\newtheorem{prop}{Proposition}[section]
\newtheorem{thm}[prop]{Theorem}
\newtheorem{cor}[prop]{Corollary}
\newtheorem{lem}[prop]{Lemma}
\newtheorem{setup}[prop]{Setup}
\theoremstyle{definition}
\newtheorem{defn}[prop]{Definition}
\newtheorem{rem}[prop]{\it Remark}
\newtheorem*{claim*}{Claim}
\newcommand{\bR}{\mathbb{R}}
\newcommand{\bQ}{\mathbb{Q}}
\newcommand{\bZ}{\mathbb{Z}}
\newcommand{\bN}{\mathbb{N}}
\newcommand{\bG}{\mathbb{G}}
\newcommand{\bk}{\mathbbm{k}}
\newcommand{\oX}{\overline{X}}
\newcommand{\oDe}{\overline{\Delta}}
\newcommand{\oD}{\overline{D}}
\newcommand{\oC}{\overline{C}}
\newcommand{\oL}{\overline{L}}
\newcommand{\oo}{\overline{0}}
\newcommand{\oE}{\overline{E}}
\newcommand{\ocR}{\overline{\cR}}
\newcommand{\oR}{\overline{R}}
\newcommand{\oF}{\overline{F}}
\newcommand{\ocG}{\overline{\cG}}
\newcommand{\oG}{\overline{G}}
\newcommand{\oB}{\overline{B}}
\newcommand{\fX}{\mathfrak{X}}
\newcommand{\fB}{\mathfrak{B}}
\newcommand{\cB}{\mathcal{B}}
\newcommand{\tX}{\widetilde{X}}
\newcommand{\tY}{\widetilde{Y}}
\newcommand{\tR}{\widetilde{R}}
\newcommand{\tS}{\widetilde{S}}
\newcommand{\tG}{\widetilde{G}}
\newcommand{\tDelta}{\widetilde{\Delta}}
\newcommand{\cO}{\mathcal{O}}
\newcommand{\cI}{\mathcal{I}}
\newcommand{\cF}{\mathcal{F}}
\newcommand{\cG}{\mathcal{G}}
\newcommand{\cJ}{\mathcal{J}}
\newcommand{\cR}{\mathcal{R}}
\newcommand{\fa}{\mathfrak{a}}
\newcommand{\fb}{\mathfrak{b}}
\newcommand{\fc}{\mathfrak{c}}
\newcommand{\fm}{\mathfrak{m}}
\newcommand{\fab}{\fa_{\bullet}}
\newcommand{\fbb}{\fb_{\bullet}}
\newcommand{\hX}{\hat{X}}
\newcommand{\hL}{\hat{L}}
\newcommand{\hv}{\hat{v}}
\newcommand{\la}{\lambda}
\newcommand{\Spec}{\mathrm{Spec}}
\newcommand{\Supp}{\mathrm{Supp}}
\newcommand{\mult}{\mathrm{mult}}
\newcommand{\codim}{\mathrm{codim}}
\newcommand{\lct}{\mathrm{lct}}
\newcommand{\im}{\mathrm{im}}
\newcommand{\vol}{\mathrm{vol}}
\newcommand{\Val}{\mathrm{Val}}
\newcommand{\DivVal}{\mathrm{DivVal}}
\newcommand{\ord}{\mathrm{ord}}
\newcommand{\QM}{\mathrm{QM}}
\newcommand{\gr}{\mathrm{gr}}
\newcommand{\Bl}{\mathrm{Bl}}
\newcommand{\norm}[1]{\left\lVert#1\right\rVert}
\numberwithin{equation}{section}
\title[Relative stability and properness]{Relative stability theory and properness of K-moduli spaces}
\author{Harold Blum}
\address{Department of Mathematics, University of Utah, Salt Lake City, UT 84112, USA}
\email{blum@math.utah.edu}
\address{School of Mathematics,
Georgia Institute of Technology, GA 30332, USA}
\email{haroldblum@gatech.edu}
\author{Yuchen Liu}
\address{Department of Mathematics, Northwestern University, Evanston, IL 60208, USA}
\email{yuchenl@northwestern.edu}
\author{Chenyang Xu}
\address{Department of Mathematics, Princeton University, Princeton, NJ 08544, USA}
\email     {chenyang@princeton.edu}
\author{Ziquan Zhuang}
\address{Department of Mathematics, Johns Hopkins University, Baltimore, MD 21218, USA}
\email{zzhuang@jhu.edu}
\begin{document}

\maketitle

\setcounter{tocdepth}{1}

\begin{abstract}
We define the relative stability threshold of a family of Fano varieties over a DVR and show that it is computed by a divisorial valuation. In the case when the special fiber is K-unstable, but the generic fiber is K-semistable, we use the divisorial valuation computing the threshold to replace the special fiber by a new one with a strictly larger stability threshold. Iterating this process yields a new and more direct proof of the properness of the K-moduli space that uses only birational geometry arguments. 
\end{abstract}
\tableofcontents

\section{Introduction}

In recent years, K-stability has been successfully used to construct a moduli theory for Fano varieties and, more generally, log Fano pairs. 
The main output of the theory is the K-moduli space, which is a projective good moduli space parametrizing K-polystable log Fano pairs with fixed numerical invariants.

Arguably, the most important and difficult to prove property of the K-moduli space is its properness. 
Prior to its proof, it was known that the K-moduli space is a finite type separated algebraic space by \cite{Jia20,BLX-openness,ABHLX-goodmoduli}.
Thus verifying the properness  was equivalent to the following statement whose proof was completed in \cite{LXZ-HRFG}.

\begin{thm}\label{t:propKmod}
Let $C$ be a spectrum of a DVR. 
If $(X_K,B_K)$ is a K-semistable log Fano pair over $K:=K(C)$, then there exists a dominant morphism of spectrum of DVRs $C'\to C$ 
such that 
\[
(X_{K'},B_{K'}):=(X_K,B_K)\times_K K' 
\]
extends to a family of log Fano pairs $(X',B') \to C'$ with K-semistable fibers.
\end{thm}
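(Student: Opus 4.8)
The plan is to prove the theorem by an iterative procedure: starting from an arbitrary extension of $(X_K,B_K)$ over a DVR, we repeatedly replace the special fiber by a strictly ``more stable'' degeneration until it becomes K-semistable.

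First I would spread out the generic fiber. Since $(X_K,B_K)$ is K-semistable it lies in a bounded family of klt log Fano pairs, so after a dominant base change $C_1\to C$ of spectra of DVRs one can extend $(X_{K_1},B_{K_1})$ to a family of log Fano pairs $(\cX,\cB)\to C_1$ (via standard arguments: properness of a suitable Hilbert scheme, a relative MMP, and openness of klt singularities) --- but with no control yet on the special fiber $(\cX_0,\cB_0)$. If $(\cX_0,\cB_0)$ happens to be K-semistable we are done, so assume $\delta(\cX_0,\cB_0)<1$.

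The heart of the argument is the relative stability threshold of the family $(\cX,\cB)\to C_1$. Relative K-instability of the special fiber --- which is compatible with K-semistability of the generic fiber --- forces this relative threshold to be $<1$, and by the main structural result of the paper it is computed by a divisorial valuation $v$ over $\cX$ whose center lies in $\cX_0$. The crucial feature, and the place where birational geometry enters, is that $v$ is \emph{finitely generated} over $C_1$ (extract $v$ by a relative MMP and invoke finite generation of the resulting section ring), so its extended Rees algebra over $C_1$ produces, after a further dominant base change $C_2\to C_1$, a new family of log Fano pairs $(\cX',\cB')\to C_2$ with the same generic fiber and with special fiber $(\cX'_0,\cB'_0)$ the central fiber of the test configuration of $(\cX_0,\cB_0)$ induced by $v$. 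Because $v$ is the \emph{optimal} destabilizing valuation, the new special fiber is strictly more stable: $\delta(\cX'_0,\cB'_0)>\delta(\cX_0,\cB_0)$, equivalently the relative stability threshold strictly increases --- a monotonicity one checks by tracking the behavior of the log discrepancy $A$ and the expected vanishing order $S$ under the degeneration, together with lower semicontinuity of $\delta$.

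Now replace $(\cX,\cB)\to C_1$ by $(\cX',\cB')\to C_2$ and iterate. This yields a strictly increasing sequence of thresholds $\delta(\cX_0^{(i)},\cB_0^{(i)})<1$; since all the $(\cX_0^{(i)},\cB_0^{(i)})$ share the same numerical invariants and degenerate from one another they lie in a single bounded family, so by the ACC for stability thresholds (a consequence of boundedness together with the ACC for log canonical thresholds) the sequence must terminate. Hence after finitely many steps $\delta(\cX_0^{(i)},\cB_0^{(i)})\ge 1$, so the special fiber of the resulting family is K-semistable; its generic fiber is K-semistable since K-semistability is preserved under base field extension, and composing the base changes gives the required $C'\to C$. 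I expect the two hardest points to be the finite generation used above --- which is exactly why one works with the relative stability threshold, whose minimizer is automatically divisorial and dreamy, rather than directly with $\delta(\cX_0,\cB_0)$, whose minimizer need not be divisorial --- and the termination, which hinges on the strict monotonicity of the relative threshold under the optimal degeneration and on the uniform boundedness of all the intermediate special fibers.
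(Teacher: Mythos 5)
Your proposal takes essentially the same route as the paper's (Theorem \ref{t:propKmod} is deduced as a special case of Corollary \ref{cor-deltaconstant}): extend $(X_K,B_K)$ to some family of log Fano pairs over a DVR via \cite{LX-special}; if the special fiber is K-unstable, compute the relative stability threshold by a divisorial valuation (Theorem \ref{t:divisorial minimizer}), use finite generation (Corollary \ref{c:delta minimizer f.g.}) and MMP to replace the special fiber by a strictly more stable one (Theorem \ref{thm-main}); iterate and terminate by boundedness. Two cautions about the mechanism you describe, however. For the crucial inequality $\delta(\cX'_0,\cB'_0)\geq \delta(\cX,\Delta)$, ``lower semicontinuity of $\delta$'' is the wrong tool: semicontinuity of $\min\{1,\delta\}$ in the new family $(\cX',\cB')\to C'$ bounds $\delta(\cX'_0,\cB'_0)$ from \emph{above} by the generic-fiber value, whereas one needs a \emph{lower} bound on the special fiber. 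The actual argument requires the uniform convergence estimate $S_m(w)-w(E_Y)S_m(E)\leq(1+\varepsilon)(S(w)-w(E_Y)S(E))$ of Lemma \ref{l:subtleBJinequality}, which in turn uses the degeneration of $X_0$ to a reduced projective scheme (Proposition \ref{p:weaklyspecialtc}) and the Okounkov-body comparison of Proposition \ref{prop-comparetwofiltration}; you rightly flag this step as hard, but the tools you name would not deliver it. For termination, the paper does not invoke an ``ACC for stability thresholds derived from ACC for lct''; rather, boundedness of the intermediate fibers (\cite{Jia20,XZ-minimizer-unique}) together with constructibility of $\min\{1,\delta(\cdot)\}$ in families (\cite{BLX-openness}) gives outright \emph{finiteness} of the set of attainable values, which is what is actually used and is not known to follow from the ACC for lct alone.
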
 

A first step toward this result was achieved in \cite{LX-special}.
By taking a semistable reduction and running well-chosen MMPs, \cite{LX-special} showed  the existence of an extension of $(X_{K'},B_{K'})$ to a family of log Fano pairs  $(X',B')\to C'$ with special fiber not necessarily K-semistable.
Such an extension is often highly non-unique, and finding a K-semistable limit requires  a  more subtle analysis of which extension to take.

Theorem \ref{t:propKmod}  was proven algebraically in \cite{LXZ-HRFG} by combining  higher rank finite generation results with K-stability arguments in \cite{BHLLX-theta}, the notion of $\Theta$-stratifications in \cite{DHL-Theta}, and stack theoretic results in  \cite{AHLH}.
In the special case when the pair $(X_K,B_K)$ is log smooth, the theorem can alternatively be deduced from deep analytic results on Gromov--Hausdorff limits of K\"ahler-Einstein Fano manifolds \cite{DonSun-GHlimits,TianWang-conicKE}.

 \subsection{Properness via $\Theta$-stratification}
We now explain the prior algebraic proof of properness in more detail.
The proof begins by considering the moduli stack   of  log Fano pairs.
While this stack does not admit a good moduli space, it contains the moduli stack of K-semistable log Fano pairs as an open substack \cite{BLX-openness} and satisfies the existence part of the valuative criterion for properness  \cite{LX-special}. 
 
The  papers \cite{LXZ-HRFG,BHLLX-theta} construct a $\Theta$-stratification on the moduli stack of log Fano pairs. 
 Roughly, this amounts to stratifying the  locus of the stack parametrizing K-unstable log Fano pairs  in terms of an invariant measuring instability and choosing a uniquely determined ``optimal destabilizing'' test configuration for each K-unstable log Fano pair that varies continuously on strata.

 To construct such a stratification, a natural starting point is to consider the stability threshold of a log Fano pair $(X,B)$ defined as
 \[
 \delta(X,B):= \inf_{E} \frac{A_{X,B}(E)}{S_X(E)},
 \]
 where the infimum runs through all divisors $E$ over $X$ \cite{FO-delta,BJ-delta}. 
Recall that  $(X,B)$ is K-semistable if and only if 
$\delta(X,B) \geq 1$ by \cite{Fujvalcrit,Livalcrit}.
By  finite generation results for higher rank valuations in \cite{LXZ-HRFG}, when $(X,B)$ is K-unstable,  this infimum is a minimum and induces a special test configuration that minimizes the functional ${\rm Fut}/ \lVert \, \,  \rVert_{\min}$ on the space of special test configurations.  While this minimizer is not always unique, one can further minimize the function ${\rm Fut} / \lVert\, \,\rVert_{2}$ over the set of minimizers of ${\rm Fut}/\lVert\, \, \rVert_{\min}$. 
The latter minimizer exists, is unique, and induces a $\Theta$-stratification in \cite{ABHLX-goodmoduli}. 

With the existence of the stratification complete, 
the semistable reduction theorem for algebraic stack admitting $\Theta$-stratification  in \cite{AHLH}  then implies that Theorem \ref{t:propKmod} holds.  The latter result relies on a subtle analysis of the local  stack structure of $\Theta$-stratification on an algebraic stack.

In this paper, we give a purely birational geometry proof of the properness of K-moduli spaces. 
The proof replaces the delicate argument of studying a $\Theta$-stratification by a more explicit geometric construction using techniques from birational geometry.
While the argument still uses  higher rank finite generation results of \cite{LXZ-HRFG,XZ-SDC}, they are used in a more direct way.

\subsection{Birational approach to properness}
Let us start with a family of log Fano pairs $(X,B)\to C$ over the spectrum of a DVR $C$ with closed point $0 \in C$ and  $K:=K(C)$.
We assume  
\[
 \delta(X_0,B_0)<
\min \{1 , \delta(X_{K},B_{K})\}
,\] 
which means that the special fiber is less stable than the general fiber. 
Our goal is to construct, after a possible base change by  a  dominant morphism of spectrums of DVRs $C'\to C$,  a new family of log Fano pairs $(X',B')\to C$ such that 
\[
(X_K,B_K)\cong (X'_{K},B'_{K}) \quad \text{ and } \quad 
\delta(X_0, B_0)< \delta(X'_0, B'_0)
.\] 
To begin, observe that the first isomorphism induces a birational map $X\dashrightarrow X'$ and, hence, the new family  induces a divisorial valuation $\ord_{X'_0}$ of $X$. 
We seek to pick out this valuation intrinsically. 

To proceed, we define a relative version of the stability threshold that depends on a parameter $t\in [0,1)\cap \bQ$.
We set
\[
\delta_t( X,B)= \inf_{v} \frac{A_{X,B+(1-t)X_0}(v)}{S(v)}
,\]
where the infimum runs through all valuation $v$ on $X$ with finite log discrepancy.
Above $A_{X,B+(1-t)X_0}(v)$ denotes the log discrepancy and $S(v)$ the relative $S$-invariant (Section \ref{ss-invariants}).
If $v= \ord_E$ where $E$ is a prime divisor on a normal scheme $Y$ with a proper birational morphism $\mu:Y\to X$, then 
\[
S(\ord_E) = \frac{1}{\vol(-K_{X_0})} \int_0^\infty \vol_{Y\vert \widetilde{X}_0}(- \mu^*K_X -sE) \, \mathrm{d}s
,\]
where $\vol_{Y\vert \widetilde{X}_0}$ denotes the restricted volume as in \cite{ELMNP-resvol} onto the strict transform of $X_0$ on $Y$.
We say a valuation $v$  on $X$ \emph{computes} $\delta_t(X,B)$ if it computes the above infimum. 

Using inversion of adjunction, we show that 
$
\delta(X_0,B_0)= \delta_{0}(X,B)
$ and so the invariant recovers nothing new at $t=0$.
For $0<t\ll1$, we show that a minimizer of the infimum exists and it can be used to construct our desired extension. 

\begin{thm}\label{thm-main}
Let $C$ be a spectrum of a DVR with algebraically closed residue field.
Let $0 \in C$ denote the closed point and $K:=K(C)$  the fraction field.

If $(X,B)\to C$ is a family of log Fano pairs  with 
\[
\delta(X_0, B_0)<
\min \{1,\delta(X_K,B_K) \} ,
\]
 then there exists $\varepsilon>0$ such that for any $t\in (0,\varepsilon)\cap \bQ$ the following holds:
\begin{enumerate}
	\item There exists a divisorial valuation $v= \ord_E$  
    that computes $\delta_t(X,B)$. 
	\item If a divisorial valuation $v= \ord_E$  computes $\delta_t(X,B)$, then  there  exists a  dominant morphism of spectrums of DVRs  $C'\to C$  and a family of log Fano pairs $(X',B')\to C'$ such that
	\[
	(X_K,B_K)\times_K K' \cong (X'_{K'}, B'_{K'})  \quad \text{ and } \quad \delta(X_0, B_0)< \delta(X'_{0'},B'_{0'}),
\]
where $0'\in C'$ is the closed point and $K'= K(C')$ the function field.
Furthermore, the restriction of the valuation  $\ord_{X'_{0'}}$  via the inclusion $K(X)\subset K(X')$ is an integer multiple of $\ord_{E}$.
\end{enumerate}
\end{thm}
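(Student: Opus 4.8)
The plan is to first single out a divisorial valuation computing $\delta_t(X,B)$, and then feed it into a finite-generation construction producing the new family $(X',B')\to C'$; higher rank finite generation \cite{LXZ-HRFG,XZ-SDC} is the engine in both steps. Write $\pi$ for a uniformizer of $\cO_C$, so that $X_0=\{\pi=0\}$, and record the identity $A_{X,B+(1-t)X_0}(v)=A_{X,B}(v)-(1-t)\,v(X_0)$; it shows $t\mapsto\delta_t(X,B)$ is non-decreasing, with $\delta_t(X,B)\ge\delta_0(X,B)=\delta(X_0,B_0)$. Since $\delta(X_0,B_0)<1$, by \cite{LXZ-HRFG} the threshold $\delta(X_0,B_0)$ is computed by a quasi-monomial valuation $w_0$ over $X_0$; lifting $w_0$ to $v_0$ over $X$ with $v_0(X_0)=1$, $A_{X,B+X_0}(v_0)=A_{X_0,B_0}(w_0)$ and $S(v_0)=S_{X_0}(w_0)$, the identity gives
\[
\delta_t(X,B)\ \le\ \frac{A_{X,B+(1-t)X_0}(v_0)}{S(v_0)}\ =\ \delta(X_0,B_0)+\frac{t}{S_{X_0}(w_0)}.
\]
Hence there is a uniform $\varepsilon>0$ with $\delta(X_0,B_0)\le\delta_t(X,B)<\min\{1,\delta(X_K,B_K)\}$ for all $t\in(0,\varepsilon)\cap\bQ$, and the strict bound by $\delta(X_K,B_K)$ will pin the centers of near-minimizers onto $X_0$.

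For assertion (1), working in the range $\delta_t(X,B)<1$ and adapting the minimizer-existence argument of \cite{LXZ-HRFG} to the relative threshold, I would first obtain a \emph{quasi-monomial} valuation computing $\delta_t(X,B)$: a minimizing sequence of divisorial valuations---all centered on $X_0$ by the bound above---lies on a common log-smooth model over $C$ (boundedness of complements, Izumi estimates), and a limiting argument extracts a cluster point computing the infimum. Finite generation of the graded ring of this valuation \cite{LXZ-HRFG}, together with the rationality of $t$, then upgrades it to a \emph{divisorial} minimizer $\ord_E$; the relative setting is essential here, since the extra $\Spec\cO_C$--direction of the total space lets one realize the optimal degeneration by a single divisor over $X$.

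For assertion (2), suppose $\ord_E$ is divisorial and computes $\delta_t(X,B)<1$. The bound $\delta_t(X,B)<\delta(X_K,B_K)$ forces $c_X(E)\subseteq X_0$, so $b:=\ord_E(X_0)=\ord_E(\pi)$ is a positive integer; as the residue field is algebraically closed, there is a totally ramified extension $C'\to C$ of degree $b$ with uniformizer $\pi'$, $\pi=(\pi')^{b}$, and $\ord_E$ extends to a valuation $w$ of $K(X'):=K(X)(\pi')$ with $w(\pi')=1$, hence $w|_{K(X)}=\ord_E$; this $w$ is the candidate for $\ord_{\cX'_{0'}}$. The crux is that, because $\ord_E$ computes $\delta_t(X,B)<1$, the two-step filtration of the relative anticanonical algebra $R=\bigoplus_m H^0(X,-m(K_{X/C}+B))$ obtained first from $\ord_{X_0}$ and then from $\ord_E$ has a finitely generated associated graded algebra over $\cO_C$; this is the substantive input of \cite{LXZ-HRFG,XZ-SDC} in the presence of the family. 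Taking $\Proj$ over $C'$ of the corresponding Rees algebra then yields a flat projective family $\cX'\to C'$ with $\cX'_{K'}\cong X_K\times_K K'$ and with $\ord_{\cX'_{0'}}|_{K(X)}$ a positive integer multiple of $\ord_E$; carrying $B$ across the birational map gives a boundary $B'$, and an adjunction computation---using again that $\ord_E$ computes $\delta_t(X,B)<1$, so it is an lc place of a $\bQ$-complement and the degeneration is crepant in its direction---shows $(X',B'):=(\cX',B')\to C'$ is a family of log Fano pairs.

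Finally, the strict inequality. Inversion of adjunction on the new family gives $\delta(X'_{0'},B'_{0'})=\delta_0(X',B')$, and restricting valuations along $K(X)\subset K(X')$---using the crepancy of the degeneration in the $\ord_E$--direction together with adjunction for the ramified base change $C'\to C$---bounds $\delta_0(X',B')\ge\delta_t(X,B)$. On the other hand $\ord_E$ computes $\delta_t(X,B)$ and $\ord_E(X_0)\ge 1$, so $\delta_t(X,B)=A_{X,B+X_0}(\ord_E)/S(\ord_E)+t\,\ord_E(X_0)/S(\ord_E)>\delta(X_0,B_0)$, the first summand being $\ge\delta_0(X,B)=\delta(X_0,B_0)$ and the second strictly positive. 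Combining, $\delta(X'_{0'},B'_{0'})\ge\delta_t(X,B)>\delta(X_0,B_0)$. The principal obstacle is the relative finite generation in assertion (2): one must set up the two-step filtered relative anticanonical ring over the DVR so that the theorem of \cite{LXZ-HRFG} applies with the family present, and then check that the resulting flat $\Proj$ over $C'$ is a family of log Fano pairs and not merely flat; the upgrade from quasi-monomial to divisorial in assertion (1) and the crepancy comparison (with its ramified-base-change bookkeeping) in the last step are the remaining delicate points.
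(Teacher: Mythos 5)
Your outline matches the paper's global strategy (relative stability threshold, divisorial minimizer, base change, crepant MMP replacement of the special fiber), but there is a genuine gap at exactly the step the paper identifies as the crux. You assert that ``restricting valuations along $K(X)\subset K(X')$---using the crepancy of the degeneration in the $\ord_E$-direction together with adjunction for the ramified base change $C'\to C$---bounds $\delta_0(X',B')\ge\delta_t(X,B)$.'' This is not a valid argument: given a valuation $w$ on $X'$ with center in $X'_{0'}$, the log discrepancies $A_{X',B'+X'_{0'}}(w)$ and $A_{X,B+(1-t)X_0}(w)$ can indeed be related by crepancy, but the $S$-invariants $S_{(X',B')}(w)$ and $S_{(X,B)}(w)$ are computed from two \emph{different} relative section rings related by the filtration $\cF_E$, and nothing in your sketch controls their ratio. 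The paper shows this comparison is delicate even when $\ord_E$ computes every $\delta_m$ (the ``simplifying assumption'' in the introduction), and in general requires introducing the auxiliary threshold $\lambda_m(X,\Delta;E)$ and proving a uniform-in-$w$ convergence-rate estimate
$S_{m}(w)-w(E_Y)S_{m}(E)\le(1+\varepsilon)\bigl(S(w)-w(E_Y)S(E)\bigr)$
(Lemma~\ref{l:subtleBJinequality}). That estimate does not follow from the absolute case: the restriction $\cF_E\vert_{X_0}$ is not the filtration induced by a single valuation on $X_0$, so the authors have to degenerate $X_0$ to a reduced (possibly reducible) projective scheme and develop Okounkov-body convergence results for such schemes (Section~\ref{ss:reducible}, Proposition~\ref{prop-comparetwofiltration}, Proposition~\ref{p:weaklyspecialtc}). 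Your proposal does not mention any of this and treats the inequality as an adjunction bookkeeping exercise; as written, that step would not go through.

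A secondary, smaller point: for assertion (1), the step from ``quasi-monomial minimizer'' to ``divisorial minimizer'' via finite generation requires the minimizer to be a monomial lc place of a \emph{special} $\bQ$-complement in the relative setting. The paper shows this does not follow by simply diagonalizing two filtrations on a free module over a DVR (this fails in general), and introduces level-$c$ basis type divisors and the invariant $\delta_{c,m}$ to circumvent the issue (Section~\ref{ss:revisit S-inv}, Lemma~\ref{l:level-c basis type >= any G}). You invoke \cite{LXZ-HRFG} as a black box, which papers over a real technical gap. Also, your heuristic inequality $\delta_t(X,B)\le\delta(X_0,B_0)+t/S_{X_0}(w_0)$ requires a lift $v_0$ of $w_0$ with $S(v_0)=S_{X_0}(w_0)$, which is not automatic; the cleaner route is the paper's Lemma~\ref{lem-tconvergeto0}, which gives $\delta(X_0,B_0)=\lim_{t\to0^+}\delta(X,B+(1-t)X_0)$ directly via inversion of adjunction. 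The construction of $X'$ by $\Proj$ of a Rees algebra versus the paper's MMP argument (Proposition~\ref{p:Fanoreplacement}) is a legitimate stylistic alternative, and your concluding strict inequality $\delta_t(X,B)>\delta(X_0,B_0)$ is correct.
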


By repeatedly applying the above theorem and using discreteness results for the stability threshold of a log Fano pair with a fixed volume, we deduce the follow corollary, which is a generalization of Theorem \ref{t:propKmod}.

\begin{cor}\label{cor-deltaconstant}
Let $C$ be a spectrum of a DVR  with  closed point $0$ and fraction field $K$.
If $(X_K,B_K)\to {\rm Spec}(K)$ is a log Fano pair over $K$, then there is a dominant morphism of spectrums of DVRs $C'\to C$ and a family of log Fano pairs $(X',B')\to C'$ such that
\[
	(X_K,B_K)\times_K K' \cong (X'_{K'}, B'_{K'})
\]
and 
\[
\min \{1, \delta(X'_{K'},B'_{K'})\} = \min \{1,  \delta(X'_{0'},B'_{0'}) \} \, .
\]
where $0' \in C'$ is the closed point and $K' := K(C')$.
Furthermore, if $(X_K,B_K)$ is K-semistable, then all fibers of $(X',B')\to C'$ are  K-semistable. 
\end{cor}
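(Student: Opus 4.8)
The plan is to iterate Theorem~\ref{thm-main}. After replacing $C$ by a dominant morphism of spectra of DVRs we may assume its residue field is algebraically closed, and by the existence part of the valuative criterion for the moduli stack of log Fano pairs \cite{LX-special}, after a further dominant base change we may assume that $(X_K,B_K)$ extends to a family of log Fano pairs $(X,B)\to C$. Set $n:=\dim X_K$ and $v:=(-K_{X_K}-B_K)^{n}$; since $-K_{X/C}-B$ is relatively ample, the volume is constant in the family, so $v=(-K_{X_0}-B_0)^{n}$. Note that $n$, $v$, and the (finite) set of coefficients of $B$ are unaffected both by base field extension and by the operation of passing from $(X,B)\to C$ to the new family produced by Theorem~\ref{thm-main}, since there the generic fiber is unchanged (up to base field extension) and the special fiber is again a log Fano pair with the same numerical invariants.

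We use two facts: the stability threshold of a log Fano pair is invariant under base field extension, and it is lower semicontinuous in families of log Fano pairs \cite{BLX-openness}. The second fact, applied to a family over the spectrum of a DVR, gives $\delta(\text{special fiber})\le\delta(\text{generic fiber})$, hence $\min\{1,\delta(X_0,B_0)\}\le\min\{1,\delta(X_K,B_K)\}$, and similarly after each base change below. Thus, if $\min\{1,\delta(X_0,B_0)\}=\min\{1,\delta(X_K,B_K)\}$ we are already done. Otherwise $\delta(X_0,B_0)<\min\{1,\delta(X_K,B_K)\}$, so Theorem~\ref{thm-main} applies and produces a dominant base change $C'\to C$ and a family of log Fano pairs $(X',B')\to C'$ with $(X_K,B_K)\times_K K'\cong(X'_{K'},B'_{K'})$ and $\delta(X_0,B_0)<\delta(X'_{0'},B'_{0'})$. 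By base-field invariance, $\delta(X'_{K'},B'_{K'})=\delta(X_K,B_K)$, and $(X'_{0'},B'_{0'})$ is a log Fano pair of dimension $n$, volume $v$, with coefficients in the same finite set.

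We now iterate: replace $(X,B)\to C$ by $(X',B')\to C'$ (first, if necessary, enlarging $C'$ by a further dominant base change so that its residue field is again algebraically closed, which changes none of the relevant invariants) and repeat. This yields a sequence of dominant base changes and families whose special fibers have strictly increasing stability thresholds
\[
\delta(X_0,B_0)=\delta_0<\delta_1<\delta_2<\cdots,
\]
all realised by log Fano pairs of dimension $n$, volume $v$, with coefficients in the fixed finite set, and all bounded below by $\delta_0>0$. By the discreteness of the stability threshold among log Fano pairs of fixed dimension and volume with bounded coefficients and stability threshold bounded away from $0$, the set $\{\delta_i\}$ is finite, so the iteration terminates after finitely many---say $N$---steps. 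Termination means that Theorem~\ref{thm-main} no longer applies, i.e.\ $\delta_N\ge\min\{1,\delta(X_K,B_K)\}$; together with $\delta_N\le\min\{1,\delta(X_K,B_K)\}$ from the second paragraph this gives $\min\{1,\delta_N\}=\min\{1,\delta(X_K,B_K)\}$. Composing the $N$ dominant base changes furnishes the desired $C'\to C$ and family $(X',B')\to C'$. Finally, if $(X_K,B_K)$ is K-semistable then $\delta(X_K,B_K)\ge1$, so $\delta_N\ge1$ and the special fiber of $(X',B')\to C'$ is K-semistable; since K-semistability is open in families of log Fano pairs \cite{BLX-openness} and the only nonempty open subscheme of the spectrum of a DVR containing its closed point is the whole spectrum, every fiber of $(X',B')\to C'$ is K-semistable.

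The step I expect to be the main obstacle is termination: one must know that the stability thresholds of log Fano pairs of a fixed dimension and volume, with coefficients in a fixed finite set and uniformly bounded away from $0$, form a set with no accumulation point---this is where boundedness of such pairs, and the resulting finiteness of the constructible function $\delta$ on a bounded family, enter. The remaining points are routine: checking that the dimension, volume, coefficient set, and generic fiber persist, and that the comparison $\delta_{\mathrm{special}}\le\delta_{\mathrm{generic}}$ continues to hold, through the base field extensions and the birational modifications produced by Theorem~\ref{thm-main}.
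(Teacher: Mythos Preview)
Your proof is correct and follows essentially the same strategy as the paper: start from an initial extension via \cite{LX-special}, iterate Theorem~\ref{thm-main}, and terminate using boundedness of the relevant log Fano pairs together with constructibility of $\min\{1,\delta\}$ in families. One small imprecision: what is known (and what the paper uses, Proposition~\ref{p:deltafieldext} and \cite{BLX-openness}) is invariance and lower semicontinuity of $\min\{1,\delta\}$, not of $\delta$ itself; but since your argument only ever compares against $\min\{1,\delta(X_K,B_K)\}$, this does not affect the logic.
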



As a consequence of the above corollary, we give a new proof of the  properness of the K-moduli space $X^{\rm K}_{n,V,N}$, parametrizing $n$-dimensional K-polystable log Fano pairs $(X,B)$ with $(-K_X-B)^n=V$ and $N B$ is integral \cite[Definition 8.15]{Xu-Kbook}

\begin{cor}\label{c:propKmod}
The K-moduli space  $X^{\rm K}_{n,V,N}$ is proper. 
\end{cor}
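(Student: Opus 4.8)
The plan is to deduce properness of $X^{\mathrm{K}}_{n,V,N}$ from Corollary \ref{cor-deltaconstant} by verifying the existence part of the valuative criterion for properness; separatedness and finite type are already known by \cite{Jia20,BLX-openness,ABHLX-goodmoduli}, so only properness (equivalently, the valuative criterion applied to the good moduli space) remains. First I would recall that $X^{\mathrm{K}}_{n,V,N}$ is the good moduli space of the algebraic stack $\mathcal{M}^{\mathrm{Kss}}_{n,V,N}$ parametrizing $n$-dimensional K-semistable log Fano pairs $(X,B)$ with $(-K_X-B)^n = V$ and $NB$ integral. Given a spectrum of a DVR $C$ with closed point $0$ and fraction field $K$, together with a morphism $\eta\colon \operatorname{Spec} K \to X^{\mathrm{K}}_{n,V,N}$, one lifts $\eta$ (after a possible finite base change, which is harmless for the valuative criterion) to a K-polystable — in particular K-semistable — log Fano pair $(X_K,B_K)$ over $K$, using that the good moduli space morphism is surjective on $K$-points after base change.

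Next I would apply Corollary \ref{cor-deltaconstant} to $(X_K,B_K)$: there is a dominant morphism of spectrums of DVRs $C'\to C$ and a family of log Fano pairs $(X',B')\to C'$ with $(X_K,B_K)\times_K K' \cong (X'_{K'},B'_{K'})$ and, since $(X_K,B_K)$ is K-semistable, \emph{all} fibers of $(X',B')\to C'$ are K-semistable. In particular the numerical invariants are locally constant in the family, so the central fiber $(X'_{0'},B'_{0'})$ is an $n$-dimensional K-semistable log Fano pair with $(-K_{X'_{0'}}-B'_{0'})^n = V$ and $NB'_{0'}$ integral. Hence $(X',B')\to C'$ defines a morphism $C' \to \mathcal{M}^{\mathrm{Kss}}_{n,V,N}$ whose restriction to $\operatorname{Spec} K'$ recovers $(X'_{K'},B'_{K'})$; composing with the good moduli space morphism gives $C' \to X^{\mathrm{K}}_{n,V,N}$ extending the composite $\operatorname{Spec} K' \to \operatorname{Spec} K \xrightarrow{\eta} X^{\mathrm{K}}_{n,V,N}$. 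This produces the required extension over $C'$, and since $C'\to C$ is a dominant morphism of DVR spectra, the existence part of the valuative criterion is verified.

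The one point requiring care is uniqueness, i.e.\ the separatedness input: for the valuative criterion of properness one needs the extension $C'\to X^{\mathrm{K}}_{n,V,N}$ to be unique, but this follows from the already-established separatedness of $X^{\mathrm{K}}_{n,V,N}$ (from \cite{BX-separatedness} as incorporated in \cite{ABHLX-goodmoduli}), which guarantees that any two extensions agree. Since $X^{\mathrm{K}}_{n,V,N}$ is of finite type and separated, verifying the existence part of the valuative criterion over all DVRs (allowing dominant base change $C'\to C$) is exactly what is needed to conclude properness. I do not expect a genuine obstacle here; the whole content has been pushed into Corollary \ref{cor-deltaconstant}, and the remaining work is the standard translation between "family of K-semistable log Fano pairs over a DVR after base change" and "valuative criterion for the good moduli space." The only mild subtlety is bookkeeping around which base changes are permitted — but dominant morphisms of DVR spectra (in particular finite extensions of the fraction field) are precisely those allowed in the valuative criterion, so this causes no difficulty.
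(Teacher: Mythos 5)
Your argument is essentially correct and reduces, as the paper does, to Corollary \ref{cor-deltaconstant}; but the route you take to get from that corollary to properness of the coarse space differs from the paper's in a way worth noting. The paper stays at the level of the stack: it observes that Theorem \ref{t:propKmod} gives the existence part of the valuative criterion for the K-semistable moduli \emph{stack} $\fX^{\rm K}_{n,V,N}$ (allowing DVR extensions), invokes \cite[Lemma 0CLW]{stacks-project} to conclude the stack is universally closed, and then applies \cite[Proposition 3.48]{AHLH}, which says that a universally closed finite type stack with a separated good moduli space has a \emph{proper} good moduli space. You instead try to verify the valuative criterion directly on the algebraic space $X^{\rm K}_{n,V,N}$, which forces an extra step that the paper avoids: given a $K$-point of $X^{\rm K}_{n,V,N}$, you need to lift it (after a finite extension $K'/K$) to a $K'$-point of $\fX^{\rm K}_{n,V,N}$, i.e.\ to an actual K-semistable log Fano pair over $K'$. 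You assert this as if it were routine (``the good moduli space morphism is surjective on $K$-points after base change''), but it requires an argument about the nonempty finite-type fiber stack over $K$ and its residual gerbe having a point over a finite extension of $K$; this is true but not weaker than citing \cite[Proposition 3.48]{AHLH}, and you should either prove it or cite a precise reference. Also, your reference for separatedness should be \cite{BX-separated} (the citation key used in this paper), not ``BX-separatedness''. Finally, you implicitly use that the invariants $(-K_{X'_{0'}}-B'_{0'})^n=V$ and ``$NB'_{0'}$ integral'' are preserved by the extension produced in Corollary \ref{cor-deltaconstant}; the former follows from flatness of intersection numbers, and the latter from the definition of a family of log Fano pairs (a locally stable family in the sense of \cite{Kol23}), but these deserve a sentence each rather than being folded into ``the invariants are locally constant.'' None of these are genuine gaps — they are all true — but the net effect is that your version does not actually simplify the deduction from Corollary \ref{cor-deltaconstant}; it trades one black box (\cite[Prop.\ 3.48]{AHLH}) for another of comparable weight (lifting $K$-points across a good moduli space morphism).
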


\subsection{Proof of Main Theorem}

The proof of  Theorem \ref{thm-main} has two main components. 
We first  begin by developing the theory of this relative stability threshold.
Much, but not all, of the theory on the stability threshold in the absolute case of a log Fano pair \cite{BJ-delta,LXZ-HRFG} extends easily to the relative setting and results in the proof of Theorem \ref{thm-main}.1.
The next step is  to use an appropriate MMP to construct the extension and  prove
\[
\delta(X_0 ,B_0) <\delta (X'_{0'},B'_{0'})
,\]
which turns out to be extremely subtle.

\subsubsection{Relative stability threshold}
We first develop the theory of the relative stability threshold for the data of 
$f:(X,\Delta;L) \to C$,  where  $(X,\Delta)$ is an lc pair, $f: X\to C$ is a flat proper morphism with connected and reduced  geometric fibers, and $L$ is a relatively ample $\bQ$-line bundle on $X$. Fix an integer $r>0$ such that $rL$ is a line bundle.
We write 
\[
\cR:= \bigoplus_{m \in r\bN} \cR_m =\bigoplus_{m \in r\bN} H^0(X,mL) 
\quad \text{and } \quad 
R:= \bigoplus_{m \in r\bN} R_m =\bigoplus_{m \in r\bN} H^0(X_0,mL_0) 
\]
for the relative section rings of $L$ and $L_0$. 
Furthermore, we  assume  that the restriction morphism $\cR \otimes k(0)\to R$ is surjective.

We define the \emph{relative stability threshold} of $(X,\Delta;L)\to C$ as
\[
\delta(X,\Delta;L) = \inf_{v}\frac{A_{X,\Delta}(v)}{S(v)},
\] 
where the infimum runs through all valuations $v$ on $X$ with finite log discrepancy. Here $A_{X,\Delta}(v)$ denotes the log discrepancy of $v$ along $(X,\Delta)$ and $S(v)$ is the relative $S$-invariant.
To define the latter invariant, let $\cF_v$ be the filtration of $\cR$ defined by 
\[
\cF_v^\la \cR_m := \{s \in \cR_m \, \vert\, v(s)\geq \la\}
\]
and $\cF_v\vert_{X_0}$ be the filtration of $R$ defined via restriction:
\[
(\cF_v\vert_{X_0})^\la R_m := \im (\cF_v^\la \cR_m \otimes k(0) \to R_m) \subset R_m.
\]
We then define  $S(v)$ as the  $S$-invariant of the filtration $\cF_v\vert_{X_0}$ of $R$.
This invariant may also be interpreted as an integral of restricted volumes when $v$ is divisorial (Remark \ref{r:relationtoresvol}).

\medspace

\noindent \emph{Relation to basis type divisors}.
For $m \in r\bN^+$,  a \emph{relative $m$-basis type divisor} of $L$ is a $\bQ$-divisor on $X$ of the form 
\[
\tfrac{1}{mN_m} \left( \{s_1=0\}+ \cdots + \{ s_{N_m}=0\} \right),
\]
where $(s_1,\ldots, s_{N_m})$ is a basis for $\cR_m$ as a free $\cO(C)$-module. 
We set 
\[
\delta_m(X,\Delta;L) := \inf \{ \lct(X,\Delta;D) \, \vert\,  \text{$D$ is an $m$-basis type divisor of $L$}\}
.\]
By the same argument as in the absolute case in \cite{BJ-delta}, the following holds:
\begin{enumerate}
	\item $S(v)= \lim_{m \to \infty} S_{mr}(v)$, where 
	\[
	S_m(v) = \max \{ v(D) \, \vert\,\text{ $D$  is an $m$-basis type divisor of $L$}\},
	\]
		and the infimum is achieved when $D$ is compatible with $v$ (see Section \ref{sss:basistypedivisors}), 
	\item $\delta_m(X,\Delta;L)= \inf_v \frac{A_{X,\Delta}(v)}{S_m(v)}$ (Proposition \ref{p:delta=inf A/S}),  and
	\item $\delta(X,\Delta;L):= \lim_{m \to \infty} \delta_{mr}(X,\Delta;L)$ (Proposition \ref{p:delta=inf A/S}).
\end{enumerate}

\vspace{.2 cm}

\noindent \emph{Existence of quasi-monomial minimizer.}
Under mild assumptions, we show that there exists a quasi-monomial valuation $v$ that computes the infimum in the definition of $\delta(X,\Delta;L)$ (Theorem \ref{t:delta computed by qm valuation}).
This is achieved by extending the generic limits argument for constructing minimizers of the stability threshold in the absolute case in  \cite{BJ-delta} to the relative case and applying the main result of \cite{Xu-quasimonomial} to choose a minimizer that is quasi-monomial.

\vspace{.2 cm}

\noindent \emph{Existence of divisorial minimizer.}
Next we specialize to the case when $(X,\Delta)\to C$ is a relative log Fano pair, i.e. $(X,\Delta)$ is klt and  $-K_{X}-\Delta$ is ample over $C$.
(The main example to  keep in mind is, given by taking a  family of log Fano pairs $(X,B)\to C$ and then considering $(X,B+(1-t)X_0) \to C$ with $t\in (0,1]\cap \bQ$.)
In this case, we set 
\[
\delta(X,\Delta):= \delta(X,\Delta; -K_{X}-\Delta)
,\]
where we view $-K_{X}-\Delta$ as a $\bQ$-line bundle.

We show that if a relative log Fano pair $(X,\Delta) \to C$ satisfies 
\[
\delta(X,\Delta)< \min \{1 , \delta(X_K, \Delta_K)\}
\]
and other mild assumptions hold, then there exists a divisorial valuation computing $\delta(X,\Delta)$.
Following the strategy in \cite{LXZ-HRFG}, we take a quasi-monomial valuation $v$ computing the stability threshold  and then show that the associated graded ring 
\[
{\rm gr}_v \cR: = \bigoplus_{m \in r\bN} \bigoplus_{\la \in \bR} \cF_v^\la \cR_m / \cF_v^{>\la} \cR_m 
\]
is finitely generated. 
Similar to arguments in the absolute case, finite generation implies a linearity property for the $S$-invariant near $v$ that implies the existence of a divisorial valuation computing $\delta(X,\Delta)$.

To prove finite generation, we aim to apply the higher rank finite generation results from \cite{LXZ-HRFG,XZ-SDC}.
For this we need to show that the quasi-monomial minimizer $v$ is an lc place of a special $\bQ$-complement of $(X,\Delta)$ (see Definition \ref{d:specialcomp}).
A key technical ingredient towards verifying the latter condition is to show that, for any horizontal prime divisor $G$ on $X$, there exists a complement $\Delta^+$ of $(X,\Delta)$ such that $v$ is an lc place of $(X,\Delta^+)$ and $\Supp(\Delta^+)\supset \Supp(G)$.
To verify this condition, we would like to take  a relative $m$-basis type divisor $D_m$ for each $m \in r\bN^+$
such that $D_m$ is compatible with $v$ and  $\ord_G$.
Unfortunately, we cannot a priori assume that both conditions hold as, unlike the vector space case, we cannot simultaneously diagonalize any two filtrations of a free module over a DVR.

To circumvent the latter issue, we consider asymptotically increasing thickenings of $X_0$. 
In particular, for $0<c\ll1$, we consider bases for $\cR_m/\fm_{C,0}^{\lceil cm\rceil }\cR_m$ over the residue field $\bk$, lift these sections to $\cR_m$, and then define the notion of an $(m,c)$-basis type divisor $D_{m,c}\sim_{\bQ} -K_{X}-\Delta$. 
Using this, we can define invariants  $S_{c}(\Delta)$, $\delta_c(X,\Delta)$ satisfying $\delta_c(X,\Delta)= \inf_{v} \frac{A_{X,\Delta}(v)}{S_c(v)}$ defined using $(m,c)$-basis type divisors rather than $m$-basis type divisors.
We show that if $v$ computes $\delta:=\delta(X,\Delta)$, then $v$ also computes $\delta_c(X,\Delta-c\delta X_0)$.
By simultaneously diagonalizing the filtrations of $\cR_m/ t^{\lceil cm\rceil}\cR_m$ induced by $v$ and $G$, we 
can construct a $(m,c)$-basis type divisor $D_m$ compatible with both $v$ and containing a bounded from zero multiple of $G$. 
Using this, we construct a complement $\Delta^+$ of $(X,\Delta)$ such that $v$ is an lc place of $(X,\Delta-c\delta X_0)$ and $\Supp(\Delta^+) \subset \Supp(G)$.
As a consequence, we deduce that $v$ is a special lc place of $(X,\Delta- c\delta X_0)$ and, hence, deduce the finite generation of ${\rm gr}_v \cR$ using \cite{XZ-SDC}.

\subsubsection{Extension induced by minimizer}
We now explain the proof of Theorem \ref{thm-main}.2. 
Let 
$
(X,B)\to C
$
 be a family of log Fano pairs satisfying the assumptions of the theorem.
We now fix  $0<t\ll1$ and set 
\[
\Delta:= B+(1-t)X_0.
\]
Let  $\ord_E$ be a divisorial minimizer of $\delta(X,\Delta)$ whose existence is guaranteed by Theorem \ref{thm-main}.1.
For simplicity, we assume that $\ord_E(X_0)=1$ in order to avoid having to take a base change in the statement of Theorem \ref{thm-main}.2.

We first show that if $0<t\ll1$, then $\ord_E$ is an lc place of a $\bQ$-complement of $(X,B+X_0)$. 
Using this, we  use the MMP to construct a flat proper family of pairs 
\[
(X',B') \to C
\]
with an isomorphism 
$
(X_{K},B_{K}) \cong (X'_{K},B'_{K})
$
over  $K$  such that  $X'_0$ is integral and corresponds to the divisor $E$ over $X$.
Furthermore, we argue that $(X',B')\to C$ is a family of log Fano pairs. 

To verify the inequality of stability thresholds, we aim to show that
\[
\delta(X_0,B_0) = \delta(X,B+X_0)<\delta(X, \Delta) 
\leq  \delta(X',B'+X'_0) = \delta(X'_0,B_0').
\]
The first and last equalities are simple consequences of adjunction  and the characterization of the stability threshold in terms of lct's, while  the second uses that
\[
\delta(X,B+X_0) \leq \frac{A_{X,B+X_0}(\ord_E)}{S(\ord_E)} < \frac{A_{X,\Delta}(\ord_E)}{S(\ord_E)} = \delta(X,\Delta)
\]
Verifying the remaining inequality
\begin{equation}\label{eq:introhardineq}
\delta(X, \Delta) 
\leq  \delta(X',B'+X'_0)
\end{equation}
requires comparing birational models and is very delicate.  

To build intuition for why \eqref{eq:introhardineq} holds, let's first impose the simplifying  assumption that $\ord_E$ also computes the infimum 
\[
\delta_m(X,\Delta) = \inf_{v}\frac{A_{X,\Delta}(v)}{S_m(v)}
\]
for all $m \in r\bN^+$.
Using that $\ord_E$ is the lc place of a complement, we construct a proper birational model of $X$
	\[
\begin{tikzcd}
	& Y \arrow[rd,dotted, " "] \arrow[ld, "g"'] & \\
	X  & & X'
\end{tikzcd}
\]
such that $E_Y := {\rm Exc}(g)$ is a prime divisor, $E_Y$ is the birational transform of $E$ on $Y$, and $-E_Y$ is ample over $X$.
A key observation is that if $D'_m$ is an $m$-basis type of $-K_{X'}-B'$ compatible with $\ord_{X_0}$, then its birational transform $D_m$ on $X$ is an $m$-basis type divisor of $-K_{X}-B$ compatible with $\ord_{X_0'}$ (Lemma \ref{lem-dualfiltration}). 
Since $\ord_E$ computes $\delta_m(X,\Delta)$, we know that
\[
(X,\Delta+\delta_m D_m)
\]
 is lc and 
\[
g^*(K_X+\Delta+\delta_m D_m)=K_Y+g_*^{-1}(\Delta+ \delta_m D_m)+E_Y
,\]
where $\delta_m:= \delta_m(X,\Delta)$, hence the pair
\[
(Y, g_*^{-1}(\Delta+\delta_mD_m)+E_Y)
\] is lc and admits a $\bQ$-complement when $m\gg 0$. 
This implies that the pair
\[
(X',B'+X'_0 +\delta_m D_m')
\]
 is lc.
Therefore 
\begin{align*}
	\delta_m(X,\Delta)& \leq   \min \{ {\rm lct}(X',X_0'+B'; D_m') \, | \,
	D_m' \text{ is a relative $m$-basis type divisor of }\\
	& \hspace{ 2 in}\text{ $-K_{X'}-B'$  compatible with }\ord_{X_0} \} \\
&= \min \{ {\rm lct}(X',X_0'+B'; D_m') \, | \, D_m'\mbox{ is a relative $m$-basis type divisor of}  \\
	&\hspace{ 2 in} \text{ $-K_{X'}-B'$} \} \\
	& =: \delta_m(X',B'+X'_0),
\end{align*}
where the first equality relies on inversion of adjunction and simultaneously diagonalizing two filtrations of the section ring of $-K_{X'_0}-B_0 $ on $X_0$ as in   \cite[Proposition 1.6]{AZ-K-adjunction}.
Sending $m \to \infty$, we then deduce that 
\[
\delta(X,\Delta)\leq \delta(X',B'+X_0)
\]
 under our simplifying assumption.

The proof of the result without this simplifying assumption is significantly more complicated and requires a delicate analysis of convergence rates. 
To proceed, we set
\begin{align*}
\la_{m}: = \inf \{ \lct( Y, g_*^{-1}(\Delta)+E_Y; g_*^{-1} D_m)\, \vert\, & D_m \text{ is a relative $m$-basis type divisor of} \\ &\text{$-K_{X}-B$ compatible with $\ord_E$ }\}
.\end{align*}
Arguing similar to before, we show that 
\[
\min\{ \la_m, \delta_m(X,\Delta)) \} \leq  \delta_m(X',B'+X'_0) 
\]
for $m\gg0$.
If we can show that 
\begin{equation}\label{e:introdeltalambda}
\delta(X,\Delta) \leq \liminf_m \la_{mr}
,
\end{equation}
then sending $m\to \infty$ gives $\delta(X,\Delta)\leq \delta(X',B'+X'_0)$ as desired. 

To verify the key inequity \eqref{e:introdeltalambda}, we first compute  that 
\[
\inf_{w\in \DivVal_X}  \frac{A_{X,\Delta}(w)-w(E_Y)A_{X,\Delta}(E)}{S_m(w)-w(E_Y)S_m(E)}
\leq 
\la_m
\] 
(see Lemma \ref{l:la_minequality}).
Since
\[
\delta(X,\Delta) = \frac{A_{X,\Delta}(\ord_E)}{S(\ord_E)}
\leq 
\frac{A_{X,\Delta}(w)}{S(w)}
\]
for $w\in \DivVal_X$,  
\[
\delta(X,\Delta) \leq \frac{ A_{X,\Delta}(w)-w(E_Y)A_{X,\Delta}(\ord_E)}{S(w) -w(E_Y)S(\ord_E)}.
\]
holds.
Therefore, if we can control the convergence of the limit
\[
S(w)-w(E_Y)S(\ord_E) =
\lim_{m \to \infty} \big(S_{mr}(w) -w(E_Y)S_{mr}(\ord_E)\big),
\]
uniformly for all $w\in \DivVal_X$, then we will be able to deduce \eqref{e:introdeltalambda} as desired. 
In particular, we need to show that for any $\varepsilon>0$, there exists $m_0:=m_0(\varepsilon)$ such that 
\begin{equation}\label{e:introconvergence}
S_{mr}(w)-w(E_Y)S_{mr}(\ord_E)  \leq (1+\varepsilon) (S(w)-w(E_Y)S(\ord_E))
\end{equation}
for all $m \geq m_0$  and  $w\in \DivVal_X$.
This is reminiscent of a key convergence result \cite[Corollary 3.6]{BJ-delta}, which shows  the corresponding statement for the inequality 
$S_{mr}(w)\leq (1+\varepsilon)S(w)$ in the absolute case.
Geometrically, the statement in \cite{BJ-delta} is related to the value
$w(D_m)$, where $D_m$ is an $m$-basis type divisor compatible with $w$, while  \eqref{e:introconvergence} is related to  vanishing order of 
\[
 w(g_*^{-1}D_m)
 =w(D_m)-\ord_E(D_m)w(E_Y)
.\]

To prove \eqref{e:introconvergence}, we first verify a version of the statement  in the absolute case by using an Okounkov body argument as in \cite{BJ-delta} by choosing $E$ to be the divisor in the chosen flag (see Theorem \ref{prop-comparetwofiltration}).
Unfortunately, we cannot deduce the relative version from the absolute version as the filtration of $\cR$ induced by $\ord_E$ does not necessarily restrict to a filtration of $R$ induced by a single valuation. 
Instead, we use $\ord_E$ to degenerate $X_0$ to a reduced projective scheme for which we can apply the absolute case to the irreducible components. 
This last step requires  generalizing various results on asymptotic invariants of filtrations of section rings of projective varieties to reduced  projective schemes (Section \ref{ss:reducible}).

\subsection{Alternative approaches to properness}
The proof of properness in \cite{LXZ-HRFG,BHLLX-theta} relies on using divisorial minimizers of the $\delta$-invariant to construct a $\Theta$-stratication of the moduli stack of log Fano pairs.
Minimizers of various other K-stability related functionals on the valuation space give rise to other approaches to proving properness.

The papers \cite{Odakanovikov,Odakacones} pursue the approach to proving properness using the minimizer of the $H$-invariant and the minimizer of the normalized volume function on the cone to construct optimal destabilizing $\mathbb{R}$-test configurations. 
These destabilizations were constructed in \cite{BLXZ-soliton,XZ-SDC} as a consequence of higher rank finite generation results. 
As the latter destabilizations are  $\mathbb{R}$-test configurations, this approach relies on defining a higher rank version of $\Theta$-stratifaction and generalizing the semistable reduction result in \cite{AHLH}  to the higher rank case \cite{Odakacones,BHLNK}.

\subsection{Extensions}
In an upcoming paper, we will  show that the techniques of this paper can be used to prove the properness of the moduli space of weighted K-semistable log Fano pairs. As a consequence, we will deduce the properness of moduli stack of  K-semistable log Fano cone singularities and log Fano pairs admitting K\"ahler-Ricci soliton.

\vspace{.5 cm}

\noindent {\bf Structure of the Paper.}
The paper is structured as follows. 
In Section \ref{s:prelims}, we discuss preliminaries on pairs, valuations, and filtrations of graded linear series. In particular, we generalize  results in the literature on filtered graded linear series of projective varieties to the case of reduced projective schemes. 
In Section \ref{sec-relative stability}, we define the relative stability threshold, prove basic properties of the invariant, and the existence of a quasi-monomial valuation computing the invariant. In Section \ref{ss-finite generation}, we prove the existence of a divisorial valuation computing the invariant in the log Fano case. 
In Section \ref{s-increasestabilitythreshold}, we use the divisorial minimizer of the stability threshold to construct a replacement of the special fiber with increased stability threshold.

\vspace{.5 cm}

\noindent {\bf Acknowledgment.}
This collaboration started at a SQuaRE sponsored by the American Institute of Mathematics, to which the authors are all grateful.
HB is partially supported by NSF Grants DMS-2441378 and DMS-2200690.
YL is partially supported by NSF Grant DMS-2237139 and an AT\&T Research
Fellowship from Northwestern University.
CX is partially supported by NSF Grant DMS-2201349 and a Simons Investigator grant.
ZZ is partially supported by the NSF Grants DMS-2240926, DMS-2234736, a Sloan research fellowship and a Packard fellowship.
CX and ZZ are also partially supported by the Simons Collaboration Grant on Moduli of Varieties.

\section{Preliminaries}\label{s:prelims}

\subsection*{Notation and conventions}\label{ss-notation}
Throughout, all schemes are defined over $\bQ$ and all DVRs contain $\bQ$. 
If $C$ is the spectrum of a DVR, we will always write $0$ and $\eta$ for the closed point and the generic point unless stated otherwise.

We will generally follow the notation for the singularities of pairs in \cites{KM-book,Kol13,Kol23} and K-stability of log Fano pairs in \cite{Xu-Kbook}.

\subsubsection{Pairs}
A pair $(X,\Delta)$ is the data of a scheme $X$ over $\bQ$ that is normal, Noetherian, excellent, integral, and with a canonical class \cite[Definition 11.3]{Kol23} and an effective $\bQ$-divisor $\Delta$ such that $K_X+\Delta$ is $\bQ$-Cartier.
For notions of singularities of pairs, e.g. lc, klt, dlt, see \cite[Definition 11.5]{Kol23}. 
A pair over a scheme $Z$ is a pair $(X,\Delta)$ with a morphism $X\to Z$.

We use the above definition of a pair (that does not assume the scheme is finite type over a field) as we are primarily interested in pairs $(X,\Delta)$ where $X$ is projective over the spectrum of a DVR.
The major results of the MMP in \cite{BCHM} are generalized to the above class of pairs in \cite{Lyu-Murayama}.

A \emph{$\bQ$-complement} of a pair $(X,\Delta)$ over $Z$ is the data of a $\bQ$-divisor $\Delta^+$ such that $(X,\Delta^+)$ is lc, $K_{X}+\Delta^+\sim_{\bQ,Z} 0$, and $\Delta^+\geq \Delta$.
We often, a bit abusively, also refer to $\Gamma:= \Delta^+-\Delta$ as the complement $(X,\Delta)/Z$.

\subsubsection{Log Fano pairs}
A \emph{log Fano pair} is a klt pair $(X,\Delta)$ such that $X$ is projective over a field $\bk$, $X$ is geometrically connected, and $-K_X-\Delta$ is ample. 
A \emph{relative log Fano pair} is a klt pair $(X,\Delta)$ over $Z$ such that $X\to Z$ is projective and $-K_X-\Delta$ is ample over $Z$. A pair $(X,\Delta)$ is of Fano type over $Z$ if there exists some $\bQ$-divisor $\Delta'\ge \Delta$ such that $(X,\Delta')$ is a relative log Fano pair.

A \emph{family of log Fano pairs} $(X,\Delta)\to C$ over the spectrum of a DVR $C$ 
is the data of a flat proper  morphism $X\to C$ with geometrically connected  fibers and a $\bQ$-divisor $\Delta$ on $X$ such that $(X,\Delta+X_0)$ is a plt pair and $-K_{X}-\Delta$ is ample over $C$.
Note that $(X,\Delta) \to C$ is a locally stable family in the sense of \cite[Section 2.1]{Kol23}
and that the fibers are log Fano pairs by adjunction. 

\begin{lem} \label{l:Fanotype}
Let $X\to Z$ and $f\colon Y\to X$ be projective morphisms. Assume that $(X,0)$ is of Fano type over $Z$ and there exists some $\bQ$-complement $\Delta$ of $X$ over $Z$ such that $A_{X,\Delta}(E)<1$ for every $f$-exceptional divisor $E$, then $(Y,0)$ is of Fano type over $Z$.
\end{lem}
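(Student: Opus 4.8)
The plan is to show that $(Y,0)$ is of Fano type over $Z$ by producing a klt boundary $\Delta_Y$ on $Y$ with $-K_Y-\Delta_Y$ ample over $Z$, obtained by pulling back a suitable complement from $X$ and running an MMP. First I would fix a $\bQ$-complement $\Delta$ of $(X,0)$ over $Z$ as in the hypothesis, so $K_X+\Delta\sim_{\bQ,Z}0$ and $A_{X,\Delta}(E)<1$ for every $f$-exceptional divisor $E$. Since $(X,0)$ is of Fano type over $Z$, there is some $\bQ$-divisor $\Delta'\ge 0$ with $(X,\Delta')$ a relative log Fano pair; in particular $(X,0)$ is klt, so we may replace $\Delta$ by $(1-\epsilon)\Delta+\epsilon\Delta'$ for small rational $\epsilon>0$ to assume in addition that $(X,\Delta)$ is klt and $-K_X-\Delta$ is ample over $Z$ — wait, that destroys $K_X+\Delta\sim_{\bQ,Z}0$. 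Instead I would keep $\Delta$ with $K_X+\Delta\sim_{\bQ,Z}0$ but perturb within its $\bQ$-linear equivalence class / use that Fano type is equivalent to: $(X,0)$ klt and there is a klt $\bQ$-complement whose support, together with an ample $\bQ$-divisor, can be arranged; concretely, since $-K_X\sim_{\bQ,Z}\Delta$ is big over $Z$ (as $(X,0)$ is of Fano type), I can write $\Delta\sim_{\bQ,Z}A+B$ with $A$ ample over $Z$, $B\ge 0$, and then for small $\epsilon$ the pair $(X,\Delta_\epsilon)$ with $\Delta_\epsilon:=(1-\epsilon)\Delta+\epsilon B+\epsilon A'$ (for a general $A'\sim_{\bQ,Z}A$) is klt with the same numerical/linear properties up to $\sim_{\bQ,Z}$; the point is only to have a klt $\Delta$ with $K_X+\Delta\sim_{\bQ,Z}0$ and $A_{X,\Delta}(E)<1$ for $f$-exceptional $E$, which the hypothesis already grants after such a standard perturbation.

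Next I would take $\Delta_Y:=f_*^{-1}\Delta+\sum_E a_E\, E$, where the sum runs over $f$-exceptional prime divisors $E$ and $a_E:=1-A_{X,\Delta}(E)\in[0,1)$, so that
\[
K_Y+\Delta_Y = f^*(K_X+\Delta)\sim_{\bQ,Z}0,
\]
and $(Y,\Delta_Y)$ is sub-klt; since every $a_E\ge 0$ and $A_{X,\Delta}(E)<1$ forces $a_E>0$ only when needed, $\Delta_Y$ is effective, hence $(Y,\Delta_Y)$ is klt. Now choose an $f$-ample (over $X$, hence over $Z$ after twisting) effective $\bQ$-divisor: pick $H$ ample over $Z$ on $Y$; for $0<\delta\ll 1$ the pair $(Y,\Delta_Y+\delta H)$ is still klt, and $-(K_Y+\Delta_Y+\delta H)\sim_{\bQ,Z}-\delta H$ is anti-ample. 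That is the wrong sign, so instead I would argue via the MMP: run a $(K_Y+\Delta_Y+\delta A_Y)$-MMP over $X$ for a suitable $f$-ample $A_Y$, or more directly invoke that a klt pair $(Y,\Delta_Y)$ with $K_Y+\Delta_Y\sim_{\bQ,Z}0$ which admits a projective morphism $f$ to a Fano-type $(X,0)/Z$ is itself Fano type: by \cite{BCHM} (in the excellent setting \cite{Lyu-Murayama}), $f$ being a morphism and $X/Z$ of Fano type, one lifts a relative log Fano structure. Concretely, write $-K_X\sim_{\bQ,Z}\Delta'$ with $(X,\Delta')$ relative log Fano; then $f^*(-K_X)\sim_{\bQ,Z}f^*\Delta'$, and set $\Delta'_Y:=f_*^{-1}\Delta'+\sum_E E$ (full coefficient $1$ on exceptionals). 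Since $A_{X,\Delta}(E)<1$ and $\Delta'\ge\Delta$ gives $A_{X,\Delta'}(E)\le A_{X,\Delta}(E)<1$, the pair $(Y,\Delta'_Y)$ has $K_Y+\Delta'_Y = f^*(K_X+\Delta') + \sum_E(1-A_{X,\Delta'}(E))E$ wait — I want $-K_Y-\Delta'_Y$ big. Since $-K_X-\Delta'$ is ample over $Z$ and $f$ is birational onto its image with small enough discrepancy defect, $-K_Y-\Delta'_Y = f^*(-K_X-\Delta') - (\text{effective exceptional with coefficients }1-A_{X,\Delta'}(E)>0)$ is big over $Z$; running the $(K_Y+\Delta'_Y)$-MMP over $Z$ terminates with a good minimal model / Mori fiber space, and because $-K_Y-\Delta'_Y$ is big one concludes $(Y,\Delta'_Y)$, hence $(Y,0)$, is of Fano type over $Z$ by the standard characterization (e.g. \cite{PrShokurov}, \cite{Xu-Kbook}).

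The main obstacle, and the part I would write most carefully, is the bookkeeping ensuring that the boundary we build on $Y$ is simultaneously \emph{effective}, \emph{klt}, and yields an \emph{anti-big} (not just anti-nef) relative log anticanonical class — this is exactly where the hypothesis $A_{X,\Delta}(E)<1$ for all $f$-exceptional $E$ is used: it guarantees the exceptional correction terms $(1-A_{X,\Delta'}(E))E$ are strictly positive, so they can absorb the loss of ampleness under $f^*$ and keep $-K_Y-\Delta'_Y$ big over $Z$, while staying $<1$ keeps klt. I also need to handle $f$-exceptional divisors whose image is not a divisor (so $f$ is not birational, $Y\to f(Y)$ could drop dimension) — but the statement only asserts $f$ projective, and if $\dim Y>\dim X$ one instead uses that a klt Mori fiber space over a Fano type base is Fano type; I would reduce to this by factoring $f$ through its Stein factorization and image, treating the birational and fiber-type parts separately. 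Bigness over $Z$ in the excellent setting is available since $X/Z$ is of Fano type; I would cite \cite{Lyu-Murayama} for the requisite MMP statements.
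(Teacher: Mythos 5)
Your seed idea — building a boundary on $Y$ by crepant pullback from a suitable boundary on $X$, using $A_{X,\Delta}(E)<1$ to keep the exceptional coefficients in the right range — is the correct skeleton, but you misdiagnose the key step and never cleanly assemble a klt boundary on $Y$ with anticanonical that is big over $Z$. Early on, you correctly suspect that you should replace $\Delta$ by a convex combination $(1-\varepsilon)\Delta+\varepsilon\Delta'$ (with $(X,\Delta')$ klt relative log Fano from the Fano-type hypothesis), but then you reject this because it ``destroys $K_X+\Delta\sim_{\bQ,Z}0$.'' Destroying that relation is precisely what you want: the new boundary $\Delta'' := (1-\varepsilon)\Delta+\varepsilon\Delta'$ has $-(K_X+\Delta'')\sim_{\bQ,Z}\varepsilon(-K_X-\Delta')$ ample over $Z$, is klt since $(X,\Delta')$ is, and for $0<\varepsilon\ll 1$ still has $A_{X,\Delta''}(E)<1$ for every $f$-exceptional $E$. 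Then the crepant pullback $K_Y+\Delta''_Y = f^*(K_X+\Delta'')$ gives $\Delta''_Y\ge 0$ (coefficients $1-A_{X,\Delta''}(E)\in(0,1)$ on exceptionals), $(Y,\Delta''_Y)$ klt, and $-(K_Y+\Delta''_Y)$ big and nef over $Z$ — done.

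By contrast, your proposal never lands on a klt pair with anticanonical big over $Z$. Your first attempt pulls back the complement $\Delta$ itself, yielding only $K_Y+\Delta_Y\sim_{\bQ,Z}0$; you then observe the sign is wrong and there is no room to add an ample divisor. There are also computational errors that would make a referee stop: you assert $a_E:=1-A_{X,\Delta}(E)\in[0,1)$, but the hypothesis only gives $A_{X,\Delta}(E)\in[0,1)$, hence $a_E\in(0,1]$, so $(Y,\Delta_Y)$ need only be lc (if some $E$ is an lc place of $(X,\Delta)$), not klt. Later you set $\Delta'_Y:=f_*^{-1}\Delta'+\sum_E E$ with full coefficient $1$ on exceptionals (again lc, not klt) and also write $-K_X\sim_{\bQ,Z}\Delta'$ with $(X,\Delta')$ log Fano, which is self-contradictory since it forces $-(K_X+\Delta')\sim_{\bQ,Z}0$. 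The concluding appeal to the MMP is not a substitute for exhibiting the klt, anti-big boundary on $Y$; once the perturbation $\Delta''$ is set up correctly, no MMP is needed at all.
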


\begin{proof}
By the Fano type condition on $X$, we can find some $\bQ$-divisor $\Delta'$ on $X$ such that $(X,\Delta')$ is klt and $-(K_X+\Delta')$ is ample over $Z$. Replacing $\Delta'$ with $(1-\varepsilon)\Delta+\varepsilon\Delta'$ for some $0<\varepsilon\ll 1$, we may further assume that $A_{X,\Delta'}(E)<1$ for every $f$-exceptional divisor $E$. Define a $\bQ$-divisor $\Delta'_Y$ on $Y$ by the crepant pullback formula
\[
K_Y+\Delta'_Y = f^*(K_X+\Delta').
\]
Then $\Delta'_Y\ge 0$, the pair $(Y,\Delta'_Y)$ is also klt over $Z$, and $-(K_Y+\Delta'_Y)$ is big and nef over $Z$. Thus $Y$ is of Fano type over $Z$. 
\end{proof}

\subsubsection{Valuations}
Let $X$ be an integral scheme over $\bk$. A \emph{valuation} on $X$ is a valuation $K(X)^\times \to \bR$ such that $v(\bk^\times)=0$. We write $\Val_X$ for the set of valuations on $X$, and $\DivVal_X$ for the subset of divisorial valuations.
The center of a valuation $v\in \Val_X$ is denoted by $c_X(v) \in X$. We write $C_X(v):=\overline{c_X(v)}$ for its closure.

For a pair $(X,\Delta)$, there is a log discrepancy function $A_{X,\Delta}: \Val_X \to \bR$, see \cite[Definition 1.34]{Xu-Kbook}.
We write $\Val_{X}^\circ:= \{ v\in \Val_{X} \, \vert\, A_{X,\Delta}(v) < \infty\}$, which is independent of the choice of $\Delta$ by \cite[Lemma 1.36]{Xu-Kbook}. 
An lc place of an lc pair $(X,\Delta)$ is a valuation $v$ on $X$ with $A_{X,\Delta}(v)=0$.

\subsubsection{Filtrations}\label{sss:deffilt}
 We now formulate a definition of a filtration of a graded ring that we will frequently apply to both section rings and relative section rings.

 Let $R:= \bigoplus_{m \in r\bN} R_m$ be a graded ring indexed by $r\bN$, where $r$ is a fixed positive integer. 
 A \emph{filtration} $\cF$ of $R$ is the data of $R_0$-submodules 
 $\cF^\la R_m \subseteq R_m$ for each $m \in r\cdot\bN$ and $\la \in \bR$ satisfying 
  \begin{enumerate}
 \item $\cF^\la R_m \subseteq F^{\la'} R_m$ when $\la \geq \la'$,
 \item $\cF^{\la} R_{m} = \bigcap_{\la' < \la }  \cF^{\la'} R_m$,
 \item $\cF^\la R_m \cdot \cF^{\la'} R_{m'} \subseteq \cF^{\la+\la'}R_{m+m'}$,
 \item  $\cap_{\la \in \bR}\cF^\la R_m = 0$ and  $\cF^{-\la} R_m = R_m$ for $\la\gg0$.
  \end{enumerate}
The filtration is  an \emph{$\bN$-filtration} if $\cF^0 R_m = R_m $ and $\cF^\la R_m  = \cF^{\lceil \la \rceil} R_m$ for all $\la \in \bR$ and $m \in r\bN$. 
The filtration  is  \emph{linearly bounded} if there exists $C>0$ such that $\cF^{-Cm}R_m =R_m$ and $\cF^{Cm} R_m=0$ for all $m \in r\bN$. 

 The \emph{associated graded ring} of a filtration $\cF$ of $R$ is the graded  ring
 \[
  {\rm gr}_{\cF}R := \bigoplus_{m \in r\bN} \bigoplus_{\la \in \bR} {\rm gr}_{\cF}^\la R_m,
 \]
where ${\rm gr}_{\cF}^\la R_m : = \cF^\la R_m / \cF^{>\la } R_m $ and $\cF^{>\la} R_{m} :=\cap_{\la'>\la} \cF^{\la'}R_m$. 
For $s\in R_m$, we set 
\[
\ord_{\cF}(s) := \max \{ \la \in \bR\, \vert\,  s\in \cF^\la R_m\}.
\]


\subsection{Filtered graded linear series}
We now discuss asymptotic invariants of filtered graded linear series and prove some new convergence results. 
\medskip

Throughout, let $X$ be a projective geometrically reduced scheme over a characteristic 0 field $\bk$, $L$ a $\bQ$-line bundle on $X$, and $r$ a positive integer such that $rL$ is a line bundle.

\subsubsection{Graded linear series}
We write
\[
R=R(X,L)= \bigoplus_{m\in r\bN} R_m := \bigoplus_{m \in r\bN} H^0(X,mL)
\]
for the section ring of $L$ with respect to the index $r$.
A \emph{graded linear series} of $L$ is a graded $\bk$-subalgebra
\[
V_\bullet 
= \bigoplus_{m \in r\bN}  V_m 
= \bigoplus_{m \in r\bN} R_m
\subset 
R
.\]
The \emph{volume} of a graded linear series $V_\bullet$  of 
$L$ is 
\[
\vol(V_\bullet):=   \limsup_{m\to \infty} \frac{ \dim_{\bk} V_{mr}}{(mr)^{n} /n!}
,\]
where $n:= \dim X$.
When $X$ is irreducible, we say that a graded linear series $V_\bullet \subset R_\bullet$ \emph{contains an ample series} if  $V_{mr}\neq 0$ for  $m\gg0$ and
there exists $\bQ$-Cartier divisors $A$ and $E$, ample and effective respectively, such that $L= A+E$ and 
\[
H^0(X,mA)\subset V_m \subset H^0(X,mL).
\]
for all $m>0$ sufficiently large and divisible.

\begin{prop}\label{p:vollimit}
If $X$ is  geometrically integral and $V_\bullet$ is a graded linear series of $L$ that contains an ample linear series, then $\vol(V_\bullet)$ is a limit and is positive.
\end{prop}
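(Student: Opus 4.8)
The plan is to reduce to the case of a variety over an algebraically closed field and then run the Okounkov-body/semigroup argument of Lazarsfeld--Musta\c{t}\u{a}, in the form recalled e.g. in \cite{BJ-delta}. First I would base change along $\Spec\bar\bk\to\Spec\bk$, where $\bar\bk$ is an algebraic closure of $\bk$: since $H^0$ commutes with this flat base change, the numbers $\dim_\bk V_m$ and the inclusions $H^0(X,mA)\subseteq V_m\subseteq H^0(X,mL)$ are unaffected, and $X_{\bar\bk}$ is an integral projective variety over an algebraically closed field of characteristic $0$ because $X$ is geometrically integral. So we may assume $\bk=\bar\bk$ and $X$ is an $n$-dimensional variety; after reindexing we may also assume $r=1$, so that $L$ is a genuine line bundle.

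Positivity is the easy half. Since $A$ is ample $\bQ$-Cartier and $H^0(X,mA)\subseteq V_m$ for all large divisible $m$, we have $\dim_\bk V_m\ge h^0(X,mA)$ for infinitely many $m$, hence
\[
\vol(V_\bullet)=\limsup_{m\to\infty}\frac{\dim_\bk V_m}{m^n/n!}\ \ge\ \vol_X(A)\ =\ (A^{\cdot n})\ >\ 0 .
\]
Thus it remains only to upgrade the $\limsup$ defining $\vol(V_\bullet)$ to a genuine limit.

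For this I would fix an admissible flag $X=Y_0\supsetneq Y_1\supsetneq\cdots\supsetneq Y_n=\{p\}$ of irreducible subvarieties with $\codim Y_i=i$ and each $Y_i$ smooth at $p$ --- such a flag exists since $X$ is generically smooth in characteristic $0$, so one may take $p$ in the smooth locus and build the $Y_i$ from local parameters --- and form the valuation semigroup
\[
\Gamma:=\{\,(m,\nu(s))\ :\ m\ge 0,\ 0\ne s\in V_m\,\}\ \subset\ \bN\times\bZ^n,
\]
where $\nu$ is the rank-$n$ valuation attached to $Y_\bullet$, so that $\#\Gamma_m=\dim_\bk V_m$. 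One then checks the two hypotheses of the asymptotic theorem for such semigroups (Okounkov; Lazarsfeld--Musta\c{t}\u{a}; Kaveh--Khovanskii): $\Gamma$ generates $\bZ\times\bZ^n$ as a group, and $\Gamma$ lies in the cone over a bounded convex region whose level-$1$ slice $\Delta(\Gamma):=\overline{\mathrm{cone}(\Gamma)}\cap(\{1\}\times\bR^n)$ has full dimension $n$. The boundedness is automatic from linear boundedness of the section ring of $L$, while both the group-generation and the full-dimensionality come from the ample-series hypothesis $H^0(X,mA)\subseteq V_m$: exactly as in the absolute case, the Okounkov body of the ample class $A$ has nonempty interior and is contained in that of $V_\bullet$. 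Granting these, the semigroup theorem yields that $\#\Gamma_m/m^n$ converges to $\vol_{\bR^n}(\Delta(\Gamma))$, and therefore $\vol(V_\bullet)=n!\cdot\vol_{\bR^n}(\Delta(\Gamma))$ is a limit, finishing the proof when combined with the positivity above. The step I expect to be the main obstacle --- essentially the only non-formal point --- is verifying that the ample-series hypothesis really forces $\Gamma$ to generate the full lattice and the limiting cone to be $n$-dimensional; here one must be slightly careful because $X$ may be singular, so one works with a flag through a smooth point of $X$ itself rather than pulling the linear series back to a resolution.
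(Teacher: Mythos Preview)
Your proposal is correct and sketches precisely the Okounkov-body argument of Lazarsfeld--Musta\c{t}\u{a} that underlies the result. The paper does not give an independent proof at all: it simply cites \cite{LM09} and \cite[Theorem 1.11]{Xu-Kbook}, so your write-up is effectively a reconstruction of the argument from those references rather than a different approach.
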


\begin{proof}
This was first proven in \cite{LM09} and is stated in \cite[Theorem 1.11]{Xu-Kbook} using our terminology.
\end{proof}

\subsubsection{Filtrations}
A \emph{filtration} $\cF$ of a graded linear series $V_\bullet$ is a filtration of the graded $\bk$-algebra as in Section \ref{sss:deffilt}.
A basis $(s_1,\ldots, s_{N_m})$ for $V_m$ is \emph{compatible} with $\cF$ if each $\cF^\la V_m$ is a span of the $s_i$ or, equivalently, $\overline{s_i}\in {\rm gr}_{\cF}^{\la_i}V_m$ form a basis for ${\rm gr}_{\cF} V_m$, where $\la_i := \ord_{\cF}(s_i)$.
For $t\in \bR$, we write $V_\bullet^{\cF,t}$ for the graded linear series of $L$
defined by $V_m^{\cF,t}:= \cF^{mt}V_m$.

The $S$ and $T$ invariants of a filtration $\cF$ of $V_\bullet$ are defined by setting
\[
S_m(\cF):= \frac{1}{m\dim V_m} \sum_{\la \in \bR} \lambda\cdot \dim {\rm gr}_{\cF}^\la V_m 
\quad \text{ and } \quad 
T_m(\cF) := \frac{1}{m} \sup\{ \la \in \bR \, \vert\,  \cF^\la V_m \neq 0\}.
\]
for $m >0$ divisible by $r$ and then setting
\[
S(\cF) = \limsup_{m \to \infty} S_{mr}(\cF) \quad \text{ and } \quad 
T(\cF) = 
\sup_{m \geq 1} T_{mr}(\cF) 
.\]
When the choice of $V_\bullet$ is not clear from context, we write $S(\cF,V_\bullet)$ and $T(\cF,V_\bullet)$ for these invariants.

\begin{prop}\label{p:irredSproperties}
Assume that $X$ is geometrically integral.
If $V_\bullet$  a graded linear series of $L$ that contains an ample series
and $\cF$ is a linearly bounded filtration of $V_\bullet$, then the following hold:
\begin{enumerate}
\item $S(\cF) = \lim_{m\to \infty} S_{mr}(\cF)$  and $T(\cF) = \lim_{m \to \infty} T_{mr}(\cF)$.
\item $V_\bullet^{\cF,t}$ contains an ample series for $t< T(\cF)$.

\item If $\cF^0 V_\bullet = V_\bullet$  and $n:= \dim X$, then
\[
(n+1)^{-1} T(\cF) \leq S(\cF) \leq T(\cF)
\]
and 
\[
S(\cF) = \frac{1}{\vol(V_\bullet)} \int_0^\infty \vol(V_\bullet^{\cF,t})\, dt.
\]

\end{enumerate}
\end{prop}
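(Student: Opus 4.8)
The plan is to reduce everything to the case where $X$ is geometrically integral and the field $\bk$ is algebraically closed, and then cite the established theory of Okounkov bodies for filtered linear series. For part (1), after base change to $\overline{\bk}$ (which changes neither $S_m$, $T_m$, nor the property of containing an ample series, since $H^0$ commutes with flat base change and dimensions are preserved) the limits exist by the classical arguments in \cite{BJ-delta,BC11}: the key point is that for a graded linear series containing an ample series, the functions $m\mapsto \dim V_{mr}$ and the ``filtered'' analogues $m\mapsto \dim \cF^{m\lambda} V_{mr}$ have well-behaved asymptotics governed by the volume of convex bodies. Concretely, linear boundedness of $\cF$ lets us encode the filtered pieces via an Okounkov body $\Delta(V_\bullet)\subset \bR^n$ together with a concave ``transform'' function $G_{\cF}\colon \Delta(V_\bullet)\to \bR$, so that
\[
S(\cF) = \frac{1}{\vol(\Delta(V_\bullet))}\int_{\Delta(V_\bullet)} G_{\cF}\, d\mu
\quad\text{and}\quad
T(\cF) = \sup_{\Delta(V_\bullet)} G_{\cF},
\]
both realized as genuine limits of the $S_{mr}$, $T_{mr}$. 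This is exactly \cite[Section 3]{BJ-delta}, and since $V_\bullet$ contains an ample series Proposition \ref{p:vollimit} guarantees $\vol(V_\bullet)>0$, so the normalization makes sense.

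For part (2), I would argue that $V_\bullet^{\cF,t}$ contains an ample series whenever $t<T(\cF)$ as follows. Pick $t<t'<T(\cF)$; by definition of $T(\cF)$ as a supremum there is some $m_0$ with $\cF^{m_0 t'}V_{m_0}\neq 0$, giving a nonzero section $s$ with $\ord_{\cF}(s)\geq m_0 t'$. If $L=A+E$ with $A$ ample, $E\geq 0$ effective, and $H^0(mA)\subset V_m$ for $m$ large and divisible, then for a suitable small rational $\epsilon>0$ the divisor $A':=A-\epsilon\,\mathrm{div}(s)/m_0$ (interpreted appropriately, or rather $(1-\epsilon')A$ for the clean version) remains ample, and multiplying sections of $H^0(mA')$ by powers of $s$ produces sections of $\cF^{mt}V_m$ for $m$ large and divisible: the $\cF$-order of $s^{k}\cdot(\text{section of }H^0(mA'))$ is at least $\frac{m}{m_0}\cdot\epsilon\cdot m_0 t' = m\epsilon t'$, and one balances the bookkeeping so the total degree works out and the $\cF$-order exceeds $mt$. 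The cleanest route is to invoke the known statement \cite[Lemma 2.11]{BJ-delta} or its analogue, that the twisted linear series $V_\bullet^{\cF,t}$ contains an ample series for $t<T(\cF)$; the proof there transfers verbatim to our hypotheses since it only uses linear boundedness and the ample series property.

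For part (3), assume $\cF^0 V_\bullet = V_\bullet$, so $G_{\cF}\geq 0$ on the Okounkov body. The inequality $S(\cF)\leq T(\cF)$ is immediate from $G_{\cF}\leq T(\cF)$ pointwise. For the lower bound $S(\cF)\geq (n+1)^{-1}T(\cF)$: by concavity of $G_{\cF}$ and convexity of $\Delta(V_\bullet)$, if $G_{\cF}$ attains a value close to $T(\cF)$ at a point $p\in\Delta(V_\bullet)$, then $G_{\cF}\geq (1-s)T(\cF)$ on the scaled copy $(1-s)\Delta(V_\bullet)+s\,p$ for $s\in[0,1]$; integrating the resulting lower bound over these nested bodies and using $\int_0^1 (1-s)(1-s)^n\,ds = (n+1)^{-1}\cdot$(total) yields $\int G_{\cF}\geq (n+1)^{-1}T(\cF)\vol(\Delta(V_\bullet))$, i.e. $S(\cF)\geq (n+1)^{-1}T(\cF)$. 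Finally the integral formula: using part (2), $\vol(V_\bullet^{\cF,t})$ is a genuine limit for $t<T(\cF)$ and equals $\vol\{x\in\Delta(V_\bullet): G_{\cF}(x)\geq t\}\cdot n!$ (the super-level set volume), while it vanishes for $t>T(\cF)$; then
\[
\int_0^\infty \vol(V_\bullet^{\cF,t})\,dt = \int_0^{T(\cF)} \vol\{G_{\cF}\geq t\}\,dt = \int_{\Delta(V_\bullet)} G_{\cF}\,d\mu = \vol(V_\bullet)\cdot S(\cF),
\]
by Fubini (the ``layer cake'' formula), where I have absorbed the $n!$ normalization consistently. Since every ingredient — Okounkov bodies of linear series containing an ample series, the concave transform of a linearly bounded filtration, and the limit statements — is already in the literature \cite{LM09,BC11,BJ-delta}, the proof amounts to citing these after reducing to the geometrically integral case over an algebraically closed field. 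The one point requiring a little care, and the main potential obstacle, is making sure the reduction to $\overline{\bk}$ is clean and that the ``contains an ample series'' hypothesis is genuinely preserved under base change; but since $X$ is assumed geometrically integral this is standard, and no new reducible-scheme input (deferred to Section \ref{ss:reducible}) is needed here.
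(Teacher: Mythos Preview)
Your approach is correct and matches the paper's, which simply cites \cite[Sections 3.1 and 3.2]{Xu-Kbook}; you have essentially unpacked that citation via the Okounkov body and concave transform machinery of \cite{LM09,BC11,BJ-delta}. One small slip: in your concavity argument for $(n+1)^{-1}T(\cF)\le S(\cF)$, on the set $(1-s)\Delta+sp$ concavity gives $G_{\cF}\ge s\,T(\cF)$ (not $(1-s)T(\cF)$), and the clean way to conclude is via super-level sets, namely $\{G_{\cF}\ge t\}\supseteq (1-t/T)\Delta+(t/T)p$ so that $\int_0^T (1-t/T)^n\,dt=T/(n+1)$; your written integral $\int_0^1(1-s)(1-s)^n\,ds=1/(n+2)$ is off by one, but the correction is routine.
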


\begin{proof}
See \cite[Sections 3.1 and 3.2]{Xu-Kbook}.
\end{proof}

\begin{prop}\label{p:irredBJinequality}
Assume that $X$ is geometrically integral. 
If $V_\bullet$ is a graded linear series of $L$ that contains an ample series, then for any $\varepsilon>0$, there exists $m_0(\varepsilon)$ such that 
\[
S_{mr}(\cF) \leq (1+\varepsilon)S(\cF)
\]
for all linearly bounded filtration $\cF$ of $V_\bullet $ with $\cF^0 V_\bullet = V_\bullet$ and $m \geq m_0(\varepsilon)$
\end{prop}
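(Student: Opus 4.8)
The plan is to reduce to an Okounkov body estimate, following the strategy of \cite[Corollary 3.6]{BJ-delta}. First I would fix an admissible flag on $X$ (after possibly replacing $\bk$ by its algebraic closure, which does not affect the dimensions $\dim_\bk V_m$ since $X$ is geometrically integral), giving a valuation-like function that embeds each $V_m$ into $\bZ^n$ and realizes the graded linear series via its Okounkov body $\Delta := \Delta(V_\bullet) \subset \bR^n$, which has positive volume by Proposition \ref{p:vollimit}. A linearly bounded filtration $\cF$ with $\cF^0 V_\bullet = V_\bullet$ then gives, for each $m$, a concave (after rescaling) function on the rational points of $\tfrac1m \cdot \Delta \cap \tfrac1m\bZ^n$ recording $\tfrac1m \ord_\cF$ of a compatible basis vector, and $S_m(\cF)$ is the average of this function over the lattice points. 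The uniform linear bound $C$ (which can be taken independent of $\cF$ once we fix that $\cF$ is a filtration of the fixed $V_\bullet$ of the fixed $L$ — here I would want to be careful, see below) forces these functions to take values in $[0, C]$, so they form an equicontinuity-free but uniformly bounded family of ``concave transforms.''

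The key steps, in order: (1) set up the Okounkov body and the concave transform $G_\cF \colon \Delta \to [0,C]$ of $\cF$, with $S(\cF) = \tfrac{1}{\vol(\Delta)}\int_\Delta G_\cF\, d\mu$ (this is the limit computation already packaged in Proposition \ref{p:irredSproperties}, adapted to reducible-but-integral $X$, so it is available); (2) show that $S_{mr}(\cF)$ is a Riemann-type sum approximating $\tfrac{1}{\vol(\Delta)}\int_\Delta G_\cF\,d\mu$ from a suitable direction, using that $G_\cF$ is a ``concave transform'' in the sense of \cite{BJ-delta} (the superlevel sets $\{G_\cF \ge t\}$ are convex, being Okounkov bodies of the $V_\bullet^{\cF,t}$, by Proposition \ref{p:irredSproperties}(2) applied on $t < T(\cF)$); (3) run the volume-of-slices / one-sided error estimate from \cite[\S 3]{BJ-delta} that bounds $S_{mr}(\cF) - S(\cF)$ purely in terms of $m$, $n$, the constant $C$, and $\vol(\Delta)$ — crucially \emph{not} in terms of $\cF$ itself — yielding $S_{mr}(\cF) \le S(\cF) + \kappa(m)$ with $\kappa(m) \to 0$; (4) upgrade the additive error to the multiplicative form $S_{mr}(\cF) \le (1+\varepsilon)S(\cF)$ using the lower bound $S(\cF) \ge (n+1)^{-1}T(\cF) \ge c > 0$ — but wait, $S(\cF)$ is not bounded below uniformly since $\cF$ could be trivial. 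The fix, exactly as in \cite{BJ-delta}, is that the additive estimate should itself scale: one proves $S_{mr}(\cF) - S(\cF) \le \varepsilon \cdot T(\cF)$ (or $\le \varepsilon \cdot C$ after normalizing $\cF$ by its own $T$-invariant, so WLOG $T(\cF) \le$ some fixed bound), and then combines with $S(\cF) \ge (n+1)^{-1}T(\cF)$ to absorb it into $(1+\varepsilon')S(\cF)$; scaling $\cF \mapsto c\cF$ scales both $S_{mr}$ and $S$ linearly, so we may indeed normalize $T(\cF) = 1$ and then $C$ can be taken to be the fixed linear-boundedness constant of $L$ itself, independent of $\cF$.

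The main obstacle I expect is \emph{uniformity of the error estimate over all filtrations simultaneously}: the whole point of the statement (versus the trivial pointwise convergence $S_{mr}(\cF) \to S(\cF)$ for each fixed $\cF$) is that $m_0(\varepsilon)$ must not depend on $\cF$. This is precisely where the Okounkov body picture pays off — the error in a concave-transform Riemann sum on a fixed convex body $\Delta$ depends only on the modulus of the body, the dimension, and the sup-norm of the transform, all of which are controlled once we normalize $T(\cF) = 1$. A secondary technical point is that $X$ here is only assumed geometrically reduced in the ambient setup of this subsection, but this Proposition assumes geometrically \emph{integral}, so the classical Okounkov-body machinery of \cite{LM09} applies directly and no genuinely new input beyond \cite{BJ-delta} is needed; the reducible-scheme generalizations promised in Section \ref{ss:reducible} are for \emph{later} statements, not this one.
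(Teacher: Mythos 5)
Your proposal is correct, and it reconstructs exactly the Okounkov-body argument behind this statement; the paper itself proves the proposition by citing \cite[Theorem 3.33]{Xu-Kbook} (which generalizes \cite[Corollary 2.10]{BJ-delta}), and that reference proceeds precisely as you outline: concave transform $G_\cF$ on the Okounkov body, a uniform Riemann-sum estimate over all concave functions valued in $[0,1]$ (normalize $T(\cF)=1$ by scaling), the one-sided inequality $S_{mr}(\cF)\le \tfrac{(mr)^n}{\dim V_{mr}}\int_\Delta G_\cF\,d\rho_{mr}$, and the cone bound $\int_\Delta G_\cF\,d\rho \ge \tfrac{1}{n+1}\vol(\Delta)\,T(\cF)$ to convert the additive error into a multiplicative one. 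The only cosmetic slip is the phrase ``reducible-but-integral $X$''; the hypothesis here is geometrically integral, as you correctly emphasize later, and the genuinely reducible case is handled separately in Proposition \ref{prop-reducedBJinequality}.
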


\begin{proof}
See \cite[Theorem 3.33]{Xu-Kbook}, which is a generalization of \cite[Corollary 2.10]{BJ-delta}
\end{proof}

\subsubsection{Reducible schemes}\label{ss:reducible}
We will now extend convergence results for the $S$-invariant to the case when $X$ is not necessarily irreducible.
To begin, we write
\[
X= X_1 \cup \cdots \cup X_s,
\]
where each $X_i$ is a distinct irreducible component of $X$,
and  set $L_i := L \vert_{X_i}$.
We construct a graded linear series $V_\bullet^i$ of $L_i$ 
for each $1 \leq i \leq s$ 
by setting
\[
W_m^{i}:= V_m \cap H^0(X,L \otimes \cI_{X_1 \cup \cdots \cup X_{i-1}})
\subset H^0(X,mL)
\] 
and   
\[
V_m^i : = {\rm im} (W_{m}^{i} \to H^0(X_i,m L_i)) 
\subset H^0(X_i, mL_i)
,\]
where the previous map is induced by the restriction map  on sections
\[
r_i: H^0(X,mL)\to H^0(X_i,mL_i).
\]
Note that $V_\bullet^i$ is a graded linear series of $L_i$.

\begin{lem}\label{l:Vicontainsampleseries}
The following hold:
\begin{enumerate}
	\item For $m \in r\bN$, $\dim H^0(X,m L) = \sum_{i=1}^r \dim V_m^i$ 
    \item If $L$ is ample, then  each $V_\bullet^i$   contains an ample series.
\end{enumerate}	
\end{lem}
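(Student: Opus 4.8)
The plan is to realize each $V_\bullet^i$ as (a twist of) the section ring of $L_i$ on $X_i$ and then to read off both statements from Serre vanishing. I take $V_\bullet=R(X,L)$ throughout, matching the reference to $\dim H^0(X,mL)$ in part (1); for a general graded linear series the argument for (1) is identical with $H^0(X,mL)$ replaced by $V_m$. Write $Z_0:=\emptyset$ and $Z_j:=X_1\cup\cdots\cup X_j$ with its reduced structure, so $\cI_{Z_0}=\cO_X$, and since $X$ is reduced $Z_s=X$ and $\cI_{Z_s}=0$. Unwinding the definitions, $W_m^i=H^0(X,mL\otimes\cI_{Z_{i-1}})$ and $V_m^i$ is the image of $W_m^i$ under the restriction $r_i\colon H^0(X,mL)\to H^0(X_i,mL_i)$. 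A section in $W_m^i$ restricts to zero on $X_i$ exactly when it vanishes on $Z_{i-1}\cup X_i=Z_i$, and since $\cI_{Z_{i-1}}\cap\cI_{X_i}=\cI_{Z_i}$ this means $\ker\!\big(r_i|_{W_m^i}\big)=W_m^{i+1}$. Hence there are short exact sequences
\[
0\longrightarrow W_m^{i+1}\longrightarrow W_m^i\longrightarrow V_m^i\longrightarrow 0\qquad(1\le i\le s),
\]
and as $W_m^1=H^0(X,mL)$ and $W_m^{s+1}=0$, telescoping gives $\dim H^0(X,mL)=\sum_{i=1}^s\dim V_m^i$, proving (1).

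For (2), fix $i$ and let $W_i\subseteq X_i$ be the scheme-theoretic intersection $Z_{i-1}\cap X_i$, with ideal sheaf $\cI_{W_i}:=(\cI_{Z_{i-1}}+\cI_{X_i})/\cI_{X_i}\subseteq\cO_{X_i}$. Since $X_i$ is a distinct irreducible component, $X_i\not\subseteq Z_{i-1}$, so $W_i\subsetneq X_i$ is a proper closed subscheme and $\cI_{W_i}\ne 0$. The quotient $\cI_{Z_{i-1}}/\cI_{Z_i}$ is canonically $\cI_{W_i}$ pushed forward from $X_i$, so twisting $0\to\cI_{Z_i}\to\cI_{Z_{i-1}}\to\cI_{Z_{i-1}}/\cI_{Z_i}\to 0$ by $mL$ and applying Serre vanishing for the ample line bundle $\cO_X(rL)$ gives $H^1(X,mL\otimes\cI_{Z_i})=0$ for $m\gg 0$, hence a surjection $W_m^i\twoheadrightarrow H^0(X_i,mL_i\otimes\cI_{W_i})$. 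Since this surjection composed with $H^0(X_i,mL_i\otimes\cI_{W_i})\hookrightarrow H^0(X_i,mL_i)$ is $r_i|_{W_m^i}$, we conclude $V_m^i=H^0(X_i,mL_i\otimes\cI_{W_i})$ for all $m\gg 0$.

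It then remains to extract an ample subseries. I would pick $N\gg0$ so that $\cI_{W_i}\otimes\cO_{X_i}(NL_i)$ is globally generated; since $X_i$ is integral and $\cI_{W_i}\ne 0$, a nonzero global section of it gives a nonzero section of $\cO_{X_i}(NL_i)$ vanishing on $W_i$, i.e.\ an effective Cartier divisor $D\in|NL_i|$ with $\cI_D\subseteq\cI_{W_i}$. Put $A:=\tfrac12 L_i$ and $E:=\tfrac1{2N}D$, so $A$ is an ample $\bQ$-line bundle, $E$ is an effective $\bQ$-Cartier divisor, and $L_i=A+E$. For $m$ divisible by $2Nr$ and at least $2N$, multiplication by the $\tfrac{m}{2N}$-th power of the section cutting out $D$ gives an injection
\[
H^0(X_i,mA)\hookrightarrow H^0\!\big(X_i,mL_i\otimes\cO_{X_i}(-\tfrac{m}{2N}D)\big)\subseteq H^0\!\big(X_i,mL_i\otimes\cI_{W_i}\big)=V_m^i,
\]
where the inclusion uses $\cO_{X_i}(-\tfrac{m}{2N}D)=\cI_D^{\,m/(2N)}\subseteq\cI_D\subseteq\cI_{W_i}$. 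Together with $V_m^i\subseteq H^0(X_i,mL_i)$ and $V_{mr}^i\ne 0$ for $m\gg0$, this shows $V_\bullet^i$ contains an ample series.

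I expect the main obstacle to be the bookkeeping behind the identification $V_m^i=H^0(X_i,mL_i\otimes\cI_{W_i})$: one must carry along the (possibly non-reduced) scheme structure on the intersection $W_i=Z_{i-1}\cap X_i$ and make sure the sequence $0\to\cI_{Z_i}\to\cI_{Z_{i-1}}\to\cI_{Z_{i-1}}/\cI_{Z_i}\to 0$ is used correctly. Once that is in place the rest is a routine application of Serre vanishing and global generation, and note it uses only that each $X_i$ is integral, not normal.
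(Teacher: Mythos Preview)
Your proof is correct and follows essentially the same route as the paper's: both establish the short exact sequences $0\to W_m^{i+1}\to W_m^i\to V_m^i\to 0$ to get the dimension count, then use Serre vanishing to identify $V_m^i$ with $H^0(X_i,mL_i\otimes\cI_{W_i})$ for $m\gg0$, and finally produce a Cartier divisor $D$ on $X_i$ with $\cI_D\subseteq\cI_{W_i}$ to extract an ample subseries. Your bookkeeping with $\cI_{Z_{i-1}}\cap\cI_{X_i}=\cI_{Z_i}$ and the explicit identification $\cI_{Z_{i-1}}/\cI_{Z_i}\cong\cI_{W_i}$ is correct (using that $X$ is reduced and the $X_j$ are components), and your choice $A=\tfrac12 L_i$, $E=\tfrac1{2N}D$ is a slightly cleaner variant of the paper's construction, which instead takes $A_i=lL_i-D_i$ for $l\gg0$; both work for the same reason.
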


\begin{proof}
For each $1\leq i \leq s$ and $m \in r\bN$, we have a short exact sequence 
\[
0 \to W_m^{i+1} \to W_m^i  \to V_m^{i}  \to 0
,\]
where we use the convention that  $W_m^{s+1} =0$. 
Thus
\[
\sum_{i=1}^s \dim V_m^i
=
\sum_{i=1}^s (\dim W_m^{i}- \dim W_{m}^{i+1})
=
\dim W_m^1
=
\dim H^0(X,mL)
,\]
which completes the proof of (1).

Now assume that $L$ is ample.
Let $\cI_{i} := \cI_{X_1 \cup \cdots \cup X_{i-1}} $.
For $m\in r\bN$, the  short exact sequence
\[
0 \to \cO_X(mL)\otimes\cI_{i+1}
\to \cO_{X}(mL) \otimes \cI_{i } \to \cO_{X}(mL)\otimes \cI_{i} \vert_{X_i}
\to 0 
\]
induces a long exact sequence 
\[
0 \to W_m^{i+1} \to   W_m^i \to H^0(X_i, L_i \otimes \cI_i \vert_{X_i}) \to H^1( X, \cO_X(mL)\otimes \cI_i)
\to \cdots . 
\]
When additionally $m\gg0$, $H^1( X, \cO_X(mL)\otimes \cI_i)=0$ by Serre vanishing and so
\[
V_m^i = H^0(X_i, \cO_{X_i} (mL_i)\otimes \cI_i \vert_{X_i})
.\]
Since $L_i$ is ample, it follows that  $V_m^i\neq 0$ for $m\in r\bN$ sufficiently large.
Next,  choose a Cartier divisor $D_i$ on $X_i$ such that $\cO_{X_i}(-D_i)\subset \cI_i \vert_{X_i} $.
Fix $l\gg0$ and divisible by $r$  such that  $A_i:= l L_i -D_i$ is ample.
For $m\gg0$, we have 
\[
H^0( X_i, ml A_i)
\subset 
H^0(X, \cO_{X_i}(mlL_i) \otimes \cI_i \vert_{X_i}) 
=
V_{ml}^i 
\subset H^0(X_i, mlL_i)
.\]
Therefore $V_\bullet^i$ contains ample series.
\end{proof}

We now generalize the previous construction to the setting of filtrations. 
Given a filtration $\cF$ of $R$, 
 we define a filtration $\cF_i$ of $V_\bullet^i$ by setting 
\[
\cF_i^\la V_m^i := \im \left(\cF^\la R_m \cap W_{m}^i \to V_m^i \right)
.\]
It is straightforward to check that   $\cF_i$ is indeed a filtration of $V_\bullet^i$ and is linearly bounded when $\cF$ is linearly bounded.

\begin{lem}\label{l:formulafiltsonXi}
If $\cF$ is a filtration of $R$, then
$S_m(\cF) =  \sum_{i=1}^s\frac{ \dim V_m^i}{ \dim R_m } S_m(\cF_i)
$ 
and 
$T_m(\cF)= \sup_{i=1,\ldots, s} T_{m}(\cF_i)$ 
for each $m>0$ divisible by $r$.
\end{lem}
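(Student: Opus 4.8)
The plan is to exploit the short exact sequences $0 \to W_m^{i+1} \to W_m^i \to V_m^i \to 0$ from the proof of Lemma \ref{l:Vicontainsampleseries}, but now in a \emph{filtered} form. Fix $m > 0$ divisible by $r$ and write $\la_1 > \la_2 > \cdots$ for the finitely many values of $\ord_\cF$ on $R_m$ together with those of $\ord_{\cF_i}$ on $V_m^i$ (all filtrations are linearly bounded, so these sets are finite and bounded). The key point is that restricting the filtration $\cF$ on $R_m$ to the submodule $W_m^i$ and then pushing forward to $V_m^i$ is, by definition, exactly the filtration $\cF_i$ on $V_m^i$; and the kernel of $W_m^i \to V_m^i$ is $W_m^{i+1}$ with its induced filtration. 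Hence for every $\la$ one gets a short exact sequence of graded pieces
\[
0 \to \gr^\la(\cF|_{W_m^{i+1}}) \to \gr^\la(\cF|_{W_m^i}) \to \gr^\la_{\cF_i} V_m^i \to 0,
\]
provided one first checks that $\cF|_{W_m^i}$ is a \emph{genuine} filtration for which dimensions of associated graded pieces add up in short exact sequences. This is the standard fact that for a filtered vector space and a subspace with the induced and quotient filtrations, $\dim \gr^\la$ is additive; it holds because over a field every filtered vector space admits a compatible basis, so one may reduce to counting basis elements in each degree $\la$.

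With this in hand, the first claim follows by a telescoping computation. Define, for a filtration $\cG$ of a finite-dimensional $\bk$-vector space $U$, the quantity $\sigma(\cG,U) := \sum_{\la} \la \cdot \dim \gr_\cG^\la U$, so that $S_m(\cF) = \tfrac{1}{m \dim R_m}\sigma(\cF, R_m)$ and $S_m(\cF_i) = \tfrac{1}{m \dim V_m^i}\sigma(\cF_i, V_m^i)$. The additivity of $\dim \gr^\la$ in the sequence above gives $\sigma(\cF|_{W_m^i}, W_m^i) = \sigma(\cF|_{W_m^{i+1}}, W_m^{i+1}) + \sigma(\cF_i, V_m^i)$ for each $i$, where $W_m^1 = R_m$ and $W_m^{s+1} = 0$. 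Summing over $i = 1, \ldots, s$ telescopes to
\[
\sigma(\cF, R_m) = \sum_{i=1}^s \sigma(\cF_i, V_m^i),
\]
and dividing by $m \dim R_m$ and inserting the factor $\dim V_m^i / \dim V_m^i$ in each summand yields $S_m(\cF) = \sum_{i=1}^s \frac{\dim V_m^i}{\dim R_m} S_m(\cF_i)$, using also Lemma \ref{l:Vicontainsampleseries}(1) to identify $\sum_i \dim V_m^i = \dim R_m$ (which is anyway implied by the telescoping of the unfiltered exact sequences).

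For the statement about $T_m$, recall $T_m(\cF) = \tfrac{1}{m}\sup\{\la : \cF^\la R_m \neq 0\}$. From the surjections $\cF^\la R_m \cap W_m^i \twoheadrightarrow \cF_i^\la V_m^i$ one gets $T_m(\cF_i) \leq T_m(\cF)$ for each $i$, so $\sup_i T_m(\cF_i) \leq T_m(\cF)$. Conversely, let $\la_{\max}$ realize the sup for $\cF$, i.e. $\cF^{\la_{\max}} R_m \neq 0$; pick $0 \neq s \in \cF^{\la_{\max}} R_m$ and let $i$ be the smallest index with $s \notin H^0(X, mL \otimes \cI_{X_1 \cup \cdots \cup X_i})$, equivalently the smallest $i$ with $s \in W_m^i$ but $r_i(s) \neq 0$ in $V_m^i$; then $r_i(s) \in \cF_i^{\la_{\max}} V_m^i$ is nonzero, so $T_m(\cF_i) \geq \la_{\max}/m = T_m(\cF)$, giving equality. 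The main obstacle here is purely bookkeeping: making sure the induced filtration on the submodule $W_m^i$ behaves well enough that $\dim \gr^\la$ is additive across the short exact sequence. This is where I would be most careful, but it is a soft linear-algebra fact over a field and poses no real difficulty; everything else is a telescoping sum.
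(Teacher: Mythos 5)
Your proof is correct and follows essentially the same route as the paper: both rely on the short exact sequences $0 \to W_m^{i+1}\cap\cF^\la R_m \to W_m^i\cap\cF^\la R_m \to \cF_i^\la V_m^i \to 0$ and the resulting telescoping identity $\dim\cF^\la R_m = \sum_i \dim\cF_i^\la V_m^i$ (equivalently, additivity of $\dim\gr^\la$). Your explicit treatment of the $T_m$ statement, via the existence of a largest jump value (achieved since $R_m$ is finite-dimensional and the filtration is left-continuous) and tracking the first index $i$ with $r_i(s)\neq 0$, is a correct fleshing-out of what the paper leaves to ``using the definition of $T_m$.''
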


\begin{proof}
For each $i \in \{1,\ldots, s\}$, $m \in r\bN$, and $\la\in \bR$, there is  a short exact sequence
\begin{equation}\label{eq:sesF}
0 \to W_m^{i+1} \cap \cF^\la R_m \to W_m^i \cap \cF^\la R_m \to  \cF_i^\la V_m^i \to 0,
\end{equation}
where  $W_m^{s+1}  \cap \cF^\la R_m= 0$.
By arguing in the proof of Lemma \ref{l:Vicontainsampleseries},
\[
\dim \cF^\la R_m
 = \sum_{i=1}^s \dim \cF_i^\la V_m^i  
,\]
which implies that
\[
\dim {\rm gr}_\cF^\la R_m 
= \sum_{i=1}^s
\dim  {\rm gr}_{\cF_i}^\la V_m^i
.\]
Using the definition of  $S_m$ and $T_m$, we deduce the desired inequality.
\end{proof}

\begin{prop}\label{p:reducedSinvproperties}
If $L$ is ample, then for any linearly bounded filtration $\cF$ of $R$ the following hold:
\begin{enumerate}
\item $S(\cF)=\lim_{m \to \infty} S_{mr}(\cF)$ and $T(\cF) = \lim_{m \to \infty} T_{mr}(\cF)$.
\item $\vol(V_\bullet^{\cF,t})=\sum_{\dim X_i=\dim X} \vol(V_\bullet^{\cF_i,t})$ for $t\in \bR\setminus \{ T(\cF_1), \ldots, T(\cF_s)\}$.
\item If $\cF^0R= R$, then $S(\cF)= \frac{1}{\vol(V_\bullet)} \int_0^\infty \vol(V_\bullet^{\cF,t})\, dt$ and $S(\cF) \leq T(\cF)$. 
\item If  $\cF^0 R= R$ and $X$ is equidimensional, then 
\[
(n+1)^{-1}T(\cF) \leq S(\cF) \leq T(\cF)
,\]
where $n=\dim X$ and $c:= \min_i\vol(V_\bullet^i)/\vol(L)$. In particular, $S(\cF) >0$ when $T(\cF)>0$.
\end{enumerate} 
\end{prop}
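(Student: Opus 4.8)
The plan is to deduce Proposition \ref{p:reducedSinvproperties} from the irreducible-component statements Propositions \ref{p:irredSproperties} and \ref{p:vollimit} via the decomposition $X = X_1 \cup \cdots \cup X_s$ and the component filtrations $\cF_i$ of $V_\bullet^i$ constructed above, using Lemmas \ref{l:Vicontainsampleseries} and \ref{l:formulafiltsonXi} as the bridge. Since $L$ is ample, Lemma \ref{l:Vicontainsampleseries}(2) gives that each $V_\bullet^i$ contains an ample series on the geometrically integral scheme $X_i$, so Propositions \ref{p:vollimit} and \ref{p:irredSproperties} apply to each $(X_i, L_i, V_\bullet^i, \cF_i)$.

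\emph{Part (1).} By Lemma \ref{l:formulafiltsonXi}, $S_{mr}(\cF) = \sum_{i=1}^s \frac{\dim V_{mr}^i}{\dim R_{mr}} S_{mr}(\cF_i)$. I would first show $\dim R_{mr}/(mr)^n/n! \to \vol(L) = \sum_{\dim X_i = n}\vol(L_i)$ and $\dim V_{mr}^i/(mr)^n/n! \to \vol(V_\bullet^i)$ (a limit, by Proposition \ref{p:vollimit}; this is $0$ when $\dim X_i < n$), so the weights $\dim V_{mr}^i/\dim R_{mr}$ converge. Combined with convergence of each $S_{mr}(\cF_i)$ (Proposition \ref{p:irredSproperties}(1)), we get that $S_{mr}(\cF)$ converges, so the $\limsup$ in the definition of $S(\cF)$ is a limit. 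For $T$, since $T_{mr}(\cF) = \max_i T_{mr}(\cF_i)$ is a finite max of convergent sequences, it converges; and $\sup_m T_{mr}(\cF) = \lim_m T_{mr}(\cF)$ by monotonicity considerations as in Proposition \ref{p:irredSproperties}(1).

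\emph{Parts (2) and (3).} Applying the construction to the twisted graded linear series $V_\bullet^{\cF,t}$ and noting that its component graded linear series are exactly $(V_\bullet^i)^{\cF_i,t} = V_\bullet^{\cF_i,t}$ (this needs a small check comparing $\im(\cF^{mt}R_m \cap W_m^i \to V_m^i)$ with the analogous object for $V_\bullet^{\cF,t}$; it follows from the definition of $\cF_i$), Lemma \ref{l:Vicontainsampleseries}(1) applied to $V_\bullet^{\cF,t}$ gives $\dim V_{mr}^{\cF,t} = \sum_i \dim (V_{mr}^i)^{\cF_i,t}$, hence $\vol(V_\bullet^{\cF,t}) = \sum_i \vol(V_\bullet^{\cF_i,t})$. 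For $t \notin \{T(\cF_1),\dots,T(\cF_s)\}$ and $\dim X_i < n$, the graded linear series $V_\bullet^{\cF_i,t}$ on the lower-dimensional $X_i$ has volume $0$ (its $m$-th piece has dimension $O(m^{\dim X_i})$), giving (2). For (3): assuming $\cF^0 R = R$, each $\cF_i$ satisfies $\cF_i^0 V_\bullet^i = V_\bullet^i$, so by Proposition \ref{p:irredSproperties}(3), $S(\cF_i) = \vol(V_\bullet^i)^{-1}\int_0^\infty \vol(V_\bullet^{\cF_i,t})\,dt$ and $S(\cF_i)\le T(\cF_i)$ for each $i$; summing the weighted identity from Lemma \ref{l:formulafiltsonXi} in the limit and using (2) yields $S(\cF) = \vol(V_\bullet)^{-1}\int_0^\infty \vol(V_\bullet^{\cF,t})\,dt$, and $S(\cF) = \sum_i \frac{\vol(V_\bullet^i)}{\vol(V_\bullet)} S(\cF_i) \le \max_i S(\cF_i) \le \max_i T(\cF_i) = T(\cF)$, where the first inequality uses that $\sum_{\dim X_i = n}\vol(V_\bullet^i) = \vol(V_\bullet)$ so the coefficients form a convex combination (and $\vol(V_\bullet^i) = 0$ contributes nothing).

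\emph{Part (4).} Now assume additionally $X$ is equidimensional, so $\dim X_i = n$ for all $i$. Then $S(\cF) = \sum_i \frac{\vol(V_\bullet^i)}{\vol(L)} S(\cF_i) \ge \sum_i \frac{\vol(V_\bullet^i)}{\vol(L)} (n+1)^{-1} T(\cF_i)$ by Proposition \ref{p:irredSproperties}(3). The main obstacle is the lower bound: a naive convex combination of the $T(\cF_i)$ is only bounded below by $\min_i T(\cF_i)$, not by $T(\cF) = \max_i T(\cF_i)$, so I cannot directly conclude $S(\cF) \ge (n+1)^{-1} c\, T(\cF)$ with a clean constant unless I control the weight of the component achieving the max. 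Writing $c := \min_i \vol(V_\bullet^i)/\vol(L)$ (note $c > 0$ since each $\vol(V_\bullet^i) > 0$ by Proposition \ref{p:vollimit}), and letting $i_0$ attain $T(\cF) = T(\cF_{i_0})$, I would isolate the single term $\frac{\vol(V_\bullet^{i_0})}{\vol(L)} (n+1)^{-1} T(\cF_{i_0}) \ge c\,(n+1)^{-1} T(\cF)$ and discard the rest (all nonnegative), giving $S(\cF) \ge c\,(n+1)^{-1}T(\cF)$. The upper bound $S(\cF) \le T(\cF)$ is already from (3). Finally, if $T(\cF) > 0$ then $S(\cF) \ge c(n+1)^{-1}T(\cF) > 0$. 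I should double-check whether the intended statement of (4) carries the constant $c$ on the left (the displayed inequality as written omits it, so either the $T$ there should be $c\,T$, or equivalently one states $S(\cF)\ge c(n+1)^{-1}T(\cF)$); I would state and prove the version with $c$, which is what the definition of $c$ in the statement signals, and the main work is simply assembling the component estimates with care about which weights multiply which $T(\cF_i)$.
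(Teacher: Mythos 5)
Your proposal follows essentially the same route as the paper: decompose $X$ into irreducible components, use Lemma \ref{l:Vicontainsampleseries} and Lemma \ref{l:formulafiltsonXi} to write $S_m(\cF)$ and $T_m(\cF)$ as component contributions, and import the absolute results from Propositions \ref{p:vollimit} and \ref{p:irredSproperties}. Two remarks.

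First, a small but real gap: before invoking Proposition \ref{p:irredSproperties} on $(X_i,L_i,V_\bullet^i,\cF_i)$ you assert that $X_i$ is geometrically integral. That is not automatic from the standing hypothesis that $X$ is geometrically reduced over $\bk$; the irreducible components over $\bk$ need not be geometrically irreducible. The paper handles this by first replacing $\bk$ with $\overline\bk$ and $X$ with $X_{\overline\bk}$ (the invariants $S_m, T_m$ are unchanged under such a base change since they depend only on dimension counts). You should insert this reduction at the start.

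Second, your reading of part (4) is correct, and you caught what is almost certainly a typo in the stated proposition: the constant $c:=\min_i \vol(V_\bullet^i)/\vol(L)$ is introduced in the statement but never appears in the displayed inequality, and without it the left inequality $(n+1)^{-1}T(\cF)\le S(\cF)$ would be false in general (a convex combination of $T(\cF_i)$'s is only bounded below by the minimum, not the maximum). Your repair, isolating the component $i_0$ realizing $T(\cF)=T(\cF_{i_0})$ and dropping the other nonnegative terms to get $S(\cF)\ge c(n+1)^{-1}T(\cF)$, is exactly the right argument and matches what the displayed constant $c$ signals as the intended statement; note also that $c>0$ requires $X$ equidimensional, which (4) assumes. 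The paper's own proof of (3) and (4) is compressed to one sentence, so your fuller verification — in particular checking that $\sum_{\dim X_i = n}\vol(V_\bullet^i)=\vol(L)$ so the weights form a convex combination, and that $\vol(V_\bullet^{\cF_i,t})$ is a genuine limit off the finite set $\{T(\cF_i)\}$ — is a useful elaboration of the same argument rather than a different one.
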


\begin{proof}
By replacing  $\bk$ with $\overline{\bk}$ and $X$ with $X_{\overline{\bk}}$, we may assume that $\bk$ is algebraically closed. 
Thus the irreducible components of $X$ are geometrically integral and we can and will apply Proposition \ref{p:irredSproperties} on them. 
By Lemmas \ref{l:Vicontainsampleseries}
and \ref{l:formulafiltsonXi},
\[
S_m(\cF) = \sum_{i=1}^s \frac{\dim V_{m}^i}{\dim R_m} S_m(\cF_i)
\quad \text{ and } \quad T_m(\cF) = \max_{i=1,\ldots, s} T_m(\cF)
,\]
where $\cF_i$ is a linearly bounded filtration of $V_\bullet^i$, which is a graded linear series of $L_i$ that contains an ample series. 
By Proposition \ref{p:irredSproperties}.1, 
$T(\cF_i) = \lim_{m \to \infty} T_{mr}(\cF_i)$
and   
$
S(\cF_i)=
\lim_{m \to \infty} S_{mr}(\cF_i)
$.
For each $1\leq i \leq s$, let
\[
c_i:=\limsup_{m \to \infty} \frac{ \dim V_{mr}^i}{\dim R_{mr}}
=
\limsup_{m \to \infty} 
\frac{ \dim V_{mr}^i/ (mr)^{n} }{\dim R_{mr}/(mr)^n}
,\]
which is a limit  by Proposition \ref{p:vollimit}.
Furthermore $c_i= \frac{\vol( V_\bullet^i)}{ \vol(L)}>0$ when $\dim X_i=\dim X=n$ and $c_i=0$ otherwise.
 Therefore 
\begin{equation}\label{eq:Ssum}
 S(\cF) = \lim_{m\to \infty} S_{mr}(\cF) = \sum_{i=1}^s c_i \cdot S(\cF_i),
 \end{equation}
 and $T(\cF) = \lim_{m \to \infty} T_{mr}(\cF)$,
 which proves (1).

By the proof of Lemma \ref{l:formulafiltsonXi}, we have that 
$
\dim \cF^\la V_m = \sum_{i=1}^s \dim \cF_i^\la V_m^i
$.
Note that  $\vol(V_\bullet^{\cF_i,t})$ is a limit for $t\neq T(\cF_i)$ by Proposition \ref{p:vollimit} and Proposition \ref{p:reducedSinvproperties}.2. 
Thus, if $t \neq T(\cF_i)$ for all $i \in \{1,\ldots, s\}$, then $\vol(V_\bullet^{\cF,t})$ is a limit and 
\begin{equation}\label{eq:volVsum}
\vol(V_\bullet^{\cF,t}) = \sum_{\dim (X_i) =n } \vol( V_\bullet^{\cF_i,t}) 
=
\sum_{c_i\neq 0} \vol( V_\bullet^{\cF_i,t})
,
\end{equation}
which proves (2). 
Finally, combining Proposition \ref{p:irredSproperties}.3  with \eqref{eq:Ssum} and \eqref{eq:volVsum} proves (3) and (4).  
\end{proof}

\begin{prop}\label{prop-reducedBJinequality}
If $L$ is ample, then, for any $\varepsilon>0$, there   exists $m_0(\varepsilon)>0$ such that 
\[
S_{mr}(\cF) \leq (1+\varepsilon) S(\cF)
\] 
for all linearly bounded filtrations $\cF$ of $R$ with $\cF^0R=R$ and $m \geq m_0(\varepsilon)$. 
\end{prop}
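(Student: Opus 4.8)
The plan is to reduce this reducible-scheme statement to the irreducible case, Proposition~\ref{p:irredBJinequality}, by means of the decomposition of $R$ into the graded linear series $V_\bullet^i$ on the components $X_i$ together with the induced filtrations $\cF_i$, exactly as was done for the limit formulas in Proposition~\ref{p:reducedSinvproperties}. The subtlety that prevents an immediate application is that the coefficients $\dim V_{mr}^i/\dim R_{mr}$ appearing in Lemma~\ref{l:formulafiltsonXi} depend on $m$ and, more importantly, the ``divisible enough'' threshold in Proposition~\ref{p:irredBJinequality} is only asserted for a fixed component; so one needs to track that the threshold can be chosen uniformly across the finitely many components and that the $m$-dependent weights converge fast enough not to spoil the $(1+\varepsilon)$ estimate.

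First I would fix $\varepsilon>0$ and, as in the proof of Proposition~\ref{p:reducedSinvproperties}, reduce to the case $\bk=\overline\bk$ so that each $X_i$ is geometrically integral; by Lemma~\ref{l:Vicontainsampleseries} each $V_\bullet^i$ contains an ample series, and when $\cF^0R=R$ one checks $\cF_i^0 V_\bullet^i=V_\bullet^i$. Next I would apply Proposition~\ref{p:irredBJinequality} to each of the finitely many $V_\bullet^i$ with parameter $\varepsilon$ (or $\varepsilon/2$, to leave room), obtaining $m_0^{(i)}(\varepsilon)$; set $m_0:=\max_i m_0^{(i)}$, which works for all components simultaneously since there are finitely many. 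Then by Lemma~\ref{l:formulafiltsonXi}, for $m\ge m_0$,
\[
S_{mr}(\cF)=\sum_{i=1}^s \frac{\dim V_{mr}^i}{\dim R_{mr}}\,S_{mr}(\cF_i)
\le (1+\varepsilon)\sum_{i=1}^s \frac{\dim V_{mr}^i}{\dim R_{mr}}\,S(\cF_i).
\]
The remaining task is to compare $\sum_i \frac{\dim V_{mr}^i}{\dim R_{mr}} S(\cF_i)$ with $S(\cF)=\sum_i c_i S(\cF_i)$, where $c_i=\lim_m \dim V_{mr}^i/\dim R_{mr}$ by Proposition~\ref{p:reducedSinvproperties}(1) and its proof.

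The main obstacle is precisely this last comparison: the weights $w_i^{(m)}:=\dim V_{mr}^i/\dim R_{mr}$ converge to $c_i$, but $c_i=0$ for lower-dimensional components $X_i$ while $S(\cF_i)$ need not be zero, so one cannot simply absorb the error into another factor of $(1+\varepsilon)$ uniformly in the filtration — the quantity $S(\cF_i)$ is only bounded by $T(\cF_i)$, which by linear boundedness of $\cF$ is bounded by the fixed constant $C$ from the definition of linear boundedness (item (4) of Section~\ref{sss:deffilt}, with a uniform $C$ since we may take the $C$ witnessing linear boundedness of $\cF$ on $R$ and note it witnesses it on each $V_\bullet^i$). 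So $S(\cF_i)\le C$ uniformly. Since $\dim V_{mr}^i/\dim R_{mr}\to c_i$ and there are finitely many $i$, there is $m_1$ (independent of $\cF$! — it depends only on the Hilbert functions of $R$ and the $V^i_\bullet$, which are fixed) with $w_i^{(m)}\le c_i+\varepsilon/(sC)$ for the lower-dimensional components and $w_i^{(m)}\le (1+\varepsilon)c_i$ for the top-dimensional ones when $m\ge m_1$. Hence for $m\ge\max(m_0,m_1)$,
\[
S_{mr}(\cF)\le (1+\varepsilon)\sum_{\dim X_i=n}(1+\varepsilon)c_i S(\cF_i)+(1+\varepsilon)\sum_{\dim X_i<n}\frac{\varepsilon}{sC}\cdot C
\le (1+\varepsilon)^2 S(\cF)+2\varepsilon,
\]
using $\sum_{\dim X_i=n}c_i S(\cF_i)=S(\cF)$. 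Finally, since $S(\cF)$ is bounded below by a positive constant independent of $\cF$ when $T(\cF)$ is bounded below — which one arranges by treating separately the harmless case where $T(\cF)$, hence $S_{mr}(\cF)$, is already small (say $\le\varepsilon$) — the additive error $2\varepsilon$ can be converted into a multiplicative $(1+O(\varepsilon))$ factor; replacing $\varepsilon$ by a suitable $\varepsilon'=\varepsilon'(\varepsilon)$ at the outset then yields the clean bound $S_{mr}(\cF)\le(1+\varepsilon)S(\cF)$ for all $m\ge m_0(\varepsilon)$ and all linearly bounded $\cF$ with $\cF^0R=R$. I expect writing the case distinction on the size of $T(\cF)$ cleanly, and verifying that all thresholds genuinely depend only on the fixed Hilbert data and not on $\cF$, to be the only delicate bookkeeping.
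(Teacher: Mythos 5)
Your decomposition strategy — split $R$ into the graded linear series $V_\bullet^i$ on the components, apply Proposition~\ref{p:irredBJinequality} to each, and reassemble via Lemma~\ref{l:formulafiltsonXi} — is exactly the paper's argument. You are also right to worry about the lower-dimensional components: the paper's proof asserts that for $m\geq m_2$ one has $\dim V_{mr}^i/\dim R_{mr}\leq(1+\varepsilon')c_i$ for \emph{all} $i$, which is vacuously false when $c_i=0$ (i.e.\ $\dim X_i<\dim X$) but $V_{mr}^i\neq 0$ for large $m$ (which Lemma~\ref{l:Vicontainsampleseries}(2) guarantees). So the paper's proof silently requires that all components be top-dimensional, i.e.\ $X$ equidimensional — the hypothesis made explicit in Proposition~\ref{p:reducedSinvproperties}(4) but omitted from the statement of~\ref{prop-reducedBJinequality}. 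In the paper's applications $R$ is the section ring of a fiber of a flat family, so equidimensionality holds automatically, and the clean route is simply to add this hypothesis, after which $c_i>0$ for every $i$ and your remainder term never arises.

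However, your proposed remedy for the lower-dimensional remainder has a real gap. You claim $S(\cF_i)\leq C$ \emph{uniformly} and that the resulting threshold $m_1$ with $w_i^{(m)}\leq c_i+\varepsilon/(sC)$ is ``independent of $\cF$.'' But $C$ is the linear-boundedness constant of $\cF$ from Section~\ref{sss:deffilt}(4), which varies with $\cF$ — there is no single $C$ working for all linearly bounded filtrations. Consequently $m_1$ does depend on $\cF$ through $C$, and the uniform $m_0(\varepsilon)$ the proposition demands is not obtained. The ensuing case distinction ``$T(\cF)\leq\varepsilon$'' does not help either: the claimed inequality $S_{mr}(\cF)\leq(1+\varepsilon)S(\cF)$ still has to be proved for such $\cF$, and the additive error $2\varepsilon$ is not dominated by $(1+\varepsilon)S(\cF)$ when $S(\cF)$ is itself small. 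The standard repair (if one wants the non-equidimensional case) is to exploit that $S_m(\cF)$ and $S(\cF)$ are homogeneous of degree one under rescaling the filtration ($\cF^\la\mapsto\cF^{\la/c}$), so one may normalize $T(\cF)=1$; then $S(\cF_i)\leq T(\cF_i)\leq T(\cF)=1$ uniformly, the case $T(\cF)=0$ is trivial, and the lower bound $S(\cF)\geq(n+1)^{-1}$ from Proposition~\ref{p:reducedSinvproperties}(4) converts your additive error into a multiplicative one — but note that this lower bound itself requires equidimensionality, so you do not in fact escape that hypothesis this way either.
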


\begin{proof}
As before, we may assume that $\bk$ is algebraically closed. 
Fix $\varepsilon'>0$ such that $(1+\varepsilon')^2 \leq (1+\varepsilon)$. 
By Lemma \ref{l:Vicontainsampleseries}, each $V_\bullet^i$ contains an ample series.
Therefore  Proposition \ref{p:irredBJinequality} applied to each of the $V_\bullet^i$ implies that there exists $m_1>0$ such that 
\[
S_{mr}(\cG)\leq (1+\varepsilon') S(\cG)
\]
for each $m\geq m_1$,  $i \in \{1,\ldots, s\}$, and   linearly bounded filtration $\cG$ of $V_\bullet^i$ with $\cG^0 V_\bullet^i = V_\bullet^i$.
Now set 
\[
c_i :=\limsup_{m \to \infty}  \frac{\dim V_{mr}^i}{\dim R_{mr} }
,\]
which is a limit by the proof of Proposition \ref{p:reducedSinvproperties}.
Thus, there exists $m_2>0$ such that 
\[
\frac{ \dim V_{mr}^i }{\dim R_{mr}} \leq (1+\varepsilon')c_i
\] 
for all $m \geq m_2$ and $i \in \{1,\ldots, s\}$.

Fix a linearly bounded filtration $\cF$ of $R$ with $\cF^0 R= R$. Let $\cF_i$ be the induced filtrations of $V_\bullet^i$.
If $m\geq m_0 := \max\{m_1,m_2\}$, then 
\[
S_{mr}(\cF) = \sum_{i=1}^s  \frac{\dim V_{mr}^i}{ \dim R_{mr} } S_m(\cF_i)
< (1+\varepsilon')^2
\sum_{i=1}^s 
c_i S(\cF_i)
<
(1+\varepsilon)S(\cF)
,
\]
where the first equality is Lemma \ref{l:formulafiltsonXi}, the second holds by our choice of $m_0$, and the third uses that $S(\cF) = \sum_{i=1}^s c_i S(\cF_i)$ by the proof of Proposition \ref{p:reducedSinvproperties}.
\end{proof}

\subsubsection{Non-normal varieties}
The following lemma can be used to reduce the study of graded linear series on reduced varieties to normal varieties. 
The same result also appeared in \cite[Proposition 2.4.2]{FujCouple}.

\begin{lem}\label{l:pullbackcontainsample}
Assume additionally that $X$ is irreducible. Let $f:Y \to X$  be a proper birational morphism with $Y$  a normal variety. 

If $V_\bullet$ is a graded linear series of $L$  that contains an ample series, then $f^*V_\bullet$ is a graded linear series of $f^*L$ that contains an ample series. 
\end{lem}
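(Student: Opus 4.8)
The statement to prove is Lemma \ref{l:pullbackcontainsample}: for a proper birational morphism $f\colon Y\to X$ with $Y$ normal and $X$ irreducible, if $V_\bullet$ is a graded linear series of $L$ containing an ample series, then $f^*V_\bullet$ is a graded linear series of $f^*L$ containing an ample series. The plan is to unwind the definition of ``contains an ample series'' and pull back through $f$, being careful about the two subtleties: (i) the pullback of sections only gives an inclusion $f^*V_m \hookrightarrow H^0(Y, mf^*L)$ after identifying $H^0(X,mL)$ with $H^0(Y,mf^*L)$ (which holds because $f$ is proper birational and $Y$ is normal, so $f_*\cO_Y = \cO_X$ and by the projection formula $H^0(Y,mf^*L)=H^0(X, mL\otimes f_*\cO_Y)=H^0(X,mL)$); and (ii) the pullback of an ample divisor is only big and nef, not ample, so one must further thicken to recover an honest ample series.

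\textbf{Key steps in order.} First, by definition there are $\bQ$-Cartier divisors $A$ (ample) and $E$ (effective) on $X$ with $L = A + E$ and $H^0(X, mA) \subseteq V_m \subseteq H^0(X, mL)$ for all $m$ sufficiently large and divisible. Second, pull back: $f^*L = f^*A + f^*E$ with $f^*A$ big and nef and $f^*E$ effective, and under the identification $H^0(Y, mf^*L) \cong H^0(X, mL)$ from $f_*\cO_Y=\cO_X$, the chain of inclusions becomes $H^0(X, mA) = H^0(Y, mf^*A) \subseteq f^*V_m \subseteq H^0(Y, mf^*L)$. Third, to upgrade $f^*A$ (big and nef) to something ample: since $A$ is ample on $X$, pick an effective $f$-exceptional $\bQ$-divisor $F$ on $Y$ such that $f^*A - F$ is ample (Kodaira-type lemma / negativity of exceptional divisors — for $l\gg 0$ divisible, $lf^*A - F$ is ample for a suitable effective exceptional $F$). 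Set $A' := f^*A - F$, ample, and $E' := f^*E + F$, effective, so $f^*L = A' + E'$. Fourth, compare section rings: because $F$ is $f$-exceptional and effective, $f_*\cO_Y(\lfloor mF\rfloor) = \cO_X$, hence $H^0(Y, mf^*A - \lfloor mF \rfloor)$... more directly, $H^0(Y, m A') = H^0(Y, mf^*A - mF) \subseteq H^0(Y, mf^*A) = H^0(X, mA) \subseteq f^*V_m$ for $m$ large and divisible (the first inclusion uses $mF \geq 0$). Combined with $f^*V_m \subseteq H^0(Y, mf^*L) = H^0(Y, m(A'+E'))$, this exhibits $f^*V_\bullet$ as containing an ample series with respect to the decomposition $f^*L = A' + E'$. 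Also note $f^*V_m \neq 0$ for $m\gg 0$ since $H^0(Y,mA')\supseteq H^0(Y, \text{something nonzero})$ as $A'$ is ample, or simply since $H^0(X,mA)\neq 0$.

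\textbf{Main obstacle.} The one genuinely non-formal point is Step 3: producing the effective $f$-exceptional divisor $F$ with $f^*A - F$ ample. This is a standard consequence of the fact that for a proper birational morphism from a normal variety, there is an effective $f$-exceptional divisor $-F$ that is $f$-ample (so $f^*(lA) + (-F)$ is ample on $Y$ for $l \gg 0$ since $A$ is ample downstairs and $f^*(lA)$ is $f$-trivial while $-F$ is $f$-ample, and ample + ample-over-base with base ample is ample for $l$ large), i.e. one writes $F$ with $f^*A - \tfrac{1}{l}F$ ample and rescales; after clearing denominators and possibly enlarging $l$, we may take $F$ with integral coefficients on the multiples we care about. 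Everything else — the section-ring identifications via $f_*\cO_Y=\cO_X$ and the projection formula, and the preservation of the defining inclusions — is routine. I would also remark, as the excerpt does, that this is \cite[Proposition 2.4.2]{FujCouple}, so the proof can be kept brief.
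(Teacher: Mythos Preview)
Your argument has a genuine gap: the claim that $f_*\cO_Y = \cO_X$ (and hence $H^0(Y, mf^*A) = H^0(X, mA)$) requires $X$ to be normal, not $Y$. In the setting of the lemma, $X$ is only assumed irreducible and geometrically reduced, so it may fail to be normal. If for instance $f\colon Y \to X$ is the normalization map, then $f_*\cO_Y$ strictly contains $\cO_X$, and $H^0(Y, mf^*A)$ strictly contains $f^*H^0(X, mA)$ for $m \gg 0$. Your chain $H^0(Y, mA') \subseteq H^0(Y, mf^*A) = H^0(X, mA) \subseteq f^*V_m$ therefore breaks at the middle equality, and there is no reason the sections of $mA'$ on $Y$ should land inside $f^*V_m$.

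The paper handles this by factoring $f$ through the normalization and treating the two steps separately. For the normalization step it uses the conductor ideal $\fc \subset \cO_X$: one chooses an effective $B \in |sA|$ with $\cO_X(-B) \subset \fc$, so that the inclusion $f_*\cO_Y \otimes \cO_X(-B) \hookrightarrow \cO_X$ forces $H^0(Y, m f^*A')$ back into $f^*H^0(X, mA)$ for the shrunk ample $A' = A - \tfrac{1}{2s}B$. Once $X$ is normal, an argument like yours goes through.

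A secondary point: your requirement that $F$ be $f$-exceptional is unnecessary (your Step 4 only uses $F \ge 0$) and may be impossible when $f$ is small. The paper simply uses that $f^*A$ is big to write $f^*A = D + F$ with $D$ ample and $F$ effective, with no exceptionality condition.
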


In the lemma, $f^*V_\bullet $ denotes the graded linear series of $f^*L$ defined by 
\[
f^* V_m = \im \left( V_m \to H^0(Y, mf^*L)\right)\,,
\]
where the above map is induced by pulling back sections. 

\begin{proof}
Since $V_\bullet$ contains an ample series,  $V_{mr} \neq 0$ for $m\gg 0$ and for some $r>0$ there exists a decomposition $L = A +E$, where $A$ and $E$ are ample and effective $\bQ$-Cartier $\bQ$-divisors such that 
\[
H^0(X,mA)\subset
V_{m}\subset H^0(X,mL)
\]
for $m>0$ sufficiently large and divisible. 
Since $\dim V_{rm} = \dim f^*V_{mr}$, it follows that $f^*V_{mr}\neq 0$ for $m\gg0$. 
To verify the remaining condition in the definition of containing an ample series, note that $Y\to X$ factors through the normalization of $X$.
Thus it suffices to verify the remaining condition in the case when  (i) $Y\to X$ is the normalization morphism and (ii) $X$ is normal. 

We first verify the condition  in case (i).
Let
\[
\mathfrak{c}:= \mathcal{H}om_{\cO_{Y}} ( f_* \cO_{Y}, \cO_{X})\subset \cO_X
\]
denote the conductor ideal.
Since $ \Supp( \cO_{X}/ \mathfrak{c})$ is contained in the non-normal locus of $X$ and $A$ is ample, 
there exists  $s\geq 1$ and an effective Cartier divisor $B \in |s A|$ on $X$ such that $\cO_X(-B) \subset \mathfrak{c}$. 
Note that $L= A'+ E'$, where 
\[
A' = A- \tfrac{1}{2s} B
\quad \text{ and } 
E'  =E+\tfrac{1}{2s}B 
,\]
 is a decomposition into an ample and effective $\bQ$-Cartier $\bQ$-divisors. 
By the definition of $\fc$, there is an inclusion $f_* \cO_Y  \otimes \cO_X(-B)\hookrightarrow \cO_X$, which induces an inclusion 
\[
H^0(Y,mf^*A')
=
f^* 
H^0(X, f_* \cO_{Y} \otimes \cO_{X}(mA') )\\
\subset
f^* H^0(X,  mA' +B)
\subset 
H^0(X,mA)
\]
for $m>0$ sufficiently large and divisible. 
Therefore we have inclusions
\[
H^0(Y,   m f^*A' ) 
\subset 
f^* H^0(X, m A)
\subset 
f^* V_{m}
\subset 
H^0(Y, mf^*L)
\]
for  $m>0$ sufficiently large and divisible. 
Thus $f^*V_\bullet$ contains an ample series in case (i).

We now consider case (ii). 
Since $f^*A$ is big, there exists a decomposition $f^*A = D+F$ into $\bQ$-Cartier $\bQ$-divisors, where $D$ is an ample and $F$ is effective.
Thus
\[
f^*L= D+(F + f^*E)
\]
is a decomposition into ample and effective $\bQ$-Cartier $\bQ$-divisors.
 For $m>0$ sufficiently large and divisible, there are inclusions
\[
H^0(Y, mD )
\subset H^0(Y, f^*A)
\subset 
f^*V_{m}
\subset 
H^0(Y, m f^*L )
,\]
where the second inclusion uses that $H^0(Y, mA) \to H^0(X,mf^*A)$ is an isomorphism as $f_*\cO_Y=\cO_X$ by the normality of $X$.
Thus $f^*V_\bullet$ contains an ample series in case (ii).
\end{proof}

\subsubsection{Approximation results for two filtrations}

\begin{prop}\label{prop-comparetwofiltration}
Assume  additionally that $X$ is geometrically integral.
Let $V_\bullet$  be a graded linear series of $L$ that contains an ample series, let $E$ be a prime divisor over $X$ that is geometrically integral, and $a,b \in \bR$.
Let $\cG$  denote the filtration of $V_\bullet$ defined by  
\[
\cG^\la V_m:= \{ s\in V_m \, \vert\, \ord_{E} (s) \geq a\la-bm\}
.\]
For each $\varepsilon>0$, there exists a $m_0:= m_0(\varepsilon ,\cG)$ such that
 \[
S_{mr}(\cF) - S_{mr}(\cG) \leq (1+\varepsilon) \left( S(\cF)- S(\cG) \right) 
 \] 
 for all $m \geq m_0$ and linearly bounded filtrations $\cF$ of $V_\bullet$ with $\cG \subset \cF$. 
\end{prop}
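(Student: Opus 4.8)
The plan is to run the Okounkov body argument of \cite{BJ-delta} underlying Proposition~\ref{p:irredBJinequality}, but with $E$ taken to be the first divisor of the flag. With this choice $\cG$ is controlled entirely by the first Okounkov coordinate, is computed \emph{exactly} at every finite level, and the ``relative'' comparison collapses onto a uniform lattice-point estimate for a family of convex subbodies of the Okounkov body of $V_\bullet$, to which the analysis of \cite{BJ-delta} applies essentially verbatim.

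\emph{Reductions.} First I would reduce to $\bk=\overline{\bk}$, so that $X$ and $E$ are integral, and pick a proper birational $\mu\colon Y\to X$ with $Y$ smooth on which the birational transform $E_Y$ of $E$ is a smooth prime divisor. By Lemma~\ref{l:pullbackcontainsample}, $\mu^*V_\bullet$ is a graded linear series of $\mu^*L$ containing an ample series; since pullback of sections is injective and $\ord_{E_Y}=\ord_E$ on $\bk(X)=\bk(Y)$, the invariants $S_m,S,T$ and the two filtrations transport unchanged, so I may assume $E$ is a smooth prime divisor on $X$. The filtration axioms force $a>0$; replacing $\cF,\cG$ by a common shift $\la\mapsto(\cF^{\la+cm},\cG^{\la+cm})_m$, which changes neither $S_m(\cF)-S_m(\cG)$ nor $S(\cF)-S(\cG)$ and preserves $\cG\subseteq\cF$, I may assume $b\ge 0$; as every section of $mL$ has $\ord_E\ge 0$, this gives $\cG^0V_\bullet=V_\bullet$, hence $\cF^0V_\bullet=V_\bullet$.

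\emph{Okounkov bodies and a clean formula.} Complete $E$ to an admissible flag with $E$ as first member, giving a rank-$n$ valuation $\nu\colon\bk(X)^\times\to\bZ^n$ with $\nu_1=\ord_E$. Write $\Gamma_m=\nu(V_m\setminus\{0\})$, let $\Delta=\Delta(V_\bullet)\subset\bR^n$ be the Okounkov body (so $\vol_{\bR^n}\Delta=\vol(V_\bullet)/n!$), and for $t\ge 0$ set $\Delta^t(\cF)=\Delta(V_\bullet^{\cF,t})$, $\Delta^t(\cG)=\Delta(V_\bullet^{\cG,t})$, convex subbodies of $\Delta$ (the filtered series contain ample series for $t<T$ by Proposition~\ref{p:irredSproperties}.2). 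Since $V_m^{\cG,t}=\{s\in V_m:\ord_E(s)\ge m(at-b)\}$ is cut out by $\nu_1$, one gets $\Delta^t(\cG)=\Delta\cap\{x_1\ge at-b\}$; and since $\nu$ refines $\ord_E$, a $\nu$-adapted basis of a subspace $W\subseteq V_m$ is also adapted to $W\cap\{\ord_E\ge c\}$, whence $\dim\bigl(W/(W\cap\{\ord_E\ge c\})\bigr)=\#\{\alpha\in\nu(W\setminus\{0\}):\alpha_1<c\}$. Taking $W=V_m^{\cF,t}$, $c=m(at-b)$, using $\cG\subseteq\cF$ so that $V_m^{\cG,t}=V_m^{\cF,t}\cap\{\ord_E\ge m(at-b)\}$, and the layer-cake identity $S_m(\cH)=\tfrac1{\dim V_m}\int_0^\infty\dim V_m^{\cH,t}\,dt$ (valid as $\cF^0V_\bullet=\cG^0V_\bullet=V_\bullet$), I obtain
\[
S_m(\cF)-S_m(\cG)=\frac{1}{\dim V_m}\int_0^\infty\#\bigl\{\alpha\in\nu(V_m^{\cF,t}\setminus\{0\}):\alpha_1<m(at-b)\bigr\}\,dt,
\]
while Proposition~\ref{p:irredSproperties}.3 together with the identity $\vol(V_\bullet^{\cH,t})=n!\,\vol_{\bR^n}\Delta^t(\cH)$ (see \cite{LM09} and Proposition~\ref{p:vollimit}) gives
\[
S(\cF)-S(\cG)=\frac{n!}{\vol(V_\bullet)}\int_0^\infty\vol_{\bR^n}\!\bigl(K_t(\cF)\bigr)\,dt,\qquad K_t(\cF):=\Delta^t(\cF)\cap\{x_1<at-b\}=\Delta^t(\cF)\setminus\Delta^t(\cG),
\]
each $K_t(\cF)$ being a convex body inside the fixed body $\Delta$.

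\emph{The uniform estimate, and the expected obstacle.} Because $\nu(V_m^{\cF,t}\setminus\{0\})\subseteq\Gamma_m\cap m\,\Delta^t(\cF)$, the integrand in the first display is $\le\#\bigl(\Gamma_m\cap m\,K_t(\cF)\bigr)$; using $\dim V_m=(1+o(1))\vol(V_\bullet)m^n/n!$ uniformly, the theorem reduces to the assertion that for each $\varepsilon>0$ there is $m_0$ with
\[
\int_0^\infty\frac{\#\bigl(\Gamma_{mr}\cap mr\,K_t(\cF)\bigr)}{(mr)^n}\,dt\ \le\ (1+\varepsilon)\int_0^\infty\vol_{\bR^n}\!\bigl(K_t(\cF)\bigr)\,dt\qquad(m\ge m_0)
\]
for all linearly bounded $\cF$ with $\cG\subseteq\cF$, where $\int_0^\infty\vol_{\bR^n}K_t(\cF)\,dt=\tfrac1{n!}\vol(V_\bullet)(S(\cF)-S(\cG))<\infty$. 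This is exactly what is proved in \cite[proof of Corollary~2.10]{BJ-delta} in the absolute case (there $K_t(\cF)$ is replaced by $\Delta^t(\cF)$, i.e.\ the cut $\{x_1<at-b\}$ is dropped): one combines the uniform lattice count $\#(\bZ^n\cap mK)\le\vol_{\bR^n}(mK)+O(m^{n-1})$ for convex $K\subseteq\Delta$, the uniform bound $\#\Gamma_{mr}\ge(\vol_{\bR^n}\Delta-o(1))(mr)^n$, and, for the tail where $K_t(\cF)$ is small, the flatness/Brunn--Minkowski-type estimates of \cite{BJ-delta}, all uniformly in $t$ and $\cF$. I expect this last uniformity to be the main obstacle: $\cF$ ranges over all linearly bounded filtrations with $\cG\subseteq\cF$ --- not $\bN$-graded, and with $T(\cF)$ unbounded as $\cF$ varies --- so the convex bodies $K_t(\cF)$ must be controlled uniformly over $t\in[0,T(\cF))$ and over all such $\cF$, in particular handling the contribution of large $t$, which is the technical core of \cite{BJ-delta}. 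The only genuinely new inputs beyond that are the reductions above and the observation that, $E$ being the first flag divisor, $\cG$ is computed without error and the extra half-space cut is harmless --- so the relative comparison follows from the absolute one.
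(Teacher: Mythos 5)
Your proposal follows the same route as the paper---reduce to $E$ a prime divisor on $X$, put $E$ first in an Okounkov flag so that $\cG$ is computed without error, and reduce the statement to a uniform estimate on the Okounkov body $\Delta$---but the final step, which you flag as the ``main obstacle,'' is where the genuine content lives, and the way you propose to close it does not quite work. The relative inequality does \emph{not} follow formally from the absolute one (\cite[Corollary~2.10]{BJ-delta} / Proposition~\ref{p:irredBJinequality}): that would need $S_{mr}(\cG)\ge S(\cG)$ to discard the $\cG$-terms, which is false. In your dual formulation the two integrals $\int_0^\infty\#(\Gamma_{mr}\cap mrK_t(\cF))/(mr)^n\,dt$ and $\int_0^\infty\vol(K_t(\cF))\,dt$ equal $\int_\Delta H\,d\rho_{mr}$ and $\int_\Delta H\,d\rho$ with $H:=G^{\cF}-G^{\cG}$, but a per-body lattice estimate $\#(\Gamma_{mr}\cap mrK_t)\le(mr)^n\vol(K_t)+O((mr)^{n-1})$ integrated over $t\in[0,T(\cF))$ produces a total additive error of order $T(\cF)(mr)^{n-1}$, and $T(\cF)$ is unbounded as $\cF$ varies; the per-body estimate is therefore not enough.

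The missing observation is that, with $E=Y_1$, the concave transform of $\cG$ is \emph{affine} on $\Delta$, namely $G^{\cG}(x)=(x_1+b)/a$, so that $H=G^{\cF}-G^{\cG}$ is itself a concave non-negative function on $\Delta$. This, rather than the convexity of the individual bodies $K_t(\cF)$, is why the ``extra half-space cut is harmless.'' Once you know $H$ is concave the uniformity is absorbed exactly as in the absolute case: apply the uniform Riemann-sum bound for concave functions $\Delta\to[0,1]$ (\cite[Lemma~3.32]{Xu-Kbook}) to $H/\max_\Delta H$, giving $\int_\Delta H\,d\rho_{mr}\le\int_\Delta H\,d\rho+\varepsilon'\max_\Delta H$, and then use that a non-negative concave function on $\Delta$ satisfies $\frac{1}{n+1}\vol(\Delta)\max_\Delta H\le\int_\Delta H\,d\rho$ to turn the additive term $\varepsilon'\max_\Delta H$ into a multiplicative one. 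Adding these two facts---concavity of the \emph{difference} $H$, and the max-versus-integral inequality for concave functions---closes the gap and recovers the paper's argument.
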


\begin{proof}
By taking a base change, we may again assume that $\bk$ is algebraically closed. 
First we  reduce to the case when $X$ is normal and $E$ is a prime divisor on $X$. 
Fix a proper birational morphism $f: Y \to X$  such that $Y$ is a normal variety and $E\subset Y$.
The morphism $f$ induces an injective map $f^*: H^0(X,mL)\to H^0(Y, mf^*L)$.
Write $f^*V_{\bullet}$  for the graded linear series of $f^*L$ defined by
\[
f^*V_{m}= \im ( V_m \to H^0(Y ,mf^*L))
.\]
Note that $f^*V_{\bullet}$ contains an ample series by Lemma \ref{l:pullbackcontainsample}.
For an arbitrary linearly bounded filtration $\cF$ of $V_\bullet$ and the filtration $\cG$ in the statement of the proposition, pullback induces filtrations $f^*\cF$ and $f^*\cG$ of $f^*V_\bullet$ satisfying  $S_m(\cF) = S_m(f^*\cF)$ and $S_m(\cG) = S_m(f^*\cG)$.
Therefore, by replacing $X$ and $V_\bullet$ with $Y$ and  $f^*V_\bullet$,  we may reduce to the desired case.

Now we  recall the interpretation of the $S$ and $S_m$ invariants in terms of Okounkov bodies. 
Following the presentation in \cite{Xu-Kbook},
fix a flag
\[
X\supset Y_1 \supset Y_2 \supset \cdots  \supset Y_n = \{x\}
,\]
where each $Y_i$ is an irreducible subvariety of dimension $\dim X-i$ and  $Y_i$  is smooth at $x$. In particular $n=\dim X$.
The flag induces a function 
\[
\nu: H^0(X,mL) \setminus \{0\} \to \bZ^n 
\quad 
\quad 
s\longmapsto \nu(s) =(\nu_1(s),\ldots, \nu_n(s))
\]
with the property that $\nu_1(s) = \ord_{Y_1}(s)$ and 
$
\dim W = \# \nu(W\setminus \{0\})
$
for each subspace $W\subset H^0(X,mL)$; see \cite[Section 1.1.2]{Xu-Kbook}.
For each $m \in r\bZ_{>0}$,  let 
\[
\Gamma_m :=\Gamma_m(V_\bullet):= \nu(V_m \setminus 0)
\] 
and    $\Delta := \Delta(V_\bullet)\subset \bR^n$ denote the closure of the convex hull of $\bigcup_{m \geq 1} \tfrac{1}{mr}\Gamma_{mr}$.
Write 
\[
\rho_m := m^{-n}\sum_{\alpha \in \Gamma_m} \delta_{m^{-1} \alpha}
\]
for the discrete measure on $\bR^n$ and write $\rho$ for the Lebesgue measure on $\Delta$. 

For any linearly bounded filtration $\cF$ of $V_\bullet$ and $t\in \bR$, there is an induced graded linear series $V_\bullet^{\cF,t} := \oplus_{m \in \bN} \cF^{mt} V_m$, which induces an Okounkov body
\[
\Delta(V_\bullet^{\cF,t} ) 
\subset \Delta(V_\bullet) = \Delta
.\] 
We define a function $G^{\cF}: \Delta  \to  \bR $ by 
\[
 G^{\cF}(\alpha) = \sup\{ t\in \bR\, \vert\, \alpha \in \Delta(V_\bullet^{\cF,t})\}.
\]
The function is concave as a consequence of the definition of a filtration.
We have
\begin{equation}\label{eq:S_mNO}
S_m(\cF) \leq  \frac{m^n}{\dim V_m}\int_{\Delta}G^{\cF} \, d \rho_m
\quad \text{ and } \quad
S(\cF) = \frac{1}{ \vol(\Delta)} \int_{\Delta} G^{\cF} \, d \rho
\end{equation}
by \cite[Proposition 3.27 and Lemma 3.29]{Xu-Kbook}.

Now choose a flag as above  with $Y_1= E$.
Since $\nu_1(s)= \ord_E(s)$ for $s\in H^0(X,mL)$, 
\[
\nu(\cG^\la V_m \setminus \{0\})
=  \{ \alpha \in \Gamma_m  \, \vert\,   x_1^*(\alpha) \geq a  \lambda-bm \}
.\]
Thus
\[
\frac{1}{m}\Gamma_m(V_{\bullet}^{\cG,t})=\frac{1}{m}\Gamma_m(V_{\bullet}) \cap (x^*_1(\alpha)\ge a t-b)\,,
\] which implies that
\[
\Delta(V_{\bullet}^{\cG,t})=\{\alpha\in \Delta(V_{\bullet}) \mid x^*_1(\alpha)\ge a  t-b\} \, .
\]
Therefore $G^\cG (x_1,\ldots, x_n) = (x_1+b)/a$ and $\sum_{\la \in \bR} \frac{\lambda}{m}\cdot \dim {\rm gr}_{\cG}^\la V_m = \sum_{\alpha \in m^{-1}\Gamma_m} G^{\cG} (\alpha)$.
 The latter implies that
\begin{equation}\label{eq:SmGNO}
S_m(\cG) 
=
   \frac{m^n}{\dim V_m} 
\int_{\Delta} G^{\cG} d\rho_m
,
\end{equation}
which is stronger than the inequality in \eqref{eq:S_mNO}.

Now  choose  $\varepsilon'>0$ such that $1+\varepsilon' (n+1)\vol(\Delta)^{-1}  < 1+\tfrac{\varepsilon}{2}$. 
By \cite[Lemma 3.32]{Xu-Kbook}, there exists an integer $m_1$ such that 
\[
\int_{\Delta} g d \rho_{mr} \leq \int_{\Delta} g d \rho +\varepsilon'
\]
for all $m\geq m_1$ and  concave functions $g: \Delta \to [0,1] $. 
Note that $H:=G^{\cF}-G^{\cG}$ is concave, since $G^{\cF}$ is concave and $G^{\cG}$ is linear.
In addition,  
$H$ is non-negative by the assumption that $\cG\subset \cF$. 
Therefore for $m\geq m_1$,
\begin{multline*}
 \int_{\Delta} H
 d\rho_{mr}	 
=
\int_{\Delta} H \, d \rho + \varepsilon' \max (H)\\
 \leq
 \int_{\Delta} H \, d \rho + \varepsilon' (n+1) \vol(\Delta)^{-1} \int_{\Delta} H \, d \rho 
 \leq 
\left(1+ \frac{\varepsilon}{2} \right)  \int_{\Delta} H \, d \rho.
 \end{multline*}
The second inequality above uses that if $H$ achieves its maximum at $p \in \Delta$, then
 \[\frac{1}{n+1} \vol(\Delta) \max \{H\}
 = 
 \vol({\rm cone}(\Delta\times \{0\}, (p,H(p)) ))
 \leq \int_{\Delta} H \, d \rho,
 \]
 where the inequality holds as $H$ is concave and non-negative.
 Since $\vol(V_\bullet)=n! \vol(\Delta)$ by \cite[Theorem 1.11]{Xu-Kbook}, there exists an integer $m_2$ such that 
 \[ \left(1+\frac{\varepsilon}{2}\right) \frac{(mr)^n}{\dim V_{mr}}\leq  (1+\varepsilon) \frac{1}{\vol(\Delta)}
 \] for all $m \geq m_2$.
 If $m \geq m_0:= \max\{m_1,m_2\}$, then 
 \begin{align*}
 S_{mr}(\cF)-S_{mr}(\cG)
 &\le \frac{(mr)^n}{\dim V_{mr}}\int_{\Delta} (G^{\cF}-G^{\cG}) \, d \rho_{mr}\\
 &=  \frac{(mr)^n}{\dim V_{mr}} \int_{\Delta} H \, d \rho_{mr}\\
 &\leq \frac{(mr)^n}{\dim V_{mr}}   (1+\varepsilon/2) \int_{\Delta}  H \, d \rho\\
& \leq 
 (1+\varepsilon) \frac{1}{\vol(\Delta)} \int_{\Delta} H\, d \rho \\
& =
(1+\varepsilon)( S(\cF)-S(\cG))
, \end{align*}
 where the first inequality holds by \eqref{eq:S_mNO} and \eqref{eq:SmGNO}, the second and third by our choice of $m_0$, and the fourth by \eqref{eq:S_mNO}.
\end{proof}

\subsection{Stability threshold}\label{ss:stabilitythreshold}
We now recall the definition of the stability threshold \cite{FO-delta,BJ-delta}.
Below, let $(X,\Delta)$ be a klt pair that is projective over a field $\bk$ and $L$ be an ample $\bQ$-line bundle on $X$ with a positive integer $r$ such that $rL$ is a line bundle. 
\medskip

\begin{defn}\hfill
\begin{enumerate}
    \item A valuation $v\in \Val_X$ induces a filtration $\cF_v$ of $R:=R(X,L)$ defined by 
\[
\cF^\la_v R_m := \{s \in R_m \, \vert\, v(s)\geq \la\}
.\]
We set $S(v):= S(\cF_v)$ and $S_{m}(v)= S(\cF_v)$ for any $m\in r\bN^+$.

\item 
For $m\in r\bN^+$, an \emph{$m$-basis type divisor} of $L$ is a $\bQ$-divisor of the form 
\[
D:= \frac{1}{mN_m} \left(\{s_1=0\}+ \cdots + \{s_{N_m}=0\}\right),
\]
where $(s_1,\ldots, s_{N_m})$ is a basis for $H^0(X,mL)$. 

\item An $m$-basis type divisor $D$ is \emph{compatible} with a filtration $\cF$ of $R:=R(X,L)$ if the basis $(s_1,\ldots, s_{N_m})$ is compatible with $\cF$.
An $m$-basis type divisor $D$ is compatible with a valuation $v\in \Val_{X}$ if it is compatible with $\cF_v$.
\end{enumerate}
\end{defn}

\begin{lem}
For $v\in \Val_X$, 
\[
S_m(v) = \max \{ v(D) \, \vert\,  \text{$D$ is an $m$-basis type divisor of $L$ } \}
\]
and the maximum is achieved when $D$ is compatible with $v$.
\end{lem}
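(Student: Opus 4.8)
The plan is to reduce the statement to elementary linear algebra inside the finite-dimensional $\bk$-vector space $R_m = H^0(X,mL)$ equipped with the filtration $\cF_v$. First I would fix a local trivialization of $mL$ near the center $c_X(v)$, so that each nonzero $s\in R_m$ acquires a value $v(s)$, which by the definition $\cF_v^\la R_m = \{s : v(s)\ge\la\}$ equals $\ord_{\cF_v}(s) = \max\{\la : s\in\cF_v^\la R_m\}$. Writing $N_m = \dim_\bk R_m$, for an $m$-basis type divisor $D = \tfrac{1}{mN_m}(\{s_1=0\}+\cdots+\{s_{N_m}=0\})$ one has $v(D) = \tfrac{1}{mN_m}\sum_{i} v(s_i)$; since changing the trivialization shifts all $v(s_i)$ by a common constant, and shifts $\sum_\la\la\dim_\bk\gr_{\cF_v}^\la R_m$ by $N_m$ times that constant, I may normalize so that $v\ge 0$ on $R_m\setminus\{0\}$, i.e. $\cF_v^0 R_m = R_m$. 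After this normalization, the lemma amounts to the claim that $\sum_i v(s_i)\le \sum_\la \la\dim_\bk\gr_{\cF_v}^\la R_m = mN_m\,S_m(v)$ for every basis $(s_1,\dots,s_{N_m})$ of $R_m$, with equality exactly when the basis is compatible with $\cF_v$ (equivalently with $v$).

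The key input is that for each $\la\in\bR$ the subset $\{s_i : v(s_i)\ge\la\}$ consists of linearly independent vectors lying in $\cF_v^\la R_m$, whence $\#\{i : v(s_i)\ge\la\}\le\dim_\bk\cF_v^\la R_m$. I would then express both sides of the desired inequality as integrals over $[0,\infty)$ of non-increasing, left-continuous step functions: $\sum_i v(s_i) = \int_0^\infty \#\{i : v(s_i)\ge t\}\,dt$ because all $v(s_i)\ge 0$, and $\sum_\la \la\dim_\bk\gr_{\cF_v}^\la R_m = \int_0^\infty \dim_\bk\cF_v^t R_m\,dt$ using the identity $\dim_\bk\cF_v^t R_m = \sum_{\la\ge t}\dim_\bk\gr_{\cF_v}^\la R_m$ (immediate from any basis compatible with $\cF_v$). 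Integrating the pointwise bound gives $\sum_i v(s_i)\le\sum_\la\la\dim_\bk\gr_{\cF_v}^\la R_m$, i.e. $v(D)\le S_m(v)$ for every $m$-basis type divisor $D$.

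For the equality and attainment statements: if the two integrals coincide, then the two left-continuous step functions above agree almost everywhere, hence everywhere, which is precisely the assertion that $\{s_i : v(s_i)\ge\la\}$ is a basis of $\cF_v^\la R_m$ for every $\la$ — the definition of $(s_1,\dots,s_{N_m})$ being compatible with $\cF_v$. Conversely, any filtration of the finite-dimensional space $R_m$ admits a compatible basis (lift bases of the successive quotients $\gr_{\cF_v}^\la R_m$), and for such a basis the inequality becomes an equality, so $S_m(v)$ is a genuine maximum, attained by the $m$-basis type divisor associated to any basis compatible with $v$. I do not expect a genuine obstacle: the only points needing care are the trivialization-independence of $v(D)$ and the routine bookkeeping turning the two sums into integrals. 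The argument is purely internal to $R_m$ and is the same as in the log Fano case treated in \cite{BJ-delta}; in particular it uses nothing about the geometry of $X$ beyond the finite-dimensionality of $R_m$.
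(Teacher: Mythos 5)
Your proof is correct, and it is essentially the same argument as the one the paper refers to (\cite[Lemma 3.5]{BJ-delta}) and spells out in the relative setting in Lemma~\ref{l:S(v)basistype}: the core step in both is the count $\#\{i : v(s_i)\ge\la\}\le\dim_\bk\cF_v^\la R_m$ coming from linear independence of the $s_i$, and the fact that a compatible basis saturates it. The only difference is cosmetic bookkeeping — you integrate the two left-continuous counting functions, while the paper sorts the basis by $v$-value and compares termwise to the jumping numbers $a_{m,j}$; these are the same argument. (A minor remark: your trivialization/normalization step is superfluous, since $v(s)\ge 0$ automatically for any global section $s$ and any $v\in\Val_X$, and $v(s)$ is independent of the choice of trivialization near $c_X(v)$ because any two differ by a unit.)
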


\begin{proof}
The statement follows from the proof of \cite[Lemma 3.5]{BJ-delta}.
\end{proof}

\begin{defn}[\cite{FO-delta,BJ-delta}]
The \emph{stability threshold} of $(X,\Delta,L)$ is 
\[
\delta(X,\Delta,L) := \limsup_{m\to \infty}\delta_{mr}(X,\Delta,L),
\]
where 
$\delta_m(X,\Delta,L):= \min \{ \lct(X,\Delta,D) \, \vert\, \text{$D$ is an $m$-basis type divisor of $L$}\}
$.
\end{defn}

\begin{thm}[{\cite[Theorem A]{BJ-delta}}]
We have $\delta(X,\Delta,L) := \lim_{m\to \infty}\delta_{mr}(X,\Delta,L)$.
Furthermore,
\[
\delta(X,\Delta,L)=\inf_{v}\frac{A_{X,\Delta}(v)}{S(v)}
\quad \text{ and }\quad
\delta_m(X,\Delta,L)=\inf_{v}\frac{A_{X,\Delta}(v)}{S_m(v)}
\]
for all $m \in r\bZ_{>0}$,
where the infima run through all $v\in \Val_X^\circ$.
\end{thm}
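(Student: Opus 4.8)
The plan is to follow the scheme of \cite[Theorem A]{BJ-delta}, using the Okounkov body machinery already set up in the excerpt (in particular Proposition \ref{prop-comparetwofiltration} and the volume estimates of \cite{Xu-Kbook}). First I would treat the elementary direction: for any $v\in\Val_X^\circ$ and any $m$-basis type divisor $D$ of $L$ one has $\lct(X,\Delta,D)\le A_{X,\Delta}(v)/v(D)$, and combining this with the lemma that $S_m(v)=\max\{v(D)\}$ over $m$-basis type divisors $D$ gives $\delta_m(X,\Delta,L)\le\inf_v A_{X,\Delta}(v)/S_m(v)$; the reverse inequality comes from choosing, for a given $m$-basis type divisor $D$, a divisorial valuation $v=\ord_E$ computing $\lct(X,\Delta,D)$ and a basis of $H^0(X,mL)$ compatible with $\ord_E$, so that $v(D)\ge S_m(v)$ and hence $\lct(X,\Delta,D)=A_{X,\Delta}(v)/v(D)\ge\inf_v A_{X,\Delta}(v)/S_m(v)$. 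This establishes $\delta_m(X,\Delta,L)=\inf_v A_{X,\Delta}(v)/S_m(v)$ for each fixed $m\in r\bN^+$.

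Next I would pass to the limit. The inequality $\delta(X,\Delta,L)=\limsup_m\delta_{mr}\ge\inf_v A_{X,\Delta}(v)/S(v)$ is easy once one knows $S(v)=\lim_m S_{mr}(v)$ (which follows from Proposition \ref{p:irredSproperties} or \ref{p:reducedSinvproperties} applied to $\cF_v$, together with the fact that $R(X,L)$ contains an ample series since $L$ is ample): indeed $\delta_{mr}\le A_{X,\Delta}(v)/S_{mr}(v)$ for every $v$, and taking $\limsup_m$ then $\inf_v$ gives it. The content is the other direction, $\liminf_m\delta_{mr}\ge\delta(X,\Delta,L)$ together with $\delta(X,\Delta,L)\le\inf_v A_{X,\Delta}(v)/S(v)$; for this I would invoke the uniform convergence $S_{mr}(\cF)\le(1+\varepsilon)S(\cF)$ over all linearly bounded filtrations $\cF$ with $\cF^0R=R$, which is exactly Proposition \ref{p:irredBJinequality} (or Proposition \ref{prop-reducedBJinequality}). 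Given $\varepsilon>0$ and $m\ge m_0(\varepsilon)$, for any $m$-basis type divisor $D$ one picks a valuation $v$ computing $\lct(X,\Delta,D)$, normalizes its filtration so $\cF_v^0R=R$, and estimates
\[
\lct(X,\Delta,D)=\frac{A_{X,\Delta}(v)}{v(D)}\ge\frac{A_{X,\Delta}(v)}{S_{mr}(v)}\ge\frac{1}{1+\varepsilon}\cdot\frac{A_{X,\Delta}(v)}{S(v)}\ge\frac{1}{1+\varepsilon}\,\delta(X,\Delta,L),
\]
wait---this needs care: the last step should instead read that $\inf_v A_{X,\Delta}(v)/S(v)$ bounds things, so I would organize it as $\delta_{mr}(X,\Delta,L)\ge(1+\varepsilon)^{-1}\inf_v A_{X,\Delta}(v)/S(v)$ for $m\ge m_0(\varepsilon)$, giving $\liminf_m\delta_{mr}\ge\inf_v A_{X,\Delta}(v)/S(v)$. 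Combined with the $\limsup$ bound this forces the limit to exist and equal $\inf_v A_{X,\Delta}(v)/S(v)$.

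I expect the main obstacle to be the careful handling of the valuation computing $\lct(X,\Delta,D)$: one must know such a divisorial valuation exists (existence of lc places of log pairs, from \cite{BCHM}), that one may choose a basis of $H^0(X,mL)$ compatible with it so that $v(D)\ge S_{mr}(v)$, and—crucially—that the normalization replacing $\cF_v$ by its ``$\cF^0R=R$'' truncation does not change $\lct$ or the ratio $A/S$ in the relevant way (it changes $A_{X,\Delta}(v)$ and $S(v)$ by the same additive constant $\ord_{\cF_v}$-shift, or one argues directly with $A/v(D)$). The remaining subtlety is purely bookkeeping: that $S(\cF_v)=S(v)$ is finite and a genuine limit, which is covered by Propositions \ref{p:irredSproperties} and \ref{p:reducedSinvproperties} once we note $R(X,L)$ contains an ample series. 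With these in hand the argument is a routine $\varepsilon$-chase following \cite[Theorem A]{BJ-delta}, and no new ideas beyond the reducible-scheme generalizations already proved in Section \ref{ss:reducible} are needed.
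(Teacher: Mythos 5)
The paper cites this theorem from \cite{BJ-delta} without re-proving it, so there is no in-paper proof to compare to; the closest analog is the paper's own Proposition \ref{p:delta=inf A/S}, which runs the same argument in the relative setting, and your plan follows that template correctly at the structural level. There are, however, two local sign slips that as written break the argument and should be fixed. For the fixed-$m$ reverse inequality $\delta_m \ge \inf_v A_{X,\Delta}(v)/S_m(v)$: since $S_m(v)$ is \emph{by definition} the maximum of $v(D)$ over $m$-basis type divisors $D$, one automatically has $v(D)\le S_m(v)$ for every fixed $D$ and every $v$, and the correct chain is
\[
\lct(X,\Delta;D)=\frac{A_{X,\Delta}(v)}{v(D)}\ge\frac{A_{X,\Delta}(v)}{S_m(v)}\ge\inf_{v'}\frac{A_{X,\Delta}(v')}{S_m(v')},
\]
with $v$ chosen to compute $\lct(X,\Delta;D)$. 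Your version asserts $v(D)\ge S_m(v)$, which is backwards, and also proposes to choose a basis compatible with $\ord_E$ — but $D$ is already given, so no such choice is available; the ``compatible basis'' device belongs only to the other direction $\delta_m \le \inf_v A/S_m$, which you have stated correctly. Second, the ``easy'' limiting inequality should read $\delta(X,\Delta,L)=\limsup_m\delta_{mr}\le\inf_v A_{X,\Delta}(v)/S(v)$, not $\ge$; the argument you give (pass to the limit in $\delta_{mr}\le A/S_{mr}(v)$ using $S_{mr}(v)\to S(v)$, then infimize) indeed proves $\le$. With these corrected, your self-corrected $\varepsilon$-chase using Proposition \ref{p:irredBJinequality} gives $\liminf_m\delta_{mr}\ge(1+\varepsilon)^{-1}\inf_v A/S$, which sandwiches the limit. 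One further small remark: the ``normalization so $\cF_v^0R=R$'' worry is vacuous here, since any valuation takes non-negative values on global sections, so $\cF_v^0R_m=R_m$ is automatic. These are fixable slips, not a gap in approach, but the reverse inequality step does need to be rewritten as above before the argument closes.
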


We say $v\in \Val_X^\circ$ \emph{computes} $\delta(X,\Delta,L)$ if it achieves the above infimum. 

\begin{defn}
The \emph{stability threshold} of a log Fano pair $(X,\Delta)$ is defined as
\[
\delta(X,\Delta):= \delta(X,\Delta; -(K_X+\Delta))
.\]
\end{defn}

By the Fujita--Li valuative criterion and the previous theorem, we have the following characterization of K-semistability, which will be  used as its  definition in this paper.

\begin{thm}[\cite{Fujvalcrit,Livalcrit}]
A log Fano pair $(X,\Delta)$ is K-semistable if and only if $\delta(X,\Delta)\geq 1$.
\end{thm}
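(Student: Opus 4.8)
The plan is to combine the valuative criterion for K-semistability of Fujita and Li with the identity $\delta(X,\Delta)=\inf_v A_{X,\Delta}(v)/S(v)$ from the previous theorem, applied with $L=-(K_X+\Delta)$. Recall that the Fujita--Li criterion asserts that $(X,\Delta)$ is K-semistable if and only if the $\beta$-invariant $\beta_{X,\Delta}(\ord_E):=A_{X,\Delta}(\ord_E)-S(\ord_E)$ is non-negative for every divisorial valuation $\ord_E\in\DivVal_X$, where $S(\ord_E)=S(\cF_{\ord_E})$ is precisely the $S$-invariant of the previous subsection (via the integral-of-volumes formula). Equivalently, $(X,\Delta)$ is K-semistable if and only if $A_{X,\Delta}(\ord_E)/S(\ord_E)\ge 1$ for all $\ord_E\in\DivVal_X$.

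For the ``if'' direction, suppose $\delta(X,\Delta)\ge 1$. By the previous theorem $\delta(X,\Delta)=\inf_{v\in\Val_X^\circ}A_{X,\Delta}(v)/S(v)$, and since $\DivVal_X\subset\Val_X^\circ$ we obtain $A_{X,\Delta}(\ord_E)/S(\ord_E)\ge 1$ for every divisorial $\ord_E$, hence $(X,\Delta)$ is K-semistable by the criterion above. For the ``only if'' direction I would first verify that the infimum defining $\delta(X,\Delta)$ is already attained over divisorial valuations. By the previous theorem $\delta(X,\Delta)=\lim_{m}\delta_{mr}(X,\Delta)$ and, for each $m\in r\bN^+$, $\delta_m(X,\Delta)=\min_D\lct(X,\Delta;D)$ over $m$-basis type divisors $D$ of $-(K_X+\Delta)$. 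For a fixed such $D$, the log canonical threshold is computed by a divisorial valuation $\ord_E$, and since $S_m(\ord_E)=\max_{D'}\ord_E(D')$ we get $\lct(X,\Delta;D)=A_{X,\Delta}(\ord_E)/\ord_E(D)\ge A_{X,\Delta}(\ord_E)/S_m(\ord_E)$; taking the minimum over $D$, and noting the reverse inequality is trivial, yields $\delta_m(X,\Delta)=\inf_{\ord_E\in\DivVal_X}A_{X,\Delta}(\ord_E)/S_m(\ord_E)$. Feeding in the uniform estimate $S_{mr}(\ord_E)\le(1+\varepsilon)S(\ord_E)$ of Proposition \ref{p:irredBJinequality} (valid for all linearly bounded filtrations, in particular all $\cF_{\ord_E}$, using that $X$ is geometrically integral) and letting $m\to\infty$ gives $\delta(X,\Delta)=\inf_{\ord_E\in\DivVal_X}A_{X,\Delta}(\ord_E)/S(\ord_E)$. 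Thus if $(X,\Delta)$ is K-semistable then $A_{X,\Delta}(\ord_E)/S(\ord_E)\ge 1$ for all divisorial $\ord_E$, so $\delta(X,\Delta)\ge 1$.

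The only step with genuine content beyond quoting the valuative criterion is this reduction of the infimum to divisorial valuations; it is standard, but it uses both the identity $\delta_m=\min_D\lct(X,\Delta;D)$ and the uniform convergence $S_m\to S$. Alternatively, one may simply invoke the cited valuative criterion in its already $\delta$-theoretic form, in which case the equivalence is immediate from the previous theorem and no further argument is needed.
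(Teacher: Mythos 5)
The paper does not prove this theorem at all: it simply cites \cite{Fujvalcrit,Livalcrit} and notes in the surrounding text that the equivalence follows from the Fujita--Li valuative criterion together with the preceding theorem ($\delta = \inf_v A_{X,\Delta}(v)/S(v)$), and then \emph{adopts it as the definition} of K-semistability for the rest of the paper. Your write-up supplies the argument that the paper leaves to the references, and it is correct.

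Two small remarks. First, the detour to reduce the infimum to divisorial valuations is not actually needed here: \cite[Theorem~A]{BJ-delta}, which is exactly the theorem the paper quotes just above, already states that the infima $\inf_v A_{X,\Delta}(v)/S(v)$ and $\inf_v A_{X,\Delta}(v)/S_m(v)$ can be taken over $\DivVal_X$ as well as over $\Val_X^\circ$ (the paper makes this explicit in the relative analogue, Proposition~\ref{p:delta=inf A/S}), so both implications are immediate once one invokes that form of the statement. Second, your argument for the reduction is itself the standard one from \cite{BJ-delta}: exchanging the two infima in $\min_D \inf_v A(v)/v(D)$, using that the lct of a fixed basis divisor is computed by a divisor in characteristic~$0$, using the compatibility of basis divisors to pass from $\le$ to $=$, and then feeding in the uniform estimate of Proposition~\ref{p:irredBJinequality} to pass from $S_m$ to $S$. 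So you have, in effect, reconstructed the proof of \cite[Theorem~A]{BJ-delta} en route; that is fine, but you could have simply cited it in full strength, which is what the paper's terse presentation is implicitly doing.
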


\begin{prop}\label{p:deltafieldext}
If $(X_{\bk},\Delta_{\bk})$ is a log Fano pair over a field $\bk$ of characteristic $0$ and $\bk\subset \bk'$ is a field extension, then 
\[
\min\{1,\delta
(X_{\bk}, \Delta_{\bk})\}
=
\min\{1,\delta
(X_{\bk'}, \Delta_{\bk'})\}
\]
\end{prop}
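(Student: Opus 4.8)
\emph{Overview and reduction.} The plan is to reduce to the case where the larger field is the algebraic closure of the smaller one, and then to argue by Galois descent using the uniqueness of the optimal destabilization in the K-unstable range. Fixing compatible algebraic closures $\overline{\bk}\subseteq\overline{\bk'}$, the invariance of $\delta$ under extensions of algebraically closed fields of characteristic $0$ (\cite{BJ-delta,Xu-Kbook}) gives $\delta(X_{\overline\bk},\Delta_{\overline\bk})=\delta(X_{\overline{\bk'}},\Delta_{\overline{\bk'}})$; so it suffices to prove the statement for the extension $F\subseteq\overline F$ of an arbitrary characteristic-$0$ field $F$ (then apply it to $F=\bk$ and $F=\bk'$ and chain the resulting equalities). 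So fix such an $F$; I must show $\min\{1,\delta(X_F,\Delta_F)\}=\min\{1,\delta(X_{\overline F},\Delta_{\overline F})\}$.

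\emph{The easy inequality.} First I would show $\delta(X_{\overline F},\Delta_{\overline F})\le\delta(X_F,\Delta_F)$, by checking $\delta_{mr}(X_{\overline F},\Delta_{\overline F})\le\delta_{mr}(X_F,\Delta_F)$ for every $m$: an $m$-basis type divisor $D$ of $L_F:=-K_{X_F}-\Delta_F$ base-changes to an $m$-basis type divisor $D_{\overline F}$ of $L_{\overline F}$ (a basis of $H^0(X_F,mL_F)$ is a basis of $H^0(X_F,mL_F)\otimes_F\overline F=H^0(X_{\overline F},mL_{\overline F})$), and $\lct(X_{\overline F},\Delta_{\overline F};D_{\overline F})=\lct(X_F,\Delta_F;D)$ since the log canonical threshold is computed on a log resolution and is insensitive to characteristic-$0$ field extensions; take infima over $D$, then let $m\to\infty$. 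In particular, if $\delta(X_{\overline F},\Delta_{\overline F})\ge 1$ then $\delta(X_F,\Delta_F)\ge 1$ as well, and both sides of the claim equal $1$.

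\emph{The K-unstable case.} The remaining case is $\delta:=\delta(X_{\overline F},\Delta_{\overline F})<1$, where I would prove the equality $\delta(X_F,\Delta_F)=\delta$. By \cite{LXZ-HRFG}, $\delta$ is computed by a divisorial valuation over $X_{\overline F}$; by \cite{ABHLX-goodmoduli} the corresponding optimal destabilizing special test configuration can be chosen to be the unique minimizer of ${\rm Fut}/\lVert\cdot\rVert_2$ among the minimizers of ${\rm Fut}/\lVert\cdot\rVert_{\min}$. All the data in sight---$(X_{\overline F},\Delta_{\overline F})$, the invariant $A_{X_{\overline F},\Delta_{\overline F}}$, the $S$-invariant, ${\rm Fut}$, and the norms---is defined over $F$, so $\Gamma:=\mathrm{Gal}(\overline F/F)$ acts on $X_{\overline F}$ preserving it and hence permuting the optimal destabilizers; by uniqueness $\Gamma$ fixes the distinguished one, so it corresponds to a $\Gamma$-invariant divisorial valuation $v$ over $X_{\overline F}$. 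Since $\overline F/F$ is algebraic, $v=(\ord_{E_1})_{\overline F}$ for some finite Galois subextension $F_1/F$ and some $\mathrm{Gal}(F_1/F)$-invariant prime divisor $E_1$ over $X_{F_1}$. As $X_{F_1}\to X_F$ is finite étale with Galois group $\mathrm{Gal}(F_1/F)$ (this is where characteristic $0$ enters), Galois descent yields a prime divisor $E_F$ over $X_F$ with $(\ord_{E_F})_{F_1}=\ord_{E_1}$, and because $K_{X_{F_1}}+\Delta_{F_1}$, the section ring $R(X_{F_1},L_{F_1})=R(X_F,L_F)\otimes_F F_1$, and the filtration it carries from $E_F$ are all base changes from $F$, one gets $A_{X_F,\Delta_F}(\ord_{E_F})=A_{X_{\overline F},\Delta_{\overline F}}(v)$ and $S_{X_F}(\ord_{E_F})=S_{X_{\overline F}}(v)$. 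Therefore $\delta(X_F,\Delta_F)\le A_{X_F,\Delta_F}(\ord_{E_F})/S_{X_F}(\ord_{E_F})=\delta$, and with the easy inequality this forces $\delta(X_F,\Delta_F)=\delta<1$.

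\emph{Main obstacle.} The crux is this final descent: one must check that a $\Gamma$-invariant divisorial valuation on $X_{\overline F}$ comes by base change from a divisorial valuation on $X_F$ with no ramification correction (hence the need for characteristic $0$, i.e. the étaleness of $X_{F_1}\to X_F$) and that the log discrepancy and the $S$-invariant are unchanged---the latter because, over a basis of $R(X_F,L_F)$ compatible with $\ord_{E_F}$, the filtration of $R(X_{F_1},L_{F_1})$ induced by $E_1$ is exactly the base change of the one induced by $E_F$, so all graded dimensions, and thus all $S_m$ and $S$, agree. It is worth noting that the truncation $\min\{1,\cdot\}$ cannot be dropped: $\delta$ can strictly drop under a field extension (for instance from $2$ to $1=\delta(\bP^1)$ when a smooth conic without a rational point is base-changed to its algebraic closure), and the argument above works unconditionally only because the uniqueness of the optimal destabilizer is available precisely when $\delta<1$.
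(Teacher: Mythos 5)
Your proposal is correct and takes the same top-level reduction as the paper — fix compatible algebraic closures, use invariance of $\delta$ for extensions of algebraically closed fields (the paper cites \cite[Proposition 4.15]{CP-CM-positive}), and chain the $F\subseteq\overline{F}$ case twice — but diverges in how you handle $F\subseteq\overline{F}$. The paper simply cites \cite[4.63.iii]{Xu-Kbook}, which rests on the equivariant $\delta$-invariant result of \cite{Z-equivariant}: for a group $G$ acting on a log Fano pair, the infimum of $A/S$ over $G$-invariant valuations agrees with the unrestricted infimum after truncation at $1$. Applied to $G=\mathrm{Gal}(\overline{F}/F)$, one gets $\Gamma$-invariant valuations approaching $\delta(X_{\overline F},\Delta_{\overline F})$ directly, and these descend; no uniqueness of a destabilizer and no finite generation are required. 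You instead manufacture a single $\Gamma$-invariant minimizing divisor by invoking the existence of a divisorial minimizer from \cite{LXZ-HRFG} plus the uniqueness of the $\|\cdot\|_2$-optimal destabilizer from \cite{ABHLX-goodmoduli}, then perform Galois descent on that one divisor. This is logically sound — the uniqueness is exactly what forces the distinguished destabilizer to be fixed by $\Gamma$, and your descent checks (étaleness in characteristic $0$, compatibility of the restriction filtration with base change so that $A$ and all $S_m$ descend unchanged) are the right ones to do — but it is considerably heavier than what is needed: you are proving, as a byproduct, the Galois-equivariant special case of Zhuang's theorem using the full $\Theta$-stratification input that this paper is otherwise at pains to avoid. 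The conic example you give to justify the $\min\{1,\cdot\}$ truncation ($\delta$ dropping from $2$ to $1$) is correct and clarifies a point the paper leaves implicit.
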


\begin{proof}
If $\bk'= \overline{\bk}$, then the equality holds by \cite[4.63.iii]{Xu-Kbook}, which is a consequence of the main result of \cite{Z-equivariant}.
If  $\bk$ and $\bk'$ are both algebraically closed, the result follows from \cite[Proposition 4.15]{CP-CM-positive} with $T=\Spec(\bk)$.
These two special cases imply the full result by taking algebraic closures $\overline{\bk}$ and $\overline{\bk'}$ satisfying $\overline{\bk} \subset \overline{\bk'}$.
\end{proof}

\section{Relative stability threshold}\label{sec-relative stability}

In this section, we introduce the relative stability threshold  of a polarized pair over a DVR.
In particular, we develop the general theory of this invariant and prove existence results on quasi-monomial and divisorial valuations computing the invariant.

\subsection{Invariants for relative stability}\label{ss-invariants}

\subsubsection{Setup} \label{sss:setup}
Throughout this section, we fix the data of
\[
f:(X,\Delta;L) \to C
,\]
where $C$ is the spectrum of a DVR, $(X,\Delta)$ is an lc pair, $f: X\to C$ is a flat projective morphism with geometric fibers that are  connected, reduced, and of relative dimension $n$, and $L$  is a relatively ample $\bQ$-line bundle on $X$. 
Recall that we write $0$ and $\eta$ for the closed and generic points of $C$. 
We also write $\fm:=\fm_{C,0} \subset \cO_{C,0}$ for the maximal ideal of the DVR.

We denote the
the relative section ring of $L$ by
\[
\cR = \bigoplus_{m \in r\bN} \cR_m : = \bigoplus_{m \in r\bN} H^0(X,mL)
\]
and the section ring of $L_0: =L \vert_{X_0}$ by 
\[
R
:= \bigoplus_{m \in r\bN} R_m
 : = \bigoplus_{m \in r\bN} H^0(X_0,mL_0)
.\]
Note that $\cR_m$ is a free $\cO_C(C)$-module as it is a torsion free module over a DVR. 
We furthermore assume that the natural map 
\[
\cR_m \otimes k(0)\to R_{m}
\]
is an isomorphism for all $m\in r\bN$.

\subsubsection{Filtrations}
The notion of a filtration of $\cR$ is defined in Section \ref{sss:deffilt}.
%
%
%
%
The \emph{restriction} of a filtration $\cF$ of $\cR$ to $X_0$
 is the filtration $\cF\vert_{X_0}$ of $R$ defined by
 \[
\cF^\la\vert_{X_0} R_m := \im (\cF^\la \cR_m \to R_m)
 .\]
We define the $S$ and $T$-invariants of a filtration $\cF$ of $\cR$ via the restriction. First, we set 
\[
S_m(\cF) := S_m(\cF\vert_{X_0}) \quad \text{ and } \quad T_m(\cF):=T_m(\cF\vert_{X_0}),
\]
for $m >0$ divisible by $r$ and then
\[
S(\cF):=\limsup_{m\to\infty} S_{mr}(\cF) \quad \text{ and } \quad T(\cF):=\sup_{m \geq 1} T_{mr}(\cF),\]
which are in $[0,\infty]$.

We are primarily interested in  filtrations $\cF$ of $\cR$ with $T(\cF)<+\infty$.
The latter condition implies that $\cF\vert_{X_0}$ is linearly bounded, which further gives that $S(\cF) = \lim_{m \to \infty} S_{mr}(\cF)$ and $T(\cF) = \lim_{m \to \infty} T_{mr}(\cF)$
 and both values are finite by Proposition \ref{p:reducedSinvproperties}.

\subsubsection{Invariants of valuations}
A valuation $v\in \Val_X$ induces a filtration $\cF_v$ of $\cR$ defined by 
\begin{eqnarray}\label{eq-defn-relativevaluative}
\cF_v^\la \cR_m 
:= 
\{ s\in \cR_m \, \vert\, v(s) \geq \la \}
.
\end{eqnarray}
The relative $S$ and $T$ invariants of the valuation $v$ are defined by
\[
  S_{m}(v) := S_{m}(\cF_v), \quad S(v) := S(\cF_v),\quad T_m(v) := T_m(\cF_v),\quad \text{ and } \quad T(v):=T(\cF_v).
\]
These invariants depend on the choice of $L$, which we omit from the notation.

We begin by establishing some basic properties of these invariants that are analogous to those studied in \cite[Section 3]{BJ-delta} in the absolute case.

\begin{lem}\label{lem-charTusingrelativedivisors}
For $v\in\Val_X$,  
\[
T(v):= \sup \{ v(D)\, \vert\, \text{ $D$  is an effective $\bQ$-Cartier divisor with $D\sim_{\bQ}L$ with $X_0 \not\subset \Supp(D)$}\}.
\]
\end{lem}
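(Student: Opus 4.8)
The statement is the relative analogue of the characterization $T(v)=\sup\{v(D) : D\geq 0,\ D\sim_{\bQ}L\}$ in the absolute case \cite[Section 3]{BJ-delta}, and I would prove it by the same two-inequality strategy, being careful about the restriction to $X_0$ and the hypothesis $X_0\not\subset\Supp(D)$.

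First I would prove the inequality ``$\geq$''. Given an effective $\bQ$-Cartier divisor $D\sim_{\bQ}L$ with $X_0\not\subset\Supp(D)$, write $D=\frac{1}{mr}\{s=0\}$ for a suitable section $s\in\cR_{mr}=H^0(X,mrL)$ (after clearing denominators). The condition $X_0\not\subset\Supp(D)$ means precisely that the image $\bar s$ of $s$ in $R_{mr}=H^0(X_0,mrL_0)$ is nonzero, so $s\notin\fm\cdot\cR_{mr}$ and hence $\ord_{\cF_v}(\bar s)\geq v(s)$ with $\bar s\in(\cF_v|_{X_0})^{v(s)}R_{mr}$. Therefore $T_{mr}(\cF_v|_{X_0})\geq \frac{1}{mr}v(s)=v(D)$, and passing to the supremum over $m$ gives $T(v)\geq v(D)$. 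Taking the supremum over all such $D$ yields one direction.

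For the reverse inequality ``$\leq$'', fix $m\in r\bN^+$ and a section $\bar t\in (\cF_v|_{X_0})^{\la}R_m$ realizing (or nearly realizing) $T_m(\cF_v|_{X_0})=\la/m$; by definition of the restriction filtration, $\bar t$ is the image of some $t\in\cF_v^{\la}\cR_m$, i.e. $v(t)\geq\la$. Since $\bar t\neq 0$, the divisor $D:=\frac{1}{m}\{t=0\}$ is an effective $\bQ$-Cartier divisor with $D\sim_{\bQ}L$ and, because $\bar t\neq 0$ in $H^0(X_0,mL_0)$, we have $X_0\not\subset\Supp(D)$. Then $v(D)=\frac{1}{m}v(t)\geq\frac{\la}{m}=T_m(\cF_v|_{X_0})$. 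Taking the supremum over $m$ gives $\sup_D v(D)\geq \sup_m T_{mr}(\cF_v|_{X_0})=T(v)$, completing the proof. (If one wants to avoid the $\limsup$-vs-$\sup$ subtlety, note $T(v)=\sup_{m\geq1}T_{mr}(v)$ by definition, so no limiting argument is needed and the two displayed inequalities combine exactly.)

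\textbf{Main obstacle.} The only genuinely delicate point is the equivalence between ``$X_0\not\subset\Supp(D)$'' and ``the section defining $D$ has nonzero image in $R_m$'', which rests on the standing assumption that $\cR_m\otimes k(0)\to R_m$ is an isomorphism (Setup \ref{sss:setup}); I would state this equivalence explicitly. A secondary subtlety is that an arbitrary $D\sim_{\bQ}L$ need not be of the form $\frac1{mr}\{s=0\}$ on the nose — one first takes a multiple so that $mrD$ is an integral Cartier divisor linearly equivalent to $mrL$, which is harmless for the sup. Everything else is a direct unwinding of the definitions of $\cF_v$, its restriction, and $T_m$.
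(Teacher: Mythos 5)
Your proof is correct and follows essentially the same route as the paper's: both arguments reduce the statement to the observation that $(\cF_v\vert_{X_0})^\la R_m\neq 0$ precisely when some $s\in\cR_m$ with $v(s)\geq\la$ restricts to a nonzero section on $X_0$, which is equivalent to $X_0\not\subset\Supp(\{s=0\})$. You spell out more carefully the passage from $\bQ$-Cartier divisors $D\sim_\bQ L$ to integral sections (which the paper leaves implicit by stating the intermediate identity $T_m(v)=\sup\{v(\tfrac1m H):H\in|mL|,\ X_0\not\subset\Supp(H)\}$), but the underlying mechanism is identical.
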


\begin{proof}
Observe that  $\cF_v^\la\vert_{X_0} R_m \neq 0$ if and only if there exists  $s\in \cF_v^\la \cR_m$ such that $s$ does not vanish on $X_0$.
Therefore
\[
T_m(v)= \sup \{ v(\tfrac{1}{m}H) \,\vert\, H \in  |mL| \text{ and } X_0 \not\subset \Supp(H) \}
.\]
Using that $T(v) = \sup_m T_{mr}(v)$, we deduce the desired characterization of $T(v)$.
\end{proof}

\begin{lem}\label{lem-delta positive}
If $v\in \Val_X^\circ$, then $T(v)<+\infty$. Furthermore, if $(X,\Delta)$ is klt, then there exists a constant $c>0$ such that $c\cdot T(v)\le A_{X,\Delta}(v)$ for all $v\in \Val_X$.
\end{lem}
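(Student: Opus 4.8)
The plan is to bound $T(v)$ by a global analogue of a Seshadri-type constant and then invoke Izumi's inequality to compare with $A_{X,\Delta}(v)$. First I would use Lemma \ref{lem-charTusingrelativedivisors}, which gives
\[
T(v)=\sup\{v(D)\mid D\sim_{\bQ}L,\ D\ge 0\text{ effective }\bQ\text{-Cartier},\ X_0\not\subset\Supp(D)\}.
\]
Fix a point $\xi:=c_X(v)\in X$ (the center); if $v$ is the trivial valuation there is nothing to prove, so assume $\xi$ is a proper closed subscheme. Since $L$ is relatively ample and $X\to C$ is projective, after twisting by a suitable multiple of the pullback of a generator of $\fm$ (or rather, replacing $L$ by $L+f^*(\text{ample on }C)$, which does not change the relative section ring up to a shift and leaves the geometric fibers untouched) we may assume $L$ is ample on $X$. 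The key geometric input is then: there is a constant $N>0$, depending only on $(X,L)$, such that for every $\bQ$-Cartier effective $D\sim_{\bQ}L$ with $X_0\not\subset\Supp(D)$ one has $\mult_\xi D\le N$ in an appropriate sense, hence $v(D)\le N\cdot A_{X,\Delta}(v)$; equivalently, the restricted volume $\vol_{X|X_0}(L-s\,\mathrm{Fix})$ vanishes for $s$ large. Concretely, since $L$ is ample, $L|_{X_0}$ is ample on the geometrically reduced fiber $X_0$, so by Proposition \ref{p:reducedSinvproperties}(1) applied to $\cF_v|_{X_0}$ we know $T(v)=\lim T_{mr}(v)$ is finite as soon as the filtration $\cF_v|_{X_0}$ is linearly bounded; and linear boundedness of $\cF_v|_{X_0}$ follows because $v(s)$ for $s\in\cR_m$ is controlled by $\ord_\xi(s|_{X_0})$ up to a uniform Izumi constant on each component of $X_0$ through $\xi$, which is bounded linearly in $m$ by ampleness of $L_0$. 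This proves $T(v)<+\infty$.

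For the second assertion, assume $(X,\Delta)$ is klt. I would produce the uniform constant $c>0$ by combining: (i) a global Izumi/Li-type bound stating that there is $c_1>0$ with $c_1\cdot T(v)\le v(\fm_\xi)\cdot(\text{something})$ — more precisely, $T(v)\le c_1^{-1}\,v(I_\xi\cdot\cO_X)$ for the ideal $I_\xi$ of the center — uniform over all $v$ because it only uses the ample line bundle $L$ and the short exact sequences computing $H^0(X,mL)$ modulo vanishing at $\xi$; and (ii) the fact that for a klt pair $(X,\Delta)$ there is a constant $c_2>0$ with $v(I_Z)\le c_2^{-1}A_{X,\Delta}(v)$ for all $v$ centered on any given irreducible closed $Z$, which is a standard consequence of the boundedness of log discrepancies on a klt pair — this is exactly the kind of statement used in \cite{BJ-delta} in the absolute case and in \cite{Xu-Kbook} (log discrepancy controls multiplicity). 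Since $X$ is Noetherian, the centers $\xi$ range over finitely many ``strata'' after stratifying by, say, the singular locus and the components of $X_0$, so one can take $c$ to be the minimum of finitely many such constants; alternatively one can use a single Izumi-type inequality on a log resolution of $(X,\Delta)$ that works uniformly. Putting (i) and (ii) together yields $c\cdot T(v)\le A_{X,\Delta}(v)$ for all $v\in\Val_X$ with $c:=c_1 c_2$ (and the inequality is vacuous when $A_{X,\Delta}(v)=+\infty$ or when $v$ is trivial).

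The main obstacle I anticipate is the \emph{uniformity} in the second statement: getting a constant $c$ that works for \emph{all} valuations $v$ simultaneously, not just for each fixed center. In the absolute case this is handled by \cite[Corollary 3.6 or Theorem A]{BJ-delta} via a resolution argument plus properness of $X$; here I would reduce to that case by passing to a log resolution $\mu:Y\to X$ of $(X,\Delta)$ together with the special fiber, writing $K_Y+\Delta_Y=\mu^*(K_X+\Delta)$ with $\Delta_Y$ having coefficients $<1$ (klt), and then bounding $v(\mu^*L)=v(L)$ in terms of $\sum_i v(E_i)$ over the finitely many prime divisors $E_i$ on $Y$ meeting the relevant center, each term controlled by $A_{X,\Delta}(E_i)^{-1}A_{X,\Delta}(v)$ via the monotonicity/quasi-monomial approximation of $A_{X,\Delta}$. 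Since $Y$ has finitely many such divisors and $L$ is ample, the supremum defining $T(v)$ is then bounded by a finite combination, giving the uniform $c$. The bookkeeping needed to make ``$v(L)$'' literally mean $v$ of a general effective divisor in $|{\sim_\bQ}L|$ not containing $X_0$, and to ensure the resolution can be chosen once and for all, is the delicate part, but it is routine given the finite-type hypotheses and the results quoted above.
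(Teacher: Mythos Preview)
Your high-level strategy (pass to a log resolution and invoke an Izumi-type inequality) matches the paper, but there is a genuine gap at the step you label ``key geometric input'': the uniform bound $\mult_\xi D\le N$ for all effective $D\sim_{\bQ}L$ with $X_0\not\subset\Supp(D)$. You assert this and move on, but this is precisely the nontrivial point in the relative setting. On a log resolution $\mu:Y\to X$, the pullback $\mu^*D$ decomposes as a horizontal part $\Gamma_1$ plus a vertical part $\Gamma_0$ supported on $Y_0$. The horizontal part is easy to bound by intersecting with an ample on each component of $Y_0$, but the vertical part $\Gamma_0$ is not: even though $D$ does not contain $X_0$, the pullback $\mu^*D$ can pick up large multiples of the exceptional components lying over $X_0$, and nothing in your outline prevents these coefficients from growing without bound as $D$ varies. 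The paper handles this via a separate lemma (Lemma~\ref{l:mult bdd along X_0}) which uses the negative definiteness of the intersection form on the components of $Y_0$ (Zariski's Lemma) to extract a uniform bound on the vertical coefficients. Your proposal does not address this, and the alternative you suggest in item~(i)---a bound of the form $T(v)\le c_1^{-1}v(I_\xi\cdot\cO_X)$---does not make sense as written, since the right-hand side depends on the center $\xi$ and hence on $v$, so it cannot give a uniform constant.

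A second, smaller issue: your argument for $T(v)<\infty$ in the merely lc case is circular. You write that linear boundedness of $\cF_v|_{X_0}$ follows because $v(s)$ is controlled by $\ord_\xi(s|_{X_0})$ via Izumi, but Izumi requires a klt (or at least regular) ambient pair, and in the lc case you only know $A_{X,\Delta}(v)<\infty$. The paper resolves this by applying Izumi on the log resolution $(Y,0)$, which is regular, and using $A_{Y,0}(v)=A_{X,\Delta}(v)+v(\Delta_Y)<\infty$; you need something analogous. Also, the reduction ``we may assume $L$ is ample on $X$'' is not meaningful here: $X$ is only projective over a DVR, so there is no absolute ampleness to speak of, and the argument must stay relative throughout.
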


Using Proposition \ref{p:reducedSinvproperties}, the lemma implies that $S(v)= \lim_{m \to \infty} S_{mr}(v)$ for all $v\in \Val_X^\circ$. 

\begin{proof}
Fix a projective log resolution $\mu\colon Y\to X$ of $(X,\Delta) $, and write $\Delta_Y$ for the $\bQ$-divisor on $Y$ such that $K_Y+ \Delta_Y= \mu^*(K_X+\Delta)$.
Fix a divisor $A$ on $Y$ that is ample over $C$.

We first claim that there exists a constant $c_1$ such that 
\[
\mult_y(\mu^* D) \leq c_1
\]
for all effective $\bQ$-divisor $ D\sim_\bQ L$  with $X_0 \not\subset \Supp(D)$ and $y\in Y_0$.
Write $\mu^*D=\Gamma_0+\Gamma_1$, where $\Supp(\Gamma_0) \subset Y_0$ and  $\Gamma_1$ does not contain any component of $Y_0$ in its support. By  Lemma \ref{l:mult bdd along X_0} below, the coefficients of $\Gamma_0$ are uniformly bounded
and so $\mult_y \Gamma_0$ is uniformly bounded  for any $y\in Y_0$. Thus  the degree of $\Gamma_1|_F\sim_\bQ (D-\Gamma_0)|_F$ with respect to $A\vert_F$  is uniformly bounded for every component $F$ of $Y_0$. The latter implies that $\mult_y \Gamma_1\le \mult_y (\Gamma_1|_F)$ is also uniformly bounded for all $y\in F$. Thus  the claim holds.

If $(X,\Delta)$ is klt, then $(Y,\Delta_Y)$ is klt and so is $(Y, \Delta_Y^{\geq 0})$. 
Therefore the version of Izumi's theorem  \cite[Proposition 7.45]{Xu-Kbook} (whose proof extends to the case of excellent klt pairs) 
implies that there exists a constant $c_2>0$ dependent only on $(Y,\Delta_Y^{\geq 0})$
 such that 
\[
v (H ) \leq c_2 A_{Y,\Delta^{\geq 0}_Y}( v) \mult_y(H)
\]
for all $v\in \Val_Y$ and $y \in C_Y(v)$ and effective $\bQ$-Cartier  $\bQ$-divisors $H$ on $Y$.
Using that $\Val_X = \Val_Y$, $v(D) = v(\mu^*D)$,  and
$
A_{X,\Delta}(v) = A_{Y,\Delta_Y}(v) \leq A_{Y, \Delta^{\geq 0}_Y}(v)
$, we see that 
\[
v(D) \leq c_1 c_2 A_{X,\Delta}(v) 
\]
for all $v\in \Val_{X}$ and effective $\bQ$-divisors $D\sim_{\bQ}L$.
Using Lemma  \ref{lem-charTusingrelativedivisors}, we conclude that
$T(v) \leq c_1 c_2 A_{X,\Delta}(v)$ for all $v\in \Val_X$. 

It remains to consider the case when $(X,\Delta)$ is not klt and and $v\in \Val_X$ satisfies $A_{X,\Delta}(v)<\infty$.
Since $Y$ is regular, $(Y,0)$ is klt. 
Thus \cite[Proposition 7.45]{Xu-Kbook} applied to $(Y,0)$ implies that there exists a constant $c_3>0$ such that
\[
v (H) \leq c_3 A_{Y,0}( v) \mult_y(H)
\]
for all  $y \in C_Y(v)$ and effective $\bQ$-Cartier  divisors $H$ on $Y$.
As $A_{Y,0}(v) = A_{Y,\Delta_Y}(v) +v(\Delta_Y) < \infty$, the same argument as above shows that $T(v) <\infty$.
\end{proof}

The following result is used in the above proof.

\begin{lem} \label{l:mult bdd along X_0}
Let $Y \to C$ be a projective dominant morphism with connected fibers from a regular scheme $Y$ to a spectrum of a DVR $C$.
For any Cartier divisor $M$ on $Y$, there exists a constant $c>0$ such that 
\[
{\rm mult}_F(\Gamma) \leq c
\]
for all  $\bQ$-divisors $0 \leq \Gamma\sim_\bQ M$ with $Y_0 \not\subset \Supp(\Gamma)$ and irreducible component $F\subset Y_0$.
\end{lem}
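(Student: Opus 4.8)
The plan is to prove this by reducing to a statement on each irreducible component of $Y_0$ separately and then using the boundedness of the fiber degree together with Izumi-type estimates for multiplicities. First I would set up the decomposition: write $Y_0 = \sum_i m_i F_i$ as a divisor, where the $F_i$ are the distinct irreducible components and $m_i > 0$. It suffices to bound $\mathrm{mult}_{F_i}(\Gamma)$ for each fixed $i$, so fix one component $F := F_i$ and a general point $y \in F$. The key point is that $\mathrm{mult}_F(\Gamma) = \mathrm{mult}_y(\Gamma)$ when $y$ is a general (in particular, regular) point of $F$ at which $Y$ is regular and $F$ is a Cartier divisor, so it is enough to bound $\mathrm{mult}_y(\Gamma)$ for a well-chosen general $y \in F$.

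Next I would fix once and for all a divisor $A$ on $Y$ that is ample over $C$. Given $0 \leq \Gamma \sim_{\bQ} M$ with $Y_0 \not\subset \mathrm{Supp}(\Gamma)$, I decompose $\Gamma = \Gamma_{\mathrm{hor}} + \Gamma_{\mathrm{ver}}$, where $\Gamma_{\mathrm{ver}}$ is supported on $Y_0$ and $\Gamma_{\mathrm{hor}}$ has no component contained in $Y_0$. Since $Y_0 \not\subset \mathrm{Supp}(\Gamma)$, the multiplicity of $\Gamma_{\mathrm{ver}}$ along any $F_i$ is strictly less than $m_i$; actually I only need that it is bounded, which is immediate because $\Gamma_{\mathrm{ver}} \leq \frac{1}{m_i} \cdot (\text{something})$ — more precisely, since $\mathrm{mult}_{F_i}(\Gamma_{\mathrm{ver}})$ contributes to $\Gamma_{\mathrm{ver}}$ and $\mathrm{Supp}(\Gamma) \not\supset Y_0$ forces $\mathrm{mult}_{F_j}(\Gamma_{\mathrm{ver}}) = 0$ for at least one $j$, one needs a uniform bound. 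The cleanest route: the coefficient of $F_i$ in $\Gamma_{\mathrm{ver}}$ is bounded because $\Gamma_{\mathrm{ver}} \sim_{\bQ} M - \Gamma_{\mathrm{hor}}$ restricted to a horizontal curve forces the vertical part to lie in a finitely-generated subgroup modulo $\sim_{\bQ}$; concretely, vertical $\bQ$-divisors with $Y_0 \not\subset \mathrm{Supp}$ that are $\sim_{\bQ}$ to a fixed class form a bounded family since the relevant part of $N^1(Y/C)$ is finite-dimensional and the effectivity-plus-not-containing-$Y_0$ constraint cuts out a bounded polytope. For the horizontal part, I intersect with the fiber $F$: the degree of $\Gamma_{\mathrm{hor}}|_F$ with respect to $A|_F$ equals $(M - \Gamma_{\mathrm{ver}})|_F \cdot (A|_F)^{\dim F - 1}$, which is uniformly bounded since $\Gamma_{\mathrm{ver}}$ ranges over a bounded set and $M, A$ are fixed.

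Then the final step uses that a uniform bound on the $A|_F$-degree of an effective divisor $\Gamma_{\mathrm{hor}}|_F$ on the (possibly singular, but reduced, and — after passing to normalization if needed — well-behaved) fiber component $F$ gives a uniform bound on its multiplicity at a general point $y \in F$: indeed, for a general very ample section $H \in |A|_F|^{\dim F - 1}$ (a complete-intersection curve through $y$ avoiding the non-regular locus), $\mathrm{mult}_y(\Gamma_{\mathrm{hor}}|_F) \leq \Gamma_{\mathrm{hor}}|_F \cdot H = \deg_{A|_F}(\Gamma_{\mathrm{hor}}|_F)$, which is the bound we want. Combining with the bound on $\mathrm{mult}_y(\Gamma_{\mathrm{ver}})$ and choosing $y$ general in each $F_i$, I get $\mathrm{mult}_{F_i}(\Gamma) = \mathrm{mult}_y(\Gamma_{\mathrm{hor}} + \Gamma_{\mathrm{ver}}) \leq c$ for a constant $c$ depending only on $Y \to C$, $M$, and the choice of $A$. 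The main obstacle I anticipate is making the boundedness of the vertical part $\Gamma_{\mathrm{ver}}$ (given $\Gamma_{\mathrm{ver}} \sim_{\bQ} M - \Gamma_{\mathrm{hor}}$, $\Gamma_{\mathrm{ver}} \geq 0$, and $Y_0 \not\subset \mathrm{Supp}(\Gamma_{\mathrm{ver}})$) genuinely uniform; one should argue that the set of such $\Gamma_{\mathrm{ver}}$, viewed inside the finite-dimensional space spanned by the components $F_i$, is cut out by finitely many linear inequalities (effectivity, at least one coefficient zero) together with finitely many linear equations (numerical triviality over $C$ of $\Gamma_{\mathrm{ver}}$ plus the difference with a reference divisor, detected by intersecting with finitely many horizontal curves), hence compact, hence bounded.
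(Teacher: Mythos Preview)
Your overall decomposition $\Gamma=\Gamma_{\mathrm{ver}}+\Gamma_{\mathrm{hor}}$ matches the paper's, but you have misidentified where the content lies. The quantity $\mathrm{mult}_F(\Gamma)$ in the lemma is simply the coefficient $b_i$ of $F=F_i$ in $\Gamma$, hence in $\Gamma_{\mathrm{ver}}$. Your passage through a general point $y\in F$ and the subsequent bound on $\mathrm{mult}_y(\Gamma_{\mathrm{hor}})$ is a detour: at a $y$ general enough to satisfy $\mathrm{mult}_F(\Gamma)=\mathrm{mult}_y(\Gamma)$, one already has $\mathrm{mult}_y(\Gamma_{\mathrm{hor}})=0$, and your degree bound on $\Gamma_{\mathrm{hor}}|_F$ (which you correctly note requires the vertical bound as input) plays no role. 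The entire lemma reduces to bounding the vertical coefficients $b_j$.

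That is exactly the step you flag as the ``main obstacle'', and your proposed resolution has a genuine gap. Your polytope argument asserts that the $b_j$ satisfy, in addition to $b_j\ge 0$ and $b_{j_0}=0$ for some $j_0$, ``finitely many linear equations'' coming from intersecting with horizontal curves. But there are no such equations: from $\Gamma_{\mathrm{ver}}+\Gamma_{\mathrm{hor}}\sim_{\bQ}M$ you only get $(\Gamma_{\mathrm{ver}}\cdot C)=(M\cdot C)-(\Gamma_{\mathrm{hor}}\cdot C)$, and $(\Gamma_{\mathrm{hor}}\cdot C)$ is an unknown varying with $\Gamma$. Effectivity and one vanishing coordinate alone cut out an unbounded union of orthant faces, so the set is not compact. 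The paper supplies the missing mechanism: it first cuts by general hyperplanes to reduce to a regular surface, and then uses Zariski's lemma (negative definiteness of the intersection form on $\sum_{j\neq i}\bR[F_j]$). Concretely, with $d:=\max_j b_j/a_j$ attained at $i$, one constructs a reference divisor $M_i$ supported on $\cup_{j\neq i}F_j$ with $(M_i\cdot F_j)=(M\cdot F_j)$ for $j\neq i$, shows $\Gamma_0-dY_0\ge M_i$ via negative definiteness and the inequality $(\Gamma_0\cdot F_j)\le (M\cdot F_j)$ coming from $(\Gamma_1\cdot F_j)\ge 0$, and then reads off a bound for $d$ at an index $k$ with $b_k=0$. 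Your sketch could be repaired along these lines, but as written it does not bound the vertical part.
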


\begin{proof}
By cutting by general hyperplane sections and applying Bertini theorem (see \cite[Theorem 10.1]{Lyu-Murayama}), we may reduce to the case when $Y$ is regular of dimension two. Let $F_1,\dots,F_s$ be the irreducible components of $Y_0$.

Write $\Gamma=\Gamma_0+\Gamma_1$, where $\Gamma_0$ is supported on $Y_0$ and $\Gamma_1$ does not contain any $F_i$ in its support. 
Write
$Y_0 = \sum_{i} a_j F_j$ and $\Gamma_0 = \sum_j b_j F_j$.
Let 
\[
d:= \max_j b_j/ a_j
\]
and $i \in\{1,\ldots, s\}$ be an index on which the maximum is achieved.
By our choice of $d$, $\Gamma_0-dY_0 \leq 0$ and is supported on $\cup_{j\neq i} F_i$. 
Since $\max_j \mult_{F_j} \Gamma_0 \leq d \max_j \mult_{F_j}(Y_0)$, 
it suffices to find an upper bound for $d$.

To proceed, we use that
the  intersection form on 
$\sum_{j\neq i} \bR \cdot [F_i]$ is negative definite
by Zariski's Lemma \cite[Lemma 1.74]{Xu-Kbook}.
Thus there exists  a $\bQ$-divisor $M_i$ supported on $\cup_{j\neq i} F_j$ such that $
(M\cdot F_j)=(M_i\cdot F_j)
$
for all $j\neq i$. 
Note that $\Gamma_0-dY_0 \leq 0$ and is supported on $\cup_{j\neq i} F_i$. 
Since $\Gamma_1$ does not contain any $F_j$ in its support,
\[(M_i\cdot F_j)=(M\cdot F_j)=(\Gamma\cdot F_j)\ge (\Gamma_0\cdot F_j) = ((\Gamma_0 - d Y_0)\cdot F_j)\] for all $j\neq i$. 
Thus $D:= M_i - (\Gamma_0-d Y_0)$ satisfies  \[
D\cdot F_j \geq 0 \text{ for $j \neq i$}
\quad \text{ and } \quad 
\Supp(D) \subset \cup_{j\neq i} F_j.
\]
Using that the intersection form is negative definite, we deduce that  
$D\leq 0$. 
Thus $\Gamma_0 - d Y_0 \geq M_i$. 
For $k\in \{1,\ldots, s\}$ such that ${\rm mult}_{F_k}(\Gamma_0) = 0$,
we have 
\[
-d \cdot \mult_{F_k}(Y_0) ={\rm mult}_{F_k}(\Gamma_0- d Y_0) \geq {\rm mult}_{F_k}(M_i) . 
\]
Thus 
\[
d \leq - {\rm mult}_{F_k} (M_i)/ \mult_{F_k}(Y_0) 
\leq - \big(\min_k {\rm mult}_{F_k}(M_i) \big)/ \big(\min_j \mult_{F_j}(Y_0)\big)
.
\]
By taking the maximum of the last value over all $1\leq i\leq s$, we obtain an upper bound for $d$ independent of the choice of $\Gamma$.
\end{proof}

\begin{lem} \label{l:Afinite->linbounded}
If $v\in \Val_X$ with $T(v)<\infty$ and $d$ is a positive integer, 
then
\[
\cF^{C (m+1)}_v \cR_m\subseteq \fm^d \cdot  \cR_m
\]
for all $m\in r\bN$, where $C:=T(v)+v(\fm)d+1$.
\end{lem}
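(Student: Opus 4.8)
The plan is to argue by contradiction, combining the freeness of $\cR_m$ over the DVR $\cO_C(C)$ with the compatibility of $v$ with the $\fm$-adic filtration on $\cR_m$ and with the restriction filtration $\cF_v\vert_{X_0}$ on $R_m$. Fix a uniformizer $\pi$ of $\cO_C(C)$, so that $\fm=(\pi)$, and recall that $v(\fm)=v(\pi)$ is finite (since $\pi\neq 0$) and nonnegative (since $\pi$ lies in the local ring of $X$ at the center of $v$). The case $m=0$ is trivial, so I will assume $m\in r\bN$ with $m\geq r$.

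The first step is to take a nonzero $s\in\cF_v^{C(m+1)}\cR_m$, so that $v(s)\geq C(m+1)$, and to suppose for contradiction that $s\notin\fm^d\cR_m$. Because $\cR_m$ is free over the DVR $\cO_C(C)$, I can write $s=\pi^k s'$ with $0\leq k\leq d-1$ and with $s'$ having nonzero image $\overline{s'}$ in $\cR_m\otimes k(0)\cong R_m$, the isomorphism being the one assumed in Setup \ref{sss:setup}. The heart of the argument is then a two-sided estimate on $v(s)$: on the one hand, $s/s'=\pi^k$ as rational functions yields $v(s)=k\,v(\fm)+v(s')$; on the other hand, $\overline{s'}$ is a nonzero element of $(\cF_v\vert_{X_0})^{v(s')}R_m$ by the definition of the restriction filtration, so this graded piece is nonzero and hence $v(s')\leq m\,T_m(v)\leq m\,T(v)$. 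Combining these, $v(s)\leq(d-1)\,v(\fm)+m\,T(v)$, and since $C(m+1)-\bigl((d-1)\,v(\fm)+m\,T(v)\bigr)\geq T(v)+m\,d\,v(\fm)+(m+1)>0$, this contradicts $v(s)\geq C(m+1)$. Therefore $s\in\fm^d\cR_m$, which is the assertion.

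I do not expect a real obstacle: once the statement is unwound the argument is pure bookkeeping. The only points requiring a little care are verifying that $s'$ has nonzero reduction in $R_m$ — which is exactly where the isomorphism $\cR_m\otimes k(0)\cong R_m$ from Setup \ref{sss:setup} is used — that this reduction lies in the asserted piece of $\cF_v\vert_{X_0}$, and that the displayed constant $C=T(v)+v(\fm)d+1$ is chosen large enough for the final inequality to close, which it does with a comfortable margin.
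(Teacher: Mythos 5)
Your proof is correct and takes essentially the same approach as the paper: both hinge on the two facts that $v(\pi s)=v(\pi)+v(s)$ and that a section with $v$-value exceeding $mT(v)$ must vanish along $X_0$ (i.e.\ lie in $\fm\cR_m$). The paper packages this as the module identity $\cF_v^{\la}\cR_m=\fm\cdot\cF_v^{\la-c}\cR_m$ for $\la>\max\{mT(v),c\}$ and iterates it $d$ times, whereas you factor $s=\pi^k s'$ in one step and argue by contradiction, which is just the element-level unwinding of the same iteration.
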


\begin{proof}
If $\la > mT(v)$, then $\cF_{v}^\la \vert_{X_0} =0$ and so
\[
\cF_v^\lambda \cR_m \subseteq \fm\cdot \cR_m
.\]
Since  $v$ is a valuation and $\cO_{C,0}$ is a DVR, 
\[
\cF_v^{\lambda}\cR_m \cap (\fm\cdot \cR_m)=\fm \cdot \cF_v^{\lambda-c} \cR_m
\]
 for all $\lambda\ge c:=v(\fm)$ and $m\in r\bN$.
Combined with the previous inclusion we obtain
\[
\cF_v^{\lambda} \cR_m = \fm \cdot \cF_v^{\lambda-c} \cR_m
\]
for all $\lambda>\max\{mT(v),c\}$ and $m \in r\bN$. 
Thus for any integer $d>0$, repeatedly applying the previous equality gives 
\begin{equation} \label{eq:F_v linearly bounded}
\cF_v^{mT(v)+cd+1}\cR_m=
\fm^d  \cdot \cF_v^{mT(v)+1}\cR_m
\subset
\fm^d \cdot \cR_m
\end{equation}
for all $m\in r\bN$. Thus if we set $C:= T(v) + cd +1$,  then 
$\cF_v^{ C(m+1) } \cR_m
\subset \fm^d \cdot \cR_m
$.
\end{proof}

\begin{lem}\label{l:S>0}
 If $v\in \Val_X^\circ$ is not equal to $c\cdot \ord_{X_0}$ for any $c\geq 0$, then 
$S(v) > 0$.
\end{lem}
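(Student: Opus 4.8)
The plan is to prove the contrapositive: if $v\in\Val_X^\circ$ and $S(v)=0$, then $v=c\cdot\ord_{X_0}$ for some $c\ge 0$. The first step reduces this to a statement about $T(v)$. Since $v\in\Val_X^\circ$, Lemma \ref{lem-delta positive} gives $T(v)<\infty$, so $\cF_v\vert_{X_0}$ is a linearly bounded filtration of $R=R(X_0,L_0)$; moreover $\cF_v^0\cR_m=\cR_m$ since every section of $mL$ has nonnegative $v$-value, so $\cF_v^0\vert_{X_0}R=R$, the scheme $X_0$ is equidimensional of dimension $n$ (it is a Cartier divisor in the integral scheme $X$), and $L_0$ is ample. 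Thus Proposition \ref{p:reducedSinvproperties}(4) applies and yields $S(v)>0$ whenever $T(v)>0$. Hence it suffices to show: if $T(v)=0$, then $v=c\cdot\ord_{X_0}$ for some $c\ge 0$.

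So assume $T(v)=0$. If $v$ is trivial we are done, so assume $v$ is nontrivial and set $Z:=C_X(v)\subsetneq X$, an irreducible closed subset of dimension $\le n$. Using the isomorphism $\cR_m/t\cR_m\xrightarrow{\ \sim\ }R_m$ from the Setup, the condition $T(v)=0$ translates exactly into: for every $m\in r\bN$, every section $s\in\cR_m$ with $v(s)>0$ restricts to $0$ on $X_0$, i.e. lies in $t\cR_m$. Since $v$ dominates $\cO_{X,\eta_Z}$, every $s\in H^0(X,mL\otimes\cI_Z)$ satisfies $v(s)>0$, so all such sections vanish on $X_0$.

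I then claim $X_0\subseteq Z$. If not, the nonempty open subset $X_0\setminus Z$ of the Jacobson scheme $X_0$ contains a point $x$ closed in $X_0$, hence closed in $X$. As $L$ is ample on $X$ (it is relatively ample over the affine base $C$), for $m\gg 0$ the sheaf $\cI_Z\otimes\cO_X(mL)$ is globally generated; since $x\notin Z$ we have $\cI_{Z,x}=\cO_{X,x}$, so evaluation at $x$ surjects $H^0(X,mL\otimes\cI_Z)$ onto $(mL)\otimes k(x)\cong k(x)$, and there is $s\in H^0(X,mL\otimes\cI_Z)$ with $s(x)\ne 0$. Then $s\vert_{X_0}$ is a section of $mL_0$ nonzero at $x\in X_0$, hence $s\vert_{X_0}\ne 0$, contradicting the previous paragraph. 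Therefore $X_0\subseteq Z$; as $Z$ is irreducible of dimension $\le n$ and $X_0$ is equidimensional of dimension $n$, this forces $Z=X_0$, in particular $X_0$ is irreducible and $c_X(v)=\eta_{X_0}$. Since $X$ is normal and $X_0$ is a prime divisor, $\cO_{X,\eta_{X_0}}$ is a DVR; a valuation with center $\eta_{X_0}$ dominates it and hence equals $c\cdot\ord_{X_0}$ for some $c>0$. This proves the contrapositive. (When $X_0$ is reducible the same argument shows that no nontrivial $v\in\Val_X^\circ$ has $T(v)=0$, so every such $v$ has $S(v)>0$; one reads ``$v=c\cdot\ord_{X_0}$'' with $c=0$ as ``$v$ trivial''.)

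The only real content is the production, for $m\gg 0$, of a section of $mL$ vanishing along the center $C_X(v)$ but not identically along $X_0$; this is exactly where global generation of the twisted ideal sheaf $\cI_{C_X(v)}(mL)$ is used, and it is the step that invokes the hypothesis, reformulated as $X_0\not\subseteq C_X(v)$ outside the trivial case. The remaining steps are bookkeeping: translating $T(v)=0$ into vanishing of restricted sections via the identification $\cR_m/t\cR_m\cong R_m$, and invoking Proposition \ref{p:reducedSinvproperties}(4) to pass from $T(v)$ back to $S(v)$. I do not anticipate a serious obstacle; the main point requiring care is the possible reducibility of $X_0$, handled as indicated above.
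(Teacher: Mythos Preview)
Your proof is correct and takes essentially the same approach as the paper: both arguments use global generation of $\cI_{C_X(v)}\otimes\cO_X(mL)$ to produce a section with positive $v$-value that does not vanish along some component of $X_0$, thereby showing $T(v)>0$, and then invoke Proposition~\ref{p:reducedSinvproperties}(4) (using equidimensionality of $X_0$) to deduce $S(v)>0$. The paper argues directly while you go via the contrapositive with a more careful case analysis for reducible $X_0$, but the content is the same.
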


\begin{proof}
By our assumption,  $v$ is not the trivial  valuation and $C_X(v)\neq \Supp(X_0)$.
Choose $m$ sufficiently large and divisible  so that $\cO_{X}(mL)\otimes \mathcal{I}_{C_X(v)}$ is globally generated. 
For such $m$, there exists $s\in \cR_m$ such that $\la:=v(s)>0$, but $s$ does not vanish along some component of $X_0$.
By Lemma \ref{lem-charTusingrelativedivisors}, $T(v)>0$. Therefore $S(v) > 0$ by Proposition \ref{p:reducedSinvproperties}.
\end{proof}

For any $v\in \Val_X$ and $t\ge 0$, let
\[
\vol_{X_0}(L;v\ge t):=\limsup_{m\to\infty} \frac{\dim \cF_v^{mt}\vert_{X_0} R_m}{m^n/n!} \, .
\]

\begin{lem}\label{l:Svintresvol}
For $v\in \Val_X^\circ$, we have 
$
S(v) = \frac{1}{\vol(L_0)} \int_0^\infty \vol_{X_0}(L; v\ge t) \, dt 
$.
\end{lem}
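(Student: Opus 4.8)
The plan is to recognize $S(v)$ as an integral of (restricted) volumes of the filtered pieces $\cF_v^\bullet|_{X_0}$ and then to apply the already-developed machinery for reduced projective schemes, namely Proposition \ref{p:reducedSinvproperties}. First I would note that by Lemma \ref{lem-delta positive}, $v\in\Val_X^\circ$ implies $T(v)<\infty$, so the restricted filtration $\cF_v|_{X_0}$ is a linearly bounded filtration of the section ring $R=R(X_0,L_0)$, and moreover $\cF_v^0|_{X_0} R = R$ (every section has nonnegative value since $v$ is a valuation, and one may normalize; more precisely the relevant shift is harmless). Since $L_0$ is ample on the geometrically reduced scheme $X_0$, Proposition \ref{p:reducedSinvproperties}.3 applies directly to the filtration $\cG := \cF_v|_{X_0}$ of $R$ and yields
\[
S(\cG) = \frac{1}{\vol(V_\bullet)} \int_0^\infty \vol(V_\bullet^{\cG,t})\, dt,
\]
where $V_\bullet = R(X_0,L_0)$. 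By definition $S(v) = S(\cF_v) = S(\cF_v|_{X_0}) = S(\cG)$, and $\vol(V_\bullet) = \vol(L_0)$ since $V_\bullet$ is the full section ring of the ample line bundle $L_0$ on the reduced scheme $X_0$ (here $\vol$ is taken with the normalization $(mr)^n/n!$ matching the definition in Section \ref{ss:reducible}).

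The remaining point is to identify $\vol(V_\bullet^{\cG,t})$ with $\vol_{X_0}(L;v\ge t)$. By definition $V_m^{\cG,t} = \cG^{mt} R_m = \cF_v^{mt}|_{X_0} R_m$, so
\[
\vol(V_\bullet^{\cG,t}) = \limsup_{m\to\infty} \frac{\dim_\bk \cF_v^{mrt}|_{X_0} R_{mr}}{(mr)^n/n!} = \vol_{X_0}(L;v\ge t),
\]
which is exactly the quantity in the statement. Combining the two displays gives
\[
S(v) = \frac{1}{\vol(L_0)}\int_0^\infty \vol_{X_0}(L;v\ge t)\, dt,
\]
as claimed.

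The main thing to be careful about — and what I expect to be the only real obstacle — is the normalization of the filtration: Proposition \ref{p:reducedSinvproperties}.3 requires $\cF^0 R = R$, i.e.\ that $\cG$ is an $\bN$-like filtration starting at level $0$. For $v\in\Val_X^\circ$ the restricted filtration $\cF_v|_{X_0}$ satisfies $\cF_v^0|_{X_0} R = R$ because $v$ takes nonnegative values on $\cR$ (more precisely, after noting $v(s)\ge 0$ is automatic for $v$ a valuation trivial on $\bk^\times$ only up to a shift, but the definition of $\cF_v$ only records $v(s)\ge\la$ and $v$ restricted to $\cO_C(C)$-coefficients can contribute; one checks $\cF_v^0\cR_m = \cR_m$ by the ultrametric inequality applied to a monomial basis, or simply invokes that $S$ is shift-invariant in a way that reduces to this case). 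I would handle this by the same normalization remark used implicitly throughout Section \ref{sec-relative stability}, or by observing that both sides of the claimed identity only depend on the graded pieces $\gr^\la_{\cF_v}$ and are therefore insensitive to any overall shift. Once that bookkeeping is in place, the proof is just the chain of equalities above.
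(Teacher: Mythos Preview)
Your proposal is correct and follows essentially the same approach as the paper, which simply cites Proposition~\ref{p:reducedSinvproperties}, Lemma~\ref{lem-delta positive}, and the definition of $S(v)$. Your worry about the condition $\cF_v^0|_{X_0} R = R$ is unfounded: by convention every $v\in\Val_X$ has a center on $X$, so $v$ is nonnegative on $\cO_{X,c_X(v)}$ and hence $v(s)\ge 0$ for all $s\in\cR_m$ (via local trivialization), giving $\cF_v^0\cR_m = \cR_m$ directly with no shift or normalization needed.
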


\begin{proof}
This follows from Proposition \ref{p:reducedSinvproperties}, Lemma \ref{lem-delta positive}, and the definition of $S(v)$.
\end{proof}

\begin{rem}\label{r:relationtoresvol}
Let $g:Y \to X$ be a proper birational morphism with $Y$ normal and $E\subset Y$ be a prime divisor. 
If $X_0$ is irreducible and $E\neq  \widetilde{X}_0:= g_*^{-1}(X_0)$, then 
\[
 \cF^{mt}_E\vert_{X_0} R_m \cong \im \big( H^0(Y, m(g^* L - tE)) \to H^0(\widetilde{X}_0, m(g^* L-tE)\vert_{\widetilde{X}_0}) \big).
\]
Thus Lemma \ref{l:Svintresvol} implies that 
\[
S(v) = \frac{1}{\vol(L_0)} \int_0^\infty \vol_{Y\vert \widetilde{X}_0}(g^*L - t E ) \, dt
,\]
where $\vol_{Y\vert \widetilde{X}_0}$ denotes the restricted volume as in \cite{ELMNP-resvol}. 
\end{rem}

\begin{lem} \label{l:volume drop}
If $v\in \Val_X^\circ$, then $\vol_{X_0}(L;v\ge t)<\vol(L_0)$ for $t>0$.
\end{lem}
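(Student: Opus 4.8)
The plan is to show that for $v \in \Val_X^\circ$ and $t > 0$, the dimension of $\cF_v^{mt}\vert_{X_0} R_m$ grows strictly slower than $\dim R_m$ to leading order. First I would dispose of the trivial obstruction: if $v = c \cdot \ord_{X_0}$ for some $c \geq 0$, then $\cF_v^\lambda \cR_m = \cR_m$ for $\lambda \leq cm \cdot \ord_{X_0}(\cR_m)$\dots but in fact $\ord_{X_0}$ restricted to $X_0$ is the zero filtration, so $\vol_{X_0}(L; v \geq t) = 0 < \vol(L_0)$ for $t>0$ in that case, and we are done. So we may assume $v$ is not a nonnegative multiple of $\ord_{X_0}$; in particular $C_X(v)$ is a proper closed subset not equal to $\Supp(X_0)$, and by Lemma \ref{l:S>0} we have $S(v) > 0$, hence $T(v) > 0$ by Lemma \ref{lem-charTusingrelativedivisors} and Proposition \ref{p:reducedSinvproperties}.

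The main idea is then as follows. Fix $t > 0$. Using Lemma \ref{lem-charTusingrelativedivisors} (or directly the definition of $T_m$), for $m$ large and divisible we can find $s \in \cR_m$ with $v(s) > 0$ whose restriction $\bar{s}$ to $X_0$ is nonzero, say $v(s) = \lambda_0 > 0$; after passing to a power we may assume $\lambda_0$ is as large as we wish relative to the fixed increment we need. The key observation is a strict inequality at the level of graded pieces: multiplication by $\bar{s}$ embeds $\cF_v^{mt}\vert_{X_0} R_m$ into $\cF_v^{mt + \lambda_0}\vert_{X_0} R_{m + \deg s}$, so the filtration is ``eventually dragged to the right.'' More precisely, I would argue that for the restricted graded linear series $W_\bullet := \bigoplus_m \cF_v^{mt}\vert_{X_0} R_m$ on $X_0$, viewed through its components $X_{0,i}$ as in Section \ref{ss:reducible}, one has $\vol(W_\bullet) = \sum_i \vol(W_\bullet^i)$ where each $W_\bullet^i$ is a graded linear series on a component $X_{0,i}$. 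On each component of top dimension $n$, $W_\bullet^i = V_\bullet^{i,\cF_i, t}$ in the notation of Proposition \ref{p:reducedSinvproperties}, and by Proposition \ref{p:irredSproperties}(2) together with positivity of $T(\cF_i)$ — which I would deduce from $T(v) > 0$ once $v$ is genuinely seen on that component — we get $\vol(W_\bullet^i) < \vol(L_i)$ for $t > 0$ provided the filtration $\cF_i$ is nontrivial on $X_{0,i}$. Summing, $\vol_{X_0}(L; v \geq t) = \sum_i \vol(W_\bullet^i) < \sum_i \vol(L_i) = \vol(L_0)$, as desired.

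The main obstacle I anticipate is the case where the filtration $\cF_v\vert_{X_0}$ is trivial on some top-dimensional components of $X_0$ — i.e., $v$ is ``invisible'' on certain components — while being nontrivial on others. On a component $X_{0,i}$ where $\cF_i^\lambda V_\bullet^i = V_\bullet^i$ for all $\lambda$, one would have $\vol(W_\bullet^i) = \vol(L_i)$ for all $t$, contributing no drop there. I would handle this by arguing that $v$ cannot be invisible on \emph{every} top-dimensional component: if it were, then $\cF_v^\lambda \cR_m$ would map onto $R_m$ for all $\lambda$, forcing $\cF_v^\lambda \cR_m + \fm \cR_m = \cR_m$, and by Nakayama (and the freeness of $\cR_m$) this would propagate to show $\cF_v^{\lambda}\cR_m = \cR_m$, contradicting $\cap_\lambda \cF_v^\lambda \cR_m = 0$ unless $v$ is a multiple of $\ord_{X_0}$ — which we excluded. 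Hence at least one top-dimensional component sees a nontrivial filtration, giving a strict drop there, and all other components contribute at most $\vol(L_i)$, so the total is still strictly less than $\vol(L_0)$. Care is needed in the reducible bookkeeping — tracking which $W_\bullet^i$ is which and ensuring the inequality in Proposition \ref{p:irredSproperties}(2) is applied on a component where it is strict — but this is the crux, and the rest is assembling the pieces already built in Section \ref{ss:reducible}.
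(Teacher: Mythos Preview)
Your plan has a genuine gap at the step where you conclude $\vol(W_\bullet^i) < \vol(V_\bullet^i)$ on a component where the restricted filtration $\cF_i$ is ``nontrivial.'' You cite Proposition~\ref{p:irredSproperties}(2), but that statement asserts that $V_\bullet^{\cF,t}$ \emph{contains an ample series} for $t < T(\cF)$ --- a positivity statement, not a strict upper bound on the volume. For a general linearly bounded multiplicative filtration, $T(\cF_i) > 0$ does \emph{not} imply $\vol((V_\bullet^i)^{\cF_i,t}) < \vol(V_\bullet^i)$ for $t>0$: the shifted-trivial filtration $\cF^\lambda V_m = V_m$ for $\lambda \le m$ and $=0$ for $\lambda > m$ is multiplicative, has $T=1$, yet has constant volume on $[0,1)$. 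The restriction $\cF_v|_{X_0}$ and its further restrictions $\cF_i$ to the $V_\bullet^i$ are not valuation filtrations, so you cannot invoke the strict drop that holds for valuations without further argument. Your Nakayama step does show that the full restriction $\cF_v|_{X_0}$ is not trivially shifted (if $C_X(v)\subsetneq X_0$ one finds $s\in\cR_m$ with $v(s)=0$ and $s|_{X_0}\neq 0$), but this only yields $\dim (\cF_v|_{X_0})^{mt}R_m < \dim R_m$; the codimension could still be $o(m^n)$, leaving the volumes equal. The ``multiplication by $\bar s$'' remark gestures at a fix but does not supply one.

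The paper closes this gap by a different mechanism: it fixes a point $x \in C_X(v)$, uses an Izumi-type inequality (through a log resolution and \cite[Proposition~7.45]{Xu-Kbook}) to get $v(f) \le c \cdot \ord_x(f)$ on $\cO_{X,x}$, then picks a component $F$ of $X_0$ through $x$, passes to its normalization $\bar F$, and chooses a divisor $G$ over $\bar F$ with $d:=\ord_G(\fm_x \cdot \cO_{\bar F}) > 0$. This yields an inclusion of the restricted linear series into $H^0(\bar F, mL_{\bar F} \otimes \fa_{\lceil mt/c \rceil d}(\ord_G))$, a filtration coming from an honest valuation on $\bar F$, for which the strict inequality $\vol < \vol(L_{\bar F})$ is available from \cite[Lemma~5.13]{BHJ}. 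The missing ingredient in your approach is precisely this comparison with a valuation on (the normalization of) a component of $X_0$.
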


\begin{proof}
For each irreducible component $F\subset X_0$ and $t \in \bR_{>0}$, let $\oF\to F$ denote the normalization morphism and  
\[
V_{\oF,m}^{t} 
:=
\im \big( \cF^{ mt}_v  \vert_{X_0} R_m \to H^0 (\oF,m L \vert_{\oF} ) \big),
\]
which we view as a graded linear series of $L\vert_{\oF}$.
Since the map 
\[
R_m \to \bigoplus_{\oF\subset X_0} H^0(F,mL\vert_{\oF})
\]
is injective,
$\vol_{X_0}( L; v\geq t)  \leq \sum_{F\subset X_0} \vol(V_{\oF,\bullet}^t)$. 
Using that 
\[
\vol(L_0 )= \sum_{F\subset X_0} \vol(L\vert_{\oF}),
\]
it suffices to show that $\vol(V_{\oF,\bullet}^{t})< \vol (L_{\oF})$ for some irreducible component $F\subset X_0$.

To proceed, fix a point $x\in C_X(v)$.
We claim that there exists $c>0$ such that
\[
v(f) \leq c\cdot \ord_x (f) \quad \text{ for all }f\in \cO_{X,x}.
\]
To verify the claim, fix a log resolution $\mu:Y \to X$ of $(X,\Delta)$ and a closed point $y \in \mu^{-1}(x)$.
Since $A_{X,\Delta}(v)< \infty$, $A_{Y}(v)<\infty$.
Therefore  \cite[Proposition 7.45]{Xu-Kbook} implies that there exists $c_1>0$ such that 
$v(g) \leq c_1\cdot  \ord_y(g)$ for all $g\in \cO_{X,x}$.
Furthermore, by Izumi's Inequality  as in \cite[Theorem 7.44]{Xu-Kbook} (in whose proof the klt assumption used in \cite[Proposition 7.43]{Xu-Kbook} can be replaced by \cite[Theorem 4.3]{BFJIzumi}), there exists $c_2>0$ such that 
$\ord_y(h) \leq c_2 \ord_x(h)$ for all $h\in \cO_{X,x}$.
Therefore the claim holds with $c:= c_2/c_1$.

Next, fix an irreducible component $F\subset X_0$ with $x\in F$. 
Fix a divisor $G$ over $\oF$ such that $d:=\ord_G(\fm_x \cdot \cO_{\oF})>0$.
Observe that
\begin{align*}
V_{\oF,m}^{t}
\subset  H^0\left(\oF,mL\vert_{\oF} \otimes (\fa_{mt}(v)\cdot \cO_{\oF})\right) 
&\subset  H^0\left(\oF,mL\vert_{\oF} \otimes (\fm_x^{\lceil mt/c \rceil} \cdot \cO_{\oF}) \right) \\
&\subset H^0\left(\oF,m L\vert_{\oF} \otimes \fa_{ \lceil mt /c\rceil d }(\ord_G)\right).
\end{align*}
Using that $L\vert_{\oF}$ is ample and the previous inclusions, \cite[Lemma 5.13]{BHJ} implies  that
$\vol(V_{\overline{F},\bullet}^{t}) < \vol(L\vert_{\overline{F}})$.
Therefore $\vol_{X_0}(L; v\geq t) < \vol(L_0)$.
\end{proof}

\subsubsection{Basis type divisors}\label{sss:basistypedivisors}
A \emph{relative $m$-basis type divisor} of $L$ is a $\bQ$-divisor of the form 
\[
D: = \frac{1}{ m N_m}  
\left(
\{s_1=0\}+ \cdots +\{ s_{N_m} =0\}
\right),
\]
where $N_m={\rm rank}f_*\cO_X(mL)$ and $( s_1,\ldots, s_{N_m})$ is a basis for $\cR_m$ as a free $\cO_C$-module.
Note that if $D$ is a relative $m$-basis type divisor, then $D\sim_{\bQ} L$ and its restriction $D\vert_{X_0}$ is an $m$-basis type divisor of $L_0$ as in Section \ref{ss:stabilitythreshold}

We say that a relative $m$-basis type divisor $D: = \frac{1}{ m N_m}  
\sum_{i=1}^{N_m}\{s_i=0\}$ is \emph{compatible} with a filtration $\cF$ of $\cR$ if the basis  $(s_1 \vert_{X_0},\ldots, s_{N_m}\vert_{X_0})$ for $R_m$ is compatible with $\cF\vert_{X_0}$ and 
\[
\ord_{\cF}(s_i) = \ord_{\cF\vert_{X_0}}(s_i \vert_{X_0})
\]
for each $1\leq i \leq N_m$. 
We can always construct an $m$-basis type divisor compatible with a given filtration $\cF$ by  fixing an $m$-basis type divisor $D_0$ of $L_0$ compatible with $\cF\vert_{X_0}$ and then lifting the basis to a basis for $\cR_m$ that satisfies the previous equation.
Additionally, we say that $D$ is \emph{compatible} with  $v\in \Val_X$, if $D$ is compatible with $\cF_v$.

\begin{lem}\label{l:S(v)basistype}
For any $v\in \Val_X$ and $m>0$ divisible by $r$, 
\[
S_m(v)= \max \{  v(D)\, \vert\, D \text{ is a relative $m$-basis type divisor of $L$} \}.
\] 
and the max is achieved when $D$ is compatible with $v$. 
\end{lem}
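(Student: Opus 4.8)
The plan is to mimic the proof of the analogous statement in the absolute case \cite[Lemma 3.5]{BJ-delta} (and its relative adaptation, cf.\ the proof of Lemma \ref{l:S(v)basistype} would parallel the unlabeled lemma in Section \ref{ss:stabilitythreshold}). First I would recall that by definition $S_m(v) = S_m(\cF_v\vert_{X_0})$ and that, by construction, the restriction $\cF_v\vert_{X_0}$ is a filtration of the finite-dimensional $\bk$-vector space $R_m$, where $\bk = k(0)$. The dimension count $N_m = \mathrm{rank}\, f_*\cO_X(mL) = \dim_{\bk} R_m$ holds because we assumed the base change map $\cR_m \otimes k(0) \to R_m$ is an isomorphism.

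The key computation is the following. Given any relative $m$-basis type divisor $D = \frac{1}{mN_m}\sum_{i=1}^{N_m} \{s_i = 0\}$ with $(s_1,\dots,s_{N_m})$ a basis of $\cR_m$ over $\cO_C$, its restriction $(s_1\vert_{X_0},\dots,s_{N_m}\vert_{X_0})$ is a basis of $R_m$ over $\bk$ (using the base-change isomorphism and Nakayama). Then
\[
v(D) = \frac{1}{mN_m}\sum_{i=1}^{N_m} v(s_i) \le \frac{1}{mN_m}\sum_{i=1}^{N_m} v_{X_0}(s_i\vert_{X_0}),
\]
where $v_{X_0}(\cdot) := \ord_{\cF_v\vert_{X_0}}(\cdot)$ is the order function of the restricted filtration, and the inequality uses $v(s_i) \le \ord_{\cF_v}(s_i) \le \ord_{\cF_v\vert_{X_0}}(s_i\vert_{X_0}) = v_{X_0}(s_i\vert_{X_0})$, valid because $s_i \in \cF_v^{v(s_i)}\cR_m$ forces $s_i\vert_{X_0} \in (\cF_v\vert_{X_0})^{v(s_i)} R_m$. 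Next, for any basis $(t_1,\dots,t_{N_m})$ of $R_m$ one has $\sum_i v_{X_0}(t_i) \le \sum_{\lambda} \lambda \cdot \dim \mathrm{gr}^\lambda_{\cF_v\vert_{X_0}} R_m = m N_m \cdot S_m(v)$, with equality precisely when the basis is compatible with $\cF_v\vert_{X_0}$ (this is the elementary linear-algebra fact underlying \cite[Lemma 3.5]{BJ-delta}: a compatible basis maximizes the sum of filtration-orders). Combining, $v(D) \le S_m(v)$ for every relative $m$-basis type divisor $D$.

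For the reverse inequality and the ``max is achieved'' claim, I would invoke the construction recalled just before the lemma statement: pick an $m$-basis type divisor $D_0$ of $L_0$ compatible with $\cF_v\vert_{X_0}$, lift the corresponding basis $(\bar s_1, \dots, \bar s_{N_m})$ of $R_m$ to a basis $(s_1,\dots,s_{N_m})$ of $\cR_m$ with $\ord_{\cF_v}(s_i) = \ord_{\cF_v\vert_{X_0}}(\bar s_i)$ for all $i$ (possible since $\cF_v^\lambda \cR_m \to (\cF_v\vert_{X_0})^\lambda R_m$ is surjective by definition of the restriction), and let $D$ be the resulting relative $m$-basis type divisor, which by construction is compatible with $v$. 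For this $D$, all the inequalities above become equalities: $v(s_i) = \ord_{\cF_v}(s_i)$ since $s_i$ is part of a basis compatible with $\cF_v$ on $\cR_m$ — here I'd note that compatibility of the lifted basis with $\cF_v$ follows because its images span each $\cF_v^\lambda\cR_m$, which in turn uses that the $\cO_C$-module $\cF_v^\lambda\cR_m$ is free and its reduction mod $\fm$ is spanned by the $\bar s_i$ with $\ord \ge \lambda$ together with Nakayama — and $\sum_i v_{X_0}(\bar s_i) = mN_m S_m(v)$ by compatibility of $D_0$ with $\cF_v\vert_{X_0}$. Hence $v(D) = S_m(v)$.

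The main obstacle I anticipate is the linear-algebra bookkeeping over a DVR rather than a field: one must be careful that a basis of $\cR_m$ restricting to a $\cF_v\vert_{X_0}$-compatible basis of $R_m$ with matching orders is automatically $\cF_v$-compatible on $\cR_m$, i.e.\ that the filtration pieces $\cF_v^\lambda\cR_m$ (which are direct summands as they are saturated submodules of a free module over a DVR) are exactly spanned by the relevant $s_i$. This is where one uses that $\cF_v^\lambda\cR_m$ is a free $\cO_C$-module whose fiber at $0$ surjects onto (hence, by the dimension count coming from the base-change isomorphism, equals) $(\cF_v\vert_{X_0})^\lambda R_m$; Nakayama's lemma then upgrades ``the $s_i$ span mod $\fm$'' to ``the $s_i$ span,'' giving the needed compatibility. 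Everything else is routine and parallels \cite{BJ-delta}.
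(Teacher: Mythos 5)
Your proof follows the same strategy as the paper's (jumping numbers, the fact that compatible bases maximize the sum of orders, and the lifting construction described just before the lemma statement), and the core logic is correct. However, the digression in your last paragraph contains two errors, both tied to an unnecessary claim.

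You assert that $\cF_v^\lambda\cR_m$ is a \emph{saturated} submodule of $\cR_m$, and that the lifted basis $(s_1,\dots,s_{N_m})$ spans each $\cF_v^\lambda\cR_m$ (i.e.\ is $\cF_v$-compatible as a filtration of $\cR_m$). Both are false in general. Take $v=\ord_{X_0}$: then $\cF_v^1\cR_m=\pi\cR_m$, which is not saturated, since $\pi s\in\pi\cR_m$ for every $s\in\cR_m$ whether or not $s\in\pi\cR_m$. Moreover, a basis $(s_i)$ compatible with $\cF_v$ in the paper's sense must satisfy $v(s_i)=\ord_{\cF_v\vert_{X_0}}(s_i\vert_{X_0})=0$, i.e.\ no $s_i$ vanishes along $X_0$; the $\bk$-span of the $s_i$ with $v(s_i)\ge 1$ is then $0\neq\pi\cR_m$, so the $s_i$ do \emph{not} span $\cF_v^1\cR_m$. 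The Nakayama step also conflates $\cF_v^\lambda\cR_m\otimes k(0)$ with $(\cF_v\vert_{X_0})^\lambda R_m$; the latter is only the image of the former in $R_m$, and the kernel $(\cF_v^\lambda\cR_m\cap\fm\cR_m)/\fm\cF_v^\lambda\cR_m$ need not vanish.

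Fortunately this whole paragraph is not needed. The identity $v(s_i)=\ord_{\cF_v}(s_i)$ you are trying to justify holds \emph{by definition} of the filtration $\cF_v$: $\ord_{\cF_v}(s_i)=\max\{\lambda : v(s_i)\geq\lambda\}=v(s_i)$, with no compatibility hypothesis. The paper's notion of compatibility for a relative $m$-basis type divisor requires only (i) that $(s_i\vert_{X_0})$ be compatible with $\cF_v\vert_{X_0}$ and (ii) that $\ord_{\cF_v}(s_i)=\ord_{\cF_v\vert_{X_0}}(s_i\vert_{X_0})$, and the lifting construction gives both directly via surjectivity of $\cF_v^\lambda\cR_m\to(\cF_v\vert_{X_0})^\lambda R_m$ plus Nakayama to see that the lifts form a basis of $\cR_m$. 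Once the incorrect digression is excised, your argument coincides with the paper's.
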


\begin{proof}
Similar to the proof in the absolute case, 
we consider the jumping numbers 
\[a_{m,j} := \inf \{ \la \in \bR_{\geq 0} \,\vert\, \codim (\cF_v\vert_{X_0})^\la R_{m}\geq j \}
,\]
where $1\leq j \leq N_m := {\rm rank} f_* \cO_X(mL)$.
Note that $S_m(v) = S_{m}(\cF_v\vert_{X_0})= \frac{1}{mN_m}\sum_j a_{m,j}$.

Now, consider an $m$-basis type divisor 
$D= \tfrac{1}{mN_m}( \{s_1=0 \} + \cdots \{ s_{N_m}=0\})$ of $L$. 
By reordering the basis, we may assume that $v(s_1) \leq \cdots \leq v(s_{N_m})$. 
Since $(s_1,\ldots, s_{N_m})$ restricts to a basis of $R_m$ and $s_j \in \cF_v^{\la}\cR_m$ for $v(s_j)\geq \la$,
 we have $ a_{m,j} \geq v(s_j) $.
 Thus 
\[
 S_m(v)=  \frac{1}{mN_m}\sum_{j=1}^{N_m} a_{m,j}  \geq  \frac{1}{mN_m}\sum_{j=1}^{N_m} v(s_j) = v(D)
.
\] 

 Next, assume that $D $ is compatible with $v$.
Since $D_0$ is compatible with $\cF_{v}\vert_{X_0}$, after possibly reordering the basis,  
\[
a_{m,j} = \ord_{\cF_v\vert_{X_0}}(s_j\vert_{X_0})
.\]
Thus 
\[
v(s_j) = \ord_{\cF_v}(s_j) = a_{m,j}
\]
which implies that  $v(D) =\tfrac{1}{mN_m} a_{m,j} = S_m(v)$. Therefore $S_m(v)$ equals the desired maximum and the maximum is achieved when $D$ is compatible with $v$.
\end{proof}

The following lemma was essentially observed in \cite{BX-separated}. 
    
\begin{lem}\label{lem-dualfiltration}
Let $f':(X',\Delta',L')\to C$ satisfy the same condition as $f:(X,\Delta,L)\to C$ in Section \ref{sss:setup}.
Assume that $X_0$ and $X'_0$ are irreducible and there is an isomorphism 
\[
(X,L)_{C\setminus 0}\simeq (X',L')_{C\setminus 0}
\]
over $C\setminus 0$.
If $D$ is a relative  $m$-basis type divisor of $L$ compatible with $\ord_{X'_0}$, then its strict transform $D'$ on $X'$ is a relative $m$-basis type divisor of $L'$ compatible with $\ord_{X_0}$.
\end{lem}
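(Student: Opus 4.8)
The statement is a ``duality'' of basis type divisors under the birational identification $X \dashrightarrow X'$ away from the central fiber. The key point is that the correspondence between bases of $\cR_m$ and $\cR'_m$ is induced by the isomorphism over $C \setminus 0$, so that a basis of the free $\cO_C$-module $\cR_m$ maps to a \emph{rational} basis of $\cR'_m \otimes K$, and the content of the lemma is that when the first basis is adapted to the filtration $\cF_{\ord_{X'_0}}$, the image is in fact an integral basis of $\cR'_m$ adapted to $\cF_{\ord_{X_0}}$, and vice versa. First I would set up notation: fix $r>0$ so that $rL$ and $rL'$ are line bundles, let $\varphi\colon (X,L)_{C\setminus 0} \xrightarrow{\ \sim\ } (X',L')_{C\setminus 0}$ be the given isomorphism, which identifies $K(X) = K(X')$ compatibly and induces a $K$-linear isomorphism $\cR_m \otimes_{\cO_C} K \cong \cR'_m \otimes_{\cO_C} K$. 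Under this identification both $\cR_m$ and $\cR'_m$ are $\cO_C$-lattices in the same $K$-vector space, and $\ord_{X_0}$, $\ord_{X'_0}$ are (up to the uniformizer) the valuations cutting out these two lattices.

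The main computation is the following: for a section $s \in \cR_m \otimes K$, one has, writing $\pi$ for a uniformizer of $\cO_C$,
\[
\ord_{X_0}(s) = \max\{k \in \bZ \mid \pi^{-k} s \in \cR_m\}, \qquad \ord_{\cF_{\ord_{X'_0}}}(s) = \ord_{X'_0}(s) - (\text{normalization constant}),
\]
so that the filtration $\cF_{\ord_{X'_0}}$ on $\cR_m$ literally records, for each $s \in \cR_m$, the $\pi$-divisibility of $s$ viewed inside the lattice $\cR'_m$. Concretely: if $D = \frac{1}{mN_m}\sum_i \{s_i = 0\}$ with $(s_1,\dots,s_{N_m})$ a basis of $\cR_m$ compatible with $\cF_{\ord_{X'_0}}$, then after rescaling $s_i \mapsto \pi^{-a_i} s_i$ with $a_i = \ord_{X'_0}(s_i) - c$ (so that the rescaled sections lie in $\cR'_m$ and their $\ord_{X'_0}$-values are as small as possible), the tuple $(\pi^{-a_1}s_1,\dots,\pi^{-a_{N_m}}s_{N_m})$ is an $\cO_C$-basis of $\cR'_m$ — this is exactly a change-of-lattice statement using the Smith normal form / elementary divisors of the pair of lattices $\cR_m,\cR'_m$, together with the fact that compatibility with $\cF_{\ord_{X'_0}}$ means the $s_i$ ``see'' the elementary divisor filtration. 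Then the strict transform $D'$ of $D$ on $X'$ is precisely $\frac{1}{mN_m}\sum_i\{\pi^{-a_i} s_i = 0\}$ (pulling back and removing the fixed components supported on $X'_0$), which is a relative $m$-basis type divisor of $L'$; and the values $\ord_{X_0}(\pi^{-a_i}s_i)$ are the elementary divisor exponents of $\cR_m$ relative to $\cR'_m$, so this new basis is automatically compatible with $\cF_{\ord_{X_0}}$.

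I would organize the proof as: (1) reduce to understanding the pair of lattices $\cR_m \subset V_m := \cR_m \otimes K$ and $\cR'_m \subset V_m$ and simultaneously diagonalize them via elementary divisors, obtaining a common basis $(e_1,\dots,e_{N_m})$ of $V_m$ with $\cR_m = \bigoplus \cO_C e_i$ and $\cR'_m = \bigoplus \cO_C \pi^{d_i} e_i$ for integers $d_i$; (2) observe that, in terms of this basis, a basis of $\cR_m$ is compatible with $\cF_{\ord_{X'_0}}$ iff (up to reordering and unit rescaling) it is of the form $(u_1 e_1, \dots, u_{N_m} e_{N_m})$ with $u_i$ units — here I use that the $\cF_{\ord_{X'_0}}$-filtration on $\cR_m$ has jumps exactly governed by the $d_i$; (3) check that the strict transform operation on divisors corresponds on sections to $u_i e_i \mapsto$ (appropriate unit multiple of) $\pi^{-d_i}\cdot u_i e_i$ followed by reinterpretation in $\cR'_m$, landing on the basis $(\pi^{-d_i} u_i e_i)$ of $\cR'_m$; and (4) verify this latter basis is compatible with $\cF_{\ord_{X_0}}$ by the symmetric description of that filtration in terms of the $d_i$. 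The reference to \cite{BX-separated} is where this elementary divisor / dual filtration bookkeeping first appeared, so I would cite it for the lattice-theoretic lemma and spell out only what is needed.

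**Main obstacle.** The delicate point is step (2)–(3): making precise that ``compatible with $\cF_{\ord_{X'_0}}$'' forces the basis to be, up to a triangular unipotent change adapted to the filtration, the elementary divisor basis, and then tracking that the strict transform of $\{s_i = 0\}$ really is $\{\pi^{-d_i} s_i = 0\}$ — i.e., that taking the strict transform exactly strips off the $\pi$-powers dictated by the lattice comparison, with no extra components. This is a statement about comparing the relative Cartier divisors $\{s=0\}$ on $X$ and on $X'$: away from $X_0$ resp. $X'_0$ they agree under $\varphi$, and along the central fibers the difference is a multiple of $X_0$ (resp. $X'_0$) whose coefficient is read off by $\ord_{X'_0}(s)$ (resp. $\ord_{X_0}$), using that $X_0$ and $X'_0$ are \emph{irreducible} (the hypothesis of the lemma) so these central-fiber corrections are single divisors with a well-defined multiplicity. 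I expect the proof to be short once the lattice picture is in place, with essentially all the work being this careful identification; the irreducibility hypothesis is exactly what makes the ``normalization constant'' $c$ well-defined and the strict-transform computation clean.
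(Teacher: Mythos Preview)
Your proposal is correct and follows essentially the same approach as the paper: the paper introduces the filtrations $\cG^\lambda\cR_m=\cR_m\cap\pi^\lambda\cR'_m$ and $\cG'^\lambda\cR'_m=\cR'_m\cap\pi^\lambda\cR_m$, relates them to $\cF_{\ord_{X'_0}}$ and $\cF_{\ord_{X_0}}$ up to the shifts you call ``normalization constants,'' and then shows that $\pi^{-\lambda}$ induces an isomorphism $\gr_G^\lambda R_m\cong\gr_{G'}^{-\lambda}R'_m$, from which the result follows by the same rescaling $s'_i:=\pi^{-\lambda_i}s_i$ you describe. Your Smith normal form language is exactly equivalent to the paper's graded-pieces formulation; the only cosmetic difference is that the paper avoids choosing an elementary-divisor basis by working directly with the graded isomorphism, which sidesteps the triangular-change bookkeeping you flag as the main obstacle.
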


\begin{proof}
Let $\cR'_m:= H^0(X',mL')$ and $R'_m:= H^0(X'_0,mL'_0)$.
Via the isomorphism $
(X,L)_{C\setminus 0} \simeq (X',L')_{C\setminus0}$,  may view both $\cR_m$ and $\cR'_m$ as sub-modules of $H^0(X_\eta, mL_\eta)$.
We define  $\bZ$-filtrations $\cG$ and $\cG'$  by setting
\[
\cG^{\la}\cR_m := \cR_m \cap \pi^{\lceil \la\rceil } \cR'_m
\quad \text{ and } \quad 
\cG'^{\la} \cR'_m := \cR'_{m} \cap \pi^{\lceil \la \rceil}\cR_m
,\]
where $\pi \in \cO_C(C)$ is a uniformizer of the DVR.
We write $G: = \cG\vert_{X_0}$ and $G':= \cG\vert_{X'_0}$ for their restrictions to filtrations of $R$ and $R'$.

We now relate these filtration to the valuations $\ord_{X'_0}$ and $\ord_{X_0}$. 
Let $Y $ denote the normalization of the graph of $X\dashrightarrow X'$ with morphisms $g: Y\to X$ and $g':Y \to X'$.
Set 
\[
c:=\tfrac{1}{r} {\rm ord}_{X'_0} ( g^*h)_{rL'}
\quad \text{ and }  
\quad c':= \tfrac{1}{r}{\rm ord}_{X_0}(g'^*h')_{rL}
,\]
 where $h$ and $h'$ are local section of $rL$  and $rL'$ that are regular and non-vanishing at $c_X(\ord_{X'_0})$ and $c_{X'}(\ord_{X_0})$, respectively. 
The subscript $rL'$ denotes that  $g^*h$ is viewed as a rational section of $rL'$ before taking an order of vanishing along $\ord_{X'_0}$ in the first equality. 
We use similar notation moving forward.
We claim that 
\[
\cG^\la \cR_m =\{ s\in \cR_m\, \vert\, \ord_{X'_0}(s) \geq \la -mc\}
\quad
\mathrm{and}
\quad
\cG'^\la \cR'_m =\{ s\in \cR'_m\, \vert\, \ord_{X_0}(s) \geq \la -mc'\}
.\]
Indeed, for $s\in \cR_m$
\[
\ord_{X'_0} (s)_{mL} =  \ord_{X'_0} (g^* s)_{h^*mL}
= 
\ord_{X'_0} (g^*s)_{g^*mL'} - mc
=
\ord_{X_0}(s)_{mL'} -mc
.\]
Therefore the formula holds for $\cG$   and, by symmetry, also holds for $\cG'$.

For $\la \in \bZ$, multiplication by $\pi^{-\la}$ induces an isomorphism 
\[
\phi_{\la}:
\cG^\la R_m = \cR_m \cap \pi^{\la} \cR'_m 
\overset{\cdot \pi^{-\la} }{\longrightarrow}
\cR'_{m}\cap \pi^{-\la}\cR_m
= \cG^{-\la }\cR_m
,
\]
which satisfies 
\[
\phi_{\la}(\cG^{\la+1} \cR_m )= \cG'^{-\la}\cR_m \cap \pi \cR'_m
\quad \text{ and } \quad 
\phi_{\la}(\cG^{\la} \cR_m \cap \pi\cR_m )= \cG'^{-\la+1}\cR'_m
.\]
Using that 
\[
{\rm gr}_{G}R_m = G^{\la }R_m / G^{\la+1}R_m \cong \cG^{\la}\cR_m / (\cG^{\la+1}\cR_m + (\cG^\la \cR_m \cap \pi \cR_m))
\]
and  a similar formula  for ${\rm gr}_{G'} R'_m$, we deduce that $\phi_{\la}$ induces an isomorphism 
\[
{\rm gr}_{G}^{\la}R_m \to {\rm gr}_{G'}^{-\la}R_m
\]
defined by sending $\overline{s}$ to $\overline{\pi^{-\la}s}$.

Now,  fix an $m$-basis type divisor 
$
D:= \tfrac{1}{mN_m} \sum_{i=1}^{N_m}\{s_i=0\}
$
of $L$ that is compatible with $\ord_{X'_0}$.
Since $\cF_{\ord_{X'_0}}$ and $\cG$ differ by a shift, 
$D$ is also compatible with $\cG$. 
Let 
\[
\la_i:= \ord_{\cG} (s_i)\quad \text{ and } \quad 
s'_i := \pi^{-\la_i}s_i \in  \cG'^{-\la} \cR'_m.
\]
We claim that $D':= \frac{1}{mN_m}\sum_i \{s'_i=0\}$ is
an $m$-basis type divisor of $L'$ compatible with $\cG'$. 
Indeed, we know that $\ord_{\cG'}(s'_i) \geq -\la_i$.
Furthermore, since $(s_1,\ldots, s_{N_m})$ is compatible with $\cG$, the basis restricts to a basis for ${\rm gr}_{G} R_m$. Using the isomorphism ${\rm gr}_{G}R_m \cong {\rm gr}_{G'}R'_m$, we see that $(s'_1,\ldots, s'_{N_m})$ restricts to a basis for 
${\rm gr}_{G'}R'_m$ with the restriction of $s'_i$ living in degree $-\la'_i$. 
Thus $\ord_{G'}(s'_i)= -\la_i$ and $(s'_1,\ldots, s'_{N_m})$ is a basis for $\cR'_m$.
Therefore $D'$ is an $m$-basis type divisor of $L$ compatible with $G$. 
Furthermore, since $D$ and $D'$ agree over $X_{C\setminus 0}\cong X'_{C\setminus 0}$ and $X'_0 \not\subset \Supp(D')$, $D'$ is the strict transform of $D$ on $X'$.  
\end{proof}

\subsubsection{Stability thresholds}
We now introduce the relative stability threshold.

\begin{defn}
The \emph{relative stability threshold} of $(X,\Delta;L)\to C$ is defined as
\[
\delta (X,\Delta;L): =
\limsup_{m \to \infty} \delta_{mr}(X,\Delta;L)
,\]
where 
\[
\delta_{m}(X,\Delta;L):=\inf\{ \lct(X,\Delta;D) \, \vert\, \text{ $D$ is a relative $m$-basis type divisor of $L$} \}.
\]
To simplify notation,  we will often write $\delta(L)$ (resp., $\delta_m(L)$) for $\delta(X,\Delta;L)$ 
(resp. $\delta_m(X,\Delta;L)$) when the pair $(X,\Delta)$ is clear from context.
\end{defn}

 The next statement is an analogue of \cite[Theorem C]{BJ-delta} in the relatively setting.

\begin{prop} \label{p:delta=inf A/S}
The limsup in the definition of $\delta(X,\Delta;L)$ is a limit.
In addition,
\[
\delta_{m}(X,\Delta;L)=	\inf_{v} \frac{A_{X,\Delta}(v)}{ S_m(v)}	
\quad \text{ and } \quad 
\delta(X,\Delta;L)=	\inf_{v} \frac{A_{X,\Delta}(v)}{ S(v)}, 
\]
where the above infima can be taken over all valuations $v$ in $\Val_{X}^\circ$  or $\DivVal_X$.
\end{prop}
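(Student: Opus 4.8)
The plan is to mimic the proof of \cite[Theorem C]{BJ-delta} in the absolute case, replacing the section ring by the relative section ring and the volume by the restricted volume along $X_0$, with the key new input being the convergence results for reduced schemes proved in Section \ref{ss:reducible}.

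\textbf{Step 1: The inequalities $\delta_m(X,\Delta;L) \le \inf_v A_{X,\Delta}(v)/S_m(v)$ and $\delta(X,\Delta;L) \le \inf_v A_{X,\Delta}(v)/S(v)$.} Fix $v \in \Val_X^\circ$. By Lemma \ref{l:S(v)basistype}, for each $m$ there is a relative $m$-basis type divisor $D_m$ compatible with $v$ and satisfying $v(D_m) = S_m(v)$. Since $(X,\Delta,\mathrm{lct}(X,\Delta;D_m)\cdot D_m)$ is lc and $v$ is an lc place of something no better, $\mathrm{lct}(X,\Delta;D_m) \le A_{X,\Delta}(v)/v(D_m) = A_{X,\Delta}(v)/S_m(v)$; taking the infimum over basis type divisors gives $\delta_m(X,\Delta;L) \le A_{X,\Delta}(v)/S_m(v)$, and then taking $\limsup_m$ together with $S(v) = \lim_m S_{mr}(v)$ (which holds by Lemma \ref{lem-delta positive} and Proposition \ref{p:reducedSinvproperties}) gives the asymptotic inequality.

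\textbf{Step 2: The reverse inequality for fixed $m$.} Given a relative $m$-basis type divisor $D$, I want to produce a divisorial valuation $v$ with $A_{X,\Delta}(v)/S_m(v) \le \mathrm{lct}(X,\Delta;D)$. Take $v = \ord_E$ computing $\mathrm{lct}(X,\Delta;D)$, so $A_{X,\Delta}(\ord_E) = \mathrm{lct}(X,\Delta;D)\cdot \ord_E(D)$. Since $\ord_E(D) \le S_m(\ord_E)$ by Lemma \ref{l:S(v)basistype}, we get $A_{X,\Delta}(\ord_E)/S_m(\ord_E) \le \mathrm{lct}(X,\Delta;D)$. Taking the infimum over $D$ yields $\inf_v A_{X,\Delta}(v)/S_m(v) \le \delta_m(X,\Delta;L)$, which combined with Step 1 proves the formula for $\delta_m$ (with the infimum attained over $\DivVal_X$, hence also over $\Val_X^\circ$).

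\textbf{Step 3: The asymptotic formula and that the limsup is a limit.} Set $\delta := \inf_v A_{X,\Delta}(v)/S(v)$. By Step 1, $\limsup_m \delta_{mr}(X,\Delta;L) \le \delta$. For the reverse, I need $\liminf_m \delta_{mr}(X,\Delta;L) \ge \delta$; combined with the former this forces the limsup to be a genuine limit equal to $\delta$. The key estimate is: for every $\varepsilon > 0$ there is $m_0$ so that $S_{mr}(v) \le (1+\varepsilon) S(v)$ for \emph{all} $v \in \Val_X^\circ$ and all $m \ge m_0$. This is exactly Proposition \ref{prop-reducedBJinequality} applied to the linearly bounded filtration $\cF_v|_{X_0}$ of $R = R(X_0,L_0)$ (linear boundedness following from $T(v)<\infty$, i.e. Lemma \ref{lem-delta positive}; and $\cF^0 = R$ after the harmless normalization $v\ge 0$, or by working with $\cF_v|_{X_0}^{\ge 0}$). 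Granting this, $\delta_{mr}(X,\Delta;L) = \inf_v A_{X,\Delta}(v)/S_{mr}(v) \ge (1+\varepsilon)^{-1}\inf_v A_{X,\Delta}(v)/S(v) = (1+\varepsilon)^{-1}\delta$ for $m\ge m_0$, so $\liminf_m \delta_{mr}(X,\Delta;L) \ge (1+\varepsilon)^{-1}\delta$, and letting $\varepsilon \to 0$ finishes it. (One also needs to know $\delta > 0$, or handle $\delta = \infty$; positivity follows from Lemma \ref{lem-delta positive}'s constant $c$ together with $S(v) \le T(v)$ via Proposition \ref{p:reducedSinvproperties}, giving $A_{X,\Delta}(v)/S(v) \ge c$.)

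\textbf{Main obstacle.} The delicate point is Step 3: in the absolute case one applies \cite[Corollary 2.10]{BJ-delta} directly on $X$, but here the relevant object is the restricted filtration $\cF_v|_{X_0}$ on the possibly \emph{reducible} special fiber $X_0$, and $\cF_v|_{X_0}$ need not be cut out by a single valuation on $X_0$. This is precisely why Section \ref{ss:reducible} was developed — Proposition \ref{prop-reducedBJinequality} provides the uniform comparison $S_{mr}(\cF) \le (1+\varepsilon)S(\cF)$ over \emph{all} linearly bounded filtrations $\cF$ of $R$ with $\cF^0 R = R$, with $m_0$ independent of $\cF$, which is the exact strength needed. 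Once that is in hand, the rest of the argument is a routine transcription of the absolute case, so I expect the write-up to be short modulo citing Propositions \ref{p:reducedSinvproperties} and \ref{prop-reducedBJinequality}, Lemmas \ref{l:S(v)basistype} and \ref{lem-delta positive}, and the basic lct/lc-place manipulations.
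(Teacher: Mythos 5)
Your proof is correct and takes essentially the same route as the paper: the formula for $\delta_m$ via Lemma \ref{l:S(v)basistype} and the valuative characterization of the lct (your Steps 1 and 2 spell out, as two inequalities, what the paper states more compactly by switching the order of the two infima), followed by the uniform convergence estimate of Proposition \ref{prop-reducedBJinequality} to pass to the limit. You correctly identify Proposition \ref{prop-reducedBJinequality} as the crucial new input handling the reducible central fiber; the aside about positivity of $\delta$ is unnecessary since the $(1+\varepsilon)^{-1}$ estimate sandwiches the limit regardless of whether $\delta>0$, and the normalization remark is also moot because sections have non-negative valuation, so $\cF_v^0\vert_{X_0}R=R$ automatically.
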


In the above expression, we use the convention that when $S(v)=0$, then $\tfrac{A_{X,\Delta}(v)}{S(v)} := +\infty$. 
By Lemma \ref{l:S>0}, this only occurs when $X_0$ is irreducible and $v= c \cdot \ord_{X_0}$ for some $c\geq0$. We use a similar convention when $S_m(v)=0$.

We say that $\delta(X,\Delta;L)$ is \emph{computed} by $v\in\Val_{X}^\circ$ if $\delta(X,\Delta;L)=	\frac{A_{X,\Delta}(v)}{ S(v)}$.

\begin{proof}
We follow the argument in \cite{BJ-delta}, which proves the absolute case.
The valuative characterization of the lct implies  that
\[
\delta_m(L) = \inf_{\text{$D$: $m$-basis type }} \left( \inf_v \frac{A_{X,\Delta}(v)}{v(D)} \right),  
\]
where the inner infimum runs through all valuations with finite discrepancy or divisorial valuations. 
Switching the order of the infima and applying Lemma \ref{l:S(v)basistype} implies the formula for $\delta_{m}(L)$.

Next, the  formula for $\delta_{m}(L)$ and fact that $\lim_{m\to \infty} S_{mr}(v) = S(v)$  implies  that
\[
\limsup_{m\to\infty} \delta_{mr}(L) 
\leq \inf_{v} \frac{ A_{X,\Delta}(v)}{S(v)} 
.
\]
By Proposition \ref{prop-reducedBJinequality}, given any $\varepsilon>0$ there exists an integer $ m_0(\varepsilon)$ such that $S_{mr}(\cF) \leq (1+\varepsilon)S(\cF)$ for all linearly bounded filtrations $\cF$ of $R$ and $m \geq m_0(\varepsilon)$.
Using that  $S(v): = S(\cF_v \vert_{X_0})$ and Lemma \ref{lem-delta positive}, we deduce that 
$S_{mr}(v) \leq (1+\varepsilon)S(v)$ for all  $v\in \Val_X^\circ$ and $m \geq m_0(\varepsilon)$.
Therefore
\[
\liminf_{m\to\infty} \delta_{mr}(L)
=
\liminf_{m\to\infty} \inf_v \frac{A_{X,\Delta}(v)}{S_{mr}(v)}
\geq (1+\varepsilon )^{-1} \inf_v \frac{A_{X,\Delta}(v)}{S(v)}
.\]
Combining the previous two equations, we deduce that  $\delta(L)$ is a limit and has the desired valuative formula.
\end{proof}

\begin{prop}\label{p:deltaetalebasechange}
If $C'\to C$ be an \'etale morphism from the spectrum of a DVR, then the base change
\[
(X',\Delta',L') := (X,\Delta,L)\times_C C'
\]
satisfies $\delta(X',\Delta';L') \leq \delta(X,\Delta;L)$. Moreover, equality holds when $(X,\Delta)$ is klt and $C'$ has the same residue field as $\bk$.
\end{prop}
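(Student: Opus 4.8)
The plan is to treat the two assertions separately. Write $C=\Spec A$, $C'=\Spec A'$ with $A\to A'$ a local \'etale homomorphism (so $\fm A'=\fm_{A'}$, a uniformizer $\varpi$ of $A$ is still one of $A'$, and $\bk\subseteq\bk'$ is finite separable), and let $\pi\colon X'\to X$ be the projection, which is \'etale and surjective; by flat base change $\cR'_m=\cR_m\otimes_A A'$ and $R'_m=R_m\otimes_\bk\bk'$, and the assumptions of Section~\ref{sss:setup} pass to $(X',\Delta';L')$. For the inequality I would argue with basis type divisors: an $A$-basis $(s_i)$ of $\cR_m$ gives, via $s_i\mapsto s_i\otimes1$, an $A'$-basis of $\cR'_m$, so any relative $m$-basis type divisor $D=\tfrac1{mN_m}\sum_i\{s_i=0\}$ of $L$ pulls back to a relative $m$-basis type divisor $\pi^*D$ of $L'$. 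Since $\pi$ is \'etale we have $K_{X'}+\Delta'=\pi^*(K_X+\Delta)$ with $\pi^*D$ the crepant pullback of $D$, and since $\pi$ is \'etale and surjective, $(X,\Delta+cD)$ is lc if and only if $(X',\Delta'+c\pi^*D)$ is lc; thus $\lct(X',\Delta';\pi^*D)=\lct(X,\Delta;D)$, whence $\delta_m(X',\Delta';L')\le\delta_m(X,\Delta;L)$ for all $m$, and $m\to\infty$ gives $\delta(X',\Delta';L')\le\delta(X,\Delta;L)$. No hypothesis on $(X,\Delta)$ is used here.

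For the equality assume $(X,\Delta)$ is klt and $\bk'=\bk$. Then the closed point of $C'$ lies over that of $C$ with residue field $\bk$, so $X'_0=X_0$, $L'_0=L_0$, $R'_m=R_m$; and since $\widehat A$ is Henselian and the residue extension is trivial, one checks $\widehat{A'}\cong\widehat A$, so $X'\times_{C'}\Spec\widehat{A'}\cong X\times_C\Spec\widehat A=:\widehat X$ compatibly with all the data. So it is enough to show that $\delta$ is unchanged by the completion base change $\widehat X\to X$: applied to both $X$ and $X'$ this forces $\delta(X',\Delta';L')=\delta(\widehat X,\widehat\Delta;\widehat L)=\delta(X,\Delta;L)$. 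Since $A\to\widehat A$ is a regular ring map, $\widehat X\to X$ preserves klt-ness, log discrepancies and crepant pullbacks, so the previous paragraph immediately gives $\delta(\widehat X,\widehat\Delta;\widehat L)\le\delta(X,\Delta;L)$; the real content is the reverse inequality $\delta(\widehat X,\widehat\Delta;\widehat L)\ge\delta(X,\Delta;L)$, i.e.\ that the extra relative basis type divisors available over $\widehat C$ do not lower the threshold.

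For that I would approximate: a relative $m$-basis type divisor $\widehat D$ of $\widehat L$ comes from an $\widehat A$-basis of $\widehat{\cR_m}=\cR_m\otimes_A\widehat A$, and reducing this basis modulo $\varpi^k$ and lifting to an $A$-basis of $\cR_m$ produces a relative $m$-basis type divisor $D^{(k)}$ of $L$ whose pullback to $\widehat X$ agrees with $\widehat D$ modulo $\varpi^k$; the aim is then $\lct(\widehat X,\widehat\Delta;\widehat D)=\lct(\widehat X,\widehat\Delta;\widehat{D^{(k)}})=\lct(X,\Delta;D^{(k)})\ge\delta_m(X,\Delta;L)$ for $k\gg0$, followed by $m\to\infty$. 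The hard part — and what I expect to be the main obstacle — is the first equality: that a $\varpi$-adic perturbation of the defining sections does not change the log canonical threshold. Over the special fiber this is reasonable, since a valuation with center over $X_0$ has $v(\varpi)>0$ and hence cannot distinguish sections differing by a high power of $\varpi$; over the generic fiber one must instead invoke the field-extension invariance of $\delta_m$ (the map $\widehat X_\eta\to X_\eta$ is a base change by the field extension $K(C)\subseteq\mathrm{Frac}\,\widehat A$). Putting these together, with care about the uniformity of the bounds involved, is the crux. The degenerate case $S(v)=0$ is harmless: by Lemma~\ref{l:S>0} it only occurs for $v=c\cdot\ord_{X_0}$, for which the ratio $A_{X,\Delta}(v)/S(v)$ is $+\infty$ by convention on both sides.
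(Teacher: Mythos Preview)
Your proof of the inequality $\delta(X',\Delta';L')\le\delta(X,\Delta;L)$ is exactly the paper's argument: pull back an $A$-basis to an $A'$-basis, observe that lct is preserved under surjective \'etale pullback, and pass to the limit.

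For the equality case, your reduction to the completion and the approximation strategy (lift a $\widehat A$-basis modulo $\varpi^k$ to an $A$-basis) are again what the paper does. But you have correctly located, and not closed, the gap: you need that the lct of the approximated basis type divisor controls that of the original one. You aim for the exact equality $\lct(\widehat X,\widehat\Delta;\widehat D)=\lct(\widehat X,\widehat\Delta;\widehat{D^{(k)}})$ for $k\gg0$, and your proposed route --- splitting into valuations centered in $X_0$ versus horizontal ones, invoking field-extension invariance of $\delta_m$ for the latter --- is shaky: the paper nowhere establishes that $\delta_m$ (as opposed to $\min\{1,\delta\}$) is invariant under field extension, and it is not clear how to make the vertical/horizontal split uniform in the valuation.

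The paper's resolution is much cleaner and avoids this split entirely: it proves a subadditivity estimate $\lct_x(X,\Delta;\fa+\fb)\le\lct_x(X,\Delta;\fa)+\lct_x(X,\Delta;\fb)$ for ideals on a klt pair (this is where the klt hypothesis is used, via multiplier ideals). Since the product of the approximated sections lies in the ideal generated by the product of the original sections plus $\fm^N$, this gives directly
\[
\lct(X,\Delta;D^{(N)})\ \le\ \lct(\widehat X,\widehat\Delta;\widehat D)\ +\ \tfrac{c}{N}
\]
for a constant $c$ depending only on $m$ and $\lct(\fm)$, and letting $N\to\infty$ finishes. So the missing ingredient in your argument is precisely this subadditivity lemma; once you have it, there is no need to separate the special and generic fibers or to worry about uniformity.
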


\begin{proof}
Since $(X,\Delta)$ is lc and $X'\to X$ is \'etale, $(X',\Delta')$ is lc by \cite[2.14]{Kol13}.
By flat base change, 
\begin{equation} \label{e:section base change}
H^0(X',mL') \cong H^0(X,mL)\otimes_{A} A'
\end{equation}
for $m \in r\bN$, where 
 $A:= \cO_C(C)$ and $A':= \cO_{C'}(C')$.
Therefore if $D$ is a relative $m$-basis type divisor of $L$, then its pullback $D'$ to $X'$ is a relative $m$-basis type divisor of $L'$ and using \cite[2.14]{Kol13} satisfies
\[
\lct(X',\Delta';D') = \lct(X,\Delta;D)
\]
Therefore $\delta(X',\Delta';L')\leq \delta(X,\Delta;L)$. 

For the equality case, if suffices to show that $\delta_m(X,\Delta;L)\le \delta_m(X',\Delta';L')$ for all $m$ when $C'$ is the completion of $C$ at $0$. By \eqref{e:section base change}, for any positive integer $N$ and any basis $s'_1,\dots,s'_{N_m}$ of $H^0(X',mL')$, we can find a basis $s_1,\dots,s_{N_m}$ of $H^0(X,mL)$ such that $s'_i-s_i\in \fm^N \cdot H^0(X',mL')$. By \cite[2.14]{Kol13}, we can compute $\lct(X,\Delta;D)$ by pulling back to $X'$. Combined with Lemma \ref{lem:lct subadditive}, we see that 
\[
\delta_m(X,\Delta;L)\le \lct(X,\Delta;D) \le \lct(X',\Delta;D') + \frac{\lct(X',\Delta';\fm)}{N},
\]
where $D'$ (resp. $D$) is the corresponding relative $m$-basis type divisor. Letting $N\to\infty$, we get $\delta_m (X,\Delta;L)\le \delta_m(X',\Delta;L')$. 
\end{proof}

The following lemma was used in the previous proof.

\begin{lem} \label{lem:lct subadditive}
If $(X,\Delta)$ is a klt pair, $\fa,\fb\subseteq \cO_X$  nonzero ideals on $X$, and $x\in X$, then
\[
\lct_x(X,\Delta;\fa+\fb)\le \lct_x(X,\Delta;\fa)+\lct_x(X,\Delta;\fb).
\]
\end{lem}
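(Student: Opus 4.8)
The plan is to deduce this from Musta\c{t}\u{a}'s description of the multiplier ideals of a sum of ideals. Write $\alpha:=\lct_x(X,\Delta;\fa)$ and $\beta:=\lct_x(X,\Delta;\fb)$; we may assume both are finite, as otherwise the inequality is trivial (a larger ideal has larger $\lct$, and here $\alpha,\beta>0$). Recall that for a nonzero ideal $\fc$ one has $\lct_x(X,\Delta;\fc)=\sup\{t>0 \mid (X,\Delta,\fc^t)\text{ is klt at }x\}$, and that $(X,\Delta,\fc^t)$ is klt at $x$ precisely when the stalk at $x$ of the multiplier ideal $\mathcal{J}_{(X,\Delta)}(\fc^t)$ equals $\cO_{X,x}$. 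Thus it suffices to show that for every rational $c>\alpha+\beta$ one has $\mathcal{J}_{(X,\Delta)}\big((\fa+\fb)^c\big)_x\subseteq\fm_{X,x}$; letting $c\downarrow\alpha+\beta$ then yields the lemma. Everything here is local on the excellent normal scheme $X$ and, after passing to a log resolution, purely combinatorial, so the multiplier ideal theory we invoke is available in the present generality (klt pairs over excellent $\bQ$-schemes).

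The key input is Musta\c{t}\u{a}'s inclusion
\[
\mathcal{J}_{(X,\Delta)}\big((\fa+\fb)^c\big)\ \subseteq\ \sum_{\substack{\lambda,\mu\ge 0\\ \lambda+\mu=c}}\mathcal{J}_{(X,\Delta)}(\fa^\lambda)\cdot\mathcal{J}_{(X,\Delta)}(\fb^\mu),
\]
where the sum on the right is finite, since only finitely many jumping numbers of $\fa$ and of $\fb$ lie below $c$. Granting this, it remains to observe that each summand has stalk contained in $\fm_{X,x}$ once $c>\alpha+\beta$: for a pair $(\lambda,\mu)$ with $\lambda,\mu\ge 0$ and $\lambda+\mu=c$, either $\lambda>\alpha$, in which case $\mathcal{J}_{(X,\Delta)}(\fa^\lambda)_x\subsetneq\cO_{X,x}$ and hence the product $\mathcal{J}_{(X,\Delta)}(\fa^\lambda)_x\cdot\mathcal{J}_{(X,\Delta)}(\fb^\mu)_x\subseteq\fm_{X,x}$, or else $\lambda\le\alpha$, which forces $\mu=c-\lambda>\beta$, so that $\mathcal{J}_{(X,\Delta)}(\fb^\mu)_x\subsetneq\cO_{X,x}$ and again the product lies in $\fm_{X,x}$. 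Since a finite sum of ideals each contained in the maximal ideal of the local ring $\cO_{X,x}$ is still contained in $\fm_{X,x}$, we conclude $\mathcal{J}_{(X,\Delta)}\big((\fa+\fb)^c\big)_x\subseteq\fm_{X,x}$, as required.

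The part I expect to be the main obstacle is having Musta\c{t}\u{a}'s inclusion at hand in this generality. The statement is purely local, and neither the boundary $\Delta$ nor the passage to excellent schemes alters the combinatorics of its proof; but if one wants a self-contained argument, one is forced into Musta\c{t}\u{a}'s toric degeneration, which decouples $\fa$ and $\fb$ — reducing to the case of ideals pulled back from the two factors of a product, where the needed estimate becomes the elementary monomial computation $\lct_0\big((x^a)+(y^b)\big)=\tfrac1a+\tfrac1b$. Correctly carrying the boundary $\Delta$ through this degeneration, in particular avoiding the spurious ``doubling of $\Delta$ along the diagonal'' that a naive product argument produces, is the delicate point; a single divisorial valuation on $X$ itself never suffices, which is precisely why the inequality is genuinely an upper bound rather than the trivial lower bound $\lct_x(\fa+\fb)\ge\max\{\alpha,\beta\}$.
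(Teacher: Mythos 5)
Your proof follows the same blueprint as the paper's: reduce to showing $\cJ\big((\fa+\fb)^c\big)_x\subseteq\fm_{X,x}$ for $c>\alpha+\beta$ via a summation formula for the multiplier ideal of $(\fa+\fb)^c$, then observe that each term in the sum is contained in $\fm_{X,x}$ by a pigeonhole on $\lambda+\mu=c$. The conclusion is right, but there is a real issue with the ``key input'' as you state it.

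You invoke the inclusion
\[
\cJ_{(X,\Delta)}\big((\fa+\fb)^c\big)\subseteq\sum_{\lambda+\mu=c}\cJ_{(X,\Delta)}(\fa^\lambda)\cdot\cJ_{(X,\Delta)}(\fb^\mu),
\]
with the \emph{product} of the two multiplier ideals. What Musta\c{t}\u{a} (and, in the klt-pair/excellent setting the paper needs, Takagi, Thm.~0.1(2), and Jonsson--Musta\c{t}\u{a}) actually prove is the equality
\[
\cJ_{(X,\Delta)}\big((\fa+\fb)^c\big)=\sum_{\lambda+\mu=c}\cJ_{(X,\Delta)}(\fa^\lambda\fb^\mu),
\]
where the mixed multiplier ideal $\cJ(\fa^\lambda\fb^\mu)$ appears inside the sum. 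Passing from $\cJ(\fa^\lambda\fb^\mu)$ to $\cJ(\fa^\lambda)\cdot\cJ(\fb^\mu)$ is \emph{subadditivity}, and --- as you yourself flag at the end --- subadditivity for a pair $(X,\Delta)$ on a singular $X$ is not free: the na\"ive diagonal argument produces $\cJ(X,2\Delta;\fa\fb)$ on the left (the ``doubling of $\Delta$'' you mention), and on singular $X$ one additionally picks up a Jacobian-ideal correction (Takagi). So the inclusion you cite as a black box is not actually a theorem in the generality needed, and your proof has a gap right where you anticipated it would.

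The gap is easy to close, and this is where your route diverges from (and is less efficient than) the paper's: the product is overkill. You only need that each summand lies in $\fm_{X,x}$, and for that the \emph{trivial} monotonicity bound suffices: from a common log resolution one has
\[
\cJ_{(X,\Delta)}(\fa^\lambda\fb^\mu)\subseteq\cJ_{(X,\Delta)}(\fa^\lambda)\cap\cJ_{(X,\Delta)}(\fb^\mu),
\]
since adding $-\mu G$ (resp.\ $-\lambda F$) to the exponent in the round-up only shrinks the sheaf. Then the dichotomy $\lambda>\alpha$ or $\mu>\beta$ forces the intersection into $\fm_{X,x}$. This is exactly the paper's argument, and it requires only the Takagi/Jonsson--Musta\c{t}\u{a} summation equality, no subadditivity at all. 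So: replace the product by the intersection, cite the summation theorem in its correct (mixed-exponent) form, and delete the speculative final paragraph about carrying $\Delta$ through the toric degeneration --- none of that is needed.
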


\begin{proof}
This follows from \cite{Takagi-multiplier-ideal}*{Theorem 0.1(2)} as in the proof of \cite{XZ-minimizer-unique}*{Theorem 3.11}. We include the details here for the reader's convenience. 
Let $a=\lct_x(X,\Delta;\fa)$ and $b=\lct_x(X,\Delta;\fb)$.
It suffices to show that the multiplier ideal $\cJ(X,\Delta,(\fa+\fb)^c)$ is nontrivial at the point $x\in X$ for any $c>a+b$. By \cite{Takagi-multiplier-ideal}*{Theorem 0.1(2)} or \cite{JM08} (note that although the result in \emph{loc. cit.} is stated in the $\bQ$-Gorenstein case, the proof goes through in general), we have 
\[
\cJ(X,\Delta,(\fa+\fb)^c) = \sum_{\lambda+\mu = c} \cJ(X,\Delta,\fa^\lambda \fb^\mu).
\]
For any $\lambda,\mu$ with $\lambda+\mu=c>a+b$, we have $\lambda>a$ and hence $\cJ(X,\Delta,\fa^\lambda)\subseteq \fm_x$, or $\mu>b$ and $\cJ(X,\Delta,\fb^\mu)\subseteq \fm_x$. It follows that $\cJ( X,\Delta,\fa^\lambda \fb^\mu)\subseteq \cJ(X,\Delta,\fa^\lambda)\cap \cJ(X,\Delta,\fb^\mu)\subseteq \fm_x$ and therefore $\cJ(X,\Delta,(\fa+\fb)^c)\subseteq \fm_x$ as well.
\end{proof}

\begin{prop}\label{p:deltapos}
If $(X,\Delta)$ is klt, then $\delta(X,\Delta;L) >0$. 
\end{prop}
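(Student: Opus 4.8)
The plan is to bound $\delta(X,\Delta;L)$ from below by combining the valuative formula $\delta(X,\Delta;L)=\inf_v \tfrac{A_{X,\Delta}(v)}{S(v)}$ from Proposition \ref{p:delta=inf A/S} with the comparison $c\cdot T(v)\le A_{X,\Delta}(v)$ from Lemma \ref{lem-delta positive} and the inequality $S(v)\le T(v)$ coming from Proposition \ref{p:reducedSinvproperties}. Since $(X,\Delta)$ is klt, Lemma \ref{lem-delta positive} gives a constant $c>0$, depending only on $(X,\Delta;L)\to C$, with $c\cdot T(v)\le A_{X,\Delta}(v)$ for every $v\in\Val_X$. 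On the other hand, for $v\in\Val_X^\circ$ we have $T(v)<\infty$, so $\cF_v\vert_{X_0}$ is linearly bounded with $\cF_v^0\vert_{X_0}R=R$, and Proposition \ref{p:reducedSinvproperties}.3 yields $S(v)\le T(v)$. Combining, $S(v)\le T(v)\le c^{-1}A_{X,\Delta}(v)$, hence $\tfrac{A_{X,\Delta}(v)}{S(v)}\ge c$ for every $v\in\Val_X^\circ$ with $S(v)>0$ (and the ratio is $+\infty$ by convention when $S(v)=0$). Taking the infimum over $v$ and invoking Proposition \ref{p:delta=inf A/S} gives $\delta(X,\Delta;L)\ge c>0$.

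The only point needing a little care is that Proposition \ref{p:reducedSinvproperties}.3 is stated for linearly bounded filtrations $\cF$ of $R$ with $\cF^0R=R$; I should note that $\cF_v\vert_{X_0}$ indeed satisfies $\cF_v^0\vert_{X_0}R=R$ because $1\in\cR_0\otimes k(0)\cong R_0$ has $v$-value $0$, and that linear boundedness follows from $T(v)<\infty$ together with Lemma \ref{l:Afinite->linbounded} (which controls $\cF_v^\lambda\cR_m$ from above by $\fm^d\cR_m$, forcing $\cF_v^{-Cm}\vert_{X_0}R_m=R_m$ and $\cF_v^{Cm}\vert_{X_0}R_m=0$ for a suitable $C$). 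This is exactly the observation already recorded right after Lemma \ref{lem-delta positive}, so it can be cited rather than reproved.

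I do not expect any serious obstacle here: this is a short bookkeeping argument that packages together three lemmas already proved in the section. If anything, the mild subtlety is the degenerate case $S(v)=0$, which by Lemma \ref{l:S>0} only happens when $X_0$ is irreducible and $v=c\cdot\ord_{X_0}$; there the convention $\tfrac{A_{X,\Delta}(v)}{S(v)}=+\infty$ means such $v$ do not affect the infimum, so the bound $\delta(X,\Delta;L)\ge c$ is unaffected. Concretely, I would write: "By Lemma \ref{lem-delta positive} there is $c>0$ with $c\,T(v)\le A_{X,\Delta}(v)$ for all $v\in\Val_X$. For $v\in\Val_X^\circ$ with $S(v)>0$, Proposition \ref{p:reducedSinvproperties}.3 (applied to the linearly bounded filtration $\cF_v\vert_{X_0}$ of $R$, which satisfies $\cF_v^0\vert_{X_0}R=R$) gives $S(v)\le T(v)\le c^{-1}A_{X,\Delta}(v)$, so $A_{X,\Delta}(v)/S(v)\ge c$. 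Since the ratio is $+\infty$ when $S(v)=0$, Proposition \ref{p:delta=inf A/S} gives $\delta(X,\Delta;L)=\inf_{v\in\Val_X^\circ}A_{X,\Delta}(v)/S(v)\ge c>0$."
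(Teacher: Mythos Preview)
Your proposal is correct and follows exactly the same route as the paper: use Proposition \ref{p:delta=inf A/S} to write $\delta(X,\Delta;L)=\inf_v A_{X,\Delta}(v)/S(v)$, then chain $S(v)\le T(v)$ (Proposition \ref{p:reducedSinvproperties}) with $c\cdot T(v)\le A_{X,\Delta}(v)$ (Lemma \ref{lem-delta positive}) to get $\delta(X,\Delta;L)\ge c>0$. Your extra remarks on linear boundedness and the $S(v)=0$ case are fine but not strictly needed.
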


\begin{proof}
We have that 
\[
\delta(X,\Delta;L) = \inf_{v}\frac{A_{X,\Delta}(v)}{S(v)}
\geq 
\inf_{v} \frac{A_{X,\Delta}(v)}{T(v)}
>0
,\]
where the first equality holds by Proposition \ref{p:delta=inf A/S}, the second holds as $S(v) \leq T(v)$, and the third holds by Lemma \ref{lem-delta positive}.
\end{proof}

A case of primary interest is when  $(X,\Delta+X_0)$ is plt, which implies that  $(X,\Delta) \to C$  is a locally KSBA stable family as in \cite[Definition 2.3]{Kol23}).
In this case, $(X_0,\Delta_0)$ is a normal klt pair by \cite[Proposition 5.51]{KM-book} and adjunction.
The following lemma shows that in this case the relative stability threshold can be used to recover the stability threshold of the special fiber.

\begin{lem} \label{lem-tconvergeto0}
If $(X,\Delta+X_0)$ is plt, then 
\[
\delta(X_0,\Delta_0;L_0) =\delta(X,\Delta+X_0;L) = \lim_{t\to 0+} \delta(X,\Delta+(1-t)X_0;L)
.\]
\end{lem}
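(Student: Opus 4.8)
The plan is to prove the two equalities separately, starting with the rightmost one. For $t\in[0,1)\cap\bQ$ the pair $(X,\Delta+(1-t)X_0)$ is klt, since $(X,\Delta+X_0)$ is plt, so by Proposition~\ref{p:delta=inf A/S} we may use the valuative description $\delta(X,\Delta+(1-t)X_0;L)=\inf_{v\in\Val_X^\circ}\frac{A_{X,\Delta+(1-t)X_0}(v)}{S(v)}$, in which $S(v)$ depends only on $L$ and hence is independent of $t$. Writing $X_0=\mathrm{div}_X(\pi)$, so that $v(X_0)=v(\pi)\ge 0$ for every $v\in\Val_X$, we have $A_{X,\Delta+(1-t)X_0}(v)=A_{X,\Delta+X_0}(v)+t\,v(\pi)$. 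Thus $t\mapsto\delta(X,\Delta+(1-t)X_0;L)$ is an infimum of affine non-decreasing functions of $t$, so it is non-decreasing and bounded below by its value $\delta(X,\Delta+X_0;L)$ at $t=0$. For the reverse inequality, fix $\varepsilon>0$, choose $v^\ast\in\Val_X^\circ$ with $S(v^\ast)>0$ and $A_{X,\Delta+X_0}(v^\ast)/S(v^\ast)<\delta(X,\Delta+X_0;L)+\tfrac{\varepsilon}{2}$ — possible since $\delta(X,\Delta+X_0;L)<\infty$ by testing on a general prime divisor and Lemma~\ref{l:S>0} — and note $v^\ast(\pi)<\infty$. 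Then for all sufficiently small $t>0$ we get $\frac{A_{X,\Delta+X_0}(v^\ast)+t\,v^\ast(\pi)}{S(v^\ast)}<\delta(X,\Delta+X_0;L)+\varepsilon$, hence $\delta(X,\Delta+(1-t)X_0;L)<\delta(X,\Delta+X_0;L)+\varepsilon$. Letting $t\to 0+$ and then $\varepsilon\to 0$ gives $\lim_{t\to0+}\delta(X,\Delta+(1-t)X_0;L)=\delta(X,\Delta+X_0;L)$.

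For the first equality I would argue through relative $m$-basis type divisors. Let $D$ be such a divisor of $L$; then $X_0\not\subset\Supp(D)$, its restriction $D_0:=D|_{X_0}$ is an $m$-basis type divisor of $L_0$ (using $\cR_m\otimes k(0)\cong R_m$), and $D_\eta:=D|_{X_\eta}$ is an $m$-basis type divisor of $L_\eta$. I split $\Val_X$ according to whether $v(\pi)>0$, in which case $c_X(v)\in X_0$, or $v(\pi)=0$, in which case $v$ is (the image of) a valuation on the generic fiber $X_\eta$ with $A_{X,\Delta+X_0}(v)=A_{X_\eta,\Delta_\eta}(v)$ and $v(D)=v(D_\eta)$. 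Inversion of adjunction along the normal prime divisor $X_0\subset X$ identifies $\inf\{A_{X,\Delta+X_0}(v)/v(D): v(\pi)>0\}$ with $\lct(X_0,\Delta_0;D_0)$, so
\[
\lct(X,\Delta+X_0;D)=\min\bigl\{\lct(X_0,\Delta_0;D_0),\ \lct(X_\eta,\Delta_\eta;D_\eta)\bigr\}.
\]
Taking the infimum over all relative $m$-basis type divisors $D$ of $L$, using that every basis of $R_m=\cR_m\otimes k(0)$ lifts to a basis of the free $\cO_C$-module $\cR_m$ (so $D\mapsto D_0$ surjects onto the $m$-basis type divisors of $L_0$) and that $\lct(X_\eta,\Delta_\eta;D_\eta)\ge\delta_m(X_\eta,\Delta_\eta;L_\eta)$, we get
\[
\min\{\delta_m(X_0,\Delta_0;L_0),\ \delta_m(X_\eta,\Delta_\eta;L_\eta)\}\ \le\ \delta_m(X,\Delta+X_0;L)\ \le\ \delta_m(X_0,\Delta_0;L_0).
\]
Letting $m\to\infty$ (Proposition~\ref{p:delta=inf A/S}) yields $\min\{\delta(X_0,\Delta_0;L_0),\,\delta(X_\eta,\Delta_\eta;L_\eta)\}\le\delta(X,\Delta+X_0;L)\le\delta(X_0,\Delta_0;L_0)$.

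It then remains to show $\delta(X_0,\Delta_0;L_0)\le\delta(X_\eta,\Delta_\eta;L_\eta)$, which collapses the sandwich to the asserted equality $\delta(X_0,\Delta_0;L_0)=\delta(X,\Delta+X_0;L)$. This is the lower semicontinuity of the stability threshold along the (locally stable) family $(X,\Delta;L)\to C$: a divisorial valuation almost computing $\delta(X_0,\Delta_0;L_0)$ can be spread out to a divisorial valuation over $X_\eta$ without increasing the ratio $A/S$. I expect this step — equivalently, ruling out the generic fiber as the location of instability — to be the main obstacle; it can either be imported from the literature or established directly by a deformation-of-divisors argument, while everything else reduces to adjunction, the basis-type description of $S_m$ in Lemma~\ref{l:S(v)basistype}, and Proposition~\ref{p:delta=inf A/S}.
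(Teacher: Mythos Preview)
Your argument for the second equality (the limit as $t\to 0^+$) is correct and essentially matches the paper's.

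For the first equality you have taken an unnecessary detour. Your displayed identity
\[
\lct(X,\Delta+X_0;D)=\min\bigl\{\lct(X_0,\Delta_0;D_0),\ \lct(X_\eta,\Delta_\eta;D_\eta)\bigr\}
\]
is correct, but you have missed that the minimum is \emph{always} achieved by the first term. Indeed, if $\lambda<\lct(X_0,\Delta_0;D_0)$ then by inversion of adjunction $(X,\Delta+X_0+\lambda D)$ is lc on an open neighbourhood of $X_0$; the non-lc locus is a closed subset of $X$ disjoint from $X_0$, hence its image in $C$ under the proper map $X\to C$ is a closed subset missing $0$, so it is empty. Thus $(X,\Delta+X_0+\lambda D)$ is lc everywhere and in particular $\lambda\le\lct(X_\eta,\Delta_\eta;D_\eta)$. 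This gives $\lct(X_0,\Delta_0;D_0)\le\lct(X_\eta,\Delta_\eta;D_\eta)$ for every relative $m$-basis type divisor $D$, so $\lct(X,\Delta+X_0;D)=\lct(X_0,\Delta_0;D_0)$ and hence $\delta_m(X,\Delta+X_0;L)=\delta_m(X_0,\Delta_0;L_0)$ for every $m$; this is exactly the paper's one-line argument.

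Your proposed workaround---reducing to the global inequality $\delta(X_0,\Delta_0;L_0)\le\delta(X_\eta,\Delta_\eta;L_\eta)$ and then importing lower semicontinuity of the stability threshold---is not wrong, but it is both heavier and, for a general polarization $L$, not so readily available in the literature (the results you might cite typically give semicontinuity of $\min\{1,\delta\}$ in the log Fano case). In fact the standard proof of such semicontinuity goes through precisely the pointwise lct comparison above, so invoking it here is close to circular.
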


\begin{proof}
 By inversion of adjunction, $\lct(X,\Delta +X_0;D)=\lct(X_0,\Delta_0;D_0)$ for any relative $m$-basis type divisor $D$ of $L$. Since any $m$-basis type divisor  $D_0$ of $L_0$ can be lifted to a relative $m$-basis type divisor $D$ of $L$, we see that for 
\[
\delta_m(X,\Delta+X_0;L)=\inf_{D} \lct(X,\Delta+X_0;D)=\inf_{D_0} \lct(X_0,\Delta_0;D_0)=\delta_m(X_0,\Delta_0;L_0) 
\]
when  $m$ is divisible by $r$.
Sending $m\to \infty$ gives $\delta(X,\Delta+X_0;L)=\delta(X_0,\Delta_0;L_0)$, which is the first equality. 

For the second equality, note that
\[
\delta(X,\Delta+X_0;L) \leq
\delta(X,\Delta+(1-t)X_0;L)
\]
for  $t\in[0,1]$.
Therefore it suffices to verify that $\limsup_{t\to0 ^+}\delta(X,\Delta+(1-t)X_0;L)$ is at most $ \delta(X_0,\Delta_0;L_0)$.
For any $\varepsilon>0$,   there exists  $v\in \Val_X^\circ$  with $\frac{A_{X,\Delta+X_0}(v)}{S(v)}<\delta(X,\Delta+X_0;L)+\varepsilon$  by Proposition \ref{p:delta=inf A/S}. 
Now
\begin{multline*}
\limsup_{t\to0^+}\delta(X,\Delta+(1-t)X_0;L) \le
\limsup_{t\to0^+} \frac{A_{X,\Delta+(1-t_0)X_0}(v)}{S(v)} \\= \frac{A_{X,\Delta+X_0}(v)}{S(v)} <\delta(X,\Delta+X_0;L)+\varepsilon\, 
\end{multline*}
where the first inequality is by Proposition \ref{p:delta=inf A/S}.
Therefore the second equality holds.
\end{proof}

\subsection{Minimizer}\label{ss-minimizer} 
In this section, we show that the relative stability threshold is computed by a quasi-monomial valuation. We keep the notation and assumptions from Setup \ref{sss:setup}. We also assume that the residue field $\bk$ is algebraically closed and uncountable.

\begin{thm} \label{t:delta computed by qm valuation base complete}
If $(X,\Delta)$ is klt and $C$ is the spectrum of a complete DVR, then there exists a quasi-monomial valuation $v\in \Val_X$ that computes $\delta(X,\Delta;L)$.
\end{thm}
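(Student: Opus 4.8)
The plan is to adapt the generic limits argument of \cite{BJ-delta} from the absolute to the relative setting to produce \emph{some} valuation $v\in\Val_X^\circ$ computing $\delta:=\delta(X,\Delta;L)$, and then to invoke the main result of \cite{Xu-quasimonomial} to upgrade $v$ to a quasi-monomial valuation still computing $\delta$. Throughout I would use that $\delta>0$ (Proposition \ref{p:deltapos}) and that $\delta=\lim_{m}\delta_{mr}(X,\Delta;L)$ with $\delta_{m}=\inf_{v}A_{X,\Delta}(v)/S_m(v)=\inf_D\lct(X,\Delta;D)$ (Proposition \ref{p:delta=inf A/S}). Since $\cO_C$ is a complete DVR containing $\bQ$ with algebraically closed residue field, $\cO_C\cong\bk\powerseries{t}$ by the Cohen structure theorem, and this is what allows the spaces of bases of the free $\cO_C$-modules $\cR_m$ (and of their truncations $\cR_m/\fm^N\cR_m$) to be treated as $\bk$-schemes, so that the generic limit machinery applies.

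First I would fix a minimizing sequence of divisorial valuations: choose $w_m\in\DivVal_X$ with $A_{X,\Delta}(w_m)/S(w_m)<\delta+\tfrac1m$ (possible by Proposition \ref{p:delta=inf A/S}), and rescale so that $A_{X,\Delta}(w_m)=1$ (legitimate since $(X,\Delta)$ is klt, so $0<A_{X,\Delta}(w_m)<\infty$, and the ratio $A/S$ is scale invariant). Then $\delta\le S(w_m)^{-1}<\delta+\tfrac1m$, so $S(w_m)\to\delta^{-1}$, and by Lemma \ref{lem-delta positive} there is a fixed $c_0>0$ with $T(w_m)\le c_0^{-1}$; hence the valuation ideals $\fa_p(w_m)\subset\cO_X$ have colength bounded in terms of $p$ only. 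Alternatively, following \cite{BJ-delta} more literally, for each $m$ one may take a relative $m$-basis type divisor $D_m$ with $c_m:=\lct(X,\Delta;D_m)<\delta_m+\tfrac1m\to\delta$ and let $w_m$ be a divisorial lc place of $(X,\Delta+c_mD_m)$; then $A_{X,\Delta}(w_m)=c_m\,w_m(D_m)\le c_m\,S_{mr}(w_m)$ by Lemma \ref{l:S(v)basistype}, and Proposition \ref{prop-reducedBJinequality} applied to the ample line bundle $L_0$ on the (possibly reducible) fiber $X_0$ gives $S_{mr}(w_m)\le(1+\varepsilon)S(w_m)$ for $m\gg0$, so again $A_{X,\Delta}(w_m)/S(w_m)\to\delta$ after normalizing.

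Next I would extract a generic limit. Passing to a subsequence and using that $\bk$ is uncountable, I would form the generic limit $w_\infty$ of the uniformly bounded sequence $\{w_m\}$ (equivalently, of the graded ideal sequences $\fa_\bullet(w_m)$, or of the divisors $D_m$ as in \cite{BJ-delta}), realized on log smooth birational models of $X$ over $C$, allowing further blowups. Two semicontinuity facts must be carried over to the relative setting: lower semicontinuity of the log discrepancy, giving $A_{X,\Delta}(w_\infty)\le\liminf_m A_{X,\Delta}(w_m)=1$; and upper semicontinuity of the relative $S$-invariant along the limit, giving $S(w_\infty)\ge\lim_m S(w_m)=\delta^{-1}$. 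Granting these, since $\delta=\inf_v A_{X,\Delta}(v)/S(v)$ we obtain $\delta\le A_{X,\Delta}(w_\infty)/S(w_\infty)\le\delta$ (using $A_{X,\Delta}(w_\infty)\le 1$ and $S(w_\infty)\ge\delta^{-1}$), so $w_\infty\in\Val_X^\circ$ computes $\delta$. Finally, by the main theorem of \cite{Xu-quasimonomial} (whose proof extends to the relative setting over $C$), the fact that $\delta$ is computed by some valuation implies that it is computed by a quasi-monomial one, which completes the proof.

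The step I expect to be the main obstacle is the upper semicontinuity of the relative $S$-invariant along the generic limit. In \cite{BJ-delta} the $S$-invariant is the barycenter of a concave function on the Okounkov body of $L$ on $X$ itself, so its semicontinuity is read off directly from the Okounkov-body picture; here $S(v)=S(\cF_v\vert_{X_0})$ is instead defined through the restriction of $\cF_v$ to the section ring $R=R(X_0,L_0)$ of a fiber that need not be irreducible. One therefore has to show that the restricted filtrations $\cF_{w_m}\vert_{X_0}$ of $R$ — equivalently, their finite levels $\cF_{w_m}^{\bullet}\cR_{mr}$, which are $\cO_C$-submodules of free modules — degenerate compatibly with the generic limit of the $w_m$, and then to combine this with the convergence and semicontinuity results for filtered section rings of reduced projective schemes established in Section \ref{ss:reducible}. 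The completeness of $C$ enters precisely at this point, as it is what makes the relevant spaces of bases of the $\cR_m$ amenable to the generic limit construction.
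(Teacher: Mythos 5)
Your overall plan — generic limit plus the quasi-monomial theorem of \cite{Xu-quasimonomial} — is the same strategy the paper uses, and the setup of the minimizing sequence (normalizing $A_{X,\Delta}(w_m)=1$, bounding $T(w_m)$ via Lemma \ref{lem-delta positive}) is fine. But the way you glue the two halves together contains a genuine conceptual gap.

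The generic limit of a sequence of filtrations $\cF_{w_k}$ (or of the graded ideal sequences $\fa_\bullet(w_k)$, or of basis type divisors $D_k$) is a \emph{filtration} $\cF$ of the section ring $\cR$, not a valuation. There is no object ``$w_\infty\in\Val_X$'', and the two quantities you invoke, $A_{X,\Delta}(w_\infty)$ and $S(w_\infty)$, are undefined as written: log discrepancy has no meaning for a filtration. This is precisely why the paper does not attempt to prove ``lower semicontinuity of log discrepancy along the generic limit'' — it proves instead semicontinuity of the lct of the base-ideal sequence $\fbb(\cF)$ of the limit filtration (Lemma \ref{l:lct under generic limit}), getting $\lct(\fbb(\cF))\le\limsup_k\lct(\fbb(\cF_{w_k}))\le1$, and proves $S(\cF)\ge\liminf_k S(w_k)=\delta^{-1}$ at the level of filtrations (Lemma \ref{l:S under generic limit}). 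Your step ``$\delta\le A_{X,\Delta}(w_\infty)/S(w_\infty)\le\delta$, so $w_\infty$ computes $\delta$'' therefore does not exist.

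Your citation of \cite{Xu-quasimonomial} is also misremembered. Its Theorem 1.1 is not the statement ``if $\delta$ is computed by some valuation, then it is computed by a quasi-monomial one''; it is the statement that the lct of a graded ideal sequence is computed by a quasi-monomial valuation. The theorem must be applied to $\fbb(\cF)$: it produces a quasi-monomial $v$ computing $\lct(\fbb(\cF))\le1$; after normalizing $A_{X,\Delta}(v)=1$ one deduces $v(\fbb(\cF))\ge1$, whence (by the analogue of \cite[Corollary 3.21]{BJ-delta}) $S(v)\ge S(\cF)\ge\delta^{-1}$, and only then does one conclude $A_{X,\Delta}(v)/S(v)\le\delta$. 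The detour through $\fbb(\cF)$ and the implication $v(\fbb(\cF))\ge1\Rightarrow S(v)\ge S(\cF)$ are exactly the steps your proposal skips, and they cannot be skipped: without them you have neither a valuation nor the chain of inequalities needed to close the argument.
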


When the DVR $\cO_{C,0}$ is not complete, the relative stability threshold may not be computed by any valuation (see Remark \ref{r:no val compute delta}), but under mild assumptions such valuations still exist.

\begin{thm} \label{t:delta computed by qm valuation}
If $(X,\Delta)$ is klt and $\delta(X,\Delta;L)<\delta(X_K,\Delta_K;L_K)$, where $K=\mathrm{Frac}(\widehat{\cO_{C,0})}$ and \[
(X_K,\Delta_K;L_K) = (X,\Delta;L)\times_C \Spec~K
,\]
then there exists a quasi-monomial valuation $v\in \Val_X$ that computes $\delta(X,\Delta;L)$.
Moreover, the center of any such valuation $v$ is contained in $X_0$.
\end{thm}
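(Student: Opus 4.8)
The plan has two parts: the assertion on centers follows directly from the hypothesis, and existence reduces to the already proven complete case, Theorem~\ref{t:delta computed by qm valuation base complete}. For the centers, I would show that \emph{any} $v\in\Val_X^\circ$ computing $\delta(X,\Delta;L)$ has $C_X(v)\subseteq X_0$. Suppose not. Then $v$ is ``horizontal'': writing $\pi$ for a uniformizer of $\cO_{C,0}$ we have $v(\pi)\ge 0$ (since $v\in\Val_X^\circ$ has a center on $X$), and $v(\pi)>0$ would put $c_X(v)$ in $V(\pi)=X_0$, so $v(\pi)=0$ and $c_X(v)\in X_\eta$. The vanishing $v(\pi)=0$ makes each $\cR_m/\cF_v^\la\cR_m$ and $\cR_m/\cF_v^{>\la}\cR_m$ torsion free over the DVR, so $(\cF_v\vert_{X_0})^\la R_m\cong\cF_v^\la\cR_m\otimes k(0)$, and a short rank computation shows $S_m(v)$, hence $S(v)$, equals the $S$-invariant of $v\vert_{X_\eta}$ on $(X_\eta,L_\eta)$; since $X_\eta\subseteq X$ is open and contains $c_X(v)$, also $A_{X,\Delta}(v)=A_{X_\eta,\Delta_\eta}(v\vert_{X_\eta})$. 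Extending $v\vert_{X_\eta}$ to a valuation $v'$ on $X_K=X_\eta\times_{\mathrm{Frac}(\cO_{C,0})}K$ preserves both invariants (\cite[4.63]{Xu-Kbook}, \cite{Z-equivariant}, \cite{CP-CM-positive}), so
\[
\delta(X,\Delta;L)=\frac{A_{X,\Delta}(v)}{S(v)}=\frac{A_{X_K,\Delta_K}(v')}{S(v')}\ge\delta(X_K,\Delta_K;L_K),
\]
contradicting the hypothesis. The same computation gives the stronger fact I use below: any sequence $w_i\in\Val_X^\circ$ with $A_{X,\Delta}(w_i)/S(w_i)\to\delta(X,\Delta;L)$ satisfies $w_i(\pi)>0$, hence $C_X(w_i)\subseteq X_0$, for $i\gg0$.

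For existence, put $\hat C=\Spec\widehat{\cO_{C,0}}$ and $(\hat X,\hat\Delta;\hat L)=(X,\Delta;L)\times_C\hat C$; the residue field is still $\bk$ and $(\hat X,\hat\Delta)$ is klt. The argument in the proof of Proposition~\ref{p:deltaetalebasechange} (which treats precisely the completion) gives $\delta(\hat X,\hat\Delta;\hat L)=\delta(X,\Delta;L)$, and since $\hat X_{\hat\eta}=X_K$ compatibly with $\hat\Delta$ and $\hat L$, the hypothesis reads $\delta(\hat X,\hat\Delta;\hat L)<\delta(X_K,\Delta_K;L_K)$. Theorem~\ref{t:delta computed by qm valuation base complete} then produces a quasi-monomial $\hat v\in\Val_{\hat X}$ computing $\delta(\hat X,\hat\Delta;\hat L)$, and the first part applied over the complete DVR $\hat C$ shows $C_{\hat X}(\hat v)\subseteq\hat X_0=X_0$; so $\delta(X,\Delta;L)$ is attained after completion.

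It remains to produce a quasi-monomial minimizer \emph{on $X$ itself}. One cannot simply restrict $\hat v$ along $K(X)\subseteq K(\hat X)$, since the log smooth model realizing $\hat v$ may use subschemes of $\hat X$ not defined over $X$, so the restriction need not stay quasi-monomial. Instead I would re-run the generic limits construction underlying Theorem~\ref{t:delta computed by qm valuation base complete} over $X$ itself, starting from a sequence $w_i\in\Val_X^\circ$ with $A_{X,\Delta}(w_i)/S(w_i)\to\delta(X,\Delta;L)$ (which exists since $\delta(X,\Delta;L)$ is an infimum over $\Val_X^\circ$, Proposition~\ref{p:delta=inf A/S}). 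By the last observation of the first part $C_X(w_i)\subseteq X_0$ for $i\gg0$, and the quantitative form of that estimate — exactly where the hypothesis $\delta(X,\Delta;L)<\delta(X_K,\Delta_K;L_K)$ is used — keeps the vertical weight of the $w_i$ from becoming negligible relative to $S(w_i)$, so the generic limit is a genuine valuation $v_0\in\Val_X$, centered in $X_0$ (using $X_0=\hat X_0$), that computes $\delta(X,\Delta;L)$ rather than one living only on $\hat X$. Applying the main result of \cite{Xu-quasimonomial} to $v_0$ then yields a quasi-monomial $v\in\Val_X$ computing $\delta(X,\Delta;L)$, whose center lies in $X_0$ by the first part. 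I expect the main obstacle to be precisely this descent: making the ``minimizing sequence stays over $X_0$'' control sharp enough that the generic limit lives on $X$ and not merely on $\hat X$ — without the hypothesis it genuinely need not, as Remark~\ref{r:no val compute delta} illustrates.
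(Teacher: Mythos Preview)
Your first part, on centers, is essentially correct and matches the paper's Lemma~\ref{l:center in X0}, though you take an unnecessary detour through $X_K$: it suffices to compare with $\delta(X_\eta,\Delta_\eta;L_\eta)$ and then observe $\delta(X_K,\Delta_K;L_K)\le\delta(X_\eta,\Delta_\eta;L_\eta)$.

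The real gap is in your existence argument. You claim that one cannot simply restrict $\hat v$ along $K(X)\subseteq K(\hat X)$ because the restriction need not remain quasi-monomial. This is precisely the step the paper takes, and it works: since $C_{\hat X}(\hat v)\subseteq\hat X_0$ and the morphism $\hat X\to X$ induces isomorphisms of completed local rings at every point of $\hat X_0\cong X_0$, \cite[Lemma~3.10]{JM-val-ideal-seq} (and its proof) shows that $\hat v$ restricts to a quasi-monomial valuation $v$ on $X$ with $A_{X,\Delta}(v)=A_{\hat X,\hat\Delta}(\hat v)$, and $v$, $\hat v$ induce the same filtration of $R$, hence $S(v)=S(\hat v)$. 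Combined with $\delta(X,\Delta;L)=\delta(\hat X,\hat\Delta;\hat L)$ from Proposition~\ref{p:deltaetalebasechange}, this finishes the proof immediately. Your worry that the log smooth model for $\hat v$ may not descend is resolved by the fact that quasi-monomiality is a formal-local property at the center, and the formal neighborhoods of $X_0$ in $X$ and in $\hat X$ coincide.

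Your proposed workaround, re-running generic limits over $X$, cannot work as stated: the construction in Section~\ref{ss-genericlimit} explicitly requires $C$ to be complete, since one must reconstruct a filtration of $\cR$ from compatible filtrations of the truncations $\cR/(\fm^m\cR+\cR_{>m})$, and this step uses $\cR_m\cong\varprojlim_N\cR_m/\fm^N\cR_m$. Starting the sequence with valuations on $X$ rather than $\hat X$ does not help; the limit filtration still only lives on the completion. The role of the hypothesis $\delta(X,\Delta;L)<\delta(X_K,\Delta_K;L_K)$ is not to control a generic limit over $X$, but simply to force $C_{\hat X}(\hat v)\subseteq\hat X_0$ so that the restriction step above applies.
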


As in \cite{BJ-delta}, the proof of Theorem \ref{t:delta computed by qm valuation base complete} relies heavily on generic limit constructions. We first discuss some general properties of this construction.

\subsubsection{Generic limit}\label{ss-genericlimit}
In this Section \ref{ss-genericlimit}, we assume that $C$ is the spectrum of a complete DVR. By Cohen structure theorem, we have $\cO_{C,0}\cong \bk[\![t]\!]$. We fix an isomorphism as such.


Let $\cF_k$ ($k=1,2,\cdots$) be a sequence of $\bN$-filtrations of $\cR$. By restriction, they induce $\bN$-filtrations (still denoted by $\cF_k$) of 
\[
\widetilde{R}_m:=\cR/(\fm^m \cR+ \cR_{>m})\cong \oplus_{l\le m} \cR_l/\fm^m \cR_l
\]
for all  $m \in r\bN$. More precisely,
\[
\cF_k^{\lambda}\widetilde{R}_m={\rm Image} (\oplus_{l\le m} \cF_k^{\lambda}\cR_l\to \oplus_{l\le m}\cR_l\to \widetilde{R}_m) \, .
\]
For each $m \in r\bN$ and $C\in \bN$, the $\bN$-filtrations of $\widetilde{R}_m$ with $\cF^{C}\widetilde{R}_m = 0$ are parameterized by a scheme $H_{m,C}$ that is  a closed subscheme of a nested Hilbert scheme of finite type over $\bk$, since each filtration $\cF$ yields a sequence of ideals
\[
0=\cF^{C}\widetilde{R}_m\subseteq \cF^{C-1}\widetilde{R}_m\subseteq\cdots \subseteq \cF^{0}\widetilde{R}_m=\widetilde{R}_m \,,
\]
and $\widetilde{R}_m$ is a finite $\bk$-algebra. If $C'\ge C$ then $H_{m,C}\subseteq H_{m,C'}$ is a union of connected components. Thus $\bN$-filtrations of $\widetilde{R}_m$ can be parameterized by a scheme $H_m:=\cup_{C} H_{m,C}$ whose connected components are of finite type over $\bk$.

Assume now that the filtrations $\cF_k$ of $\widetilde{R}_m$ are uniformly bounded, in the sense that for each $m\in r\bN$ there exists some constant $C_m$ such that $\cF^{C_m}_k \tR_m = 0$ for all $k$. Then the filtrations $\cF_k$ correspond to points $z_{k,m}$ of $H_{m,C_m}$, and thus it makes sense to take their Zariski closure in $H_m$ which gives a scheme of finite type. 

We have natural truncation maps $H_{m+r}\to H_m$ induced by the restriction of filtrations. By the same argument as \cite{BJ-delta}*{Proposition 6.7}, there exists a sequence of   infinite subsets
\[
\mathbb{N}\supset I_1\supset I_2\supset \cdots 
\]
such that $Z_{m}:=\overline{\bigcup_{k\in I_{m/r}}\{z_{k,m}\}} \subset H_{m}$ satisfies the condition that  any  closed subset $Y\subsetneq Z_m$ contains finitely many $z_{k,m}$. 
In particular, there exists dominant natural morphisms $Z_{m+r}\to Z_m$ as well. By the uncountability assumption of the field $\bk$ and the same argument as  \cite{BJ-delta}*{Lemma 6.6},  for any sequence of non-empty open sets $U_m\subset Z_m$, we can pick   a compatible sequence of closed points  $z_m\in U_m$ such that $z_{m+r}$ maps to $z_m$ under the truncation map. 
Since each point $z_m$ corresponds to a filtration of $\widetilde{R}_m$, and since $\cO_{C,0}$ is complete, the entire sequence $\{z_m\}_{m\in r\bN}$ determines a filtration $\cF$ of $\cR$. 
We call $\cF$ a \emph{generic limit} of the $\cF_k$.

We shall analyze the behavior of several invariants of filtrations under the generic limit construction. 
Let $\fb_{m,\la}(\cF)$ be the base ideal of $\cF^\la \cR_m$, i.e.
\[
\fb_{m,\la}(\cF):= {\rm Image}(\cF^\la \cR_m \otimes \cO_{X} \to \cO_{X}(mL) )\otimes  \cO_{X}(-mL) \subseteq  \cO_{X} \, .
\]
Then $\fb_{m,\la}(\cF)\subseteq \fb_{m+m_0,\la}(\cF)$ for any integer $m_0 \in r\bN$ such that $m_0 L$ is base point free. As $L$ is ample, this implies that for any fixed $\la$ the sequence $\fb_{m,\la}(\cF)$ of ideals stabilizes when $m \in r\bN$ is sufficiently large. We can thus set 
\begin{equation} \label{eq:stable base ideal}
\fb_\la(\cF):=\fb_{mr,\la}(\cF)
\end{equation}
for $m\gg 0$. Then $\{\fb_\la(\cF)\}_{\la \in \bN}$ form a multiplicative graded sequence of ideals on $X$. The first property we need is that the log canonical threshold of base ideals does not increase under generic limit.

\begin{lem} \label{l:lct under generic limit}
With the above setup, we have
\[
\lct(X,\Delta;\fbb(\cF))\le \limsup_{k\to\infty} \lct(X,\Delta;\fbb(\cF_k)).
\]
\end{lem}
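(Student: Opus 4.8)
The plan is to reduce this assertion about the graded sequence $\fbb(\cF)$ to one about each individual ideal $\fb_\lambda(\cF)$, and then to use the flexibility built into the generic limit construction. First I would recall the standard identity $\lct(X,\Delta;\fbb(\cF))=\sup_{\lambda\in\bN^+}\lambda\cdot\lct(X,\Delta;\fb_\lambda(\cF))$ for a multiplicative graded sequence of ideals, valid also for each $\cF_k$, together with the elementary bound $\lambda\cdot\lct(X,\Delta;\fb_\lambda(\cF_k))\le\lct(X,\Delta;\fbb(\cF_k))$. Hence it suffices to prove that, for each fixed $\lambda\in\bN^+$, one has $\lct(X,\Delta;\fb_\lambda(\cF))\le\lct(X,\Delta;\fb_\lambda(\cF_k))$ for infinitely many $k$: granting this, for $\varepsilon>0$ I pick $\lambda$ with $\lambda\cdot\lct(X,\Delta;\fb_\lambda(\cF))>\lct(X,\Delta;\fbb(\cF))-\varepsilon$, whence $\lct(X,\Delta;\fbb(\cF_k))\ge\lambda\cdot\lct(X,\Delta;\fb_\lambda(\cF))>\lct(X,\Delta;\fbb(\cF))-\varepsilon$ for infinitely many $k$, and letting $\varepsilon\to 0$ finishes the proof. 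Conceptually, the limit $\cF$ can be matched with the $\cF_k$ only one level $\lambda$ at a time, whereas each $\cF_k$ exploits all of its levels at once, which is precisely why the graded log canonical threshold can drop in the limit.

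To treat a fixed $\lambda$, I would first use \eqref{eq:stable base ideal} to write $\fb_\lambda(\cF)=\fb_{m_1,\lambda}(\cF)$ for some $m_1\in r\bN$ that can be chosen uniformly over the relevant filtrations, the stabilization level being governed, as in the absolute case, by a fixed multiple of $L$ being relatively base point free. The crucial claim is then that the ideal $\fb_{m_1,\lambda}(\cF)\subseteq\cO_X$, and hence the number $\lct(X,\Delta;\fb_{m_1,\lambda}(\cF))$, depends on $\cF$ only through the point $z_M\in Z_M$ for $M=M(\lambda)\gg 0$: the $\cO_{C,0}$-submodule $\cF^\lambda\cR_{m_1}\subseteq\cR_{m_1}$ is pinned down modulo $\fm^M\cR_{m_1}$ up to an ambiguity that does not affect the base ideal --- here one uses a uniform bound on the order of vanishing of these base ideals along the components of a fixed projective log resolution $\mu\colon Y\to X$ of $(X,\Delta)$, together with the $\fm$-adic separatedness of $\cO_X$ and the completeness of $\cO_{C,0}$ --- and $\lct$ is then a constructible function of $z_M$ by the standard behaviour of log canonical thresholds in families of finite type.

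Since $Z_M$ is irreducible --- a finite union of proper closed subsets would, by the defining property of the $Z_m$, contain only finitely many $z_{k,m}$ --- the function sending a point of $Z_M$ to the log canonical threshold of the associated base ideal is constant, say equal to $c$, on a dense open $W=W_\lambda\subseteq Z_M$, and only finitely many $z_{k,M}$ lie outside $W$. I would then carry out the generic limit construction choosing the open sets $U_m$ so that $U_m\subseteq W_\lambda$ whenever $M(\lambda)=m$; for each $m$ this is a countable intersection of dense open subsets of $Z_m$, hence nonempty since $\bk$ is uncountable, and it forces $\lct(X,\Delta;\fb_\lambda(\cF))=c=\lct(X,\Delta;\fb_\lambda(\cF_k))$ for all but finitely many $k$. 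This gives the required inequality for each $\lambda$ and completes the argument.

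The step I expect to be the main obstacle is the constructibility claim of the second paragraph: that $\fb_\lambda(\cF)$, and in particular its log canonical threshold, is determined by a finite truncation $z_M$ of $\cF$ in a constructible way. This is the relative analogue of the corresponding step in \cite{BJ-delta}; the point that requires care over a DVR is the $\fm$-adic rigidity of the base ideal, which I would extract from a uniform bound on its order of vanishing along a fixed log resolution.
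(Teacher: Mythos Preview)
Your overall strategy matches the paper's in spirit: reduce to a single level $\lambda$, pass to a fixed $m$, use constructibility of $\lct$ in the family over $Z_M$, and exploit irreducibility of $Z_M$. However, the crucial claim in your second paragraph---that the base ideal $\fb_\lambda(\cF)$, or even its log canonical threshold, is determined by the finite truncation $z_M$---is false, and your proposed justification via a uniform vanishing bound along a fixed log resolution cannot repair it.

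Knowing $\cF^\lambda\cR_{m_1}$ modulo $\fm^M\cR_{m_1}$ only determines the base ideal modulo $\fm^M$, i.e.\ it pins down $\fb_{m_1,\lambda}(\cF)+\fm^M$. For the base ideal itself to be determined you would need $\fm^M\subseteq\fb_{m_1,\lambda}(\cF)$, which fails whenever the base locus has a horizontal component over $C$; nothing in the setup excludes this. Nor is the threshold itself rigid: already in $\Spec\,\bk[\![\pi]\!][x]$ the ideals $(x^2)$ and $(x^2+\pi^M)$ agree modulo $\fm^M$, yet their log canonical thresholds are $\tfrac12$ and $\tfrac12+\tfrac1M$. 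So neither the ideal nor its $\lct$ is a constructible function of $z_M$; at best it is determined up to an $O(1/M)$ error.

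The paper confronts exactly this point and resolves it by working with $\fb_{m,\lambda}(\cF)+\fm^N$ rather than $\fb_{m,\lambda}(\cF)$. The former \emph{is} determined by $z_m$ for $m\ge N$, so constructibility and lower semicontinuity over $Z_N$ apply directly; one then uses the subadditivity estimate $\lct(\fa+\fm^N)\le\lct(\fa)+\lct(\fm)/N$ (Lemma~\ref{lem:lct subadditive}) and sends $N\to\infty$ to remove the added term. This replaces your hoped-for ``$\fm$-adic rigidity'' with a controlled approximation. A further dividend is that the paper's argument applies to \emph{any} generic limit (it only needs $z_N\in Z_N$ and lower semicontinuity at $z_N$), so there is no need to re-engineer the open sets $U_m$; this matters because in the application the same generic limit must simultaneously satisfy Lemma~\ref{l:S under generic limit}.

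A minor secondary point: your claim that the stabilization level $m_1$ with $\fb_\lambda=\fb_{m_1,\lambda}$ can be chosen uniformly over all $\cF_k$ is also unjustified. The paper sidesteps this by stabilizing only for the single filtration $\cF$ and using the inclusion $\fb_{m,\lambda}(\cF_k)\subseteq\fb_\lambda(\cF_k)$ (hence $\lct(\fb_{m,\lambda}(\cF_k))\le\lct(\fb_\lambda(\cF_k))$) on the $\cF_k$ side.
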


\begin{proof}
Fix integers $m\geq N \geq 0$ both divisible by $r$ and  $\la \in \bR$.
By abuse of notation, we denote the ideal $\fm_{C,0} \cdot \cO_{X}$ also by $\fm$.
From the generic limit construction above, there is an ideal $\mathcal{B}_{m,\la} \subset \cO_{X \times H_m}$, which we view as a family of ideals on $X$ parametrized by $H_m$, 
with the property that
$\cB_{m,\la}\vert_{X\times \{z_{k,N}\} }$ and $\cB_{m,\la}\vert_{X\times\{z_N\}}$ agree with $\fb_{m,\la}(\cF_k) + \fm^N$
and $\fb_{m,\la}(\cF)+\fm^N$, respectively.
The function 
\[
H_N\ni h \mapsto \lct(X,\Delta; \cB\vert_{X\times \{h\}})
\]
is  constructible and lower semicontinuous by inversion of adjunction. 
Therefore
\begin{multline*}
\lct(\fb_{m,\la}(\cF))
\leq 
\lct(\fb_{m,\la}(\cF)+\fm^N)\\
\leq \lim_{I_{N/r}\ni k\to\infty} \lct(\fb_{m,\la}(\cF_k)+\fm^N)
\leq 
\limsup_{k\to\infty} \lct(\fb_{m,\la}(\cF_k) +\fm^N),
\end{multline*}
where the second inequality uses that $Z_{N}$ is irreducible, contains $z_N$, and is the closure of  $\{z_{N,k} \, \vert\, k\in I_{N/r} \}$.
By Lemma \ref{lem:lct subadditive}, we see that 
\[
\lct(\fb_{m,\la}(\cF_k)+\fm^N)\le \lct(\fb_{m,\la}(\cF_k))+\frac{\lct(\fm)}{N}
.\]
Combining the previous two inequalities and letting $N\to \infty$, we deduce that 
\[
\lct(\fb_{m,\la}(\cF))
\leq 
\limsup_{k\to \infty} \lct(\fb_{m,\la}(\cF_k))
.
\]
For $m>0$ sufficiently large and divisible, we obtain
\[
\lct(\fb_{\la}(\cF))
=
\lct(\fb_{m,\la}(\cF))
\leq 
\limsup_{k\to \infty} \lct(\fb_{m,\la}(\cF_k))
\leq 
\limsup_{k\to \infty} \lct(\fb_{\la}(\cF_k))
,\]
where the last equality uses that $\fb_{m,\la}(\cF_k) \subset \fb_{\la}(\cF_k)$. 
By multiplying by $\la$ and sending $\la\to \infty$, we deduce the desired result.
\end{proof}




Next, we analyze the behavior of the $S$-invariant under the generic limit. Before we do this, we need to generalize a uniform approximation result from \cite{BJ-delta} to our relative setting.

For any filtration $\cF$ of $\cR$, positive numbers $m$ and $\ell$ with $m$ divisible by $r$,  and $\la \in \bR$, similar to \cite[pg. 32]{BJ-delta}, we define a graded linear series of $mL_0$ by 
\begin{equation} \label{eq:tF}
V^{\lambda}_{m,\ell}:=H^0(X_0,m\ell L_0\otimes \overline{\fb_{m,\lambda m}(\cF)|_{X_0}^\ell}) \, ,
\end{equation}
where the notation $\overline{\fb}$ denotes the integral closure of the ideal $\fb$.
We define an invariant
\[
\tS_m(\cF)=\frac{1}{\vol(mL_0)}\int_0^{T(\cF)}\vol(V^{\lambda}_{m,\bullet})\,  {\rm d}\lambda\, . 
\]As before, we denote by $\tS_m(v):=\tS_m(\cF_v)$ for any valuation $v$.


\begin{lem}[{\it cf.} \cite{BJ-delta}*{Theorem 5.3}] \label{l:S uniform convergence}
If $(X,\Delta)$ is klt, then 
there exists a constant $C>0$ such that
\begin{equation} \label{eq:uniform S estimate}
    0\le S(v)-\tS_m(v)\le C\cdot\frac{A_{X,\Delta}(v)}{m}
\end{equation}
for all $m> 0$ divisible by $r$ with $h^0(X,mL)\neq 0$ and  $v\in \Val_X^\circ$. 
\end{lem}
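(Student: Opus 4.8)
The plan is to mirror the proof of the absolute statement \cite{BJ-delta}*{Theorem 5.3}, keeping track of where the possible reducibility and non‑normality of $X_0$ intervene and invoking the results of Section~\ref{ss:reducible} and Lemma~\ref{l:pullbackcontainsample} at those points. Write $\cF:=\cF_v$ and let $\cF\vert_{X_0}$ be its restriction to $R$. The first observation is that, for each $m$ and $\lambda$, the ideal $\fb_{m,\lambda m}(\cF)\vert_{X_0}$ is exactly the base ideal of the linear system $\cF^{\lambda m}\vert_{X_0}R_m\subseteq H^0(X_0,mL_0)$ (this follows from right exactness of $-\otimes\cO_{X_0}$ together with the hypothesis $\cR_m\otimes k(0)\xrightarrow{\ \sim\ }R_m$). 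Hence the graded linear series $\ell\mapsto V^\lambda_{m,\ell}$ of $mL_0$ in \eqref{eq:tF} is precisely the degree-$m$ Fujita approximant of the graded linear series $V_\bullet^{\cF\vert_{X_0},\lambda}$ of $L_0$. Since $\vol_{X_0}(L;v\ge\lambda)=\vol\bigl(V_\bullet^{\cF\vert_{X_0},\lambda}\bigr)$, and since $\vol\bigl(V_\bullet^{\cF\vert_{X_0},\lambda}\bigr)=\vol\bigl(V^\lambda_{m,\bullet}\bigr)=0$ for $\lambda> T(v)=T(\cF\vert_{X_0})$, Lemma~\ref{l:Svintresvol} reduces everything to comparing, pointwise in $\lambda\in[0,T(v)]$, the quantity $\vol\bigl(V_\bullet^{\cF\vert_{X_0},\lambda}\bigr)$ with $m^{-n}\vol\bigl(V^\lambda_{m,\bullet}\bigr)$.

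The lower bound $\tS_m(v)\le S(v)$ is the easy, monotone half: a degree-$m$ Fujita approximant never has larger normalized volume than the series it approximates, so $m^{-n}\vol\bigl(V^\lambda_{m,\bullet}\bigr)\le\vol\bigl(V_\bullet^{\cF\vert_{X_0},\lambda}\bigr)$ for every $\lambda$. On a possibly reducible $X_0$ I would obtain this by decomposing both volumes over the top-dimensional irreducible components of $X_0$ as in Section~\ref{ss:reducible} — so that $\vol\bigl(V^\lambda_{m,\bullet}\bigr)=\sum_{\dim X_i=n}\vol\bigl(V^{\lambda,i}_{m,\bullet}\bigr)$ and likewise for $V_\bullet^{\cF\vert_{X_0},\lambda}$ — pulling each component back to its normalization via Lemma~\ref{l:pullbackcontainsample} (which does not change these volumes, the non‑normal correction being of lower order in $\ell$), and applying Fujita approximation on the resulting normal projective varieties. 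Integrating over $\lambda\in[0,T(v)]$ and dividing by $\vol(L_0)$ gives $\tS_m(v)\le S(v)$.

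For the upper bound, the core is a \emph{uniform} rate of Fujita approximation: a constant $C'>0$ depending only on $f\colon(X,\Delta;L)\to C$ such that
\[
\vol\bigl(V_\bullet^{\cG,\lambda}\bigr)-m^{-n}\,\vol\bigl(V^\lambda_{m,\bullet}(\cG)\bigr)\ \le\ \frac{C'}{m}\,\vol(L_0)
\]
for every linearly bounded filtration $\cG$ of $R$ with $\cG^0R=R$, every $\lambda\in\bR$, and every $m$ with $h^0(X,mL)\ne0$, where $V^\lambda_{m,\bullet}(\cG)$ is the degree-$m$ approximant of $V_\bullet^{\cG,\lambda}$, defined as in \eqref{eq:tF} with $\cG$ in place of $\cF\vert_{X_0}$. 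Granting this and applying it to $\cG=\cF_v\vert_{X_0}$, integration over $\lambda\in[0,T(v)]$ yields $S(v)-\tS_m(v)\le\frac{C'}{m}T(v)$; Lemma~\ref{lem-delta positive} then provides a constant $c>0$ independent of $v$ with $c\,T(v)\le A_{X,\Delta}(v)$ — this is the only place the klt hypothesis enters — and the assertion follows with $C:=C'/c$. To prove the uniform rate I would reduce, again via Section~\ref{ss:reducible} and Lemma~\ref{l:pullbackcontainsample}, to an irreducible normal projective $X_i$ with $L_i$ ample, where it is the filtration form of the estimate underlying \cite{BJ-delta}*{Theorem 5.3}: passing to a Newton--Okounkov body $\Delta_i$ of $R(X_i,L_i)$ attached to a flag, $\vol\bigl(V_\bullet^{\cG_i,\lambda}\bigr)$ becomes the Lebesgue measure of the superlevel set $\{G^{\cG_i}\ge\lambda\}$ of the concave transform $G^{\cG_i}$, which takes values in $[0,T(\cG_i)]$, and the degree-$m$ approximant's body contains an explicit inner approximation of $\{G^{\cG_i}\ge\lambda\}$ with Lebesgue-measure defect $\le\frac{C'}{m}\,T(\cG_i)\,\vol(\Delta_i)$, the implied constant being governed only by finitely many Serre-vanishing thresholds for $\cO_{X_i}$ against $L_i$, hence independent of $\cG_i$.

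The main obstacle is exactly this reduction of the \emph{upper} bound to the irreducible normal case. The decompositions of Section~\ref{ss:reducible} are phrased in terms of filtrations and their graded pieces, whereas $\tS_m$ is built from \emph{base ideals} $\fb_{m,\lambda m}(\cF)$ and their integral closures; for a reducible $X_0$ the base ideal of $\cF^{\lambda m}\vert_{X_0}R_m$ restricted to a component $X_i$ is generally larger than the base ideal of the $i$-th component series, so the degree-$m$ approximant on $X_0$ does not split cleanly into the component approximants, and one must instead control it directly — for instance through the fact that $V^\lambda_{m,\ell}$ contains $\ell$-fold products of sections of $\cF^{\lambda m}\vert_{X_0}R_m$, which do behave well with respect to a flag on each component. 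One must likewise check that restriction to $X_i$, pullback to the normalization of $X_i$, formation of integral closures of $\ell$-th powers, and the volume identities are mutually compatible, and that no asymptotically essential sections are lost in these steps. When $X_0$ is irreducible and normal — the case of primary interest, e.g.\ when $(X,\Delta+X_0)$ is plt — the reduction is vacuous and one quotes the absolute estimate directly; a minor remaining point is that $V_\bullet^{\cF\vert_{X_0},\lambda}$ ceases to contain an ample series on a component $X_i$ once $\lambda\ge T(\cG_i)$, but there the relevant volumes all vanish and this case requires only bookkeeping.
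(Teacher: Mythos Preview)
Your argument has a real gap at the ``uniform rate of Fujita approximation'' step. You assert that for every linearly bounded filtration $\cG$ of $R$ one has $\vol(V_\bullet^{\cG,\lambda}) - m^{-n}\vol(V^\lambda_{m,\bullet}(\cG)) \le \tfrac{C'}{m}\vol(L_0)$ with $C'$ independent of $\cG$ and $\lambda$, and that this comes from an Okounkov-body argument. This is not what \cite{BJ-delta}*{Theorem~5.3} proves, and I do not see how to extract it from convexity of the concave transform: the Okounkov body of the degree-$m$ approximant $\ell\mapsto H^0(m\ell L_0\otimes\overline{\fb_m^\ell})$ has no simple description in terms of $G^{\cG}$, and there is no general uniform convergence rate for Fujita approximation of arbitrary graded linear series. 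Your own sketch is already inconsistent on this point: the displayed inequality has defect $\le\tfrac{C'}{m}\vol(L_0)$, whereas your Okounkov-body paragraph claims a defect $\le\tfrac{C'}{m}T(\cG_i)\vol(\Delta_i)$; the latter, summed and integrated over $\lambda\in[0,T(v)]$, yields $S(v)-\tS_m(v)\le\tfrac{C'}{m}T(v)^2$, not $\tfrac{C'}{m}T(v)$. By passing to $X_0$ and treating $\cF_v|_{X_0}$ as an abstract filtration of $R$, you have discarded exactly the structure (that it comes from a valuation on $X$) that makes the estimate go through.

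The paper's proof does not reduce to $X_0$ at all. It runs the multiplier-ideal argument of \cite{BJ-delta}*{\S5} directly on the klt pair $(X,\Delta)$: Takagi subadditivity furnishes a fixed ideal $\fb\subset\cO_X$ with $\fb^\ell\cdot\cJ(m\ell\cdot\|V_\bullet\|)\subseteq\cJ(m\cdot\|V_\bullet\|)^\ell$; Nadel vanishing and Castelnuovo--Mumford regularity give global generation of $(a+m)L\otimes\cJ(m\cdot\|V_\bullet\|)$ for a fixed shift $a$; and the valuation-specific inclusion $\cJ(m\cdot\|V^\lambda_\bullet\|)\subseteq\fa_{\lambda m-A_{X,\Delta}(v)}(v)$ (\cite{BJ-delta}*{Proposition~5.11}) injects the log discrepancy directly, with no detour through $T(v)$ or Izumi. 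Chaining these and restricting to $X_0$ yields $\cF_v^{\lambda m\ell}R_{m\ell}\hookrightarrow V^{\lambda'}_{m+a,\ell}$ with $\lambda'=\tfrac{\lambda m-A}{m+a}$, from which the estimate follows as in \cite{BJ-delta}*{Propositions~5.13,~5.15}. The only new wrinkle is that one needs $\fb|_{X_0}$ nonzero on every component of $X_0$; this holds when $\Supp(\Delta)$ contains no component of $X_0$, and the general case is reduced to that one by replacing $\Delta$ with a nearby klt boundary $\Delta'\sim_\bQ \Delta+aL$ whose support avoids the components of $X_0$ and comparing log discrepancies.
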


\begin{proof}
We first consider the case when $\Delta$ does not contain any component of $X_0$. We follow the same multiplier ideal argument as in \cite{BJ-delta}*{Section 5}. For any graded linear series $V_\bullet$ of $L$ and any $c\ge 0$, we denote by $\cJ(c\cdot \norm{V_\bullet})$ the corresponding asymptotic multiplier ideals with respect to $(X,\Delta)$ (see e.g. \cite[Definition 11.1.2]{Laz-positivity-2}). By \cite{Tak-subadditive-pairs}*{Theorem} and the basic property $\fa\cdot \cJ(X,\Delta,\fb)\subseteq \cJ(X,\Delta,\fa \fb)$ of multiplier ideals on a klt pair, we know that there exists a nonzero ideal $\fb\subseteq \cO_X$ depending only on the pair $(X,\Delta)$ such that $\fb_0:=\fb|_{X_0} $ is not zero along any component of $X_0$ and 
\begin{equation} \label{eq:multiplier ideal uniform inclusion}
    \fb^{\ell} \cdot \cJ(m\ell \cdot \norm{V_\bullet}) \subseteq \cJ(m \cdot \norm{V_\bullet})^\ell
\end{equation}
for all graded linear series $V_\bullet$. In fact by {\it loc. cit.} we can take $\fb=\mathrm{Jac}_X \cO_X(-s\Delta)$ where $\mathrm{Jac}_X$ is the Jacobian ideal and $s>0$ is an integer such that $s(K_X+\Delta)$ is Cartier, and the assumption that ${\rm Supp}(\Delta)$ does not contain any component of $X_0$ is used to show that $\fb_0$ is not zero along any component of $X_0$. By \cite{BJ-delta}*{Corollary 5.10}, there also exists an integer $a=a(L)$  divisible by $r$ such that for any graded linear series $V_\bullet$ of $L$, the sheaf 
\[
(a+m)L\otimes \cJ(m\cdot \norm{V_\bullet})
\]
is globally generated for all integer $m\in r\bN$ (note that our situation is slightly different from \emph{loc. cit. } as $X$ is only projective over a DVR and we have an additional boundary divisor, but the same argument in \emph{loc. cit.}., which essentially relies on Nadel vanishing and Castelnuovo-Mumford regularity, goes through in our setting). Applying this to the graded linear series $V^\lambda_\bullet$ with $V^\lambda_m:=\cF_v^{\lambda m} \cR_m$, and noting that (\cite{BJ-delta}*{Proposition 5.11})
\[
\cJ(m\cdot \norm{V^\lambda_\bullet})\subseteq \fa_{\lambda m-A_{X,\Delta}(v)}(v),
\] 
we see that $\cJ(m\cdot \norm{V^\lambda_\bullet})$ is contained in the base ideal $\fb_{m+a,\lambda m-A}:=\fb_{m+a,\lambda m-A}(\cF_v)$ of $\cF_v^{\lambda m-A} \cR_{m+a}$, where $A:=A_{X,\Delta}(v)$. In other words,
\[
\cJ(m\cdot \norm{V^\lambda_\bullet})\subseteq \fb_{m+a,\lambda m-A}.
\]
Moreover, we can choose $a$ so that $H^0(X_0,aL_0\otimes \fb_0)$ contains an element $s$ which does not vanish along any component of $X_0$. There is also an obvious inclusion $\fb_{m,\lambda m}(\cF_v)\subseteq \cJ(m\cdot \norm{V^\lambda_\bullet})$. Combined with \eqref{eq:multiplier ideal uniform inclusion}, we deduce a chain of inclusions (where we write $\cJ_{m,\lambda}:=\cJ(m\cdot \norm{V^\lambda_\bullet})|_{X_0}$ for ease of notation)
\begin{align*}
    \cF_v^{\lambda m\ell} R_{m\ell}\hookrightarrow H^0(m\ell L_0\otimes \cJ_{m\ell,\lambda}) & \stackrel{\cdot s^\ell}{\rightarrow} H^0((m+a)\ell L_0\otimes (\fb^\ell_0\cdot \cJ_{m\ell,\lambda})) \\
    & \hookrightarrow H^0((m+a)\ell L_0 \otimes \cJ_{m,\lambda}^\ell) \\
    & \hookrightarrow H^0((m+a)\ell L_0 \otimes \fb_{m+a,\lambda m-A}|_{X_0}^\ell) \\
    & \hookrightarrow H^0((m+a)\ell L_0 \otimes \overline{\fb_{m+a,\lambda m-A}|_{X_0}^\ell}) = V^{\lambda'}_{m+a,\ell} \, , 
\end{align*}
where $\lambda'=\frac{\lambda m-A}{m+a}$.
The rest of the proof of \eqref{eq:uniform S estimate} then proceeds verbatim as in \cite{BJ-delta}*{Propositions 5.13 and 5.15} using Lemmas \ref{lem-delta positive} and \ref{l:Svintresvol}.

In general, $\Delta$ may contain some components of $X_0$, but for some sufficiently large integer $a$ there exists some effective  $\bQ$-divisor $\Delta'$ such that ${\rm Supp}(\Delta')$ does not contain any component of $X_0$, $aL+\Delta\sim_{\bQ}\Delta'$ and $(X,\Delta')$ is klt. Moreover, since $(X,\Delta)$ is klt, we may also choose some small constant $0<c_0\ll 1$ such that $c_0\cdot A_{X,\Delta'}(v)\le A_{X,\Delta}(v)$ for all $v\in \Val_X$; indeed this is equivalent to the condition that $(X,\Delta+\frac{c_0}{1-c_0}(\Delta-\Delta'))$ is a klt sub-pair. Thus the general case of \eqref{eq:uniform S estimate} follows from the special case treated above.
\end{proof}

For a filtration $\cF$, we denote by $\cF_{\bN}$  the associated $\bN$-filtration defined by  $\cF_{\bN}^{\lambda}R_m=\cF^{\lceil \lambda \rceil}R_m$.
Below, we write $\cF_k:=(\cF_{v_k})_{\bN}$.

\begin{lem} \label{l:S under generic limit}
Let $A,T>0$ be two constants. 
If $(v_k)_{k\in \bN}$  is a sequence of valuations on $X$  such that $A_{X,\Delta}(v_k)<A$ and $T(v_k)<T$ for all $k$, then the filtrations $\cF_{k}$ admit a generic limit $\cF$ satisfying
\[
S(\cF)\ge \liminf_{k\to\infty} S(v_k).
\]
\end{lem}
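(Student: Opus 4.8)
The plan is to follow the generic limit strategy of \cite{BJ-delta}*{Section 6}, adapted to the relative setting with the help of the approximation results assembled above; throughout I assume, as in the situations where this lemma is applied, that $(X,\Delta)$ is klt. \emph{First, to produce the generic limit}, I would check that the $\bN$-filtrations $\cF_k=(\cF_{v_k})_\bN$ are uniformly bounded on each $\tR_m$, so that the construction of Section \ref{ss-genericlimit} applies. Since $T(v_k)<T$ we have $\cF_{v_k}^{\lambda}\cR_l\subseteq \fm\cR_l$ for $\lambda\ge lT$ and $l\le m$; iterating the identity $\cF_{v_k}^{\lambda}\cR_l\cap\fm\cR_l=\fm\cdot\cF_{v_k}^{\lambda-v_k(\fm)}\cR_l$ gives $\cF_{v_k}^{\,lT+(m-1)v_k(\fm)}\cR_l\subseteq\fm^m\cR_l$. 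By Izumi's inequality \cite[Proposition 7.45]{Xu-Kbook}, together with $A_{X,\Delta}(v_k)<A$ and the boundedness of $\mult_x(X_0)$ over $x\in X_0$ — exactly as in the proof of Lemma \ref{lem-delta positive} — the numbers $v_k(\fm)$ are uniformly bounded, say by $B$. Hence $\cF_k^{C_m}\tR_m=0$ for $C_m:=\lceil mT+(m-1)B\rceil$, which is independent of $k$, so a generic limit $\cF$ exists. To see $T(\cF)\le T$, observe that for each $m$ divisible by $r$ the condition ``the image of $\cF^{\lceil mT\rceil}\tR_m$ in $R_m$ is zero'' is closed on the parameter scheme $H_m$ and holds at every $z_{k,m}$ (because $T_m(v_k)<T$), hence at the limit point; thus $\cF^{\lceil mT\rceil}\vert_{X_0}R_m=0$ and $T_m(\cF)<T$. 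Therefore $T(\cF)\le T<\infty$, so $\cF\vert_{X_0}$ is linearly bounded and $S(\cF)$ is a finite limit by Proposition \ref{p:reducedSinvproperties}.

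\emph{For the $S$-inequality}, let $C$ be the constant of Lemma \ref{l:S uniform convergence} and fix $m$ divisible by $r$ with $h^0(X,mL)\ne0$. Three facts combine. First, for each $k$,
\[
\tS_m(\cF_k)\ \ge\ \tS_m(v_k)-\tfrac1m\ \ge\ S(v_k)-\tfrac{CA+1}{m},
\]
where the first inequality comes from an index shift in \eqref{eq:tF} (the filtrations $(\cF_{v_k})_\bN$ and $\cF_{v_k}$ differ only at non-integer levels, and the resulting ``tail'' over $[0,\tfrac1m]$ contributes at most $\tfrac1m$) and the second is Lemma \ref{l:S uniform convergence} applied to $v_k$. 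Second, $\tS_m$ is computed from the family of base ideals $\fb_{m,\lambda m}(\cdot)\vert_{X_0}$ on $X_0$, which — since $\fm\cdot\cO_{X_0}=0$ — depend only on the induced filtration of $R_m$, hence on the $H_m$-point; and the function $z\mapsto\vol(V^\lambda_{m,\bullet})$ is constructible and lower semicontinuous on the irreducible scheme $Z_m$ (the volume of an ideal can only drop under specialization), so it attains its maximal value at the generic point of $Z_m$. Choosing the generic limit so that $\cF$ realizes this generic value, compatibly in $m$ (possible by the uncountability of $\bk$, as in \cite{BJ-delta}*{Lemma 6.6}), we obtain $\tS_m(\cF)\ge\tS_m(\cF_k)$ for all $k\in I_{m/r}$, whence $\tS_m(\cF)\ge S(v_k)-\tfrac{CA+1}{m}$ for all $k\in I_{m/r}$. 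Third, by Fujita approximation for filtered graded linear series on the reduced scheme $X_0$ (cf.\ \cite{BJ-delta}*{Section 5} and Section \ref{ss:reducible}), $\tS_m(\cF)\to S(\cF)$ as $m\to\infty$. Combining these: if $S(\cF)<\liminf_k S(v_k)$, pick $\varepsilon>0$ with $S(\cF)+\varepsilon<\liminf_k S(v_k)$; then $S(v_k)>S(\cF)+\varepsilon$ for all but finitely many $k$, so $\tS_m(\cF)>S(\cF)+\varepsilon-\tfrac{CA+1}{m}$ for every large $m$, and letting $m\to\infty$ yields $S(\cF)\ge S(\cF)+\varepsilon$, a contradiction. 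Hence $S(\cF)\ge\liminf_{k\to\infty}S(v_k)$.

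\emph{The main obstacle} is the semicontinuity and constructibility of the functional $\tS_m$ over the family $Z_m$: that the integral $\int_0^{T}\vol(V^\lambda_{m,\bullet})\,\rd\lambda$ — which involves integral closures of restrictions to the possibly reducible scheme $X_0$ and a limit over the twisting parameter — varies lower-semicontinuously and takes its generic value off a proper closed subset, uniformly enough to survive both the integration in $\lambda$ and the compatible generic choices over all $m$. This is the relative, reducible analogue of the generic-limit analysis of \cite{BJ-delta}*{Sections 5--6}; it is noticeably more delicate than the companion Lemma \ref{l:lct under generic limit} on log canonical thresholds of base ideals, and it is precisely here that the reducible-scheme results of Section \ref{ss:reducible} and the uniform estimate of Lemma \ref{l:S uniform convergence} are both brought to bear.
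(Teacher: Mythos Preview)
Your proposal is essentially correct and follows the same strategy as the paper: bound $v_k(\fm)$ to get uniform boundedness of the $\cF_k$ on each $\tR_m$, then compare $\tS_m$ of the generic limit with $\tS_m(\cF_k)$ and invoke Lemma \ref{l:S uniform convergence} to pass to $S$. Two small points are worth noting. First, the paper bounds $v_k(\fm)$ more directly than via Izumi: since $(X,\Delta)$ is klt, $(X,\Delta+\varepsilon X_0)$ is klt for some $\varepsilon>0$, and then $A_{X,\Delta}(v_k)\ge \varepsilon\, v_k(\fm)$ immediately gives $v_k(\fm)<A/\varepsilon$. Second, and more substantively, where you argue that $z\mapsto \tS_m(\cF_z)$ is lower semicontinuous, the paper instead cites \cite{BJ-delta}*{Proposition 6.3} to get that this function is \emph{constant} on a nonempty open $U_m\subseteq Z_m$; choosing the generic limit with $z_m\in U_m$ then gives $\tS_m(\cF)=\tS_m(\cF_k)$ for all but finitely many $k\in I_{m/r}$ (since $Z_m\setminus U_m$ contains only finitely many of the $z_{k,m}$), which is already enough for the final chain of inequalities. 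Your heuristic ``the volume of an ideal can only drop under specialization'' is morally right for each fixed $\ell$, but the passage to $\vol(V^\lambda_{m,\bullet})$ involves a $\limsup$ over $\ell$, and a pointwise $\limsup$ of lower semicontinuous functions need not remain lower semicontinuous; constructibility (hence constancy on a dense open) is the cleaner route, and indeed you flag exactly this as ``the main obstacle.'' With that adjustment your argument and the paper's coincide.
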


\begin{proof}
Since $(X,\Delta)$ is klt, there exists some constant $\varepsilon>0$ such that $(X,\Delta+\varepsilon X_0)$ is still klt. It follows that $A_{X,\Delta}(v_k)\ge \varepsilon v_k(\fm)$ and hence $v_k(\fm)<\frac{A}{\varepsilon}$. By Lemma \ref{l:Afinite->linbounded}, we see that the filtrations $\cF_{v_k}$ of $\tR_m$ are uniformly bounded. 

For each $m$, there exists a nonempty open set $U_m\subset Z_m$ such that
the function 
\[
U_m \ni z\to \tS_m(\cF_z)
\]is a constant (see \cite{BJ-delta}*{Proposition 6.3}).
By the discussion at the beginning of this subsection, there exists a generic limit $\cF$  of the $\cF_{k}$ given by a compatible set of points $z_m\in U_m$.
Note that $\tS_m(\cF)= \tS_m(\cF_k) $ for $k \in I_{m/r}$ with $z_{m,k}\in U_m$.
Since $Z_{m}\setminus U_m$ contains finitely many $z_{m,k} $ with $k\in I_{m/r}$, we see that 
\[
\tS_{m}(\cF) = \lim_{I_{m/r} \ni k \to \infty } \tS_{m}(\cF_k) 
\geq 
\liminf_{k\to \infty} \tS_m(\cF_k)
.
\]
Now we compute that 
\[
S(\cF)=\lim_{m\to \infty}\tS_m(\cF)
\geq\lim_{m\to\infty}  
\liminf_{k\to\infty} \tS_m(\cF_{k})  
= \lim_{m\to\infty} 
\liminf_{k\to\infty} \tS_m(\cF_{v_k}) 
=
\liminf_{k\to\infty}S(v_k)
,
\]
where the first relation holds by  the discussion in \cite{BJ-delta}*{Section 5.1},
the second relation by the previous inequality,
the third by the inequality $0<\tS_m(v_k)-\tS_m(\cF_k) \le \frac{1}{m}$  which holds by the proof of  \cite[Lemma 6.9]{BJ-delta}, and the final by  Lemma \ref{l:S uniform convergence}.
\end{proof}

\subsubsection{Existence of minimizer}

We  now return to prove the existence of quasi-monomial valuations that compute the relative stability threshold $\delta(L)$.

\begin{proof}[Proof of Theorem \ref{t:delta computed by qm valuation base complete}]
By Proposition \ref{p:deltapos}, $\delta(L)>0$. 
By Proposition \ref{p:delta=inf A/S}, there exists 
 a sequence of valuations $(v_k)_{k \in \bN}$ on $X$ such that $\lim_{k\to \infty} \frac{A_{X,\Delta}(v_k)}{S(v_k)} = \delta(L)$.
 By scaling the valuations, we may assume that  $A_{X,\Delta}(v_k)=1$ for all $k\in \bN$. Then we have 
\[
\lct(\fbb(\cF_{v_k}))
\le 
\lct(\fa_\bullet(v_k))
\le \frac{A_{X,\Delta}(v_k)}{v_k(\fa_\bullet(v_k))} \le 1
\]
(where $\fbb(\cF_{v_k})$ is the sequence of stable base ideals defined in \eqref{eq:stable base ideal}) and
\[
S:=\lim_{k\to\infty} S(v_k) = \delta(L)^{-1}\in (0,+\infty).
\]
In particular, the $S$-invariants $S(v_k)$ are uniformly bounded from above. 
Thus, Proposition \ref{p:reducedSinvproperties} implies that the $T(v_k)$ are also uniformly bounded from above. 
Since $C$ is the spectrum of a complete DVR, the filtrations $\cF_k:=(\cF_{v_k})_{\bN}$ admit a generic limit $\cF$, which satisfies $S(\cF)\ge S$ by Lemma \ref{l:S under generic limit}. 
By Lemma \ref{l:lct under generic limit}, we also have $\lct(\fbb(\cF))\le 1$. 

By \cite[Theorem 1.1]{Xu-quasimonomial} (and  its extension to the case of excellent schemes  relying on \cite{Lyu-Murayama}), 
there exists a quasi-monomial valuation $v\in \Val_{X}$ that computes $\lct(\fbb(\cF))$. After rescaling, we may assume that $A_{X,\Delta}(v)=1$. As $\lct(\fbb(\cF))\le 1$, this gives $v(\fbb(\cF))\ge 1$ and hence $S(v)\ge S(\cF)\ge S$ (see \cite[Corollary 3.21]{BJ-delta}). It follows that 
\[\frac{A_{X,\Delta}(v)}{S(v)}\le S^{-1} = \delta(L)
\]
and hence $v$ computes $\delta(L)$.
\end{proof}

\begin{lem} \label{l:center in X0}
Assume that $\delta(X,\Delta;L)<\delta(X_\eta,\Delta_\eta;L_\eta)$. Then the center of any valuation $v\in \Val_X$ that computes $\delta(X,\Delta;L)$ is contained in $X_0$.
\end{lem}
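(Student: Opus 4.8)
\emph{The plan} is to argue by contradiction. Suppose $v\in\Val_X$ computes $\delta(X,\Delta;L)$ but its center is not contained in $X_0$; then $v\in\Val_X^\circ$, since otherwise $A_{X,\Delta}(v)/S(v)=+\infty\neq\delta(X,\Delta;L)$. Because $c_X(v)\notin X_0$, the image of $c_X(v)$ in $C$ is the generic point $\eta$, so $v$ is centered over $\eta$ and is therefore trivial on $K:=K(C)$; in particular $v(\fm)=0$, and via $K(X)=K(X_\eta)$ the valuation $v$ defines an element $v_\eta\in\Val_{X_\eta}^\circ$. I will use two facts about this restriction: (i) $A_{X,\Delta}(v)=A_{X_\eta,\Delta_\eta}(v_\eta)$; and (ii) $S(v)=S(\cF_{v_\eta})$, where the right-hand side is the absolute $S$-invariant of the filtration of $\bigoplus_m H^0(X_\eta,mL_\eta)$ induced by $v_\eta$. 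Fact (i) is standard: for a divisorial valuation $v=\ord_E$ it follows by realizing $E$ on a proper birational model $Y\to X$, observing that $E\subset Y_\eta$ and $Y_\eta\to X_\eta$ is proper birational, and restricting the crepant pullback formula to $X_\eta$ using $(K_X+\Delta)|_{X_\eta}=K_{X_\eta}+\Delta_\eta$; quasi-monomial valuations follow by linearity, and the general case follows since $A_{X,\Delta}(v)$ is the supremum over snc models $Y\to X$ of the log-linear discrepancy evaluated at the retraction $r_Y(v)$, the generic fibers $Y_\eta$ being cofinal among snc models of $X_\eta$.

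The heart of the argument is fact (ii). Since $v(\fm)=0$, the submodule $\cF_v^\la\cR_m=\{s\in\cR_m\mid v(s)\geq\la\}$ is \emph{saturated} in the free $\cO_C$-module $\cR_m$: if $t^ks\in\cF_v^\la\cR_m$ for a uniformizer $t$ and $k\geq1$, then $v(s)=v(t^ks)\geq\la$, so $s\in\cF_v^\la\cR_m$. Hence $\cF_v^\la\cR_m$ is a direct summand, the restriction $\cF_v^\la\cR_m\otimes k(0)\to R_m$ is injective, and
\[
\dim_{\bk}\big(\cF_v^\la|_{X_0}R_m\big)=\mathrm{rank}_{\cO_C}\cF_v^\la\cR_m=\dim_K\cF_{v_\eta}^\la H^0(X_\eta,mL_\eta).
\]
Taking differences in $\la$ gives $\dim\gr^\la_{\cF_v|_{X_0}}R_m=\dim\gr^\la_{\cF_{v_\eta}}H^0(X_\eta,mL_\eta)$, and since $\dim_{\bk}R_m=\mathrm{rank}_{\cO_C}\cR_m=\dim_K H^0(X_\eta,mL_\eta)$, the definition of $S_m$ yields $S_m(v)=S_m(\cF_{v_\eta})$ for all $m\in r\bN^+$. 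Letting $m\to\infty$ — the relative limit exists by Lemma \ref{lem-delta positive} and Proposition \ref{p:reducedSinvproperties}, the absolute one by \cite[Theorem A]{BJ-delta} — proves (ii). Also $S(v)>0$: by Lemma \ref{l:S>0} the alternative $S(v)=0$ would force $v=c\cdot\ord_{X_0}$, contradicting $c_X(v)\notin X_0$.

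Combining (i) and (ii) with the valuative characterization of the stability threshold (\cite[Theorem A]{BJ-delta}) applied to $v_\eta$ on $(X_\eta,\Delta_\eta;L_\eta)$,
\[
\delta(X,\Delta;L)=\frac{A_{X,\Delta}(v)}{S(v)}=\frac{A_{X_\eta,\Delta_\eta}(v_\eta)}{S(\cF_{v_\eta})}\geq\delta(X_\eta,\Delta_\eta;L_\eta),
\]
which contradicts the hypothesis $\delta(X,\Delta;L)<\delta(X_\eta,\Delta_\eta;L_\eta)$. Therefore $c_X(v)\in X_0$. The substantive point is the saturation observation underlying (ii); the only step requiring mild care is (i) for valuations that are not quasi-monomial, which is handled as indicated.
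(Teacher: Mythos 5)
Your proof is correct and follows the same route as the paper's: argue by contradiction, restrict $v$ to the generic fiber, and use the key identity $S(v)=S(\cF_{v_\eta})$ together with the valuative formula for $\delta(X_\eta,\Delta_\eta;L_\eta)$. The paper simply asserts that identity; you give the right justification — since $v(\fm)=0$, each $\cF_v^\lambda\cR_m$ is a saturated (hence direct-summand) submodule of the free $\cO_C$-module $\cR_m$, so its restriction to $X_0$ has the same dimension as its generic-fiber counterpart — which fills in the only detail the paper leaves implicit.
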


\begin{proof}
Suppose that $C_X(v)\not\subset X_0$, then it induces a valuation on the generic fiber $X_\eta$ with $S_{X_\eta,\Delta_\eta}(v)=S(v)$. It follows that 
\[
\delta(X_\eta,\Delta_\eta;L_\eta)\le \frac{A_{X_\eta,\Delta_\eta}(v)}{S_{X_\eta,\Delta_\eta}(v)} = \frac{A_{X,\Delta}(v)}{S(v)} = \delta(X,\Delta;L),
\]
which contradicts our assumption that $\delta(X,\Delta;L)<\delta(X_\eta,\Delta_\eta;L_\eta)$.
\end{proof}

\begin{proof}[Proof of Theorem \ref{t:delta computed by qm valuation}]
Let $\hat{C}=\Spec(\widehat{\cO_{C,0}})$, $(\hX,\hat{\Delta})=(X,\Delta)\times_C \hat{C}$, and let $\hL$ be the pullback of $L$ to $\hX$. By Theorem \ref{t:delta computed by qm valuation base complete}, there exists some quasi-monomial valuation $\hv\in \Val_{\hX}$ that computes $\delta(\hX,\hat{\Delta};\hL)$.  
By Lemma \ref{l:center in X0}, the center of $\hv$ is contained in $\hat{X}_0$, hence by \cite[Lemma 3.10]{JM-val-ideal-seq} (and its proof) it restricts to a quasi-monomial valuation $v$ on $X$ such that $A_{X,\Delta}(v)=A_{\hX,\hat{\Delta}}(\hv)$ and $S(v)=S(\hv)$ (in fact $v$ and $\hv$ induce the same filtration of $R$). By Proposition \ref{p:deltaetalebasechange}, we have $\delta(X,\Delta;L)=\delta(\hX,\hat{\Delta};\hL)$. Therefore $v$ computes $\delta(X,\Delta;L)$. Since $\delta(X_K,\Delta_K;L_K)\le \delta(X_\eta,\Delta_\eta;L_\eta)$, the center of any valuation $v\in \Val_X$ that computes $\delta(X,\Delta;L)$ is contained in $X_0$ by Lemma \ref{l:center in X0}.
\end{proof}

\begin{rem} \label{r:no val compute delta}
Without the hypothesis in Theorems \ref{t:delta computed by qm valuation base complete} and \ref{t:delta computed by qm valuation}, the relative stability threshold may not be computed by any valuation. For example, consider a smooth projective morphism of relative dimension one $X\to C$ (such as an elliptic fibration). Then $\delta(L)=\delta(L_\eta)=\frac{1}{\deg L_\eta}$ and every valuation computing $\delta(L)$ comes from a section of the family $X\to C$. In general, such a section may not exist, unless $C$ is the spectrum of a complete DVR.
\end{rem}

\subsection{\texorpdfstring{$S$}{S}-invariant revisited} \label{ss:revisit S-inv}

To prove further properties of the minimizer, we need to give a different interpretation of the $S$-invariant by working with a thickening of the central fiber $X_0$. To this end, fix some positive real number $c$. For any $m\in r\bN$ we set
\[
V_{c,m}:=\cR_m/\fm^{\lceil cm \rceil}\cR_m,
.\]
which is an Artinian $\cO_C$-module.
As before, any valuation $v$ on $X$ induces a filtration  of $V_{c,m}$ denoted also by $\cF_v$, and we define
\[
S_{c,m}(v)
= 
\frac{1}{m\dim V_{c,m}}
    \sum_{\la \in \bR} \la\cdot  \mathrm{length} ({\rm gr}_{\cF_v}^\la V_{c,m})
\]

\begin{lem} \label{lem:S_c,m}
We have $S_{c,m}(v)=S_m(v)+\frac{\lceil cm \rceil-1}{2m} v(\fm)$. In particular, 
\[\lim_{m\to \infty} S_{c,mr}(v)= S(v)+\frac{c}{2}\cdot v(\fm).\]
\end{lem}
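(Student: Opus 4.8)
The plan is to compute $S_{c,m}(v)$ by filtering $V_{c,m}=\cR_m/\fm^{\lceil cm\rceil}\cR_m$ by powers of $\fm$, so that the computation reduces to the filtration $\cF_v\vert_{X_0}$ of $R_m$ and we never have to "simultaneously diagonalize" $\cF_v$ and the $\fm$-adic filtration. Write $N:=\lceil cm\rceil$, fix a uniformizer $\pi$ of $\cO_{C,0}$, and put $a:=v(\fm)=v(\pi)$. Consider the $\fm$-adic filtration $0=M_N\subseteq M_{N-1}\subseteq\cdots\subseteq M_0=V_{c,m}$ with $M_j:=\fm^j\cR_m/\fm^N\cR_m$; since $\cR_m$ is a free $\cO_{C,0}$-module, multiplication by $\pi^{-j}$ identifies $M_j/M_{j+1}$ with $\cR_m/\fm\cR_m=R_m$, and $\mathrm{length}(V_{c,m})=N\dim R_m$.

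The key input is the identity $\cF_v^\lambda\cR_m\cap\fm^j\cR_m=\fm^j\cF_v^{\lambda-ja}\cR_m$, which is immediate because $v$ is a valuation and $\pi$ is a nonzerodivisor on $\cR_m$: if $t=\pi^j s$ with $s\in\cR_m$ lies in $\cF_v^\lambda\cR_m$, then $v(s)=v(t)-ja\ge\lambda-ja$, and conversely. Feeding this through the modular law (using $N\ge j+1$), one checks that the image of $\cF_v^\lambda V_{c,m}\cap M_j$ in $M_j/M_{j+1}$, transported to $R_m$ via $\pi^{-j}$, is exactly $(\cF_v\vert_{X_0})^{\lambda-ja}R_m$; in other words, the filtration $\cF_v$ induces on the $j$-th graded piece of the $\fm$-adic filtration is the filtration $\cF_v\vert_{X_0}$ of $R_m$ translated to the right by $ja$. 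A standard Jordan--H\"older argument along the filtration $\{\cF_v^\lambda V_{c,m}\cap M_j\}_{j}$ of $\cF_v^\lambda V_{c,m}$ then gives $\mathrm{length}(\cF_v^\lambda V_{c,m})=\sum_{j=0}^{N-1}\dim(\cF_v\vert_{X_0})^{\lambda-ja}R_m$ for every $\lambda$.

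To conclude, I would use that for any finite-length filtered module $(W,\cF)$ the quantity $\sum_\lambda\lambda\cdot\mathrm{length}({\rm gr}_\cF^\lambda W)$ is the sum, counted with multiplicity, of the jumps of the step function $\lambda\mapsto\mathrm{length}(\cF^\lambda W)$. Hence it is additive over the decomposition $\mathrm{length}(\cF_v^\lambda V_{c,m})=\sum_{j}\dim(\cF_v\vert_{X_0})^{\lambda-ja}R_m$, and, since translating a filtration to the right by $ja$ translates all of its jumps by $ja$ (adding $ja\cdot\dim R_m$ to the sum), we obtain $\sum_\lambda\lambda\cdot\mathrm{length}({\rm gr}_{\cF_v}^\lambda V_{c,m})=N\cdot m\dim R_m\cdot S_m(v)+\dim R_m\cdot a\cdot\tfrac{N(N-1)}{2}$. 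Dividing by $m\cdot\mathrm{length}(V_{c,m})=mN\dim R_m$ yields $S_{c,m}(v)=S_m(v)+\frac{a(N-1)}{2m}=S_m(v)+\frac{(\lceil cm\rceil-1)v(\fm)}{2m}$. For the last assertion, substitute $N=\lceil cmr\rceil$: since $\lim_m S_{mr}(v)=S(v)$ for $v\in\Val_X^\circ$ (remark after Lemma \ref{lem-delta positive}) and $\frac{\lceil cmr\rceil-1}{2mr}\to\frac c2$, letting $m\to\infty$ gives $\lim_m S_{c,mr}(v)=S(v)+\frac c2 v(\fm)$.

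There is no serious obstacle here; the only point requiring care is to avoid trying to choose a basis of $\cR_m$ compatible with both $\cF_v$ and the $\fm$-adic filtration (which need not exist), and instead to route everything through the identity $\cF_v^\lambda\cR_m\cap\fm^j\cR_m=\fm^j\cF_v^{\lambda-ja}\cR_m$ and the additivity of length over the $\fm$-adic filtration, which make that non-diagonalizability irrelevant.
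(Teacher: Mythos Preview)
Your proof is correct and follows essentially the same route as the paper: both filter $V_{c,m}$ by the $\fm$-adic filtration, identify each successive quotient with $R_m$ carrying the restricted filtration $\cF_v\vert_{X_0}$ shifted by $ja$, and sum the contributions. The only difference is in bookkeeping: the paper invokes \cite[Lemma 3.1]{AZ-K-adjunction} to pick a basis of the finite-dimensional $\bk$-vector space $V_{c,m}$ compatible with both $\cF_v$ and $\cF_\fm$, and then reads off the answer from the basis elements $\pi^i f_{i,j}$, whereas you compute lengths directly via the identity $\cF_v^\lambda\cR_m\cap\fm^j\cR_m=\fm^j\cF_v^{\lambda-ja}\cR_m$ and additivity over the $\fm$-adic layers.

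Your closing caveat is slightly misplaced: the obstruction to simultaneous diagonalization you mention concerns filtrations of the free $\cO_C$-module $\cR_m$, but the paper never attempts that---it diagonalizes on $V_{c,m}$, which is a finite-dimensional $\bk$-vector space, where simultaneous diagonalization of two filtrations is always possible. So neither approach runs into the issue you flag; your length-based argument is simply a basis-free reformulation of the same computation, with the mild advantage of not needing the external citation.
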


\begin{proof}
Let $\pi\in \fm$ be a uniformizer of $\cO_{C,0}$. Consider the $\fm$-adic filtration $\cF_{\fm}$ of $\cR$ given by $\cF_{\fm}^k \cR_m :=\fm^k \cR_m = \pi^k \cR_m$. It also induces a filtration on each $V_{c,m}$. By \cite{AZ-K-adjunction}*{Lemma 3.1}, the average jumping number can be computed from a basis of $V_{c,m}$ that is compatible with both filtrations $\cF_{\fm}$ and $\cF_v$. Note that $\cF_{\fm}^k V_{m,c} = 0$ for all $k\ge \lceil cm \rceil$. When $0\le k<\lceil cm \rceil$, multiplication by $\pi^k$ induces a filtered isomorphism up to a degree shift
\[
R_m\to \pi^k\cR_m/\pi^{k+1}\cR_m\cong \cF_{\fm}^k V_{c,m}/\cF_{\fm}^{k+1} V_{c,m},
\]
where the filtrations are all induced by $\cF_v$.
The compatible basis we get therefore consists of elements of the form $\pi^i f_{i,j}$, where $0\le i<\lceil cm \rceil, 1\le j\le \dim R_m$, such that for each fixed $i$, the elements $f_{i,j}$ restrict to a basis of $R_m$ (through the above isomorphism) that is compatible with $\cF_v$. The average $v$-value of the $f_{i,j}$ is $m\cdot S_m(v)$ by definition. The average $\fm$-adic value of the compatible basis is $\frac{\lceil cm \rceil-1}{2}$, thus their contribution to $S_{c,m}(v)$ is given by $\frac{\lceil cm \rceil-1}{2m} v(\fm)$.
\end{proof}

Next, suppose that $\Delta\ge \frac{c\delta}{2}X_0$ where $\delta=\delta(L)$ (in the applications, this often means that $X_0\subseteq \Supp(\Delta)$ and we choose some sufficiently small $c$). In this case we can use the above calculation to reinterpret the relative stability threshold as follows. We call any lift $s_1,\dots,s_{N_{c,m}}\in \cR_m$, where $N_{c,m}=\dim V_{c,m}$, of a basis of $V_{c,m}$ a level-$c$ $m$-basis. The corresponding divisor
\[
D=\frac{1}{mN_{c,m}} \sum_{i=1}^{N_{c,m}} \{s_i=0\}
\]
is called a level-$c$ $m$-basis type divisor. Denote by 
\begin{eqnarray}\label{defn-delta cm}
\delta_{c,m}(L)=\min_D\lct(X,\Delta-\frac{c\delta}{2}X_0;D)
\end{eqnarray} as $D$ varies among the level-$c$ $m$-basis type divisors.

\begin{lem} \label{l:delta_c,m}
 We have 
\[
\lim_{m\to\infty} \delta_{c,m}(L) = \inf_v \frac{A_{X,\Delta}(v)+\frac{c\delta}{2}v(\fm)}{S(v)+\frac{c}{2}v(\fm)}=\delta(L).
\]
Moreover, if $v$ is a valuation that computes $\delta(L)$, then it also computes the above infimum.
\end{lem}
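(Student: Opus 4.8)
The plan is to follow the pattern of Proposition \ref{p:delta=inf A/S}: first rewrite $\delta_{c,m}(L)$ as an infimum of $A/S_{c,m}$ over valuations, and then pass to the limit using the uniform estimate of Proposition \ref{prop-reducedBJinequality}. Two preliminary remarks: by hypothesis $\Delta-\tfrac{c\delta}{2}X_0\ge 0$, so $(X,\Delta-\tfrac{c\delta}{2}X_0)$ is again klt; and since the fibers of $f$ are reduced, $v(X_0)=v(\fm)$ for every $v\in\Val_X$, hence $A_{X,\Delta-\frac{c\delta}{2}X_0}(v)=A_{X,\Delta}(v)+\tfrac{c\delta}{2}v(\fm)$. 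The first step is the analogue of Lemma \ref{l:S(v)basistype} at finite level: for $v\in\Val_X^\circ$ and a level-$c$ $m$-basis type divisor $D$, one has $v(D)\le S_{c,m}(v)$, with equality when $D$ is compatible with $v$. The inequality is the usual comparison of the $v$-orders of a basis with the jumping numbers of the induced filtration on $V_{c,m}$, noting that for a lift $s_i\in\cR_m$ of a basis element $\bar s_i$ of $V_{c,m}$ one has $v(\{s_i=0\})\le\ord_{\cF_v}(\bar s_i)$; equality is realized by the explicit basis $\{\pi^i f_{i,j}\}$ built in the proof of Lemma \ref{lem:S_c,m}. Combining this with the valuative characterization of the log canonical threshold and exchanging infima as in the proof of Proposition \ref{p:delta=inf A/S} yields, with the convention that a quotient with zero denominator is $+\infty$,
\[
\delta_{c,m}(L)=\inf_{v\in\Val_X^\circ}\frac{A_{X,\Delta}(v)+\tfrac{c\delta}{2}v(\fm)}{S_{c,m}(v)}.
\]

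Next I would pass to the limit. By Lemma \ref{lem:S_c,m}, $S_{c,mr}(v)=S_{mr}(v)+\tfrac{\lceil cmr\rceil-1}{2mr}v(\fm)\to S(v)+\tfrac{c}{2}v(\fm)$ for each fixed $v$, which already gives $\limsup_m\delta_{c,mr}(L)\le\inf_v\tfrac{A_{X,\Delta}(v)+\frac{c\delta}{2}v(\fm)}{S(v)+\frac{c}{2}v(\fm)}$. For the reverse inequality I need this convergence to be uniform from above. Given $\varepsilon>0$, Proposition \ref{prop-reducedBJinequality} applied to the filtration $\cF_v|_{X_0}$ of $R$ --- linearly bounded for $v\in\Val_X^\circ$ by Lemma \ref{lem-delta positive}, and with $\cF_v^0|_{X_0}R=R$ since vanishing orders of global sections are nonnegative --- produces $m_0(\varepsilon)$ with $S_{mr}(v)\le(1+\varepsilon)S(v)$ for all $m\ge m_0(\varepsilon)$ and all $v$. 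Together with the crude bound $\tfrac{\lceil cmr\rceil-1}{2mr}\le\tfrac{c}{2}$ this gives $S_{c,mr}(v)\le(1+\varepsilon)\bigl(S(v)+\tfrac{c}{2}v(\fm)\bigr)$ uniformly in $v$ for $m\ge m_0(\varepsilon)$, whence $\delta_{c,mr}(L)\ge(1+\varepsilon)^{-1}\inf_v\tfrac{A_{X,\Delta}(v)+\frac{c\delta}{2}v(\fm)}{S(v)+\frac{c}{2}v(\fm)}$. Letting $m\to\infty$ and then $\varepsilon\to0$ shows the limit exists and equals $\inf_v\tfrac{A_{X,\Delta}(v)+\frac{c\delta}{2}v(\fm)}{S(v)+\frac{c}{2}v(\fm)}$.

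It remains to identify this infimum with $\delta$ and to prove the ``moreover''. Both follow from the identity
\[
\frac{A_{X,\Delta}(v)+\tfrac{c\delta}{2}v(\fm)}{S(v)+\tfrac{c}{2}v(\fm)}-\delta=\frac{A_{X,\Delta}(v)-\delta S(v)}{S(v)+\tfrac{c}{2}v(\fm)},
\]
valid whenever $S(v)+\tfrac{c}{2}v(\fm)>0$. Since $A_{X,\Delta}(v)\ge\delta S(v)$ for all $v$ by the definition $\delta=\delta(L)$, the right side is $\ge0$, so the infimum is $\ge\delta$; and it is at most $\tfrac{A_{X,\Delta}(v)}{S(v)}-\delta$, so taking the infimum over $v$ with $S(v)>0$ --- the valuations with $S(v)=0$ being of the form $c_0\ord_{X_0}$ by Lemma \ref{l:S>0}, for which $v(\fm)=c_0$ and the quotient is $\ge\delta$ directly, using $\Delta\ge\tfrac{c\delta}{2}X_0$ --- gives $\le\delta$. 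Hence the infimum equals $\delta$. Finally, if $v$ computes $\delta(L)$, then $A_{X,\Delta}(v)=\delta S(v)$, so the identity shows the quotient at $v$ equals $\delta$, i.e. $v$ computes the infimum.

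The step I expect to be the main obstacle is the uniform estimate in the second paragraph: checking that Proposition \ref{prop-reducedBJinequality} applies to the restricted filtrations $\cF_v|_{X_0}$ simultaneously for all $v\in\Val_X^\circ$, and that the $\fm$-adic contribution to $S_{c,mr}$ is harmlessly absorbed by the bound $\tfrac{\lceil cmr\rceil-1}{2mr}\le\tfrac{c}{2}$. Everything else parallels the absolute case and the proof of Proposition \ref{p:delta=inf A/S}.
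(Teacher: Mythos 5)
Your proof is correct and follows essentially the same route as the paper: you first identify $\delta_{c,m}(L)=\inf_v A_{X,\Delta-\frac{c\delta}{2}X_0}(v)/S_{c,m}(v)$ via the level-$c$ analogue of Lemma~\ref{l:S(v)basistype}, then invoke Lemma~\ref{lem:S_c,m} and the uniform bound of Proposition~\ref{prop-reducedBJinequality} to pass to the limit as in Proposition~\ref{p:delta=inf A/S}. The only place you go beyond the paper's (very terse) write-up is the final identification with $\delta(L)$, where your algebraic identity $\tfrac{A+\frac{c\delta}{2}v(\fm)}{S+\frac{c}{2}v(\fm)}-\delta=\tfrac{A-\delta S}{S+\frac{c}{2}v(\fm)}$ makes explicit what the paper summarizes as ``a consequence of Proposition~\ref{p:delta=inf A/S}''.
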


\begin{proof}
We have
\[
\delta_{c,m}(L)
=
\inf_{v} \inf_{D} 
\frac{A_{X,\Delta-\frac{c\delta}{2}X_0}(v)}{v(D)} 
=
\inf_v \frac{A_{X,\Delta-\frac{c\delta}{2}X_0}(v)}{S_{c,m}(v)} = \inf_v \frac{A_{X,\Delta}(v)+\frac{c\delta}{2}v(t)}{S_m(v)+\frac{\lceil cm \rceil-1}{2m} v(t)},
\]
where the last equality holds by Lemma \ref{lem:S_c,m} and the infima run through all level-$c$ $m$-basis type  divisors $D$ and $v\in \Val_{X}^\circ$.
The first equality in the lemma then follows from the same argument as in the proof of Proposition \ref{p:delta=inf A/S}. The remaining statement is then a consequence Proposition \ref{p:delta=inf A/S}.
\end{proof}

The main advantage of the level-$c$ basis type divisors is that they can be arranged to contain a positive multiple of an arbitrary divisor. The precise statement is the following. It will play an important role when we show that special complements exist in the log Fano case.

\begin{lem} \label{l:level-c basis type >= any G}
Let $v\in \Val_X$ be a valuation with $A_{X,\Delta}(v)<\infty$ and $C_X(v)\subseteq X_0$. Let $G$ be an effective divisor on $X$ whose support does not contain any component of $X_0$. Then there exists  $\varepsilon>0$ such that for every sufficiently large integer $m\in r\bN$, we can find a level-$c$ $m$-basis type divisor $\Gamma_m$ such that $v(\Gamma_m)=S_{c,m}(v)$ and $\Gamma_m\ge \varepsilon G$.
\end{lem}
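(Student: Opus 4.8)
The plan is to work on the thickened central fiber $V_{c,m}=\cR_m/\fm^{\lceil cm\rceil}\cR_m$, which is a finite-dimensional $\bk$-vector space; the point of passing from the free $\cO_C$-module $\cR_m$ to this vector space is that any two filtrations of a finite-dimensional vector space admit a common compatible basis (\cite{AZ-K-adjunction}*{Lemma 3.1}), whereas two filtrations of a free module over a DVR in general do not. Write $G=\sum_{j}a_jG_j$ as a sum of prime divisors; by hypothesis each $G_j$ is horizontal over $C$, and we may assume $G\neq 0$. Besides $\cF_v$ and the $\fm$-adic filtration $\cF_{\fm}$, I will use the filtration $\cG$ of $\cR$ defined by $\cG^p\cR_m:=\{\,s\in\cR_m \mid \{s=0\}\geq pG\,\}$. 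Since $\mathrm{div}_X(\pi)=X_0$ and no $G_j$ is a component of $X_0$, the quotient $\cR_m/\cG^p\cR_m$ is torsion free, so $\cG^p\cR_m$ is a \emph{saturated} $\cO_C$-submodule of $\cR_m$; in particular $\cG^p\cR_m\cap\fm^i\cR_m=\fm^i\cG^p\cR_m$ for every $i$, so the image of $\cG^p\cR_m$ in $V_{c,m}$ equals the free $\cO_C/\fm^{\lceil cm\rceil}$-module $\cG^p\cR_m/\fm^{\lceil cm\rceil}\cG^p\cR_m$, of rank $\mathrm{rank}_{\cO_C}\cG^p\cR_m$.

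The first step is to produce many sections divisible by $G$. Since $L$ is $f$-ample, for $k_0\in r\bN$ with $k_0\gg 0$ there is a nonzero $t_0\in\cR_{k_0}$ with $\mathrm{div}(t_0)\geq G$; then $t_0^{\,p}\cdot\cR_{m-pk_0}\subseteq\cG^p\cR_m$, and since $\cR$ is a domain multiplication by $t_0^{\,p}$ is injective, so $\mathrm{rank}_{\cO_C}\cG^p\cR_m\geq N_{m-pk_0}$ where $N_\ell=\mathrm{rank}_{\cO_C}\cR_\ell$. Taking $p=\lfloor\varepsilon m\rfloor$ with $\varepsilon>0$ small enough in terms of $k_0$ and the relative dimension $n=\dim X_\eta$, and using $N_\ell\sim\tfrac{1}{n!}\mathrm{vol}(L_\eta)\ell^{n}$, we get $\mathrm{rank}_{\cO_C}\cG^{\lfloor\varepsilon m\rfloor}\cR_m\geq\tfrac12 N_m$ for all $m\gg 0$, and hence, by the previous paragraph, $\dim_{\bk}\im\big(\cG^{\lfloor\varepsilon m\rfloor}\cR_m\to V_{c,m}\big)\geq\tfrac12\dim_{\bk}V_{c,m}$.

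Next I would choose the level-$c$ $m$-basis. Working inside the vector space $V_{c,m}$, pick a $\bk$-basis $(\sigma_k)$ compatible with $\cF_v$, refined so that inside each graded piece $\gr_{\cF_v}^{\lambda}V_{c,m}$ a maximal number of the $\sigma_k$ lie in the image of $\cF_v^{\lambda}\cR_m\cap\cG^{\lfloor\varepsilon m\rfloor}\cR_m$. Lift each $\sigma_k$ to $\widetilde{\sigma}_k\in\cF_v^{\ord_{\cF_v}(\sigma_k)}\cR_m$, taking the lift inside $\cG^{\lfloor\varepsilon m\rfloor}\cR_m$ whenever the refinement allows. Because $\sigma_k$ is compatible with $\cF_v$ on $V_{c,m}$, the lift automatically satisfies $v(\widetilde{\sigma}_k)=\ord_{\cF_v}(\sigma_k)$; hence $\Gamma_m:=\tfrac{1}{mN_{c,m}}\sum_k\{\widetilde{\sigma}_k=0\}$ is a level-$c$ $m$-basis type divisor with $v(\Gamma_m)=S_{c,m}(v)$, by the level-$c$ analogue of Lemma \ref{l:S(v)basistype} used in the proof of Lemma \ref{l:delta_c,m}. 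Moreover each $\widetilde{\sigma}_k$ chosen inside $\cG^{\lfloor\varepsilon m\rfloor}\cR_m$ satisfies $\{\widetilde{\sigma}_k=0\}\geq\lfloor\varepsilon m\rfloor G$, so contributes at least $\lfloor\varepsilon m\rfloor a_j$ to $\mult_{G_j}\Gamma_m$.

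The main obstacle, and the step I expect to be genuinely delicate, is to bound from below --- uniformly in $m$ --- the number of $\sigma_k$ for which the $v$-tight lift can be taken inside $\cG^{\lfloor\varepsilon m\rfloor}\cR_m$, that is, to show $\sum_{\lambda}\dim_{\bk}\im\big(\cF_v^{\lambda}\cR_m\cap\cG^{\lfloor\varepsilon m\rfloor}\cR_m\to\gr_{\cF_v}^{\lambda}V_{c,m}\big)\geq c'\dim_{\bk}V_{c,m}$ for some $c'>0$ independent of $m$. Granting this, $\mult_{G_j}\Gamma_m\geq\tfrac{c'\lfloor\varepsilon m\rfloor}{m}a_j\geq\tfrac{c'}{2}\varepsilon a_j$ for $m\gg 0$, so $\Gamma_m\geq\tfrac{c'}{2}\varepsilon\,G$, and replacing $\varepsilon$ by $\tfrac{c'}{2}\varepsilon$ finishes the proof. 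The difficulty is that $\cF_v^{\lambda}\cR_m$ is \emph{not} saturated in $\cR_m$, so $\im(\cF_v^{\lambda}\cR_m\cap\cG^{p}\cR_m\to V_{c,m})$ can be strictly smaller than the intersection of the images, and the total loss need not be negligible. The plan to control it is to apply the modular law to the saturated submodule $W:=\cG^{\lfloor\varepsilon m\rfloor}\cR_m$ relative to $\fm^{\lceil cm\rceil}\cR_m$ --- which identifies the intersection of the images with $\im\big((\cF_v^{\lambda}\cR_m+\fm^{\lceil cm\rceil}\cR_m)\cap W\to V_{c,m}\big)$ --- and then to trace the loss through the layered description of $\cF_v$ on $V_{c,m}$, in which each layer $\fm^i\cR_m/\fm^{i+1}\cR_m\cong R_m$ carries the restriction $\cF_v\vert_{X_0}$ shifted by $i\,v(\fm)$ (as in the proof of Lemma \ref{lem:S_c,m}), using the linear boundedness of $\cF_v$ on the layers provided by Lemma \ref{l:Afinite->linbounded} (applicable since $T(v)<\infty$ by Lemma \ref{lem-delta positive} and $v(\fm)>0$ because $C_X(v)\subseteq X_0$). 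Combining the resulting estimate with the lower bound on $\mathrm{rank}_{\cO_C}W$ from the second step should then yield the required $c'>0$.
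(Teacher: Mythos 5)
Your setup is close in spirit to the paper's---you introduce the $G$-filtration $\cG$ of $\cR$, observe that $\cG^{p}\cR_m$ is saturated, and plan to lift a basis of $V_{c,m}$ that is compatible with $\cF_v$ while pushing as many lifts as possible into $\cG^{\lfloor\varepsilon m\rfloor}\cR_m$. But the argument stops at the step you yourself flag as ``genuinely delicate,'' and the resolution is a short observation you missed rather than the layered estimate you sketch. After rescaling $v$ so that $v(\fm)=1$, for any $\lambda\le cm$ one has $\fm^{\lceil cm\rceil}\cR_m\subseteq\cF_v^{\lambda}\cR_m$, since every element of $\fm^{\lceil cm\rceil}\cR_m$ has $v$-value at least $\lceil cm\rceil\ge\lambda$; hence $\cF_v^{\lambda}\cR_m+\fm^{\lceil cm\rceil}\cR_m=\cF_v^{\lambda}\cR_m$. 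Consequently, for any $\bar s\in\cF_v^{\lambda}V_{c,m}\cap\cG^{\mu}V_{c,m}$ with $\lambda\le cm$, choosing a lift $s\in\cG^{\mu}\cR_m$ (the restriction $\cG^{\mu}\cR_m\to\cG^{\mu}V_{c,m}$ is surjective by definition) automatically places $s$ in $\cF_v^{\lambda}\cR_m+\fm^{\lceil cm\rceil}\cR_m=\cF_v^{\lambda}\cR_m$; so the image of $\cF_v^{\lambda}\cR_m\cap\cG^{\mu}\cR_m$ in $V_{c,m}$ equals $\cF_v^{\lambda}V_{c,m}\cap\cG^{\mu}V_{c,m}$ in the range $\lambda\le cm$, and the non-saturation of $\cF_v^{\lambda}\cR_m$ that worries you causes no loss there. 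This is exactly why the paper can simply take a basis of $V_{c,m}$ compatible with \emph{both} $\cF_v$ and $\cG$ at once (via \cite{AZ-K-adjunction}*{Lemma 3.1}) and lift each element so that it is simultaneously $v$-tight and $G$-divisible, which is the engine of the proof.

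Your counting step also does not do what is needed and is where the argument, as written, would fail. Bounding $\mathrm{rank}_{\cO_C}\cG^{\lfloor\varepsilon m\rfloor}\cR_m\ge\tfrac12 N_m$ controls only how many sections are deep in $\cG$, but the lifting trick is available only for basis elements whose $\cF_v$-value is at most $cm$ (and in practice one takes it $\le\varepsilon_2 m$ with $\varepsilon_1\ll\varepsilon_2\ll c$), so you need a lower bound on the number of basis elements that are simultaneously deep in $\cG$ and shallow in $\cF_v$. The paper counts the elements in $\cG^{\varepsilon_1 m}V_{c,m}\setminus\bigl(\cF_v^{>\varepsilon_2 m}V_{c,m}\cap\cG^{\varepsilon_1 m}V_{c,m}\bigr)$ by identifying $\cG^{\varepsilon_1 m}V_{c,m}$ with $\cR_{m-k\lceil\varepsilon_1 m\rceil}/\fm^{\lceil cm\rceil}\cR_{m-k\lceil\varepsilon_1 m\rceil}$ (multiplication by a power of a defining section of $G\in|kL|$), reducing layer by layer to $R_{m-k\lceil\varepsilon_1 m\rceil}$ as in Lemma~\ref{lem:S_c,m}, and then invoking Lemma~\ref{l:volume drop} to see that this dimension is a positive fraction of $N_{c,m}$. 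Without such a two-sided count, your conclusion under ``granting this'' is not established.
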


\begin{proof}
After rescaling $v$ we may assume that $v(\fm)=1$. By adding components to $G$ and rescaling we may also assume that $G$ is reduced and $G\in |kL|$ for some integer $k>0$. The divisor $G$ induces a filtration $\cG$ on $\cR$ and hence also on $V_{c,m}$. Choose a basis $\bar{s}_1,\dots,\bar{s}_{N_{c,m}}$ of $V_{c,m}$ that is compatible with both filtrations $\cF_v$ and $\cG$. Our plan is to lift it to a level-$c$ $m$-basis that preserves the $v$-value while maximizing the multiplicity along $G$. 

By definition, if $\bar{s}\in \cF^\lambda_v V_{c,m}\setminus \cF^{>\lambda}_v V_{c,m}$, then we can lift $\bar{s}$ to some $s\in \cR_m$ with $v(s)=\lambda$. If in addition $\bar{s}\in \cG^\mu V_{c,m}$ and $\lambda\le cm$, then we can further ensure that $s\in \cR_m$ satisfies $\mult_G (s)\ge \mu$: this is because we can first lift $\bar{s}$ to some $s\in \cG^\mu \cR_m$; but as $\bar{s}\in \cF^\lambda_v V_{c,m}$ and  $v(\fm)=1\ge \frac{\lambda}{cm}$, we  get 
\[
s\in \cF^\lambda_v \cR_m + \fm^{\lceil cm \rceil}\cR_m \subseteq \cF^\lambda_v \cR_m \, ,
\]
i.e. $s\in \cF^\lambda_v \cR_m \cap \cG^\mu \cR_m$.
 Once we lift all the basis elements $\bar{s}_1,\dots,\bar{s}_{N_{c,m}}$ this way, we get a level-$c$ $m$-basis $s_1,\dots,s_{N_{c,m}}$ together with the associated basis type divisor $\Gamma_m$ such that $v(\Gamma_m)=S_{c,m}(v)$. We claim that 
\[
\liminf_{m\to \infty}\mult_G (\Gamma_m) >0,
\]
from which the lemma follows.

To see this, choose some small constants 
\[
0<\varepsilon_1\ll \varepsilon_2\ll c
\] and consider those basis elements $\bar{s}_j$ with
\[
\bar{s}_j\in \cG^{\varepsilon_1 m} V_{c,m} \setminus (\cF_v^{>\varepsilon_2 m} V_{c,m}\cap \cG^{\varepsilon_1 m} V_{c,m}).
\]
By the previous discussion, $\bar{s}_j$ can be lifted to $s_j\in  \cG^{\varepsilon_1 m} \cR_{m}$. Hence $\mult_G (\Gamma_m)\ge \frac{\varepsilon_1 a_m}{N_{c,m}}$ where
\[
a_m:=\dim \left( \cG^{\varepsilon_1 m} V_{c,m} / (\cF_v^{>\varepsilon_2 m} V_{c,m}\cap \cG^{\varepsilon_1 m} V_{c,m}) \right) \, .
\]

Since $G$ does not contain any component of $X_0$, multiplication with powers of the defining equation of $G$ identifies $\cR_{m- k\lceil\mu\rceil}/\fm^{\lceil cm\rceil} \cR_{m-k\lceil\mu\rceil}$ with $\cG^{\mu} V_{c,m}$. It induces an isomorphism 
\[
 \cR_{m- k \lceil \varepsilon_1 m\rceil} / \cF_v^{>(\varepsilon_2 m- \lceil\varepsilon_1 m\rceil \cdot v(G))} \cR_{m- k \lceil \varepsilon_1 m\rceil} 
\cong
\cG^{\varepsilon_1 m} V_{c,m} / (\cF_v^{>\varepsilon_2 m} V_{c,m}\cap \cG^{\varepsilon_1 m} V_{c,m}) 
\, ,
\]
where we use $\varepsilon_1\ll\varepsilon_2\ll c$ for 
\[
 \cF_v^{>(\varepsilon_2 m- \lceil\varepsilon_1 m\rceil \cdot v(G))} \cR_{m- k \lceil \varepsilon_1 m\rceil}  \supseteq \fm^{\lceil cm\rceil} \cR_{m-k \lceil \varepsilon_1 m\rceil} \, .
\]
Since $v(\fm)=1$, by the same argument as in the proof of Lemma \ref{lem:S_c,m} this gives
\begin{eqnarray*}
 {a_m} &=&\sum_{j\ge 0}\dim (R_{m- k \lceil \varepsilon_1 m\rceil}/\cF^{>(\varepsilon_2 m- \lceil\varepsilon_1 m\rceil \cdot v(G)-j)}_{v}|R_{m- k \lceil \varepsilon_1 m\rceil})
\end{eqnarray*}
Combined with Lemma \ref{l:volume drop}, this gives 
$\liminf_{m\to\infty} \frac{a_m}{m^n}>0$ with $n=\dim X$. It is also clear that $N_{c,m}=O(m^n)$. Thus
\[
\liminf_{m\to\infty} \frac{a_m}{N_{c,m}} > 0
\]
and this proves the claim.
\end{proof}

\section{Divisorial minimizer}\label{ss-finite generation}

In this section we prove Theorem \ref{t:divisorial minimizer} on the existence of divisorial minimizers of the relative stability threshold of a log Fano pair over a DVR.

\subsection{Statement of main result}
Let $(X,\Delta)\to C$ be a relative log Fano pair over the spectrum of a DVR $C$ such that $X\to C$ has connected and geometrically connected and reduced fibers. 
We set
\[
\delta(X,\Delta):= \delta(X,\Delta;-K_X-\Delta)
,\]
where we are viewing $L:=-K_X-\Delta$ as $\bQ$-line bundle and fixing $r>0$ such that $-r(K_X+\Delta)$ is a Cartier divisor.
In addition, for $v\in \Val_X$, the  invariants $S(v)$ and $T(v)$ denote the $S$ and $T$ invariants for $v$ with respect to $L$.
The above values are independent of the choice of $r$ by  Proposition \ref{p:reducedSinvproperties}.1 and Proposition \ref{p:delta=inf A/S}.

The goal of this section is to prove the following enhancement of Theorem \ref{t:delta computed by qm valuation}.

\begin{thm} \label{t:divisorial minimizer}
Let $f\colon (X,\Delta) \to C$ be a log Fano pair over the spectrum of a DVR with algebraically closed residue field.  
If $X_0$ is integral, $X_0\subseteq \Supp(\Delta)$, and 
\[
\delta(X,\Delta)<\min\{1,\delta(X_{\eta},\Delta_{\eta})\} \ \ \ \mbox{for }\eta={\rm Spec}\,K(C),
\]
then there exists a divisorial valuation $v\in \Val_X$ with $C_X(v)\subsetneq X_0$ that computes $\delta(X,\Delta)$.
\end{thm}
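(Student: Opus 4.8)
The plan is to first produce a quasi-monomial valuation computing $\delta:=\delta(X,\Delta)$, then prove that its associated graded ring is finitely generated, and finally extract a divisorial minimizer by a linearity argument, mirroring the pattern of \cite{LXZ-HRFG,XZ-SDC}. For the first step, the hypothesis $\delta<\min\{1,\delta(X_\eta,\Delta_\eta)\}$ implies $\delta<\delta(X_K,\Delta_K;L_K)$, where $K$ is the fraction field of the completion $\widehat{\cO_{C,0}}$ (combine $\delta<1$ and $\delta<\delta(X_\eta,\Delta_\eta)$ with Proposition \ref{p:deltafieldext}), so Theorem \ref{t:delta computed by qm valuation} applies---after a harmless base change enlarging the algebraically closed residue field to an uncountable one, which changes neither $\delta$ nor the conclusion---and yields a quasi-monomial valuation $v\in\Val_X$ computing $\delta$. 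By Lemma \ref{l:center in X0} its center lies in $X_0$; since $\delta$ is finite and positive, $S(v)>0$, so $v$ is not proportional to $\ord_{X_0}$ by Lemma \ref{l:S>0}, and therefore $C_X(v)\subsetneq X_0$. Fix a projective birational model $Y\to X$ on which $v=v_{\vec a}$ is quasi-monomial with respect to an SNC divisor $\sum_{i=1}^d E_i$ with $\vec a\in\bR_{>0}^d$, and write $v_{\vec b}$ for the nearby quasi-monomial valuations with weight $\vec b$.

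The decisive---and most delicate---step is the finite generation of $\gr_v\cR$, which is where the hypothesis $\delta<1$ enters and which I expect to be the main obstacle. Following \cite{LXZ-HRFG,XZ-SDC}, it suffices to show that $v$ is an lc place of a \emph{special} $\bQ$-complement (Definition \ref{d:specialcomp}) of the relative log Fano pair $(X,\Delta-\tfrac{c\delta}{2}X_0)$ for some fixed $0<c\ll 1$---chosen so that $\Delta-\tfrac{c\delta}{2}X_0$ is effective and klt, using $X_0\subseteq\Supp(\Delta)$---and then to invoke the higher rank finite generation theorem in the relative setting over a DVR. To verify this, and to obtain the ``specialness'', the point is that the complement can be arranged to contain the support of an arbitrary horizontal prime divisor $G$ on $X$. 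Given such $G$ (which does not contain the vertical divisor $X_0$), Lemma \ref{l:level-c basis type >= any G} supplies, for every $m\gg 0$ divisible by $r$, a level-$c$ $m$-basis type divisor $\Gamma_m\sim_\bQ-K_X-\Delta$ with $v(\Gamma_m)=S_{c,m}(v)$ and $\Gamma_m\ge\varepsilon G$ for a fixed $\varepsilon>0$; this device replaces the simultaneous diagonalization of the $v$- and $G$-filtrations of $\cR_m$, which is unavailable over a DVR. Writing $\lambda_m:=\lct(X,\Delta-\tfrac{c\delta}{2}X_0;\Gamma_m)$, one has
\[
\delta_{c,m}(L)\ \le\ \lambda_m\ \le\ \frac{A_{X,\Delta-\frac{c\delta}{2}X_0}(v)}{v(\Gamma_m)}\ =\ \frac{A_{X,\Delta}(v)+\tfrac{c\delta}{2}v(\fm)}{S_{c,m}(v)},
\]
and by Lemma \ref{l:delta_c,m} both outer terms converge to $\delta$ as $m\to\infty$, so $\lambda_m\to\delta$ and, using that $v$ computes $\delta$, also $A_{X,\Delta-\frac{c\delta}{2}X_0}(v)-\lambda_m v(\Gamma_m)\to 0$. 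Since $\Gamma_m$ is fixed in $\bQ$-linear equivalence and $\delta<1$, the coefficients of the $\Gamma_m$ are uniformly bounded, so a limiting argument as in \cite{LXZ-HRFG} carried out on the fixed model $Y$ produces a $\bQ$-complement $\Delta^+$ of $(X,\Delta-\tfrac{c\delta}{2}X_0)$ with $v$ an lc place of $(X,\Delta^+)$ and $\Supp(\Delta^+)\supseteq\Supp(G)$. Combined with \cite{XZ-SDC}, this yields the finite generation of $\gr_v\cR$ (hence of the restricted graded ring computing $S(v)$).

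Finally, finite generation propagates the conclusion of \cite{LXZ-HRFG}: for $\vec b$ in a small neighborhood of $\vec a$, the rings $\gr_{v_{\vec b}}\cR$ are mutually isomorphic up to a shift of the $\bR$-grading, which forces $S(v_{\vec b})$ to be a rational-linear function of $\vec b$ there; as $A_{X,\Delta}(v_{\vec b})$ is likewise rational-linear, the ratio $A_{X,\Delta}(v_{\vec b})/S(v_{\vec b})$ is a quotient of rational-linear functions on a rational polyhedral cone, and its infimum---which equals $\delta$ and is attained at $\vec a$---is therefore attained at some rational weight $\vec b$. Then $v':=v_{\vec b}$ is divisorial and computes $\delta(X,\Delta)$; since $v'$ computes $\delta<\delta(X_\eta,\Delta_\eta)$, its center lies in $X_0$ by Lemma \ref{l:center in X0}, and since $S(v')>0$ it is not proportional to $\ord_{X_0}$, so $C_X(v')\subsetneq X_0$, as required.
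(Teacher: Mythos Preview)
Your overall strategy---quasi-monomial minimizer, special complement, finite generation, linearity to perturb to a divisorial valuation---is exactly the paper's approach, and your use of level-$c$ basis type divisors to produce complements containing a prescribed horizontal divisor correctly captures Lemma~\ref{l:complement>=given divisor}. There are, however, two genuine omissions.

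The more serious one is the claim that enlarging the residue field to an uncountable one is ``harmless.'' Theorem~\ref{t:delta computed by qm valuation} is stated under that hypothesis, so you must pass to $C'=\Spec\,\bk'[\![t]\!]$ with $\bk\subset\bk'$ uncountable; but the map $\bk[\![t]\!]\to\bk'[\![t]\!]$ is not \'etale, and a divisorial (or quasi-monomial) valuation on $X'=X\times_C C'$ centered in $X'_0=X_0\times_\bk\bk'$ does not obviously restrict to one of the same type on $X$. The paper devotes the whole of Step~2 to this descent: using Corollary~\ref{cor-minimizer lc place} one shows the divisorial minimizer $E'$ over $X'$ is extracted by blowing up an ideal $\fb'\supset\cI_{X'_0}^\ell$, hence corresponds to a $\bk'$-point of a Hilbert scheme $H$ of finite type over $\bk$; spreading $E'$ out over the Zariski closure $Z\subset H$ of that point and using semicontinuity of log discrepancy and of $S_m$, a general $\bk$-point $z\in Z$ yields a divisor $E_z$ over $X$ still computing $\delta(X,\Delta)$. (A further reduction from a general $C$ to $\Spec\,\bk[\![t]\!]$ is Step~3 and is handled, as in the proof of Theorem~\ref{t:delta computed by qm valuation}, by restriction of valuations from the completion.) None of this is bypassed by anything in your write-up.

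Secondarily, the phrase ``invoke the higher rank finite generation theorem in the relative setting over a DVR'' hides a real step: no such theorem is available off the shelf. The paper (Proposition~\ref{p:finite generation}) passes to the relative affine cone $Y=\Spec_C\cR$, replaces $v$ by the $\bG_m$-invariant quasi-monomial combination $v_1$ of $v$ and $\ord_E$ centered at the cone point over $0$, and checks that the pulled-back special complement remains special on $Y$ (the nonobvious point being relative ampleness of the pulled-back divisor over $Y$). Only then does \cite[Theorem~1.4]{XZ-SDC} apply to give $\gr_{v_1}\cR\cong\gr_v\cR$ finitely generated.
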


The key point here is that the minimizer can be made divisorial. This is a relative version of the optimal destabilization theorem \cite[Theorem 1.2]{LXZ-HRFG}. In the absolute case, one shows minimizing valuations induce finitely generated filtrations on the section ring, and this implies that any small perturbation of the minimizer is still a minimizer. Hence we get a divisorial minimizer by choosing a rational perturbation. To deduce the finite generation, we show that any quasi-monomial minimizer of the stability threshold is a monomial log canonical place of a special complement (to be defined below), and then show that this geometric property yields finite generation even for valuations with higher rational rank (than one).  We follow a similar strategy in our relative setting, though each step requires some extra work. In particular, to construct a special complement such that the minimizer is an lc place is tricky.  Once we show a minimizer is a monomial log canonical place of a special complement
in the relative setting, the finite generation can be deduced from \cite{XZ-SDC} using a cone construction.

\subsection{Special complements}

In this subsection, we show that the quasi-monomial minimizers are monomial lc places of special $\bQ$-complements.

\begin{defn}[{\it cf.} {\cite[Definition 3.1]{XZ-SDC}}]\label{d:specialcomp}
A \emph{special $\bQ$-complement} of $(X,\Delta)$ with respect to a log smooth model $\pi\colon(Y,E)\to (X,\Delta)$ is a $\bQ$-complement $\Gamma$ such that $\pi_*^{-1}\Gamma\ge G$ for some ample effective $\bQ$-divisor $G$ on $Y$ whose support does not contain any stratum of $(Y,E)$. Any valuation $v\in \QM(Y,E)$ that is an lc place of $(X,\Delta+\Gamma)$ is called a \emph{monomial lc place} of the special $\bQ$-complement $\Gamma$ with respect to $(Y,E)$.
\end{defn}

For any $\bQ$-divisor $D$ on $X$, we denote its horizontal (resp. vertical) part (over $C$) by $D^h$ (resp. $D^v$).

\begin{prop} \label{p:special complement}
In the setting of Theorem \ref{t:divisorial minimizer}, let $v$ be a quasi-monomial valuation that computes $\delta(X,\Delta)$. 
Then $v$ is a monomial lc place of some special $\bQ$-complement of $(X,\Delta-tX_0)$ for any $t>0$ with $tX_0\le \Delta$.
\end{prop}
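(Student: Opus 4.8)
The plan is to produce a special $\bQ$-complement whose associated log smooth model detects $v$ as a monomial lc place, arguing horizontal divisor by horizontal divisor. First I would fix a small $c>0$ with $\Delta \geq \tfrac{c\delta}{2}X_0 + tX_0$ (possible after shrinking $c$ since $X_0 \subseteq \Supp(\Delta)$ and $tX_0 \leq \Delta$), set $\delta := \delta(X,\Delta)$, and invoke Lemma \ref{l:delta_c,m}: since $v$ computes $\delta(L)$, it also computes $\lim_m \delta_{c,m}(L)$. By Theorem \ref{t:delta computed by qm valuation} (using $\delta(X,\Delta)<\delta(X_\eta,\Delta_\eta)$), $C_X(v) \subseteq X_0$, so Lemma \ref{l:level-c basis type >= any G} applies: for any horizontal prime divisor $G$ on $X$ there is $\varepsilon_G>0$ and, for all $m \gg 0$ divisible by $r$, a level-$c$ $m$-basis type divisor $\Gamma_m$ with $v(\Gamma_m) = S_{c,m}(v)$ and $\Gamma_m \geq \varepsilon_G G$.

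Next, because $v$ computes the infimum $\inf_w \frac{A_{X,\Delta}(w) + \tfrac{c\delta}{2}w(\fm)}{S(w)+\tfrac{c}{2}w(\fm)} = \delta$, for the $(m,c)$-basis type divisors $\Gamma_m$ above we get that
\[
\bigl(X,\ \Delta - \tfrac{c\delta}{2}X_0 + (\delta - o(1))\Gamma_m\bigr)
\]
is log canonical at $v$ as $m \to \infty$ (the error $o(1)$ coming from the convergence $\delta_{c,m} \to \delta$ and $S_{c,m}(v) \to S(v) + \tfrac{c}{2}v(\fm)$), and in fact $v$ is an lc place in the limit. Concretely, I would pass to a log smooth model $\pi\colon(Y,E)\to(X,\Delta)$ on which $v \in \QM(Y,E)$, take a convex/limiting combination (or a single large-$m$ member together with a general auxiliary complement of the klt pair $(X,\Delta-tX_0)$ over $C$, whose existence is guaranteed since $(X,\Delta-tX_0)\to C$ is a relative log Fano pair) to build an honest $\bQ$-complement $\Gamma$ of $(X,\Delta - tX_0)$ with $A_{X,\Delta-tX_0+\Gamma}(v) = 0$. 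By construction $\Gamma \geq \varepsilon_G G$ for the chosen $G$; running this over a generic hyperplane's worth of horizontal $G$'s and averaging (the standard trick: a general $\bQ$-linear combination of finitely many such complements, one for each component of a fixed effective ample-over-$C$ horizontal divisor), I would arrange $\pi_*^{-1}\Gamma \geq$ an ample effective $\bQ$-divisor on $Y$ whose support contains no stratum of $(Y,E)$. That is exactly the special $\bQ$-complement condition of Definition \ref{d:specialcomp}, and $v \in \QM(Y,E)$ is a monomial lc place of it.

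The main obstacle is the step where one simultaneously guarantees (i) $v$ is an \emph{exact} lc place of the complement (not merely lc), and (ii) the complement dominates an \emph{ample} divisor on $Y$ missing all strata, while (iii) everything stays vertical-compatible with the $X_0$-thickening so that the level-$c$ machinery of Lemma \ref{l:level-c basis type >= any G} can be applied for the horizontal $G$. The reason the $(m,c)$-basis type divisors are essential here — rather than ordinary $m$-basis type divisors — is precisely that over a DVR one cannot simultaneously diagonalize the filtrations induced by $v$ and by $\ord_G$ on a free $\cO_C$-module; working modulo $\fm^{\lceil cm\rceil}$ reduces to a vector-space situation where simultaneous diagonalization is available, at the cost of the $\tfrac{c\delta}{2}X_0$ correction, which is why the hypothesis $tX_0 \le \Delta$ (giving room to absorb this correction) enters. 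Once the special complement is built, finite generation of $\gr_v\cR$ will follow from \cite{XZ-SDC} via the cone construction, but that is the content of later results, not of this Proposition.
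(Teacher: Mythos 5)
Your overall route is the right one and follows the paper's structure: you are in effect re-deriving Lemma~\ref{l:complement>=given divisor} inline (via Lemma~\ref{l:delta_c,m}, Lemma~\ref{l:level-c basis type >= any G}, and Lemma~\ref{l:limit of complement}), whereas the paper invokes Lemma~\ref{l:complement>=given divisor} as a black box. The averaging step you describe is harmless --- log discrepancy is affine in the boundary, so a convex combination of complements that all have $v$ as an lc place is again a complement with $v$ as an lc place --- but it is also unnecessary.

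The actual gap is in how you arrange the specialty condition. Definition~\ref{d:specialcomp} requires $\pi_*^{-1}\Gamma$ to dominate an \emph{ample} effective $\bQ$-divisor on the log smooth model $Y$ whose support avoids all strata of $(Y,E)$. You propose to feed the machinery horizontal prime divisors $G$ on $X$ (components of a horizontal divisor that is ample over $C$) and then pass to strict transforms. But the strict transform $\pi_*^{-1}G$ of an ample divisor on $X$ under a nontrivial log resolution $\pi\colon Y\to X$ is only big --- it is trivial on every $\pi$-exceptional curve --- and no averaging of finitely many such strict transforms produces an ample class on $Y$. So your construction produces $\pi_*^{-1}\Gamma$ dominating a big-but-not-ample effective divisor, which does not verify Definition~\ref{d:specialcomp}. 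The fix, which is what the paper does, is to choose $G$ as a \emph{general ample effective divisor on $Y$} (so its support avoids every stratum by genericity), set $G_X := \pi_*G$ on $X$ --- this is merely effective and horizontal, which is all Lemma~\ref{l:complement>=given divisor} (equivalently, Lemma~\ref{l:level-c basis type >= any G}) needs --- and then observe that $\Gamma \geq \varepsilon\,\pi_*G$ forces $\pi_*^{-1}\Gamma \geq \varepsilon G$ since the components of a general $G$ on $Y$ are not $\pi$-exceptional. No averaging is required once $G$ is chosen on $Y$ rather than on $X$.
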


Before we prove this general statement, we construct some $\bQ$-complements that realize the minimizer $v$ as a log canonical place. A useful tool is the following statement, which we extract from \cite{LXZ-HRFG}.

\begin{lem} \label{l:limit of complement}
Let $(X,\Delta)$ be a log Fano pair over $C$, i.e. $(X,\Delta)$ is klt and $-K_X-\Delta$ is ample over $C$. 
Fix $\varepsilon>0$, $v\in \Val_X$  a quasi-monomial valuation, and  $G$  an effective divisor on $X$. Assume that there exists a sequence of $\bQ$-complements $\Gamma_m$ $(m=1,2,\dots)$ of $(X,\Delta)$ such that
\[
\lim_{m\to\infty} v(\Gamma_m) = A_{X,\Delta}(v)\quad \text{and}\quad\Gamma_m \ge \varepsilon G.
\]
Then there exists a $\bQ$-complement $\Gamma$ such that $\Gamma\ge \varepsilon G$ and $v$ is an lc place of $(X,\Delta+\Gamma)$.
\end{lem}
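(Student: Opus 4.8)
The plan is to reduce the statement to an elementary convex‑geometry fact about complement polytopes, after invoking relative boundedness of complements. Two observations set things up. Since $v$ is quasi-monomial we have $A:=A_{X,\Delta}(v)<\infty$, and for \emph{any} $\bQ$-complement $\Gamma$ of $(X,\Delta)$ over $C$ the log canonicity of $(X,\Delta+\Gamma)$ gives
\[
A_{X,\Delta+\Gamma}(v)=A_{X,\Delta}(v)-v(\Gamma)\ge 0,
\]
with equality exactly when $v$ is an lc place of $(X,\Delta+\Gamma)$. So $v(\Gamma)\le A$ always, the hypothesis furnishes $\bQ$-complements $\Gamma_m\ge\varepsilon G$ with $v(\Gamma_m)\to A$, and what must be produced is a single $\bQ$-complement $\Gamma\ge\varepsilon G$ with $v(\Gamma)=A$.

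First I would reduce to $\varepsilon\in\bQ$ (shrinking $\varepsilon$ to a rational number, harmless in the applications). Since some $\Gamma_m\ge\varepsilon G$ exists, $(X,\Delta+\varepsilon G)$ is lc, and the substitution $\Gamma=\varepsilon G+\Theta$ identifies the $\bQ$-complements $\Gamma\ge\varepsilon G$ of $(X,\Delta)$ with the $\bQ$-complements $\Theta$ of $(X,\Delta+\varepsilon G)$. The crucial input is relative boundedness of complements over a DVR, which is what one extracts from \cite{LXZ-HRFG} (ultimately resting on Birkar's boundedness of complements): after fixing a suitable log resolution so as to parametrize candidate complements by $\bR$-divisors supported on a fixed finite set of prime divisors, the set
\[
\mathcal{P}:=\{\,\Theta\ge 0:\ (X,\Delta+\varepsilon G+\Theta)\text{ is lc and }K_X+\Delta+\varepsilon G+\Theta\sim_{\bR,C}0\,\}
\]
is a \emph{rational polytope}, nonempty since $\Gamma_m-\varepsilon G\in\mathcal{P}$ for all $m$.

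The rest is formal. The map $\Theta\mapsto v(\Theta)$ is a linear functional on $\mathcal{P}$ which, by the displayed inequality applied to $\Gamma=\varepsilon G+\Theta$, is bounded above by $A-\varepsilon v(G)$; since $v(\Gamma_m-\varepsilon G)=v(\Gamma_m)-\varepsilon v(G)\to A-\varepsilon v(G)$ and $\mathcal{P}$ is compact, this supremum is attained and equals $A-\varepsilon v(G)$. The face of $\mathcal{P}$ on which $v$ attains its maximum is a nonempty face of a rational polytope, hence contains a vertex of $\mathcal{P}$, i.e.\ a rational point $\Theta_0$. Then $\Gamma:=\varepsilon G+\Theta_0$ is a $\bQ$-complement of $(X,\Delta)$ with $\Gamma\ge\varepsilon G$ and $v(\Gamma)=\varepsilon v(G)+(A-\varepsilon v(G))=A_{X,\Delta}(v)$, so $v$ is an lc place of $(X,\Delta+\Gamma)$, as wanted.

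The main obstacle is the rationality — and even the compactness, i.e.\ the genuine polytope structure — of the complement set $\mathcal{P}$ in the relative DVR setting; everything else is bookkeeping. As an alternative to the convex-geometry endgame, one could instead first take a limit of the $\Gamma_m$ in the bounded family of effective $\bQ$-divisors that are $\bQ$-linearly equivalent to $-K_X-\Delta$ over $C$, producing an $\bR$-complement $\Gamma_\infty\ge\varepsilon G$ with $v(\Gamma_\infty)=A$ (using that multiplicity along the quasi-monomial valuation $v$ is lower semicontinuous in families and that log canonicity is a closed condition by inversion of adjunction), and then rationalize $\Gamma_\infty$ again via boundedness of complements; but extracting $\Gamma$ directly as a rational vertex of $\mathcal{P}$ is cleaner.
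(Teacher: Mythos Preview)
Your argument has a genuine gap at the point you yourself flag as the main obstacle: the set $\mathcal{P}$ of $\bR$-complements of $(X,\Delta+\varepsilon G)$ over $C$ is \emph{not} a rational polytope, and no amount of passing to a log resolution changes this. A complement $\Theta$ is an effective $\bR$-divisor with $\Theta\sim_{\bR,C}-(K_X+\Delta+\varepsilon G)$ and $(X,\Delta+\varepsilon G+\Theta)$ lc; the support of $\Theta$ is entirely unconstrained, so $\mathcal{P}$ sits inside an infinite-dimensional linear system, not inside a finite-dimensional vector space of divisors with fixed support. Birkar's boundedness of complements says only that \emph{some} $N$-complement exists for a bounded $N$; it does not cut the space of all complements down to a polytope. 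Moreover, on any compactification of the relevant linear system (Chow variety, Hilbert scheme), the function $\Theta\mapsto v(\Theta)$ is merely lower semicontinuous, not linear, so even the compactness step of your argument does not go through, and the ``rational vertex'' conclusion has no content.

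The paper's proof (following \cite[Lemma 3.1]{LXZ-HRFG}) takes a completely different route that sidesteps this. Rather than maximizing $v(\cdot)$ over complements, it replaces the quasi-monomial valuation $v$ by finitely many \emph{divisorial} valuations $F_1,\dots,F_r$ chosen so that any complement having all the $F_i$ as lc places automatically has $v$ as an lc place. The hypothesis $v(\Gamma_m)\to A_{X,\Delta}(v)$ forces $A_{X,\Delta+\Gamma_m}(F_i)$ to be arbitrarily small; one then extracts the $F_i$ on a birational model $\tX\to X$, observes $(\tX,\tDelta+\varepsilon\tG+(1-\varepsilon_1)\sum F_i)$ has a $\bQ$-complement, and uses the ACC-type Lemma \ref{lem:ACC} to bump the coefficients of the $F_i$ from $1-\varepsilon_1$ up to $1$. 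This last step is precisely where the finite-dimensional, polytope-flavored reasoning you wanted actually lives: it is applied to the \emph{coefficients of the fixed divisors} $F_i$, not to the space of all complements. Your alternative endgame (limit to an $\bR$-complement, then rationalize) would, if made precise, essentially require rediscovering this divisorial approximation and the ACC argument.
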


\begin{proof}
This follows from the exact same proof of \cite[Lemma 3.1]{LXZ-HRFG}, so we only sketch the main steps. After rescaling we assume $A_{X,\Delta}(v)=1$. Let $\fab=\fab(v)$. By (3.1) in {\it loc. cit.}, for any sufficiently small $\varepsilon_1>0$, there exist some divisors $F_1,\dots,F_r$ over $X$ and some $\varepsilon_0>0$ such that 
\begin{equation} \label{e:A<<1}
    A_{X,\Delta+\fab^{1-\varepsilon_0}}(F_i)<\varepsilon_1
\end{equation}
for all $i=1,\dots,r$.  Moreover, the construction in \cite{LXZ-HRFG} ensures that if $\Gamma$ is a $\bQ$-complement that has all the $F_i$ as lc places, then $v$ is also an lc place. By \cite[Corollary 1.4.3]{BCHM}, there exists a proper birational morphism $p\colon \tX\to X$ that extracts exactly the divisors $F_i$. Hence to prove the lemma it is enough to find a $\bQ$-complement of $(\tX,\tDelta+\varepsilon \tG+\sum F_i)$, where $\tDelta$, $\tG$ denotes the strict transforms of $\Delta$, $G$. Now the assumptions on $\Gamma_m$ together with \eqref{e:A<<1} imply that $\tX$ is of Fano type over $C$ (by Lemma \ref{l:Fanotype}) and $(\tX,\tDelta+\varepsilon \tG+(1-\varepsilon_1)\sum F_i)$ has a $\bQ$-complement, thus we conclude by applying Lemma \ref{lem:ACC} as $0<\varepsilon_1\ll 1$.
\end{proof}

The following is essentially \cite[Lemma 3.2]{LXZ-HRFG}.
\begin{lem} \label{lem:ACC}
Let $(X,\Delta)$ be a pair and let $G$ be an effective $\bQ$-Cartier $\bQ$-divisor on $X$. Assume that $(X,\Delta)$ is projective and of Fano type over $C$. Then there exists some $\varepsilon>0$ depending only on the dimension of $X$ and the coefficients of $\Delta$ and $G$ such that: if $(X,\Delta+(1-\varepsilon)G)$ has a $\bQ$-complement over $C$, then the same is true for $(X,\Delta+G)$. 
\end{lem}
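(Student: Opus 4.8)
The plan is to deduce Lemma \ref{lem:ACC} from the ACC for log canonical thresholds together with the boundedness of complements, exactly as in \cite[Lemma 3.2]{LXZ-HRFG}. First I would reduce to the case where $(X,\Delta+(1-\varepsilon)G)$ actually admits a $\bQ$-complement $\Gamma$ over $C$, i.e. $(X,\Delta+(1-\varepsilon)G+\Gamma)$ is lc with $K_X+\Delta+(1-\varepsilon)G+\Gamma\sim_{\bQ,C}0$. Since $(X,\Delta)$ is of Fano type over $C$, by the theory of complements (Birkar's boundedness of complements, which holds in this relative setting by the MMP results of \cite{Lyu-Murayama} cited earlier) there is a fixed positive integer $N$, depending only on $\dim X$ and the coefficient set of $\Delta$ and $G$, such that $(X,\Delta+(1-\varepsilon)G)$ admits an $N$-complement, i.e. one can choose $\Gamma$ so that $N(K_X+\Delta+(1-\varepsilon)G+\Gamma)\sim 0$ over $C$.

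Next I would use the $N$-complement to produce the desired $\bQ$-complement for $(X,\Delta+G)$. Set $\Gamma' := (1-\varepsilon)G+\Gamma$, so that $(X,\Delta+\Gamma')$ is an $N$-complement of $(X,\Delta)$ over $C$ and in particular $K_X+\Delta+\Gamma'\sim_{\bQ,C}0$. The pair $(X,\Delta+G+(\Gamma'-G))=(X,\Delta+\Gamma'+\varepsilon G)$ then has $K_X+\Delta+\Gamma'+\varepsilon G\sim_{\bQ,C}\varepsilon G$, which is not numerically trivial, so this is not yet a complement; instead I would run a $(K_X+\Delta+\Gamma')$-trivial, $G$-MMP. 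More precisely, since $X$ is of Fano type over $C$, one can run the MMP on $K_X+\Delta+\Gamma'+\varepsilon G \equiv_C \varepsilon G$ over $C$, which terminates with a model on which the pushforward of $\Delta+\Gamma'+\varepsilon G$ is a $\bQ$-complement. Pulling this back (the MMP is $(K_X+\Delta+\Gamma')$-crepant since $K_X+\Delta+\Gamma'\sim_{\bQ,C}0$) gives a $\bQ$-divisor $\Gamma''\geq G$ with $(X,\Delta+\Gamma'')$ lc if the lc condition is preserved — and this is where the ACC enters.

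The key quantitative step, and the main obstacle, is controlling the lc condition uniformly in $\varepsilon$. The point is: $(X,\Delta+\Gamma')$ is an $N$-complement, so it belongs to a bounded family of lc pairs; in particular the set of pairs $(X,\Delta+\Gamma')$ arising this way has discrepancies that do not accumulate to $-1$ from the coefficient-set constraints, and by the ACC for lc thresholds \cite[Theorem 1.1 of Hacon–McKernan–Xu]{} (again available here via \cite{Lyu-Murayama}) there is a uniform $\varepsilon_0>0$ such that $(X,\Delta+\Gamma'+\varepsilon_0 G)$ is still lc whenever $(X,\Delta+\Gamma')$ is lc and $G$ has coefficients in the fixed finite set. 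Choosing $\varepsilon\le\varepsilon_0$ from the start, the pair $(X,\Delta+G+(\Gamma-(1-\varepsilon)\varepsilon_0^{-1}\,\text{-adjustment}))$ stays lc through the MMP because MMP steps preserve lc-ness for klt-over-Fano-type pairs, and the resulting $(X,\Delta+\Gamma'')$ with $\Gamma''\ge G$ is the sought $\bQ$-complement of $(X,\Delta+G)$. I expect the delicate bookkeeping to be: (i) verifying that boundedness of complements and ACC for lc thresholds are available in the relative-over-a-DVR / excellent setting (citing \cite{Lyu-Murayama,BCHM}), and (ii) arranging the MMP and the crepant pullback so that the coefficient set stays finite and fixed, so that the uniform $\varepsilon$ genuinely depends only on $\dim X$ and the coefficients of $\Delta$ and $G$ and not on the particular complement $\Gamma$ chosen.
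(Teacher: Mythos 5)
Your approach has genuine gaps and also does unnecessary work compared to the paper's argument.

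First, you invoke Birkar's boundedness of $N$-complements, which is a much deeper theorem than anything needed here, and whose availability over an excellent DVR base is unclear (\cite{Lyu-Murayama} extends \cite{BCHM}, not Birkar's complements). The paper's proof uses only \cite[Theorems 1.1 and 1.5]{HMX-ACC} (ACC for lct, and global ACC for numerically trivial lc pairs), and applies them only on varieties over a field (the generic fiber of a Mori fiber space), so no relative/excellent extension is needed.

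Second, you run the MMP on $K_X+\Delta+\Gamma'+\varepsilon G\equiv_C\varepsilon G$, i.e.\ on $G$ twisted by a trivial class. This is the wrong MMP: when it terminates, there is no reason the output should carry a $\bQ$-complement of $(X',\Delta'+G')$. The paper instead runs the $-(K_X+\Delta+G)$-MMP, which crucially (a) is crepant for the lc pair $(X,\Delta+(1-\varepsilon)G+\Gamma)$ because $K_X+\Delta+(1-\varepsilon)G+\Gamma\sim_{\bQ,C}0$ makes every contracted ray trivial for that pair, and (b) terminates with either $-(K_{X'}+\Delta'+G')$ nef (hence semiample by Fano type, giving the complement), or a Mori fiber space.

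Third — and this is the most substantive missing idea — you never rule out the Mori fiber space case. This is where the second half of the HMX property enters: if $g\colon X'\to S$ is a Mori fiber space with $K_{X'}+\Delta'+G'$ $g$-ample, since $K_{X'}+\Delta'+(1-\varepsilon)G'\sim_\bQ-\Gamma'\le 0$ and $\rho(X'/S)=1$, there exists $\varepsilon'\in(0,\varepsilon]$ with $K_{X'}+\Delta'+(1-\varepsilon')G'\sim_{\bQ,S}0$; restricting to the generic fiber of $g$ contradicts the chosen $\varepsilon$ (which was picked, via global ACC, so that any $D$ with $(1-\varepsilon)G\le D\le G$ and $K+\Delta+D\sim_\bQ 0$ must equal $G$). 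Without this dichotomy and the MFS contradiction, your MMP has no conclusion. Finally, your last paragraph contains unresolved placeholder notation (``$\varepsilon_0^{-1}$-adjustment'') and does not actually produce a $\bQ$-complement of $(X,\Delta+G)$ at the end.
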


\begin{proof}
By replacing $X$ by a small $\bQ$-factorial modification, we may assume that $X$ itself is $\bQ$-factorial. Let $n=\dim X$ and let $I\subseteq \bQ_+$ be the coefficient set of $\Delta$ and $G$. By \cite{HMX-ACC}*{Theorems 1.1 and 1.5}, we know that there exists some rational constant $\varepsilon>0$ depending only on $n,I$ which satisfies the following property: for any pair $(X,\Delta)$ of dimension at most $n$ and any $\bQ$-Cartier divisor $G$ on $X$ such that the coefficients of $\Delta$ and $G$ belong to $I$, we have $(X,\Delta+G)$ is lc as long as $(X,\Delta+(1-\varepsilon)G)$ is lc; if in addition $X$ is projective and there exists some divisor $D$ with $(1-\varepsilon)G\le D\le G$ such that $K_X+\Delta+D\sim_{\bQ} 0$, then $D=G$. 

Now let $(X,\Delta+(1-\varepsilon)G)$ be a pair with a $\bQ$-complement $\Gamma$. As $X$ is of Fano type, we may run the $-(K_X+\Delta+G)$-MMP $f\colon X\dashrightarrow X'$ over $C$. Let $\Delta'$, $G'$, and $\Gamma'$ denote the strict transforms of $\Delta$, $G$, and $\Gamma$. Note that by construction
\[
K_X+\Delta+G \le f^*(K_{X'}+\Delta'+G').
\]
Hence $(X,\Delta+G)$ has a $\bQ$-complement over $C$ if and only if $(X',\Delta'+G')$ has one. Since 
\[
K_X+\Delta+(1-\varepsilon)G+\Gamma\sim_{\bQ} 0,
\]
the MMP is crepant for the lc pair $(X,\Delta+(1-\varepsilon)G+\Gamma)$. Hence $(X',\Delta'+(1-\varepsilon)G'+\Gamma')$ is also lc. It follows that $(X',\Delta'+(1-\varepsilon)G')$ is lc. Thus by our choice of $\varepsilon$, $(X',\Delta'+G')$ is lc as well. Suppose that $X'$ is Mori fiber space $g\colon X'\to S$ over $C$. Then $K_{X'}+\Delta'+G'$ is $g$-ample. Since $K_{X'}+\Delta'+(1-\varepsilon)G'\sim_\bQ -\Gamma'\le 0$ and 
$\rho(X'/S)=1$, there exists some $\varepsilon'\in (0,\varepsilon]$ such that $K_{X'}+\Delta'+(1-\varepsilon')G'\sim_{\bQ,S} 0$. But if we restrict the pair to the generic fiber of $X'\to S$ we would get a contradiction to our choice of $\varepsilon$. 

Thus $X'$ is a minimal model and $-(K_{X'}+\Delta'+G')$ is nef. As $X'$ is also of Fano type, we see that $-(K_{X'}+\Delta'+G')$ is semiample over $C$, hence  $(X',\Delta'+G')$ has a $\bQ$-complement. By the previous discussion, this implies that $(X,\Delta+G)$ has a $\bQ$-complement as well.
\end{proof}

\begin{lem} \label{l:complement>=given divisor}
Under the assumptions of Theorem \ref{t:divisorial minimizer}, let $v$ be a quasi-monomial valuation that computes $\delta(X,\Delta)$. Then for any positive $t$ with $\Delta\ge tX_0$ and any effective divisor $G$ on $X$, there exists some $\bQ$-complement $\Gamma$ of $(X,\Delta-tX_0)$ such that $\Gamma\ge \varepsilon G$ for some $\varepsilon>0$ and $v$ is an lc place of $(X,\Delta-tX_0+\Gamma)$.
\end{lem}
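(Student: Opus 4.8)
\textbf{Proof proposal for Lemma \ref{l:complement>=given divisor}.}
The plan is to apply Lemma \ref{l:limit of complement} with the pair $(X,\Delta-tX_0)$, the given quasi-monomial valuation $v$, and the given divisor $G$; the only thing to verify is that one can produce a sequence of $\bQ$-complements $\Gamma_m$ of $(X,\Delta-tX_0)$ with $v(\Gamma_m)\to A_{X,\Delta-tX_0}(v)$ and $\Gamma_m\ge \varepsilon G$ for a fixed $\varepsilon>0$. This is exactly the situation that the level-$c$ basis type divisors from Section \ref{ss:revisit S-inv} were designed to handle, so the first step is to set $\delta:=\delta(X,\Delta)$, choose $c>0$ small enough that $\Delta\ge \tfrac{c\delta}{2}X_0$ (possible since $X_0\subseteq\Supp(\Delta)$), and record from Lemma \ref{l:center in X0} that $C_X(v)\subseteq X_0$, so that Lemma \ref{l:level-c basis type >= any G} applies to $v$ and $G$: for all large $m\in r\bN$ it yields a level-$c$ $m$-basis type divisor $\Gamma_m'$ with $v(\Gamma_m')=S_{c,m}(v)$ and $\Gamma_m'\ge \varepsilon G$.

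Next I would turn each $\Gamma_m'$ into an honest $\bQ$-complement. Since $\Gamma_m'\sim_{\bQ}-K_X-\Delta$ is effective, the divisor $\Gamma_m:=(1-o(1))\Gamma_m'$ scaled appropriately — more precisely I want $\Delta-tX_0+\delta_{c,m}'\Gamma_m'\sim_{\bQ,C}0$ for the correct coefficient — is a candidate; the point is that $\delta_{c,m}(L)\to\delta$ by Lemma \ref{l:delta_c,m} and, because $v$ computes $\delta(L)$, it also computes the infimum $\inf_v\frac{A_{X,\Delta}(v)+\frac{c\delta}{2}v(\fm)}{S_{c,m}(v)+o(1)}$ in that lemma. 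Writing $\Delta':=\Delta-tX_0$ and $\mu_m:=\lct(X,\Delta'-\tfrac{c\delta}{2}X_0;\Gamma_m')$ (so $\mu_m\ge\delta_{c,m}(L)$, and in fact equality can be arranged by the compatibility of $\Gamma_m'$ with $v$ together with $A_{X,\Delta}(v)<1$), the pair $(X,\Delta'-\tfrac{c\delta}{2}X_0+\mu_m\Gamma_m')$ is lc; adjusting by the $\tfrac{c\delta}{2}X_0$ term and rescaling so the sum is $\bQ$-linearly trivial over $C$ (using $-K_X-\Delta\sim_\bQ\Gamma_m'$ and that $X_0\sim_{\bQ,C}0$) produces a genuine $\bQ$-complement $\Gamma_m$ of $(X,\Delta-tX_0)$ with $\Gamma_m\ge\varepsilon' G$ for a uniform $\varepsilon'>0$. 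The key numerical input is then
\[
v(\Gamma_m)\ge \mu_m\, v(\Gamma_m')+\big(\text{contribution of }X_0\text{ and rescaling}\big)=\delta_{c,m}(L)\,S_{c,m}(v)+o(1)\longrightarrow \delta\cdot S\Big(v\,;\,\Delta-tX_0\Big),
\]
and one checks via Lemma \ref{l:delta_c,m} and Lemma \ref{lem:S_c,m} that the right-hand limit equals $A_{X,\Delta-tX_0}(v)$; combined with the trivial inequality $v(\Gamma_m)\le A_{X,\Delta-tX_0}(v)$ (valid since $(X,\Delta-tX_0+\Gamma_m)$ is lc) this forces $v(\Gamma_m)\to A_{X,\Delta-tX_0}(v)$.

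With the sequence $\{\Gamma_m\}$ in hand, Lemma \ref{l:limit of complement} — applied to the log Fano pair $(X,\Delta-tX_0)\to C$ (klt since $t>0$ and $tX_0\le\Delta$, with $-K_X-\Delta+tX_0$ ample over $C$ as $X_0\sim_{\bQ,C}0$), the quasi-monomial $v$, and the divisor $G$ — immediately yields a $\bQ$-complement $\Gamma$ of $(X,\Delta-tX_0)$ with $\Gamma\ge\varepsilon G$ (for the same $\varepsilon=\varepsilon'$) such that $v$ is an lc place of $(X,\Delta-tX_0+\Gamma)$, which is the assertion. The main obstacle is the bookkeeping in the middle step: tracking the various rescalings needed to pass from the lct-normalized divisor $\mu_m\Gamma_m'$ (which only sees $(X,\Delta-\tfrac{c\delta}{2}X_0)$) to an actual $\bQ$-complement of $(X,\Delta-tX_0)$ while (i) preserving a uniform lower bound $\varepsilon G$, and (ii) controlling $v(\Gamma_m)$ sharply enough that the limit comes out to exactly $A_{X,\Delta-tX_0}(v)$ rather than something smaller — here the fact, from Lemma \ref{l:delta_c,m}, that the minimizer $v$ of $\delta(L)$ simultaneously minimizes the level-$c$ functional is what makes the estimate tight.
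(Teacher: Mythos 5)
Your overall strategy is the same as the paper's: get level-$c$ basis type divisors from Lemma \ref{l:level-c basis type >= any G} with $v$-value $S_{c,m}(v)$ and bounded-from-below $G$-multiplicity, promote them to complements via Bertini, verify convergence of the $v$-values, and close with Lemma \ref{l:limit of complement}. But the middle step that you yourself flag as "bookkeeping" contains a genuine gap, not just bookkeeping.

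The level-$c$ machinery is calibrated to the pair $(X,\Delta-\tfrac{c\delta}{2}X_0)$, and what it hands you is
\[
\delta_{c,m}S_{c,m}(v)\ \longrightarrow\ \delta\Big(S(v)+\tfrac{c}{2}v(\fm)\Big)=A_{X,\Delta}(v)+\tfrac{c\delta}{2}v(\fm)=A_{X,\Delta-\frac{c\delta}{2}X_0}(v),
\]
using Lemmas \ref{lem:S_c,m}, \ref{l:delta_c,m}, and the fact that the minimizer $v$ of $\delta(L)$ minimizes the level-$c$ functional. This is \emph{not} $A_{X,\Delta-tX_0}(v)$ unless $t=\tfrac{c\delta}{2}$. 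You wave at a "contribution of $X_0$ and rescaling" that you assert is $o(1)$, but the discrepancy $A_{X,\Delta-tX_0}(v)-A_{X,\Delta-\frac{c\delta}{2}X_0}(v)=(t-\tfrac{c\delta}{2})v(X_0)$ is a fixed positive number (recall $C_X(v)\subseteq X_0$, so typically $v(X_0)>0$). Your displayed formula would make the squeeze $v(\Gamma_m)\le A_{X,\Delta-tX_0}(v)$ fail to be sharp, so you cannot conclude $v(\Gamma_m)\to A_{X,\Delta-tX_0}(v)$.

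The clean fix — and the paper's route — is to not touch $(X,\Delta-tX_0)$ until the very last sentence: pick $c$ small enough that $t>\tfrac{c\delta}{2}$ (you only asked for $\Delta\ge\tfrac{c\delta}{2}X_0$, which is not enough for the final step), set $\Delta':=\Delta-\tfrac{c\delta}{2}X_0$, observe $\Delta'\ge\Delta-tX_0$, build $\bQ$-complements $\Gamma'_m$ of $(X,\Delta')$ from $\delta_{c,m}\Gamma_m'$ via Bertini (the required ampleness of $-(K_X+\Delta'+\delta_{c,m}\Gamma_m')$ follows from $\delta<1$, which you should state), and run Lemma \ref{l:limit of complement} with the pair $(X,\Delta')$ to produce $\Gamma'\ge\varepsilon G$ with $v$ an lc place of $(X,\Delta'+\Gamma')$. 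Only then set $\Gamma:=\Gamma'+(t-\tfrac{c\delta}{2})X_0$: since $K_X+(\Delta-tX_0)+\Gamma=K_X+\Delta'+\Gamma'$, the pair is unchanged, $\Gamma$ is effective precisely because $t>\tfrac{c\delta}{2}$, and it is a $\bQ$-complement of $(X,\Delta-tX_0)$ with $\Gamma\ge\Gamma'\ge\varepsilon G$. Your alternative of feeding the shift into the sequence before taking the limit can be made to work — shift each $\Gamma'_m$ by $(t-\tfrac{c\delta}{2})X_0$ and then both sides of the squeeze move by the same constant — but this must be made explicit; the present write-up, with its $o(1)$ error term, does not carry out the computation correctly. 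Separately, your attempt to pin down "the correct coefficient" so that $\Delta-tX_0+\delta'_{c,m}\Gamma'_m\sim_{\bQ,C}0$ is misplaced: $\delta_{c,m}\Gamma_m'$ is deliberately smaller than a complement, and the gap is filled by a general (Bertini) divisor, which is what makes the $v$-value squeeze work. Finally, the paper reduces to $G^v=0$ at the outset; this is a harmless but worthwhile normalization that you omit.
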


\begin{proof}
We will see that this is a formal consequence of Lemmas \ref{l:level-c basis type >= any G} and \ref{l:limit of complement}. By Lemma \ref{l:center in X0}, the center of $v$ is contained in $X_0$. We may assume that $G^v=0$ as it will be clear from the proof that the complement we obtain automatically contains some positive multiple of $X_0$. 

Choose  $c>0$ such that $t>\frac{c\delta}{2}$, where $\delta=\delta(X,\Delta)$. Let 
\[
\Delta':=\Delta-\frac{c\delta}{2}X_0\ge \Delta-tX_0\,.
\]
By Lemma \ref{l:level-c basis type >= any G}, there exist some constant $\varepsilon_0>0$ and some level-$c$ $m$-basis type divisors $\Gamma_m\sim_\bQ L$ such that $v(\Gamma_m)=S_{c,m}(v)$ and $\Gamma_m\ge \varepsilon_0 G$ for each  $m\in r \bN$ with $m\gg0$. Let $\delta_{c,m}=\delta_{c,m}(L)$ be as in \eqref{defn-delta cm}. Thus $(X,\Delta'+\delta_{c,m} \Gamma_m)$ is log canonical. By Lemma \ref{l:delta_c,m} and our assumption that $\delta(X,\Delta)<1$, we also see that $-(K_X+\Delta'+\delta_{c,m} \Gamma_m)$ is ample over $C$. Thus by Bertini's theorem we get a $\bQ$-complement $\Gamma'_m$ of $(X,\Delta')$  such that $\Gamma'_m\ge \delta_{c,m}\Gamma_m$. 

Since $\lim_{m\to\infty}\delta_{c,m}=\delta(L)> 0$ by Lemma \ref{l:delta_c,m}, 
\[
\Gamma'_m\ge \delta_{c,m}\Gamma_m\ge \varepsilon G
\]
 for some constant $\varepsilon>0$ that is independent of $m$.
Moreover, we deduce that 
\begin{eqnarray*}
\lim_{m\to\infty} v(\Gamma'_m)&=&\lim_{m\to\infty} v(\delta_{c,m}\Gamma_m) \\
&= &\lim_{m\to\infty}\delta_{c,m}S_{c,m}(v)  \ \ \ \ \mbox{(by Lemmas \ref{lem:S_c,m})}\\
&= &\delta(L)(S(v)-\frac{c}2 v(X_0) )= A_{X,\Delta'}(v) \, .
\end{eqnarray*}
 Thus we can apply Lemma \ref{l:limit of complement} to get a $\bQ$-complement $\Gamma'\ge \varepsilon G$ of $(X,\Delta')$ that has $v$ as an lc place. Now $\Gamma=\Gamma'+(t-\frac{c\delta}{2})X_0$ is a $\bQ$-complement of $(X,\Delta-tX_0)$ and all the conditions we want are satisfied.
\end{proof}

\begin{proof}[Proof of Proposition \ref{p:special complement}]
This is proved as in \cite[Corollary 3.4]{LXZ-HRFG}. Let $\pi\colon (Y,E)\to (X,\Delta)$ be a log smooth model such that $v\in \QM(Y,E)$, and let $G$ be a general ample effective divisor on $Y$. By Lemma \ref{l:complement>=given divisor}, $v$ is an lc place of some $\bQ$-complement $\Gamma$ of $(X,\Delta-tX_0)$ whose support contains $\pi_* G$. In particular $G\subset \Supp(\pi_*^{-1} \Gamma)$, hence $\Gamma$ is necessarily special.
\end{proof}

\begin{cor}\label{cor-minimizer lc place}
Under the assumptions of Theorem \ref{t:divisorial minimizer}, any quasi-monomial valuation that computes $\delta(X,\Delta)$ is an lc place of complement. 
\end{cor}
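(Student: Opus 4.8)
The plan is to establish the (a priori stronger) assertion that $v$ is an lc place of a $\bQ$-complement of $(X,\Delta)$ itself, by realizing $v$ as a limit of lc places of complements coming from relative basis type divisors. This is the relative analogue of the argument in the absolute case from \cite{LXZ-HRFG}; the essential new ingredient, the relative generic-limit-of-complements statement of Lemma \ref{l:limit of complement}, is already in hand, so the remaining work is mostly bookkeeping.

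In detail, write $\delta:=\delta(X,\Delta)$ and $\delta_m:=\delta_m(X,\Delta)$. By Proposition \ref{p:delta=inf A/S} we have $\delta_{mr}\to\delta$, and the hypothesis gives $\delta<1$; by Lemma \ref{lem-delta positive} together with Proposition \ref{p:reducedSinvproperties} we have $S_{mr}(v)\to S(v)$, with $S(v)>0$ by Lemma \ref{l:S>0} (since a valuation computing the finite quantity $\delta$ cannot be a nonnegative multiple of $\ord_{X_0}$). For each $m\in r\bN$ with $m\gg 0$, I would choose a relative $m$-basis type divisor $D_m\sim_{\bQ,C}-K_X-\Delta$ compatible with $v$, so that $v(D_m)=S_m(v)$ by Lemma \ref{l:S(v)basistype}. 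By definition of $\delta_m$, the pair $(X,\Delta+\delta_m D_m)$ is lc, and
\[
-(K_X+\Delta+\delta_m D_m)\sim_{\bQ,C}(1-\delta_m)(-K_X-\Delta)
\]
is ample over $C$ once $\delta_m<1$, i.e. for $m\gg0$. Hence, exactly as in the Bertini step of Lemma \ref{l:complement>=given divisor}, for $m\gg0$ we may enlarge $\delta_m D_m$ to a $\bQ$-complement $\Gamma_m$ of $(X,\Delta)$ with $\Gamma_m\ge\delta_m D_m$. Since $(X,\Delta+\Gamma_m)$ is lc we have $v(\Gamma_m)\le A_{X,\Delta}(v)$, while $v(\Gamma_m)\ge\delta_m v(D_m)=\delta_m S_m(v)$; letting $m\to\infty$ and using that $v$ computes $\delta$,
\[
\lim_{m\to\infty}v(\Gamma_m)=\delta\, S(v)=A_{X,\Delta}(v).
\]
Applying Lemma \ref{l:limit of complement} with the trivial divisor $G=0$ then yields a $\bQ$-complement $\Gamma$ of $(X,\Delta)$ having $v$ as an lc place, as desired.

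The main obstacle does not really lie in this corollary but in what it invokes: the substantive content is the relative generic-limit-of-complements argument of Lemma \ref{l:limit of complement}, which itself rests on Lemma \ref{l:Fanotype} and Lemma \ref{lem:ACC}. Within the present proof, the one step that needs genuine care is the passage from the \emph{partial} complement $\Delta+\delta_m D_m$ to an honest $\bQ$-complement; this is where the hypothesis $\delta(X,\Delta)<1$ enters, through the ampleness of $-(K_X+\Delta+\delta_m D_m)$ over $C$. Finally, if one is content with the weaker reading of the statement, namely that $v$ is an lc place of a $\bQ$-complement of $(X,\Delta-tX_0)$ for any $t>0$ with $tX_0\le\Delta$, then this is immediate from Proposition \ref{p:special complement}.
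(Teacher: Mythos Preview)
Your argument is correct. Both your proof and the paper's hinge on Lemma~\ref{l:limit of complement}; the paper's one-line proof gestures at Proposition~\ref{p:special complement} (whose proof, via Lemma~\ref{l:complement>=given divisor}, builds the approximating sequence of complements from level-$c$ basis type divisors for $(X,\Delta-tX_0)$), whereas you feed Lemma~\ref{l:limit of complement} directly with $\bQ$-complements of $(X,\Delta)$ constructed from ordinary relative $m$-basis type divisors. Your route is slightly more elementary: the level-$c$ apparatus and the shift by $tX_0$ were introduced in Section~\ref{ss:revisit S-inv} precisely to force the complement to contain a prescribed horizontal divisor $G$, but with $G=0$ none of that is needed, and the hypothesis $\delta(X,\Delta)<1$ already makes $-(K_X+\Delta+\delta_m D_m)$ ample over $C$ so that the Bertini step goes through for $(X,\Delta)$ itself.
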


\begin{proof}
This follows  from combining Lemma \ref{l:limit of complement} and  Proposition \ref{p:special complement}.
\end{proof}

\subsection{Finite generation}

Let $f\colon (X,\Delta) \to C$ be a log Fano pair over the spectrum of a DVR, $L=-(K_X+\Delta)$, and $r>0$ a positive integer such that $rL$ is a Cartier divisor. For any valuation $v\in \Val_X$, we denote by 
\[
\gr_v \cR:= \oplus_{m\in r\bN}\oplus_{\lambda\in \bR} \cF_v^\lambda \cR_m / \cF_v^{>\lambda}\cR_m
\]
the associated graded ring of the induced filtration $\cF_v$ on the relative section ring $\cR$. The next step is to study the finite generation of this graded ring (as a $\bk$-algebra). 

\begin{prop} \label{p:finite generation}
Assume that $v$ is a monomial lc place of some special $\bQ$-complement $\Gamma$ of $(X,\Delta)$ with respect to a log resolution $(X',E')\to (X,\Delta)$, and its center is contained in $X_0$. Then $\gr_v \cR$ is finitely generated.
\end{prop}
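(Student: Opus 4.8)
The plan is to reduce finite generation of $\gr_v\cR$ to the higher rank finite generation theorem of \cite{XZ-SDC} via a cone construction, following the strategy of \cite{LXZ-HRFG} in the absolute case. The first step is to set up the geometry: since $v$ is a monomial lc place of a special $\bQ$-complement $\Gamma$ with respect to $(X',E')$, we may write $v = v_{\mathbf{a}}$ as a monomial combination of the components $E_1,\dots,E_k$ of the stratum of $E'$ through $c_{X'}(v)$, with all $E_i$ being lc places of $(X,\Delta+\Gamma)$. Because $\Gamma$ is special, $\pi_*^{-1}\Gamma$ contains an ample effective $\bQ$-divisor $G$ whose support avoids all strata of $(X',E')$; this is exactly the positivity input needed to make the cone argument work. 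The centers being contained in $X_0$ is used to guarantee that the relevant section rings are the relative ones.

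Next I would pass to the affine cone. Fix $r>0$ with $-r(K_X+\Delta)$ Cartier and let $Z := \Spec\bigoplus_{m\in r\bN}\cR_m$ be the relative orbifold cone over $(X,\Delta)$ polarized by $L=-(K_X+\Delta)$, with its vertex $o$. The divisorial valuations $E_1,\dots,E_k$ over $X$ extend to divisorial valuations over $Z$ (the ``cone valuations''), and $v$ extends to a quasi-monomial valuation $\hat v$ on $Z$ whose graded ring $\gr_{\hat v}\cO_Z$ is canonically identified with $\gr_v\cR$ (up to the extra cone grading). The key point is that $\hat v$ is an lc place of an $\bR$-complement of a klt Fano cone $(Z,\Delta_Z;\xi_0)$ — this is where the special complement $\Gamma$ enters: lifting $\Gamma$ and $G$ to the cone produces a complement of $(Z,\Delta_Z)$ whose lc places include $E_1,\dots,E_k$, and the ampleness of $G$ together with the avoidance of strata ensures the lifted complement is again ``special'' in the sense required to apply \cite[Theorem]{XZ-SDC}. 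Then \cite{XZ-SDC} (or the relevant higher rank finite generation statement there, which subsumes \cite{LXZ-HRFG}) gives that $\gr_{\hat v}\cO_Z$ is finitely generated, hence so is $\gr_v\cR$.

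I expect the main obstacle to be the verification that the lifted complement on the cone $Z$ genuinely satisfies the hypotheses of the finite generation theorem in \cite{XZ-SDC} — in particular that $\hat v$ (equivalently, the divisors $E_i$) are lc places of a special complement \emph{on the cone}, not merely on $X$. One has to check that the cone construction preserves the key properties: klt-ness of $(Z,\Delta_Z)$ away from the vertex, the fact that $\hat v$ has center contained in the fiber over $0\in C$, and that the ample divisor $G$ lifts to something still providing the needed positivity so that the relevant perturbation/MMP arguments in \cite{XZ-SDC} apply in the relative (over $C$) setting. A secondary technical point is bookkeeping with the relative section ring $\cR$ over the DVR rather than a section ring over a field: one must ensure the surjectivity hypothesis $\cR_m\otimes k(0)\xrightarrow{\sim} R_m$ from Setup \ref{sss:setup} and the structure of $\cR_m$ as a free $\cO_C$-module are compatible with forming $\gr_v$, so that finite generation over $\bk$ of $\gr_v\cR$ is the correct statement and follows from the cone argument. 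Once these identifications are in place, the conclusion is a direct citation of \cite{XZ-SDC}.
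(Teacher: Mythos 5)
Your plan is essentially the paper's proof: pass to the relative affine cone $Y=\Spec_C(\cR)$, lift $v$ to a $\bG_m$-invariant quasi-monomial valuation (explicitly $v_1(f)=m+v(f)$ for $f\in\cR_m$, the combination of $v$ with the exceptional divisor of the vertex blow-up, so that the center becomes the vertex over $0$), note $\gr_{v_1}\cO_Y\cong\gr_v\cR$, and then verify that the pullback of $\Gamma$ is again a special $\bQ$-complement of the cone pair so that \cite[Theorem~1.4]{XZ-SDC} applies. The one obstacle you flag — checking that the lifted $G$ retains the required ampleness and avoidance of strata on the cone — is exactly the nontrivial verification the paper carries out, via the fibered product $Y'=\tY\times_X X'$ and the observation that an ample divisor $\Gamma_0$ on $X'$ pulls back to something ample over the (affine) base $Y$; so your identification of where the work lies is accurate, though the step itself is left unresolved in the proposal.
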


\begin{proof}
The plan is to reduce this to the finite generation results of \cite{XZ-SDC} through a cone construction. Consider the relative cone $(Y,D)$ of the family $(X,\Delta)\to C$ with respect to the polarization $L$. In particular, $Y=\Spec_C(\cR)$, it has a fiberwise $\bG_m$-action over $C$, and $(Y,D)$ is klt. The cone vertices form a section $\sigma\colon C\to Y$ of the family $Y\to C$. Let $\pi\colon \tY:=\Bl_{\sigma(C)}Y\to Y$ be the blowup of $Y$ at $\sigma(C)$ and let $E$ be the exceptional divisor. Then $\tY$ is also the total space of the line bundle $\cO_X(-rL)$ over $X$ and $D=\pi_*p^*\Delta$, where $p\colon \tY\to X$ denotes the projection.

Define a $\bG_m$-invariant valuation $v_1$ on $Y$ by $v_1(f):=m+v(f)$ for any $0\neq f\in \cR_m$. Geometrically, $v_1$ is the quasi-monomial combination of $\ord_E$ and $v$. Since $C_X(v)\subseteq X_0$ by assumption, and $v_1(\cR_m)>0$ for all $m>0$, we see that $v_1$ is centered at the closed point $\sigma(0)\in Y$. Clearly, the associated graded ring $\gr_{v_1}\cR$ is isomorphic to $\gr_v \cR$. Thus, by \cite[Theorem 1.4]{XZ-SDC}, it suffices to show that $v_1$ is a monomial lc place of some special $\bQ$-complement of $(Y,D)$.

We claim that $G:=\pi_*p^*\Gamma$ is one such special $\bQ$-complement. Indeed, since $(Y,D+G)$ is the relative cone over $(X,\Delta+\Gamma)$ and $\Gamma$ is a $\bQ$-complement of $(X,\Delta)$, we see that $G$ is a $\bQ$-complement of $(Y,D)$ and $E$ is an lc place of $(Y,D+G)$. Moreover, as $v$ is an lc place of $(X,\Delta+\Gamma)$ by assumption, the quasi-monomial combination $v_1$ is also an lc place of $(Y,D+G)$. It remains to show that the $\bQ$-complement $G$ is special. 

Let $Y'=\tY\times_X X'$ and $q\colon Y'\to \tY$ the induced map. Denote the induced map $Y'\to X'$ by $p'$. Note that $Y'$ is the total space of the line bundle $g^*\cO_X(-rL)$ and $q^*E$ is the zero section.  
\[
\begin{tikzcd}
    Y' \arrow[r,"q"] \arrow[d,"p'"'] & \tY \arrow[r,"\pi"] \arrow[d,"p"] & Y \\
    X' \arrow[r, "g"] & X & \\
\end{tikzcd}
\]
Let $G'$ be the strict transform of $G$ on $Y'$ and $\Gamma'$ the strict transform of $\Gamma$ on $X'$. Then $G'=p'^*\Gamma'$. Since $\Gamma$ is a special $\bQ$-complement, we have $\Gamma'\ge \Gamma_0$ for some effective ample divisor $\Gamma_0$ whose support does not contain any stratum of $(X',E')$. It follows that $G'\ge p'^*\Gamma_0$ and the support of $p^*\Gamma_0$ does not contain any stratum of $(Y',p^*E'+q^*E)$. By construction, we have $v_1\in \QM(Y',p'^*E'+q^*E)$. Thus to finish the proof it suffices to show that $p^*\Gamma_0$ is ample on $Y'$. As $Y$ is affine, it is enough to show that $p'^*\Gamma_0$ is ample over $Y$. Now away from $\sigma(C)\subseteq Y$ the map $\pi$ is an isomorphism, hence the fibers of $Y'\to Y$ projects isomorphically to the fibers of $X'\to X$, while the preimage of $\sigma(C)$ in $Y'$ is the zero section $q^*E$, which projects isomorphically to $X'$. Thus as $\Gamma_0$ is ample on $X'$ we see that $p'^*\Gamma_0$ is ample on every fiber of $Y'\to Y$. It follows that $p'^*\Gamma_0$ is ample as desired.
\end{proof}

\begin{cor} \label{c:delta minimizer f.g.}
Assume we are in the setting of Theorem \ref{t:divisorial minimizer} and let $v$ be a quasi-monomial valuation that computes $\delta(X,\Delta)$. Then $\gr_v \cR$ is finitely generated.
\end{cor}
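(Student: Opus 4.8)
The plan is to combine Proposition \ref{p:special complement} with Proposition \ref{p:finite generation}, the only subtlety being the harmless change of polarization from $-(K_X+\Delta)$ to $-(K_X+\Delta-tX_0)$. First I would fix a rational number $t>0$ with $rt\in\bZ$ and $tX_0\le\Delta$, which is possible since $X_0\subseteq\Supp(\Delta)$ and $(X,\Delta)$ is klt, so the coefficient of $X_0$ in $\Delta$ lies in $(0,1)$. Applying Proposition \ref{p:special complement} to the quasi-monomial valuation $v$, I obtain a log smooth model $\pi\colon(Y,E)\to(X,\Delta)$ with $v\in\QM(Y,E)$ together with a special $\bQ$-complement $\Gamma$ of $(X,\Delta-tX_0)$ with respect to $(Y,E)$ of which $v$ is a monomial lc place. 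Since $v$ computes $\delta(X,\Delta)$ and $\delta(X,\Delta)<\delta(X_\eta,\Delta_\eta)$, Lemma \ref{l:center in X0} gives $C_X(v)\subseteq X_0$.

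Next I would observe that $(X,\Delta-tX_0)\to C$ is again a relative log Fano pair over $C$ with the same fibers as $(X,\Delta)\to C$ and with $X_0$ integral: it is klt because $\Delta-tX_0\le\Delta$; we have $-(K_X+\Delta-tX_0)\sim_{\bQ,C}-(K_X+\Delta)$, which is ample over $C$, because $X_0\sim_{\bQ,C}0$; and $-r(K_X+\Delta-tX_0)$ is Cartier since $X_0$ is Cartier and $rt\in\bZ$. Because $\Supp(\Delta-tX_0)=\Supp(\Delta)$, the model $(Y,E)$ is also a log smooth model of $(X,\Delta-tX_0)$. Hence Proposition \ref{p:finite generation} applies verbatim to $(X,\Delta-tX_0)$ in place of $(X,\Delta)$ and shows that $\gr_v\cR'$ is finitely generated, where $\cR':=\bigoplus_{m\in r\bN}H^0(X,-m(K_X+\Delta-tX_0))$.

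Finally I would transfer finite generation from $\cR'$ back to $\cR$. Since $X_0$ is the pullback of the closed point of $C$ and $\cO_C([0])\cong\cO_C$, we have $\cO_X(X_0)\cong\cO_X$; writing $\pi_0$ for a uniformizer of $\cO_{C,0}$, multiplication by $f^\sharp(\pi_0)^{mt}$ gives isomorphisms $H^0(X,-m(K_X+\Delta)+mtX_0)\xrightarrow{\ \sim\ }H^0(X,-m(K_X+\Delta))$ that are compatible with the ring structure, hence a graded $\bk$-algebra isomorphism $\cR'\cong\cR$. As $v(f^\sharp(\pi_0)^{mt})=mt\cdot v(\fm)$, this isomorphism carries $\cF_v^\lambda\cR'_m$ onto $\cF_v^{\lambda+mt\,v(\fm)}\cR_m$, so it identifies $\gr_v\cR'$ with $\gr_v\cR$ up to a relabelling of the real grading that is affine-linear in the $r\bN$-degree. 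Finite generation as a $\bk$-algebra is unaffected by such a relabelling, so $\gr_v\cR$ is finitely generated, which is what we want. I do not expect a genuine obstacle in this corollary: the substantive work is entirely contained in Propositions \ref{p:special complement} and \ref{p:finite generation}, and the only point requiring a little care is the change of polarization described above.
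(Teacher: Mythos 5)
Your proof is correct and takes essentially the same approach as the paper: choose a small rational $t>0$ with $tX_0\le\Delta$ and apply Propositions \ref{p:special complement} and \ref{p:finite generation} to the relative log Fano pair $(X,\Delta-tX_0)$. The paper leaves the transfer of finite generation between the section rings of $-(K_X+\Delta)$ and $-(K_X+\Delta-tX_0)$ implicit, justified by the displayed relation $K_X+\Delta\sim_{\bQ,C}K_X+\Delta-tX_0$, whereas you spell it out with the uniformizer-multiplication isomorphism and the resulting affine-linear shift of the $\bR$-grading — a welcome bit of extra care that does not change the substance of the argument.
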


\begin{proof}
Choose some $0<t\ll 1$ such that $\Delta\ge t X_0$. Since  
\[
K_X+\Delta\sim_{\bQ,C} K_X+\Delta-tX_0.
\]
It follows that $(X,\Delta-tX_0)$ is also log Fano over $C$. By Proposition \ref{p:special complement}, the valuation $v$ is a monomial lc place of some special $\bQ$-complement of $(X,\Delta-tX_0)$. 
Thus, by Proposition \ref{p:finite generation}, the associated graded ring $\gr_v \cR$ is finitely generated.
\end{proof}

\subsection{Existence of a divisorial minimizer} We are now ready to prove Theorem \ref{t:divisorial minimizer}.

\begin{proof}[Proof of Theorem \ref{t:divisorial minimizer}]
We will prove the result in increasing levels of generality over three steps as we would like to apply results from Section \ref{ss-minimizer} where we assume the residue field of $\cO_{C,0}$ is uncountable and algebraically closed.
\medskip

 \noindent \emph{Step 1: We  will prove the result when the residue field of $\cO_{C,0}$ is uncountable and algebraically closed.}

The strategy is similar in spirit to the proof of \cite[Theorem 5.1]{LXZ-HRFG}. By Proposition \ref{p:deltafieldext} and our assumption on the residue field, Theorem \ref{t:delta computed by qm valuation} implies that there exists a quasi-monomial valuation $w$  that computes $\delta(X,\Delta)$ and has $C_X(w)\subsetneq X_0$. By Corollary \ref{c:delta minimizer f.g.}, the associated graded ring $\gr_w \cR$ is finitely generated. Since $w$ is quasi-monomial, we can find a log smooth model $(Y,E)$ of $(X,\Delta)$ such that $w\in \QM(Y,E)$. Let $\Sigma\subseteq \QM(Y,E)$ be the smallest rational PL subspace containing $w$. We claim that the $S$-invariant function $v\mapsto S(v)$ on $\Sigma$ is linear in a neighborhood of $w$. To see this, first notice that $\gr_v R \cong \gr_v \cR/ (\fm\cdot\gr_v \cR)$.
Hence
\begin{equation} \label{e:formula for S_m}
    mN_m\cdot S_m(v) = \sum_{\lambda \in \bR} \lambda\cdot \dim \gr_v^\lambda R_m = \sum_{\lambda\in \bR} \lambda\cdot (\dim \gr_v^\lambda \cR_m - \dim \gr_v^{\lambda-v(\fm)} \cR_m ).
\end{equation}
Fix a finite set $s_1,\dots,s_r$ of  homogeneous generators of $\gr_w \cR$. From the way quasi-monomial valuations are defined, we see that there exists a small neighborhood $U$ of $w$ in $\Sigma$, such that the function $v\mapsto v(s_i)$ is linear for every $1\le i\le r$. By \cite[Lemma 2.10]{LX-higher-rank}, after possibly shrinking $U$, for all divisorial valuation $v\in U$ there is an isomorphism $\varphi_{w,v}\colon \gr_w \cR\cong \gr_v \cR$ sending the image of $s_i$ in $\gr_w \cR$ to the corresponding one in $\gr_v \cR$. In particular, for any homogeneous element $s\in \gr_w \cR$, the function $v\mapsto v(\varphi_{w,v}(s))$ on $U(\bQ)$ is linear. Combined with \eqref{e:formula for S_m}, this implies that $v\mapsto S_m(v)$ is linear on $U(\bQ)$. Hence it extends to a linear function on $U$ and by taking the limit we deduce that the $S$-invariant function $v\mapsto S(v)$ is linear on $U$ as well, which proves the claim.

Note that the log discrepancy function $v\mapsto A_{X,\Delta}(v)$ is also linear in $v$ after possibly shrinking the neighborhood $U$. Since $v$ computes $\delta(X,\Delta)$, this implies that
\[
A_{X,\Delta}(v)-\delta(X,\Delta)S(v)\geq 0
\]
for all $v\in U$, with equality when $v=w$. Since the left hand side is linear on $U$, equality holds on $U$. It follows that any divisorial valuation $v\in U$ also computes $\delta(X,\Delta)$.

\medskip

\noindent \emph{Step 2: We will prove the result in the case when  $C= \Spec (\bk[[t]])$, where $\bk$ is an algebraically closed field of characteristic $0$.}

In this case, we will deduce the result from Step 1 using a spreading out argument.
Fix a field extension $\bk\subset \bk' $ such that $\bk'$ is algebraically closed and uncountable. 
Let 
\[
A:= \bk[[t]]\quad \text{ and } \quad A':= \bk'[[t]].
\]
Thus $C= \Spec(A)$. Let $C' := \Spec(A')$. 
Write  $0'$ and $\eta'$ for the closed  and generic points of $C'$.
Consider the relative log Fano pair
\[
 (X',\Delta'):= (X,\Delta)\times_C C' \to C'
\]
with morphism $\rho: X'\to X$.
By Propositions \ref{p:deltafieldext} and \ref{p:deltaetalebasechange}, our assumptions on $(X,\Delta)\to C$ imply that
\[
\delta(X',\Delta') \leq \min \{1, \delta(X'_{\eta'},\Delta'_{\eta'})\}
.\]
Therefore Step 1 implies that there exists a divisor $E'$ over $X'$ such that $\ord_{E'}$ computes $\delta(X',\Delta')$. 
Furthermore, by Corollary \ref{cor-minimizer lc place}, $\ord_{E'}$ is the lc place of a complement of $(X',\Delta')$.
By \cite[Corollary 1.68]{Xu-Kbook}, there exists a proper birational morphism 
\[g':Y' \to X'
\]
such that $Y'$ is normal, $E'$ appears as a prime divisor on $Y'$, and $-E'$ is $g$-ample. 
Therefore, there exists  $\ell>0$ such that $g'$ is the blowup of the ideal
\[
\fb':= g_* \cO_{Y'}(-\ell E) \subset \cO_X
.\]
Note that $\fb'$ contains the ideal $\cI_{X'_0}^{\ell} = (t^\ell) \subset  \cO_{X'} $, since $\ord_{E'}(t) \geq 1$. 

Now consider the Hilbert scheme $H$ that  parameterizes closed subschemes of  $V(\cI_{X_0}^\ell)$ or, equivalently, quasi-coherent ideal sheaves $\fb \subset \cO_{X}$ that  contain $\cI_{X_0}^\ell$.
Since $V(\cI_{X_0}^\ell)$ is a projective finite type $\bk$-scheme, $H$ has the structure of a $\bk$-scheme and its connected components are projective over $\bk$.
Write  $\cJ\subset \cO_{X\times H}$ for universal family of ideal sheaves.
The ideal $\fb'$ induces a morphism $h':\Spec(\bk') \to H$
such that $\fb' = \cJ \cdot \cO_{X_{k(h')}}$. 
Let $Z \subset H$ denotes the closure of the image of $h'$ with reduced scheme structure. 

Consider the blowup
\[
Y:= {\rm Bl}_{\cJ \cdot \cO_{X\times Z}}  X\times Z \to X\times Z
.\]
Since $\fb' = \cJ \cdot \cO_{X_{k(h')}}$,
the base change $Y_{h'} \to X_{k(h')}$ is the blowup of $X'$ along $\fb'$ and, hence, agrees with the  morphism $Y' \to X'$. 
Since $Y'\to X'$ is the base change of $Y_{K(Z)} \to X_{K(Z)}$ via the extension of fields $k(Z) \subset \bk'$,  $Y_{K(Z)}$ is normal and there exists a unique prime exceptional divisor $E_{K(Z)}$ on $Y_{K(Z)}$ whose base change via $K(Z) \subset \bk'$  is $E'$. 
Let $E \subset Y$ denote  closure of $E_{K(Z)}$ in $Y$. 

We now construct an open set of $Z$ over which $E\subset Y \to X\times Z$ shares properties with  $E'\subset Y'\to X'$.
Since $Y_{K(Z)}$ is normal, there exists a non-empty open set $U\subset Z$ such that the fibers of $Y_{U} \to U$ are normal. 
Since $E_{K(Z)}$ is the unique exceptional divisor of $Y_{K(Z)}\to X_{K(Z)}$,
by shrinking $U$, we may assume that $E_z$ is integral and the unique exceptional divisor of $Y_z \to X_z$ for all $z\in U$. 
By  shrinking $U$, we may assume that 
\[
A_{X',\Delta'}(E') = A_{X_{K(Z)},\Delta_{K(Z)}} (E_{K(Z)} ) 
= 
A_{X_{k(z)},\Delta_{k(z)}}(E_z)
\]
for all $z\in U$. 

We now show that $S(E_{K(Z)} ) \leq S(E_z)$ for all $z\in U$. 
Fix a point $z\in Z$. By constructing a divisor over $Z$ with center $z$, we may construct a DVR $B$ with fraction field $K$ and dominant morphism
$T:= \Spec(B)  \to Z$ such that   the closed point $b\in B$ maps to $z$ and $K(Z) \subset K$ is an equality. 
For a $\bk$-algebra $B$ and $m\in r\bN^+$, set 
\[
\cR^B_m  := \cR_m \otimes_{\bk} B \quad \text{ and } \quad R_m^B:= R_m\otimes_{\bk} B
.\]
For $\la \in \bR$, set
$
\cG_K^\la R_m^K := \im ( \cF_{E_K}^\la \cR_m^K \to R_m^K )
$ and consider the filtration by sub $K$-vector spaces 
\[
R_m^K = \cG^0_K R_m^K \supset \cG^1_K R_m^K \supset \cdots 
\]
By the properness of the flag variety (or by explicitly setting $\cG^\la R_m^B  := \cG_K^\la R_m^K \cap R_m^B$), there exists a filtration  of $R_m^B$ by free $B$-modules 
\[
R_m^B = \cG^0 R_m^B \supset \cG^1 R_m^B \supset\cdots 
\]
such that the quotients $R_m^B/\cG^\la R_m^B$ are free and the filtration restricts to the original filtration after applying  $\otimes_B K$.
Now choose a $B$-module basis $(s_1,\ldots, s_{N_m})$ for $R_m^B$  such that each $\cG^\la R_m^B$ is spanned by a subset of the basis elements. 
Next, lift  $(s_1,\ldots, s_{N_m})$  to elements $\widetilde{s}_1 , \ldots, \widetilde{s}_{N_m}$ of $\cR_m^B$ such that $\ord_{E_K}(\tilde{s}_i)  = \ord_{\cG}(s_i)$. Let
\[
D:= \frac{1}{mN_M} \left( \{\widetilde{s}_1=0 \} + \cdots + \{\widetilde{s}_{N_m}=0\}\right)
.\]
Using that  that $D_K$ is a relative $m$-basis type divisor of $-K_{X_K}-\Delta_K$ compatible with $E_K$, we compute that
\[
S_m(E_{K(Z)}) = S_m(E_K) = \ord_{E_K}(D_K) \leq \ord_{E_z}(D_z) \leq S_m(E_z)
,\]
where the second equality and the last inequality hold by Lemma \ref{l:S(v)basistype}. 
By sending $m \to \infty$, we deduce that $S(E_{K(Z)}) \leq S(E_z)$ as desired.

Now observe that for $z\in U(\bk)$, we have that  
\[
\delta(X,\Delta) \leq \frac{A_{X,\Delta}(E_z)}{S(E_z)}
\leq 
\frac{A_{X_{K(Z)},\Delta_{K(Z)}}(E_{K(Z)})}{S(E_{K(Z)})}
\leq 
\frac{A_{X',\Delta'}(E')}{S(E')}
=
\delta(X',\Delta').
\]
Since we also have that $\delta(X',\Delta') \leq \delta(X,\Delta)$ by Proposition \ref{p:deltaetalebasechange}, the above inequalities are all equalities. 
Therefore $\ord_{E_z}$ computes $\delta(X,\Delta)$ for all $z\in U(\bk)$. In particular, there exists a divisorial valuation that computes $\delta(X,\Delta)$.\medskip

\noindent \emph{Step 3: We will now prove the theorem in full generality.} 

Let $\widehat{C} := \Spec( \widehat{\cO_{C,0}})$. Write $\hat{0}$ and $\hat{\eta}$ for the closed and generic points of $\widehat{C}$. Consider the relative log Fano pair 
\[
(\widehat{X}, \widehat{\Delta}) := (X,\Delta)\times_C \widehat{C}  \to \widehat{C}
.\]
By combining Propositions \ref{p:deltafieldext} and \ref{p:deltaetalebasechange} with our  assumptions on  $(X,\Delta) \to C$, we see that
\[
\delta( \widehat{X}, \widehat{\Delta})
< \min\{1, \delta( X_{\hat{\eta}}, \Delta_{\hat{\eta}} ) \} .
\]
Since $\widehat{\cO_{C,0}} \cong \bk[\![t]\!]$, where $\bk$ is the residue field of $\cO_{C,0}$, Step 2 implies that there exists a divisorial valuation $\widehat{v}$ on $\widehat{X}$ that computes $\delta(\widehat{X},\widehat{\Delta})$.
Arguing as in Proof of Theorem \ref{t:delta computed by qm valuation}, $v:= \widehat{v}\vert_{K(X)}$ is a divisorial valuation of $X$ satisfying
$A_{X,\Delta}(v) = A_{\widehat{X},\widehat{\Delta}}(\widehat{v})$
and 
$S_{X,\Delta}(v) = S_{\widehat{X},\widehat{\Delta}}(\widehat{v})$. As $\delta(\widehat{X},\widehat{\Delta}) = \delta(X,\Delta) $ by Proposition \ref{p:deltaetalebasechange}, this implies that $v$ computes $\delta(X,\Delta)$. 
\end{proof}

\subsection{Families of log Fano pairs}
We now apply Theorem \ref{t:divisorial minimizer} to  families of log Fano pairs over a DVR. 

\begin{cor}\label{cor-minimizer complement}
If $(X,B)\to C$ is a family of log Fano pairs over the spectrum of a DVR with
\[
\delta(X_0,B_0)<\min\{\delta(X_\eta, B_\eta),1\}
,
\]
then there exists $\varepsilon>0$ such that for any $t\in (0,\varepsilon)\cap \bQ$ the following holds:
\begin{enumerate}
    \item $\delta(X_0,B_0)< \delta(X,B+(1-t)X_0) <1$;
    \item there exist a divisorial valuation $v= \ord_E$ with $C_X(v) \subsetneq X_0$ computing 
    \[\delta(X,B+(1-t)X_0);\]
    and  $v$ is an lc place of a $\bQ$-complement $B^+$ of $(X,B)$ with $B^+ \geq X_0$.
\end{enumerate}
\end{cor}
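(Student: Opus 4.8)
The plan is to deduce Corollary \ref{cor-minimizer complement} directly from Theorem \ref{t:divisorial minimizer} and Lemma \ref{lem-tconvergeto0}, together with the adjunction identification of the stability threshold of the special fiber. First I would fix notation: since $(X,B)\to C$ is a family of log Fano pairs, $(X,B+X_0)$ is plt, so by adjunction $(X_0,B_0)$ is klt and $\delta(X_0,B_0)=\delta(X,B+X_0)$ by Lemma \ref{lem-tconvergeto0}. By hypothesis $\delta(X,B+X_0)=\delta(X_0,B_0)<\min\{1,\delta(X_\eta,B_\eta)\}$, and $\delta(X_\eta,B_\eta)=\delta(X_\eta,B_\eta+(1-t)X_{0,\eta})$ since $X_{0,\eta}=0$ on the generic fiber, so for the family $(X,B+(1-t)X_0)\to C$ the relevant inequality on the generic fiber is available for all $t$.

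Next I would invoke the continuity statement of Lemma \ref{lem-tconvergeto0}, namely $\delta(X,B+X_0)=\lim_{t\to 0^+}\delta(X,B+(1-t)X_0)$, combined with the monotonicity $\delta(X,B+X_0)\le \delta(X,B+(1-t)X_0)$ in $t$; actually to get the \emph{strict} inequality $\delta(X_0,B_0)<\delta(X,B+(1-t)X_0)$ I expect we need the additional input that a minimizer of $\delta(X,B+X_0)$ is a quasi-monomial/divisorial valuation $w$ with $C_X(w)\subsetneq X_0$ (from Theorem \ref{t:divisorial minimizer}, applicable since all hypotheses of that theorem are met: $X_0$ is integral because the family is plt hence $X_0$ is normal, $X_0\subseteq\Supp(B+X_0)$, and the threshold inequality holds). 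Since such $w$ has $w(X_0)=c>0$, one computes
\[
\delta(X,B+(1-t)X_0)\le \frac{A_{X,B+(1-t)X_0}(w)}{S(w)}=\frac{A_{X,B+X_0}(w)+tc}{S(w)}=\delta(X_0,B_0)+\frac{tc}{S(w)},
\]
which shows the threshold is finite and, more importantly, one wants the reverse: that for small $t$ the infimum is \emph{no longer} attained by any valuation centered on $X_0$ with vanishing $X_0$-order... Here the cleaner route is: the minimizer $w$ of $\delta(X,B+X_0)$ cannot be an lc place giving equality after perturbation unless $w(X_0)>0$, and for $t>0$ small the value $\frac{A_{X,B+X_0}(w)}{S(w)}$ strictly increases to $\frac{A_{X,B+X_0}(w)+t\,w(X_0)}{S(w)}$; combined with the fact (to be verified via Theorem \ref{t:divisorial minimizer} applied at parameter $t$) that the minimizer at level $t$ has strictly positive $S$-invariant, one gets $\delta(X_0,B_0)<\delta(X,B+(1-t)X_0)$. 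The upper bound $\delta(X,B+(1-t)X_0)<1$ then holds for $0<t\ll1$ by continuity, since $\delta(X_0,B_0)<1$ strictly.

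For part (2), once $t\in(0,\varepsilon)\cap\bQ$ is small enough that (1) holds and $\delta(X,B+(1-t)X_0)<\min\{1,\delta(X_\eta,B_\eta)\}$, I would apply Theorem \ref{t:divisorial minimizer} to the relative log Fano pair $(X,\Delta)\to C$ with $\Delta:=B+(1-t)X_0$ (noting $X_0\subseteq\Supp\Delta$ and $X_0$ integral), obtaining a divisorial valuation $v=\ord_E$ with $C_X(v)\subsetneq X_0$ computing $\delta(X,\Delta)$. Then Corollary \ref{cor-minimizer lc place} gives that $v$ is an lc place of a $\bQ$-complement of $(X,\Delta)$; to upgrade this to a complement $B^+$ of $(X,B)$ with $B^+\ge X_0$, I would apply Lemma \ref{l:complement>=given divisor} with the distinguished divisor $G=X_0$ and parameter $t$ (so that the pair there is $(X,\Delta-tX_0)=(X,B)$ up to the plt/log Fano conventions — here one uses $K_X+B\sim_{\bQ,C}K_X+B+(1-t)X_0$ since $X_0$ is a fiber). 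This produces a $\bQ$-complement $\Gamma$ of $(X,B)$ with $\Gamma\ge\varepsilon' X_0$ for some $\varepsilon'>0$ and $v$ an lc place of $(X,B+\Gamma)$; finally, replacing $\Gamma$ by $\Gamma+(1-\varepsilon')X_0$ if necessary — or more precisely running the argument so that the coefficient of $X_0$ in the complement is exactly $1$, which is automatic because $v$ is an lc place and $A_{X,B+X_0}(\text{general divisor})$ forces the boundary to contain $X_0$ with coefficient $1$ — we obtain $B^+\ge X_0$.

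\textbf{Main obstacle.} The delicate point is the strict inequality $\delta(X_0,B_0)<\delta(X,B+(1-t)X_0)$ in part (1): Lemma \ref{lem-tconvergeto0} only gives a limit and a (non-strict) monotonicity, so strictness requires knowing that a valuation $w$ computing $\delta(X_0,B_0)=\delta(X,B+X_0)$ necessarily has $w(X_0)>0$ and $S(w)>0$, i.e. that $w\ne c\cdot\ord_{X_0}$. This follows because $\delta(X_0,B_0)<\delta(X_\eta,B_\eta)$ forces (via Lemma \ref{l:center in X0}) the center of $w$ to lie in $X_0$; if moreover $w$ were supported so that $w(X_0)=0$ then $w$ would descend to a valuation on $X_0$ and $\frac{A_{X,B+X_0}(w)}{S(w)}$ would equal $\frac{A_{X_0,B_0}(w|_{X_0})}{S_{X_0,B_0}(w|_{X_0})}$, which is fine — so the real content is that after perturbing $\Delta$ by $+tX_0$, the old minimizer's value strictly increases while the new infimum is still attained by something with bounded-below $S$-invariant, forcing the threshold up. Verifying this cleanly — essentially a relative analogue of the strict monotonicity of $\delta$ under adding a fractional fiber — is where I expect the argument to need the most care, and it is precisely the interplay of Theorem \ref{t:divisorial minimizer} (existence of divisorial minimizer at each level $t$) with Lemma \ref{l:S>0} that makes it work.
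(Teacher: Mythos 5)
Your overall architecture is right — apply Theorem \ref{t:divisorial minimizer} at parameter $t>0$ to get the divisorial minimizer, and then build the complement from it — but there are two genuine gaps, one in part (1) and one in part (2).

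For part (1), you try to justify strictness by producing a divisorial minimizer $w$ of $\delta(X,B+X_0)$ and explicitly claim that ``all hypotheses of that theorem are met'' for $(X,B+X_0)$. They are not: Theorem \ref{t:divisorial minimizer} (and everything feeding into it, e.g.\ Theorem \ref{t:delta computed by qm valuation}) requires a \emph{relative log Fano pair}, hence a klt pair, and $(X,B+X_0)$ is only plt — the divisor $X_0$ is an lc place. So you cannot apply the theorem at $t=0$, and more generally none of the paper's minimizer theory is available there. The remedy (which is what the paper does) is to never ask for a minimizer of $\delta(X,B+X_0)$. Instead, fix $t\in(0,\varepsilon)\cap\bQ$, apply Theorem \ref{t:divisorial minimizer} to the klt pair $(X,B+(1-t)X_0)$ to get $v=\ord_E$ with $C_X(v)\subsetneq X_0$, and then use $v$ directly as a test valuation for the $t=0$ threshold:
\[
\delta(X_0,B_0)=\delta(X,B+X_0)\le \frac{A_{X,B+X_0}(v)}{S(v)} < \frac{A_{X,B+(1-t)X_0}(v)}{S(v)}=\delta(X,B+(1-t)X_0),
\]
the strictness coming from $v(X_0)>0$ since $C_X(v)\subsetneq X_0$. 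This eliminates all the convoluted discussion in your first paragraph about whether and when the old minimizer stops being a minimizer; you never need one.

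For part (2), your application of Lemma \ref{l:complement>=given divisor} with $G=X_0$ yields a $\bQ$-complement $\Gamma$ of $(X,B)$ with $\Gamma\ge\varepsilon' X_0$ for some $\varepsilon'>0$ and $v$ an lc place of $(X,B+\Gamma)$, but that is weaker than the required $B^+\ge X_0$. Your proposed fix of ``replacing $\Gamma$ by $\Gamma+(1-\varepsilon')X_0$'' destroys the complement condition: $K_X+B+\Gamma+(1-\varepsilon')X_0\sim_{\bQ,C}(1-\varepsilon')X_0\not\sim_{\bQ,C}0$, and the follow-up hand-wave about the coefficient being ``automatic'' does not produce a proof. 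The paper instead extracts $E$ on a $\bQ$-factorial model $Z$ using \cite[Cor.~1.4.3]{BCHM}, verifies $Z$ is of Fano type over $C$ via Lemma \ref{l:Fanotype}, notes that $(Z,g_*^{-1}(B+(1-t)X_0)+E)$ admits a $\bQ$-complement (from Corollary \ref{cor-minimizer lc place}), and then applies the ACC-type Lemma \ref{lem:ACC}: for $t$ sufficiently small, the pair $(Z,g_*^{-1}(B+X_0)+E)$ also admits a $\bQ$-complement, whose pushforward is the desired $B^+\ge X_0$. This ACC step is the essential mechanism you are missing for upgrading $\ge\varepsilon' X_0$ to $\ge X_0$.
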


In the above statement, note that  $\delta(X,B+(1-t)X_0)$ denotes the stability threshold of the relative log Fano pair $(X,B+(1-t)X_0)\to C$.

\begin{proof}
By Lemma \ref{lem-tconvergeto0} and our assumption on $\delta(X_0,B_0)$, there exists $\varepsilon>0$ such that 
\[
\delta(X,B+(1-t)X_0) < \min \{ \delta(X_\eta,B_\eta),1\}
\]
for all $t\in (0,\varepsilon)\cap \bQ$.
For such $t$, Proposition \ref{p:deltafieldext} implies that $(X,B+(1-t)X_0)$ satisfies the assumptions of  Theorem \ref{t:divisorial minimizer}. 
Thus   there exists a divisorial valuation $v=\ord_E$ such that 
 $C_X(v)\subsetneq X_0$ and $v$  computes $\delta(X,B+(1-t)X_0)$.
Observe that  
\begin{multline*}
\delta(X_0, B_0)  =   \delta(X,B+X_0)
\leq   \frac{A_{X,B+X_0}(v)}{S(v)}
<  \frac{A_{X,B+(1-t)X_0}(v)}{S(v)}\\
= \delta(X,B+(1-t)X_0) < 1 \, ,
\end{multline*}
where the first relation holds by Lemma \ref{lem-tconvergeto0} and the third uses that $C_X(v) \subseteq X_0$. 
Therefore  condition (1) holds. 

To verify (2) holds, it remains to construct the complement $B^+$.
By Corollary \ref{cor-minimizer lc place}, $E$ is an lc place of $(X, B'^+)$ of a $\bQ$-complement $B'^+\ge B+(1-t)X_0$.
We seek to replace $B'^+$ with a new complement so that $B^+ \geq X_0$.
By \cite[Corollary 1.4.3]{BCHM}, there exists a proper birational morphism $g: Z\to X$  such that $Z$ is normal and $\bQ$-factorial and $E\subset Z$ is the sole exceptional divisor. 
By Lemma \ref{l:Fanotype}, $Z$ is Fano type over $C$.
Since
\[
(Z, g_*^{-1}(B +(1-t)X_0)+E)
\]
admits a $\bQ$-complement given by $g_*^{-1}(B'^+)+E$,
Lemma \ref{lem:ACC} implies that, assuming $\varepsilon$ was chosen sufficiently small,
there exists a $\bQ$-complement $B_Z^+$ of 
\[
(Z, g_*^{-1} (B +X_0)+E)
.\]
Now  $B^+:= g_* B^+_Z$ is a $\bQ$-complement of $(X,B)$ with lc place along $E$ and $B^+\geq X_0$.
\end{proof}

\section{Log Fano family induced by minimizer}\label{s-increasestabilitythreshold}

In this section, we will complete the proof of Theorem \ref{thm-main}. 
In light of Corollary \ref{cor-minimizer complement},  the remaining step is to use the divisorial minimizer of the relative stability threshold to construct  a new family of log Fano pairs and bound the stability threshold of the special fiber.

\subsection{Setup and main result}

Throughout this section, we fix the following setup. 
\begin{setup}\label{assu:logFano}
Let $f:(X,\Delta) \to C$ be a relative log Fano pair  over the the spectrum of a DVR $C$ essentially of finite type over $\bk$ and 
 $v= \ord_E$ be a divisorial minimizer of the relative stability threshold $\delta(X,\Delta)$  with $C_X(E) \subsetneq X_0$ and $X_0\subseteq{\rm Supp}(\Delta)$. 
 Furthermore, we assume that 
\begin{enumerate}
	\item $\delta(X,\Delta) <1$, 
\item 	$X_0$ is irreducible and reduced, and
\item $E$ is an lc place of a complement $\Delta^+$ of  $(X,\Delta)$ with $\Delta^+\geq X_0$.
\end{enumerate}
\end{setup}

\begin{rem}
This setup arises naturally in the study of families of log Fano pairs over DVRs; see Corollary \ref{cor-minimizer complement}.
As well, condition (2) will be needed in Lemma \ref{lem-dualfiltration}
and (3) will be needed in Section \ref{ss:basechange}.
\end{rem}

The goal of this section is to prove the following result.

\begin{thm}\label{t:increasedelta}
Under the above assumptions in Setup \ref{assu:logFano}, there exists a finite dominant morphism $\oC \to C$ such that $\oC$ is the spectrum of a DVR with closed point $0\in \oC$ and $\cO(C) \to \cO(\oC)$ is an extension of DVRs, and  a family of log Fano pairs $(\overline{X}',\overline{\Delta}')\to \overline{C}$
such that 
\begin{enumerate}
	\item there exists an isomorphism $(\overline{X}',\overline{\Delta}')_{\overline{C}\setminus 0}\simeq (X,\Delta)_{\oC\setminus 0}$ over $\oC\setminus 0$,
\item the restriction of  $\ord_{\oX'_0}$  to a valuation of  $K(X)$ via the inclusion
$K(X) \subset K(\oX')$ agrees with $\ord_E$,
	 and 
\item $\delta(\overline{X}'_0 ,\overline{\Delta}'_{0})\geq \delta(X,\Delta) $.
\end{enumerate}
\end{thm}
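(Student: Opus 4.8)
The strategy is to build the family $(\oX',\oDe')\to\oC$ directly from the divisorial minimizer $E$ via a two-step MMP construction and then verify the three conclusions, with the inequality (3) being the crux. First I would use condition (3) of Setup \ref{assu:logFano}, namely that $E$ is an lc place of a $\bQ$-complement $\Delta^+\ge X_0$ of $(X,\Delta)$, to extract $E$: by \cite[Corollary 1.4.3]{BCHM} (applied in the excellent setting via \cite{Lyu-Murayama}) there is a proper birational morphism $g\colon Y\to X$ with $Y$ normal and $\bQ$-factorial whose only $g$-exceptional divisor is the birational transform $E_Y$ of $E$, and we may arrange $-E_Y$ to be $g$-ample. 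Since $(X,0)$ is of Fano type over $C$ (because $X_0\subseteq\Supp(\Delta)$ and $(X,\Delta)$ is relative log Fano), Lemma \ref{l:Fanotype} gives that $Y$ is of Fano type over $C$. Next, after possibly passing to a finite base change $\oC\to C$ so that $\ord_{\oX'_0}$ restricts to exactly $\ord_E$ (i.e. normalizing so that the coefficient is $1$; this is where the $\oC$ in the statement comes from — without base change one gets an integer multiple as in Theorem \ref{thm-main}.2), I would run a $(K_Y+g_*^{-1}\Delta^+)$-MMP, or more precisely a suitable MMP over $\oC$ contracting everything except $E_Y$ and making the birational transform of $X_0$ (which is $E_Y$ after the base change normalization) into the new special fiber; the Fano type property ensures this MMP terminates. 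The output is a model $\oX'$ projective over $\oC$ with $\oX'_0$ integral corresponding to $E$, equipped with the boundary $\oDe'$ obtained as the birational transform of $\Delta$ (minus the $X_0$-component, appropriately), and one checks using adjunction and the complement condition that $(\oX',\oDe'+\oX'_0)$ is plt and $-K_{\oX'}-\oDe'$ is ample over $\oC$, i.e. $(\oX',\oDe')\to\oC$ is a family of log Fano pairs. Conclusions (1) and (2) are then immediate from the construction: the MMP is an isomorphism over $\oC\setminus 0$, and the base change was chosen precisely to make (2) hold.

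For conclusion (3), I would follow the outline sketched in the introduction. The aim is the chain
\[
\delta(X_0,\Delta_0)=\delta(X,\Delta^+\!\text{-type datum})\le \delta(X,\Delta)\le \delta(\oX'_0,\oDe'_0),
\]
but the genuinely new inequality is $\delta(X,\Delta)\le \delta(\oX'_0,\oDe'_0)$, equivalently (by adjunction) $\delta(X,\Delta)\le\delta(\oX',\oDe'+\oX'_0)$. The plan is to compare $m$-basis type divisors on the two sides. Given a relative $m$-basis type divisor $D'_m$ of $-K_{\oX'}-\oDe'$ compatible with $\ord_{X_0}$ (meaning compatible with the valuation coming from the original central fiber $X_0$ of $X$, viewed via the birational identification over $\oC\setminus 0$), Lemma \ref{lem-dualfiltration} — whose hypotheses require $X_0$ and $\oX'_0$ irreducible, which is exactly Setup \ref{assu:logFano}(2) — shows its strict transform $D_m$ on $X$ is a relative $m$-basis type divisor of $-K_X-\Delta$ compatible with $\ord_{\oX'_0}=\ord_E$. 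Since $E$ computes the relative stability threshold, when $m\gg0$ the pair $(X,\Delta+\delta_m D_m)$ has log discrepancy zero along $E$ and admits a $\bQ$-complement; pulling back via $g$ and using $g^*(K_X+\Delta+\delta_m D_m)=K_Y+g_*^{-1}(\Delta+\delta_m D_m)+E_Y$, one transports this to $\oX'$ to conclude $(\oX',\oDe'+\oX'_0+\delta_m D'_m)$ is lc, hence $\delta_m(\oX',\oDe'+\oX'_0)\ge\delta_m(X,\Delta)$ along the subfamily of $D'_m$ compatible with $\ord_{X_0}$; then one removes the compatibility restriction using inversion of adjunction and the simultaneous-diagonalization trick of \cite[Proposition 1.6]{AZ-K-adjunction} on the section ring of $-K_{\oX'_0}-\oDe'_0$, exactly as in the introduction's sketch. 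Sending $m\to\infty$ gives (3).

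\textbf{The main obstacle} is that the argument just described secretly assumes $E$ also computes each finite-level invariant $\delta_m(X,\Delta)$, which is false in general — only the limit is computed by $E$. As the introduction explains, the fix requires introducing the auxiliary thresholds
\[
\la_m:=\inf\{\lct(Y,g_*^{-1}\Delta+E_Y;g_*^{-1}D_m)\mid D_m\text{ relative $m$-basis type of }-K_X-\Delta\text{ compatible with }\ord_E\},
\]
proving $\min\{\la_m,\delta_m(X,\Delta)\}\le\delta_m(\oX',\oDe'+\oX'_0)$ for $m\gg0$ by the adjunction argument above, and then establishing the key limit inequality $\delta(X,\Delta)\le\liminf_m\la_{mr}$. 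The latter reduces, via the Izumi/Okounkov-body estimate of Lemma \ref{l:la_minequality} and the definition of $E$ as minimizer, to the uniform convergence statement
\[
S_{mr}(w)-w(E_Y)S_{mr}(\ord_E)\le(1+\varepsilon)\bigl(S(w)-w(E_Y)S(\ord_E)\bigr)\qquad\text{for all }w\in\DivVal_X,\ m\ge m_0(\varepsilon).
\]
This is the hard convergence estimate. I would prove it by degenerating $X_0$ (via $\ord_E$, or rather the associated graded construction) to a reduced projective scheme and applying Theorem \ref{prop-comparetwofiltration} — the Okounkov-body comparison of two filtrations with $E$ chosen as the top element of the flag — componentwise, which is precisely why Section \ref{ss:reducible} generalizes the asymptotic-filtration machinery from projective varieties to reduced projective schemes. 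One subtlety to handle carefully: the filtration of $\cR$ induced by $\ord_E$ need not restrict to a single-valuation filtration on $R=R(X_0,L_0)$, so one cannot cite the absolute case directly and must work on the degeneration; and one must track that the constant $m_0(\varepsilon)$ is genuinely uniform over all $w\in\DivVal_X$ (using Lemma \ref{lem-delta positive} to bound $T(w)$ in terms of $A_{X,\Delta}(w)$, so that after rescaling $A_{X,\Delta}(w)=1$ the relevant filtrations lie in a bounded family). Once \eqref{e:introconvergence}-type uniformity is in hand, the chain of inequalities closes and (3) follows, completing the theorem.
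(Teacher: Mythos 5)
Your proposal correctly reproduces the architecture of the paper's proof: extract $E$ via the model $Y\to X$ with $-E_Y$ relatively ample, base change by the ramified degree-$d$ cover $\oC\to C$ so that $\ord_{\oX'_0}$ restricts exactly to $\ord_E$, transfer $m$-basis type divisors via Lemma \ref{lem-dualfiltration}, and for the crucial inequality (3) introduce the thresholds $\la_m$, prove $\min\{\la_m,\delta_m(X,\Delta)\}$ bounds the new model's $\delta_m$ from below, and reduce $\delta(X,\Delta)\le\liminf_m\la_{mr}$ to the uniform convergence estimate $S_{mr}(w)-w(E_Y)S_{mr}(\ord_E)\le(1+\varepsilon)\bigl(S(w)-w(E_Y)S(\ord_E)\bigr)$, which you correctly propose to establish componentwise on a degeneration of $X_0$ via Theorem \ref{prop-comparetwofiltration}. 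This is essentially the paper's route, and you correctly identify where the difficulty is concentrated. However, two genuinely non-trivial points are glossed over.

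First, the descent from $\oX'$ to $X$. Lemma \ref{lem-dualfiltration} carries a relative $m$-basis type divisor of $\oL'$ compatible with $\ord_{\oX_0}$ to one on $\oX$ compatible with $\ord_{\oE}$ --- but you need one on $X$, compatible with $\ord_E$, since $\la_m$, the convergence estimate, and the complement construction of Proposition \ref{p:complementsforbasistype+minimizer} all live on the original $X$, not the base change $\oX$. A general basis type divisor of $\oL$ need not be pulled back from $X$. The paper handles this by working only with $\mu_d$-eigenbases: the eigenspace decomposition $\ocR_m=\bigoplus_i\pi^{i/d}\rho^*\cR_m$ together with Proposition \ref{p:comparefilt} forces the transferred sections to lie in the zero-weight piece, hence to descend. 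Accordingly the final step plugs in a $\mu_d$-invariant near-minimizer $v'\in\DivVal_{\oX'_0}$ (via \cite[Theorem 1.1]{Z-equivariant}) and a $\mu_d$-eigenbasis simultaneously compatible with $\cF_{\ord_{\oX_0}}\vert_{\oX'_0}$ and $\cF_{v'}$, rather than invoking \cite[Proposition 1.6]{AZ-K-adjunction} as you do. The two devices both give simultaneous compatibility on the special fiber, but the equivariance is what makes the descent to $X$ go through, and your plan does not explain how your diagonalized basis would descend.

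Second, reducedness of the degeneration. To run the componentwise Okounkov-body comparison of Theorem \ref{prop-comparetwofiltration} you need $\gr_\cG R$ (with $\cG=\cF_E\vert_{X_0}$) to be finitely generated and $Z=\Proj\gr_\cG R$ to be a \emph{reduced} projective scheme; neither is automatic, and the filtration $\cG$ is not induced by a single valuation on $X_0$, so the usual test-configuration arguments do not directly apply. The paper proves this (Proposition \ref{p:weaklyspecialtc}) by realizing $Z$ as a fiber of the Rees-algebra family over $\Spec B[s,t]/(st-\pi^{1/d})$ and identifying that family as one of boundary-polarized Calabi--Yau pairs in the sense of \cite{ABB+}, whose fibers are lc and hence reduced; this crucially uses Setup \ref{assu:logFano}(3), the complement $\Delta^+\ge X_0$ with $A_{X,\Delta^+}(E)=0$. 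Your sketch asserts the degeneration is a reduced projective scheme but supplies no reason, and without reducedness the decomposition into irreducible components underlying Section \ref{ss:reducible} and the Okounkov-body estimate does not go through.
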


To prove the theorem, we will use straightforward  MMP argument to construct  $(\overline{X}',\overline{\Delta}')\to \overline{C}$ satisfying  (1) and (2). 
Verifying that condition (3) holds  will require a lengthy and subtle analysis.

\subsection{Birational models}\label{ss:birmodels}

In this subsection, we will construct two birational models  of $X$ related to the minimizer $v=\ord_E$.
The first birational model  replaces $X_0$ with $E$ and the second extracts $E$ over $X$. These are well-known results following \cite{BCHM}, but we include an argument for reader's convenience.

\begin{prop}\label{p:Fanoreplacement}
There exists a relative log Fano pair $(X',\Delta')\to C$ with an isomorphism $f':(X,\Delta) \vert_{C\setminus 0}  \to (X', \Delta')\vert_{C\setminus 0} 
$ over $C\setminus 0$ such that
	\begin{enumerate}
		\item   the divisor $X'^{\rm red}_{0}$ is  prime  and the birational transform of $E$ on $X'$, and 
		\item the pair $(X',\Delta'+X'^{\rm red}_0)$ is plt.
\end{enumerate} 
\end{prop}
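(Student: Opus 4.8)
\textbf{Proof plan for Proposition \ref{p:Fanoreplacement}.}
The plan is to run an appropriate MMP, extracting the divisor $E$ and then contracting the birational transform of $X_0$, so that $E$ becomes the new special fiber. Since $E$ is an lc place of the complement $\Delta^+$ with $\Delta^+\geq X_0$, and $\Delta^+\geq X_0$ forces $\ord_E(X_0)>0$, the valuation $\ord_E$ is ``vertical'' over $C$ in the sense that its center on $X$ lies in $X_0$. By \cite[Corollary 1.4.3]{BCHM} (in the excellent setting, via \cite{Lyu-Murayama}) there is a proper birational morphism $g\colon Z\to X$ with $Z$ normal $\bQ$-factorial extracting exactly $E$; by Lemma \ref{l:Fanotype}, $Z$ is of Fano type over $C$ (using that $E$ is an lc place of the complement $\Delta^+$, so $A_{X,\Delta}(E)<1$ after possibly shrinking $t$, and any other $g$-exceptional divisor—there are none—trivially satisfies the bound). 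Write $\Delta_Z=g_*^{-1}\Delta$ and note that $(Z,\Delta_Z+E+g_*^{-1}(\Delta^+-\Delta))$ is a crepant pullback of the lc pair $(X,\Delta^+)$, hence lc.

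First I would run the $(K_Z+\Delta_Z+E)$-MMP over $C$; since $Z$ is of Fano type over $C$, this terminates with a model on which $-(K_{X'}+\Delta'+E')$ is semiample over $C$, where $\Delta',E'$ are the birational transforms. The key point is to arrange that the birational transform of the old special fiber $X_0$ gets contracted. Here I would use that $X_0\sim_{\bQ,C}0$ (it is a fiber of $f$, so $X_0$ is $\bQ$-linearly trivial over $C$), so that $X_0$, viewed on $Z$, is $g$-exceptional plus a $\bQ$-trivial-over-$C$ piece; more precisely $g^*X_0 = \widetilde{X_0} + aE$ with $a=\ord_E(X_0)>0$, and since $g^*X_0\sim_{\bQ,C}0$ we get $\widetilde{X_0}\sim_{\bQ,C}-aE$. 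Thus on $Z$ the divisor $\widetilde{X_0}+aE$ is $\bQ$-trivial over $C$; choosing the MMP to be the $(K_Z+\Delta_Z+E)$-MMP with scaling, or directly running a suitable MMP that makes $-E$ relatively ample over $C$ first and then over $X$, one forces the strict transform of $X_0$ to be covered by curves on which it has negative degree, hence contracted. Alternatively, and more cleanly: run the MMP for $(K_Z+\Delta_Z+(1-\epsilon)E+\widetilde{X_0})$ over $C$ for $0<\epsilon\ll1$; since this pair is still of Fano type over $C$ and $\widetilde{X_0}+aE\sim_{\bQ,C}0$, the MMP contracts $\widetilde{X_0}$ and we obtain $X'$ on which $E'=X'^{\mathrm{red}}_0$ is the reduced special fiber, establishing (1).

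Once $E'$ is the reduced special fiber, I would check (2): the pair $(X',\Delta'+X'^{\mathrm{red}}_0)=(X',\Delta'+E')$ is obtained from the lc pair $(Z,\Delta_Z+E+\cdots)$ by a sequence of steps of a $(K+\Delta_Z+E)$-MMP, hence remains lc, and it is plt along $E'$ because $E'$ is the unique divisor of log discrepancy $\le 0$ appearing (as $E$ was an lc place, its transform $E'$ has log discrepancy exactly $0$, and all other divisors retain positive log discrepancy since $(X,\Delta)$ is klt away from $X_0$ and the MMP is $(K+\Delta_Z+E)$-non-positive). Inversion of adjunction then gives that $(X'_0,\Delta'_0)$ is klt, so $(X',\Delta'+E')$ is plt. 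Finally, since $-(K_{X'}+\Delta'+E')\sim_{\bQ,C}-(K_{X'}+\Delta')$ is semiample and big over $C$ (it pulls back to $-(K_X+\Delta)$ which is ample over $C\setminus 0$, and the MMP preserves bigness over $C$), one further $\bQ$-linear-triviality argument (using $E'=X'^{\mathrm{red}}_0\sim_{\bQ,C}0$) shows $-(K_{X'}+\Delta')$ is ample over $C$, so $(X',\Delta')\to C$ is a relative log Fano pair. The isomorphism over $C\setminus 0$ is automatic since $g$ and all MMP steps are isomorphisms over $C\setminus 0$ (the center of $E$ and the strict transform of $X_0$ both lie over $0\in C$). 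The main obstacle is the bookkeeping to guarantee that the chosen MMP contracts precisely $\widetilde{X_0}$ and nothing on the generic fiber, which is handled by the $\bQ$-triviality $\widetilde{X_0}+aE\sim_{\bQ,C}0$ combined with the standard length-of-extremal-rays / Fano-type termination machinery.
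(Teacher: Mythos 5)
Your extraction-and-contraction strategy matches the paper's in outline: extract $E$ via \cite[Corollary 1.4.3]{BCHM}, observe that $h^*X_0 = \widetilde{X}_0 + dE_Z \sim_{\bQ,C} 0$ so that $\widetilde{X}_0 \sim_{\bQ,C} -dE_Z$ (hence contracting $\widetilde{X}_0$ amounts to running a $-E_Z$-MMP), pass to the anticanonical ample model over $C$, and check the result is relative log Fano with reduced special fiber the transform of $E$. The first few steps are essentially what the paper does (it runs the $-E_Z$-MMP and then a $-(K+\Delta)$-MMP separately, while you try to combine them; also note a sign slip — a $(K_Z+\Delta_Z+E_Z)$-MMP makes $K+\Delta+E$ nef, not $-(K+\Delta+E)$ semiample, and you want the latter). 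These points are minor and could be fixed.

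The substantive gap is in your verification of (2). You claim $E'$ is the unique divisor of log discrepancy $\le 0$ for $(X',\Delta'+E')$ because ``$(X,\Delta)$ is klt away from $X_0$ and the MMP is $(K+\Delta_Z+E)$-non-positive,'' with a nod to inversion of adjunction. This does not hold up. The putative starting pair $(Z,\Delta_Z+E_Z)$ is generally \emph{not} plt, nor even lc: since
\[
A_{Z,\Delta_Z+E_Z}(F)=A_{X,\Delta}(F)-A_{X,\Delta}(E)\,\ord_F(E_Z),
\]
raising the coefficient of $E_Z$ to $1$ can push discrepancies below zero for divisors $F$ with center inside $E_Z$. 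Even if you compared instead to the lc pair $(Z,\Delta_Z+E_Z+h_*^{-1}(\Delta^+-\Delta))$, running the MMP for a \emph{different} divisor does not preserve plt-ness of the smaller boundary, and upgrading lc to plt means ruling out \emph{every} extra lc place with center inside $X'^{\rm red}_0$ — which is exactly the nontrivial content. Invoking inversion of adjunction at the end is circular: it converts plt for $(X',\Delta'+E')$ into klt for $(X'_0,\Delta'_0)$, but gives you neither.

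What the paper does instead is use Lemma \ref{l:complement>=given divisor}: given any divisor $F$ over $X'$ with center $\subsetneq X'^{\rm red}_0$, choose a horizontal $H'\sim_{\bQ}-K_{X'}-\Delta'$ through $C_{X'}(F)$ avoiding $X'^{\rm red}_0$, transport to $H$ on $X$, and produce a $\bQ$-complement $\Lambda^+$ of $(X,\Delta-tX_0+cH)$ that still has $E$ as an lc place. Crepant transfer to $X'$ then yields
\[
A_{X',\Delta'+X'^{\rm red}_0}(F)\;\ge\; c\,\ord_F(H')\;>\;0.
\]
This is not a formal MMP fact: Lemma \ref{l:complement>=given divisor} uses the hypothesis that $\ord_E$ \emph{computes} $\delta(X,\Delta)$ (through the level-$c$ basis-type divisor machinery of Lemmas \ref{l:level-c basis type >= any G} and \ref{l:limit of complement}). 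Your argument never uses the minimizer hypothesis in the plt step, which is a strong indication that it cannot close as written.
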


Note that $X'_0$ is not necessarily reduced, since 
$X'_0 : = d X'^{\rm red}_{0}$, where $d:= \ord_E(X_0) = \ord_E(\pi)$ and $\pi \in \cO_C(C) $ is a uniformizer of the DVR.

\begin{proof}
We first construct a proper birational morphism  $h:Z\to X$ such that $Z$ is a normal $\bQ$-factorial variety, $Z$ is Fano type over $C$, and $E_Z$, which denotes the birational transform of $E$ on $Z$, is the sole exceptional divisor of $h$.
By Corollary \ref{cor-minimizer lc place}, there exists a complement $\Delta^+$ of $(X,\Delta)$ such that $A_{X,\Delta^+}(E)=0$. Then
$G:= \Delta+(1-\varepsilon)(\Delta^+-\Delta)$ satisfies $A_{X,G}(E)<1$ when $0<\varepsilon\ll1$. 
By \cite[Corollary 1.39]{Kol13}, there exists a proper birational morphism $g:Z\to X$ of normal varieties such that $Z$ is $\bQ$-factorial, the birational transform of $E$ on $Z$, denoted $E_Z$, is the sole exceptional divisor, and $-E_Z$ is nef over $X$. 
By Lemma \ref{l:Fanotype}, $Z$ is Fano type over $C$.
 
We now proceed to contract the birational transform of $X_0$ on $Z$.
Since $Z$ is Fano type over $C$,    \cite[Corollary 1.3.1]{BCHM} implies that we may run a $-E_Z$ MMP over $C$ that terminates.
Since
\[
0\sim
h^*X_0 = \widetilde{X}_0 + dE_Z
,\]
where $\tilde{X}_0 := h_*^{-1}(X_0)$ and  $d:= \ord_E(\pi) >1$, 
$\widetilde{X}_0 \sim - d E_Z$.
Therefore the MMP process only contracts curves contained in $\widetilde{X}_0$ and, hence, outputs a birational contraction $h:Z\dashrightarrow W$
such that $-E_W:= -h_*  E_Z$ is nef. 
As $-E_W $ is nef over $C$,  $Z\dashrightarrow W$ must contract $\tilde{
X}_0$.
Therefore $W_0 = d E_W$.

Next, we construct the relative log Fano pair $f':(X',\Delta')\to C$.
Write $\Delta_{W}$ for the birational transform of $\Delta$ on $W$.
Since $Z\dashrightarrow W$ is a birational contraction and $Z$ is of Fano type over $C$, $W$ is of Fano type over $C$.
Thus, by \cite[Corollary 1.3.1]{BCHM}, we  may run a $-K_{W}-\Delta_{W}$ MMP over $C$. 
As $-K_W-\Delta_W$ is big over $C$, the MMP terminates with a birational contraction
$h:W \dashrightarrow W'$ such that $-K_{W'} - \Delta_{W'}$ is nef and semi-ample. 
Let $W' \to X'$ denote the morphism to the ample model. 
Write $\Delta'$ for the birational transform of $\Delta_{W'}$ on $X'$, which is also the birational transform of $\Delta$ on $W$.
Note that both $X\setminus X_0$ and $X' \setminus X'
_0$ are the ample model of $-K_{W}-\Delta_W$ over $C\setminus 0$.
As the ample model is unique,  $X\dashrightarrow X'$ extends to an isomorphism over $X_{C\setminus 0}\simeq  X'_{C\setminus 0}$ over $C\setminus 0$.

We will now verify that $(X',\Delta')\to C$ is a log Fano pair over $C$.
By construction,   $-K_{X'}-\Delta'$ is ample over $C$. 
To analyze the singularities of $(X',\Delta')$, let $\Delta'^+$ denote the birational transform of $\Delta^+ $ on $X'$.
Since $A_{X,\Delta^+}(E)=0$ and $K_{X}+\Delta^+ \sim_{\bQ}0$, the pairs 
\[
(X,\Delta^+) \quad \text{ and } \quad (X', \Delta'^+ +X'^{\rm red}_0)
\]
are crepant birational. 
Therefore  $\Delta'^+ + X'^{\rm red}_0$ is a complement of $(X',\Delta')$.
Since $(X',\Delta'^+ + X'^{\rm red}_0)$ is lc and $(X',\Delta')$ is klt on $X\setminus X_0$, the pair $(X',\Delta')$ is klt. 
Therefore $(X',\Delta')$ is a log Fano pair over $C$.

It remains to verify that (1) and (2) hold. 
Condition (1) holds by construction.
To verify that (2) holds,
fix a divisor $F$ over $X'$ such that $C_{X'}(F)\neq X'^{\rm red}_0$.
To show that  $A_{X',\Delta'}(F)>0$, fix an effective $\bQ$-divisor $H'\sim_{\bQ}-K_{X'}-\Delta'$ such that $C_{X'}(F') \subset H'$ and $X'^{\rm red}_0\not\subset \Supp(H')$. 
Let $H $ denote the birational transform of $H'$ on $X$.
By Lemma \ref{l:complement>=given divisor}, there exist $c, t>0$ and  a complement $\Lambda^+$ of $(X, \Delta -t X_0+ c H) $ with lc place along $E$.
Let $\Lambda'^+$ denote the birational transform of $\Lambda^+$ on $X'$. 
Since $A_{X,\Lambda^+}(E)=0$, 
\[
(X,\Lambda^+)
\quad \text{ and }\quad 
(X',\Lambda'^++X'^{\rm red}_0)
\] 
are crepant birational and so 
 $(X',\Lambda'^++X'^{\rm red}_0)$ is lc. 
Now observe that
\[
A_{X',\Delta'+X'^{\rm red}_0}(F) \ge 
A_{X',\Delta'+cH'+X'^{\rm red}_0}(F) 
> A_{X',\Lambda'^++ X'^{\rm red}_0}(F) \geq 0
.\]
Therefore $(X',\Delta'+X'^{\rm red}_0)$ is plt, which verifies (2).
\end{proof}

The existence of the following model can be easily deduced from Proposition \ref{p:Fanoreplacement}. 

\begin{prop}\label{p:extractmin}
The exists a proper birational morphism $g:Y\to X$ such that $Y$ is normal, $E_Y := {\rm Exc}(g)$ is a prime divisor that is the birational transform of $E$ on $Y$, and $-E_Y$ is ample over $X$.
\end{prop}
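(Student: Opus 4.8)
The plan is to deduce this from Proposition~\ref{p:Fanoreplacement} together with a single application of \cite[Corollary~1.4.3]{BCHM} (or rather \cite[Corollary~1.39]{Kol13}, which was already invoked in the proof of the previous proposition). The key point is that $\ord_E$ is an lc place of a complement, so its log discrepancy with respect to a slightly perturbed boundary is strictly less than one, which is exactly the hypothesis needed to extract $E$ as the unique exceptional divisor of a $\bQ$-factorial model with $-E$ relatively nef.

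Concretely, first I would use Corollary~\ref{cor-minimizer lc place} (or condition (3) of Setup~\ref{assu:logFano} directly) to fix a $\bQ$-complement $\Delta^+$ of $(X,\Delta)$ with $A_{X,\Delta^+}(E)=0$. Then for $0<\varepsilon\ll 1$ the divisor $G:=\Delta+(1-\varepsilon)(\Delta^+-\Delta)$ satisfies $A_{X,G}(E)=\varepsilon\cdot A_{X,\Delta^+-\Delta}(E)\in(0,1)$ since $A_{X,\Delta}(E)>0$ (as $(X,\Delta)$ is klt) and $A_{X,\Delta^+}(E)=0$. Moreover $(X,G)$ is klt and $-(K_X+G)$ is still ample over $C$, so $(X,G)\to C$ is a relative log Fano pair, hence of Fano type over $C$. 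By \cite[Corollary~1.39]{Kol13} applied to $(X,G)$ and the single divisor $E$, there is a proper birational morphism $g\colon Y\to X$ with $Y$ normal and $\bQ$-factorial such that $E_Y:=\mathrm{Exc}(g)$ is the unique prime exceptional divisor, $E_Y$ is the birational transform of $E$ on $Y$, and $-E_Y$ is nef over $X$. Since $g$ has relative Picard number one (only one exceptional divisor, and $Y$ is $\bQ$-factorial), $-E_Y$ being nef over $X$ and $g$-exceptional forces $-E_Y$ to be $g$-ample: indeed $-E_Y$ is $g$-big (its restriction to a general fiber of $E_Y\to g(E_Y)$ is ample by the negativity lemma) and nef over $X$, and on a relative Mori fiber structure of Picard number one nef and non-trivial implies ample. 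Alternatively, one can cite directly the standard fact that such an extraction can be realized as the blowup of an ideal, making $-E_Y$ automatically $g$-ample.

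A cleaner route, which avoids re-deriving the extraction, is to piggyback on Proposition~\ref{p:Fanoreplacement}: that proposition produces $(X',\Delta')\to C$ with $X'^{\mathrm{red}}_0$ prime equal to the birational transform of $E$. Taking a common resolution, or more efficiently running a relative MMP, one obtains a model $Y$ dominating both $X$ and $X'$ extracting exactly $E$; then $-E_Y$ is ample over $X$ because on $X'$ the divisor $X'^{\mathrm{red}}_0$ is $\bQ$-linearly equivalent to $-\tfrac{1}{d}\widetilde{X}_0$ which is anti-effective and relatively semiample. I would present whichever of these is shorter, and I lean toward the direct application of \cite[Corollary~1.39]{Kol13} as in the proof of Proposition~\ref{p:Fanoreplacement}.

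\begin{proof}
By Corollary~\ref{cor-minimizer lc place}, there exists a $\bQ$-complement $\Delta^+$ of $(X,\Delta)$ with $A_{X,\Delta^+}(E)=0$. Since $(X,\Delta)$ is klt, $A_{X,\Delta}(E)>0$, so for $0<\varepsilon\ll 1$ the $\bQ$-divisor
\[
G:=\Delta+(1-\varepsilon)(\Delta^+-\Delta)
\]
is effective, $(X,G)$ is klt, $-(K_X+G)$ is ample over $C$, and
\[
A_{X,G}(E)=\varepsilon\,A_{X,\Delta^+-\Delta}(E)=\varepsilon\,A_{X,\Delta}(E)\in(0,1).
\]
By \cite[Corollary~1.39]{Kol13} applied to the klt pair $(X,G)$ and the divisor $E$, there exists a proper birational morphism $g\colon Y\to X$ with $Y$ normal such that $E_Y:=\mathrm{Exc}(g)$ is a prime divisor equal to the birational transform of $E$ on $Y$ and $-E_Y$ is nef over $X$. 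By construction $g$ can be taken so that $E_Y$ is $g$-ample; equivalently, $g$ is the blowup of an ideal $\fb\subseteq\cO_X$ with $g_*\cO_Y(-\ell E_Y)=\fb$ for some $\ell>0$, so $-E_Y$ is $g$-ample.
\end{proof}
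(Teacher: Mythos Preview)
Your setup is essentially identical to the paper's: you produce the perturbed boundary $G$ with $A_{X,G}(E)\in(0,1)$ and invoke \cite[Corollary~1.39]{Kol13} to extract $E$ as the unique exceptional divisor of a $\bQ$-factorial model with $-E_Y$ \emph{nef} over $X$. This is exactly the model $Z$ from the first paragraph of the proof of Proposition~\ref{p:Fanoreplacement}. The gap is in your last sentence: you assert that ``by construction $g$ can be taken'' so that $-E_Y$ is $g$-ample, or equivalently that $g$ is a blowup, but you do not justify either. The reference \cite[Corollary~1.39]{Kol13} only gives nef, and the blowup description is a \emph{consequence} of ampleness, not a route to it.

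The Picard-number-one argument you sketch in your preamble would close the gap, but it is not in your proof, and it needs a line or two (e.g.\ $\rho(Y/X)=1$ since $Y$ is $\bQ$-factorial with a single exceptional divisor; $E_Y\not\equiv_X 0$ since $g$ is projective and not an isomorphism; hence $-E_Y$ nef and nonzero in a one-dimensional $N^1(Y/X)$ forces $-E_Y$ ample). The paper instead takes the model $Z$ with $-E_Z$ nef, observes that $-E_Z-(K_Z+G_Z)\sim_{X,\bQ}-E_Z$ is nef and big over $X$ with $(Z,G_Z)$ klt, applies the base-point-free theorem to get $-E_Z$ semiample over $X$, and lets $Y$ be the relative ample model. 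One then checks via the negativity lemma that $Z\to Y$ does not contract $E_Z$, and that $-E_Y$ ample over $X$ forces $\mathrm{Exc}(g)=E_Y$. Either route is fine; you just need to actually include one of them. (A minor aside: the intermediate expression $A_{X,\Delta^+-\Delta}(E)$ is not standard notation for log discrepancy, though your final value $\varepsilon\,A_{X,\Delta}(E)$ is correct.)
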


\begin{proof}
We return to the morphism  $Z\to X$ constructed in the first paragraph of Proposition \ref{p:Fanoreplacement} and the $\bQ$-divisor $G$.
Since $(Z,G)$ is klt, 
\[
-E_Z - (K_Z+G_Z) 
\sim_{X,\bQ} 
-E_Z
,\]
and $-E_Z$ is nef and big over $X$, the base-point free theorem implies that $-E_Z$ is semiample over $X$.
Let $Z\to Y$ denote the morphism over $X$ to the relative ample model of $-E_Z$ and write $g$ for the morphism $Y\to X$ . Note that $E_Z \equiv_{Y} 0$ by construction. 
The morphism $Z\to Y$ cannot contract $E_Z$ as otherwise the negativity lemma would imply that $E_Z=0$, which is absurd. 
Let $E_Y$ denote the birational transform of $E_Z$ on $Y$. 
Since $Y\to X$ contracts $E_Y$, $E_Y \subset {\rm Exc}(g)$. 
Since $-E_Y$ is ample over $X$, ${\rm Exc}(g) \subset E_Y$
and so the result holds.
\end{proof}

\subsection{Finite Base change}\label{ss:basechange}
We now consider base changes of the above pairs constructed in Section \ref{ss:birmodels} via a finite morphism of spectrum of DVRs. 
The base change is needed to replace  $(X',\Delta')\to C$ with a model with reduced special fiber.

Let $A:=\cO_C(C)$, which is a DVR, and fix a uniformizer $\pi \in A$. 
Let $v=\ord_E$ be as before and let $B:= A[\pi^{1/d}]$, where $d:= v(X_0)= v(\pi)$. 
Note that $B$ is a DVR with uniformizer $\pi^{1/d}$ and has residue field isomorphic to $\bk$. 
The finite extension $A\hookrightarrow B$ 
induces a finite morphism 
\[
\oC:= \Spec(B)  \to \Spec(A)= C
.\]
We write $0$ for the closed point of $\oC$; this is slightly abusive notation as $0$ also denotes the closed point of $\oC$.

We now apply a base change to define a relative sub-pair and pair
\[
(\oX,\oDe) \to \oC
\quad \text{ and } \quad
(\oX',\oDe')\to \oC
.\]
First, let $\oX:= X\times_C \oC$ and write $\rho: \oX \to X$ for the first projection, which is finite.
Since $\oX\to \oC$ is flat with normal fibers and $\oC$ is normal, $\oX$ is normal.
Let $\oDe$ be the $\bQ$-divisor on $\oX$ satisfying 
\[
K_{\oX}+ \oDe=\rho^*(K_X+\Delta)
.
\]
Next,  let 
$
\oX'\to X'\times_C \oC
$
denote the normalization morphism and write $\rho':\oX'\to \oC$ for the composition of the normalization and  projection morphisms. 
Let $\oDe'$ be the $\bQ$ divisor on $\oX'$ satisfying
\[
K_{\oX'}+ \oDe' +\oX'^{\rm red}_{0}=\rho'^*(K_{X'}+\Delta' +X'^{\rm red}_0)
.
\] 
Note that the definition of $\oDe'$ is chosen so that $\Supp(\oDe')$ does not contain irreducible components of $\oX'_0$. 

The isomorphism $(X,\Delta)_{C\setminus 0}\simeq (X',\Delta')_{C\setminus 0}$ over $C\setminus 0$ induces an isomorphism 
\[(\oX,\oDe)_{\oC\setminus 0}\simeq (\oX',\oDe')_{\oC\setminus 0}
\]over $\oC\setminus 0$.
By the proposition below,  $\oX'_0$ is irreducible. 
Thus we may view $\oE: =\oX'_0$ as a prime divisor over $\oX$.


\begin{prop}\label{p:logFano}
The following assertions hold:
\begin{enumerate}
\item $(\oX,\oDe) \to \oC$ is a relative log Fano sub-pair and
\item $(\oX',\oDe')\to \oC$ is a family of log Fano pairs.
\end{enumerate}
Furthermore, both $\oX_0$ and $\oX'_0$ are reduced and irreducible.
\end{prop}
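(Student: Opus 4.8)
The plan is to prove Proposition \ref{p:logFano} by analyzing the two base-changed families separately, using the structure results from Section \ref{ss:birmodels} (especially Proposition \ref{p:Fanoreplacement}) together with standard facts about crepant pullbacks under finite base change and cyclic ramified covers. First I would treat $(\oX,\oDe)\to\oC$. Since $\oX\to\oC$ is flat with normal (indeed klt log Fano) fibers and $\oC$ is normal, $\oX$ is normal; the fiber $\oX_0$ is identified with $X_0$, which is reduced and irreducible by hypothesis (2) in Setup \ref{assu:logFano}. The pair $(\oX,\oDe)$ is a relative log Fano sub-pair because $\rho\colon\oX\to X$ is finite and \'etale in codimension one over the generic point (it is totally ramified only along $X_0$, which does not affect $K_{\oX}+\oDe=\rho^*(K_X+\Delta)$ since we pull back as $\bQ$-divisors), so $-K_{\oX}-\oDe=\rho^*(-K_X-\Delta)$ is ample over $\oC$, and the klt-ness of $(\oX,\oDe)$ away from characteristic-$p$ issues follows from the klt-ness of $(X,\Delta)$ because $\rho$ is finite; one only needs that $\oDe$ may fail to be effective (hence ``sub-pair''), which is harmless.

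Next I would handle $(\oX',\oDe')\to\oC$, which is the substantive part. The key point is that $d=\ord_E(\pi)=\ord_E(X_0)$ is \emph{exactly} the ramification needed so that, after base change by $B=A[\pi^{1/d}]$ and normalization, the special fiber becomes reduced and irreducible. Concretely, by Proposition \ref{p:Fanoreplacement}, $X'^{\rm red}_0$ is prime and $X'_0=dX'^{\rm red}_0$, and $(X',\Delta'+X'^{\rm red}_0)$ is plt. I would invoke the standard local computation for a cyclic cover totally ramified of order $d$ along a divisor with multiplicity $d$: the normalized base change $\oX'\to X'\times_C\oC$ has special fiber $\oX'_0$ generically reduced and irreducible, mapping isomorphically (birationally) onto $X'^{\rm red}_0$; more precisely the normalization separates the structure so that $\oX'_0$ is a prime divisor with multiplicity one. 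This is where I would cite, or reprove via the cyclic-cover / Kawamata-cover formalism as in \cite[Section 2]{Kol13} or \cite[Chapter 2]{Kol23}, the fact that $\oX'$ is normal with $\oX'_0$ reduced irreducible. Then the crepant defining equation $K_{\oX'}+\oDe'+\oX'^{\rm red}_0=\rho'^*(K_{X'}+\Delta'+X'^{\rm red}_0)$ together with the plt-ness of $(X',\Delta'+X'^{\rm red}_0)$ and the fact that $\rho'$ is finite and crepant gives that $(\oX',\oDe'+\oX'^{\rm red}_0)$ is plt (plt-ness is preserved under finite crepant pullback, e.g.\ by \cite[2.14 or Proposition 5.20]{Kol13} applied to the log structure). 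Combined with ampleness of $-K_{\oX'}-\oDe'=\rho'^*(-K_{X'}-\Delta')$ over $\oC$ and flatness of $\oX'\to\oC$ (which follows from $\oX'$ being normal, dominant over a DVR, hence torsion-free), this shows $(\oX',\oDe')\to\oC$ is a family of log Fano pairs in the sense of the paper's definition; the fiber $\oX'_0$ is then klt by adjunction.

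For the ``furthermore'' clause I would note that reducedness of $\oX_0$ is immediate (it equals $X_0$), while reducedness and irreducibility of $\oX'_0$ is precisely the output of the cyclic-cover analysis above: the multiplicity-$d$ divisor becomes multiplicity-one after pulling back the $d$-th root of the uniformizer and normalizing, and irreducibility is preserved because $X'^{\rm red}_0$ was already prime and the cover is connected (as $B$ is a domain). The main obstacle I anticipate is making the cyclic-cover computation rigorous in the relative/excellent setting rather than over a field: one needs that $B=A[\pi^{1/d}]$ is a DVR with residue field $\bk$ (clear, since $\pi^{1/d}$ is a uniformizer and the residue extension is trivial), and that normalization commutes appropriately with the generically reduced structure; I would address this by working locally at the generic point of $\oX'_0$, where $X'$ is regular (or at least the local ring is a DVR) because $(X',\Delta'+X'^{\rm red}_0)$ is plt so $X'$ is normal and $X'^{\rm red}_0$ is Cartier in codimension one along its generic point, reducing the claim to the elementary statement that if $(R,\fm)$ is a DVR with uniformizer $u$ and $\pi=u^d\cdot(\text{unit})$, then the normalization of $R\otimes_A B$ is again a DVR with uniformizer $u$ and reduced closed fiber. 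The rest is bookkeeping with the crepant formulas.
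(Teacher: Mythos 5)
Your proposal follows the same overall outline as the paper: ampleness of $-K_{\oX}-\oDe$ and $-K_{\oX'}-\oDe'$ via pullback along the finite maps $\rho,\rho'$, klt- and plt-ness via crepant pullback along a finite morphism (the paper cites \cite[Corollary 2.43]{Kol13} for this), reducedness of $\oX'_0$ via the local analysis of the cyclic cover (the paper cites \cite[Lemma 2.53]{Kol23}), and the treatment of $(\oX,\oDe)$ and $\oX_0$ matches the paper's. However, there is a genuine gap in your argument for irreducibility of $\oX'_0$.

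You assert that ``irreducibility is preserved because $X'^{\rm red}_0$ was already prime and the cover is connected (as $B$ is a domain),'' and you propose the ``elementary statement'' that the normalization of $R\otimes_A B$ (with $R=\cO_{X',\eta}$ a DVR) is again a DVR. Neither is correct as stated. Connectedness of the total space $\oX'$ (which does hold, since $K(X')\otimes_{K(C)}K(\oC)$ is a field) does not imply connectedness of the special fiber $\oX'_0$. And writing $\pi=u^d\cdot v$ with $v\in R^\times$, the normalization of $R\otimes_A B = R[s]/(s^d-u^dv)$ equals $R[w]/(w^d-v)$ (with $w=s/u$), which is finite \'etale over $R$; it is therefore a semi-local Dedekind ring whose number of maximal ideals equals the number of irreducible factors of $T^d-\bar v$ over the residue field $K(X'^{\rm red}_0)$. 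A priori this can exceed one, in which case $\oX'_0$ would have several (reduced) components, all mapping unramifiedly to $X'^{\rm red}_0$. The local computation gives reducedness at each such component (your ``multiplicity one'' claim), but not irreducibility.

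The paper's route is different: it first establishes that $(\oX',\oDe'+\oX'_0)$ is plt by crepant pullback, which forces the components of $\oX'_0$ to be normal and pairwise disjoint, and then irreducibility follows from connectedness of $\oX'_0$. The connectedness is a \emph{global} input — it comes from the fact that $(\oX',\oDe')\to\oC$ is a log Fano contraction (so $f_*\cO_{\oX'}=\cO_{\oC}$ and $R^1f_*\cO_{\oX'}=0$ by Kawamata--Viehweg, whence $h^0(\oX'_0,\cO)=1$), or equivalently from Shokurov connectedness applied to the non-klt locus — not from the local ring-theoretic picture. To repair your proof you would need to replace the ``$B$ is a domain'' remark with such a connectedness argument, and you should also invoke $S_1$ (Serre's criterion, using that $\oX'_0$ is Cartier on the normal scheme $\oX'$) to upgrade the generic reducedness from the local computation to reducedness everywhere, which is what \cite[Lemma 2.53]{Kol23} packages for you.
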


Note that $\oDe$ may fail to be effective as the Hurwitz formula implies that
\[
1-{\rm coeff}_{\oX_0}(\oDe )= d (1- {\rm coeff}_{\oX_0}(\Delta) )
.\]
In contrast, $\Supp(\oDe')$ does not contain $\oX'_0$ in its support by construction.

\begin{proof}
We first analyze the fibers over $0 \in \oC$.
The divisor $\oX_0$ is reduced and irreducible as it is isomorphic to $X_0$. 
Since the ramification index of $\oC \to C$ over $0$ equals the multiplicity of $X'_0$, 
\cite[Lemma 2.53]{Kol23} implies that $\oX'\to \oC$ has reduced fibers.

Now, we verify that (1) and (2) hold.
Since $\rho$ and $\rho'$ are finite and $-K_{X}-\Delta$ and $-K_{X'}-\Delta'$ are ample over $C$, $-K_{\oX}-\oDe$ and $-K_{\oX'}-\oDe'$ are both ample over $\oC$. 
Since $(X,\Delta)$ is klt and $(X',\Delta'+X_0^{\rm red})$ is plt,
\cite[Corollary 2.43]{Kol13} implies that 
$(\oX,\oDe)$  is klt  and $(\oX',\oDe'+\oX'_0)$ is plt.
Thus (1) and (2) hold. 
Furthermore, the plt condition implies that $\oX'_0$ is irreducible.
\end{proof}

The above sub-pairs admit $\mu_d$ actions defined as follows.
There is a natural $\mu_d$-action on $\oC$ induced by the action of  $\zeta \in \mu_d \subset \bk^\times$  on  $B:=A[\pi^{1/d}]$ which is trivial on $A$ and multiplication by $\zeta$ on $\pi^{1/d}$.
The ring $B$ admits a direct sum decomposition
\[
B= 1 A \oplus \pi^{1/d} A \oplus \cdots \oplus \pi^{(d-1)/d}A
,\]
where $\zeta \in \mu_d$ acts on $\pi^{i/d}A$ via multiplication by $\zeta^i$.
Let $\mu_d$ act on 
\[
X\times_C \oC
\quad  \text{and } \quad X'\times_C \oC
\] 
as the product of the trivial action and the above action on $\oC$.
This induces  $\mu_d$-actions on $\oX$ and $\oX'$ that fix $\oDe$ and $\oDe'$.

Next, we analyze the filtrations of the relative section rings induced by $E$ and $\oE$. 
Let
\[
L:=-K_X-\Delta, \quad \oL := -K_{\oX}+\oDe, \quad \text{ and } \quad \oL':= -K_{\oX'}-\oDe'
.\]
Fix an integer $r>0$ such that $rL$, $r\oL$, and $r\oL'$ are Cartier divisors. 
We consider the relative section rings
\[
\cR:= \cR(X,L)
\quad \text{ and }\quad 
\ocR:= \cR(\oX,\oL)
\]
and the section rings on the special fibers
\[
R:=R(X_0,L_0)
\quad \text{ and }\quad 
\oR:= R(\oX_0,\oL_0)
.\]
By convention, the rings are indexed by $r\bN$.
Let $\cF_E$ and $\cF_{\oE}$ denote the respective filtrations of $\cR$ and $\ocR$ induced by $E$ and $\overline{E}$.
We additionally write 
\[
\cG:= \cF_E \vert_{X_0}
\quad \text{ and }\quad
\ocG:=\cF_{\oE}\vert_{\oX_0}
\]
for the filtrations of $R$ and $\oR$ defined via restriction.
To compare the various filtrations, note that $R^if_* \cO_X(mL)=0$ by Kawamata-Viehweg vanishing. 
Therefore 
$H^0(\oX, m\oL) \simeq H^0(X,mL)\otimes_A B $ by the cohomology and base change theorem.
 The latter isomorphism induces a $\mu_d$-equivariant isomorphism 
\begin{equation}\label{e:ocR}
\ocR_m \simeq \cR_m \oplus \pi^{1/d} \cR_m \oplus \cdots \oplus \pi^{ (d-1)/d}\cR_m 
,
\end{equation}
where $\zeta\in \mu_d \subset \bk^\times$ acts on the summand $\pi^{i/d} \cR_m$ via multiplication by $\zeta^i$.
Additionally, the isomorphism 
 $(X_0,\Delta_0)\simeq (\oX_0,\oDe_0)$ induces an isomorphism of graded rings $R\simeq \oR$.

\begin{prop}\label{p:comparefilt}
The following hold:
\begin{enumerate}
\item The isomorphism \eqref{e:ocR} sends the subspace $\cF_{\oE}^\la \ocR_m \subset  \ocR_m$ to the subspace
\[
\cF_E^{\la/d} \cR_m \oplus \pi^{1/d} \cF_E^{(\la -1)/d}\cR_m \oplus \cdots \oplus \pi^{ (d-1)/d} \cF_E^{(\la  -(d-1))/d}\cR_m
.\]
\item The isomorphism $ \oR_m\to R_m$ sends $\ocG^\la R_m$ to $\oG^{\la/d}R_m$.
\end{enumerate}
\end{prop}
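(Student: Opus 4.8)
The plan is to deduce both parts from the $\mu_d$-equivariance of the construction, the reducedness statements of Proposition \ref{p:logFano}, and the comparison of valuations in Theorem \ref{t:increasedelta}(2). The whole point is that, although we cannot simultaneously diagonalize two filtrations of a module over a DVR, the filtration $\cF_{\oE}$ of $\ocR$ is the pull-back of $\cF_E$ under the \emph{cyclic} cover $\rho$, and cyclic covers are controlled by their character-eigenspace decomposition.

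First I would record two order-of-vanishing computations for the divisorial valuation $\ord_{\oE}=\ord_{\oX'_0}$. The $\mu_d$-action on $\oC$ lifts to $\oX=X\times_C\oC$ and, by passing to the normalization, to $\oX'$; since the closed point $0\in\oC$ is fixed by $\mu_d$, the action fixes the prime divisor $\oX'_0=\oE$, so $\ord_{\oE}$ is $\mu_d$-invariant. Because $\oX'_0=\mathrm{div}_{\oX'}(\pi^{1/d})$ is reduced (Proposition \ref{p:logFano}), $\pi^{1/d}$ is a uniformizer at the generic point of $\oX'_0$, whence $\ord_{\oE}(\pi^{1/d})=1$ and $\ord_{\oE}(\pi^{i/d})=i$. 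By Theorem \ref{t:increasedelta}(2) the restriction of $\ord_{\oE}$ to $K(X)\subset K(\oX)$ is $\ord_E$; since $\oL=\rho^*L$, pulling back a local trivialization of $mL$ near $c_X(E)$ gives $\ord_{\oE}(\rho^*s)=\ord_E(s)$ for every $s\in\cR_m$. For part (1), write $\bar s\in\ocR_m$ in its $\mu_d$-eigen-decomposition coming from \eqref{e:ocR}, say $\bar s=\sum_{i=0}^{d-1}\pi^{i/d}\rho^*(s_i)$ with $s_i\in\cR_m$; by the above, $\ord_{\oE}\bigl(\pi^{i/d}\rho^*s_i\bigr)=i+\ord_E(s_i)$. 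The $i$-th eigencomponent of $\bar s$ is recovered as $\tfrac1d\sum_{\zeta\in\mu_d}\zeta^{-i}(\zeta\cdot\bar s)$ with $\tfrac1d\zeta^{-i}\in\bk$, so $\mu_d$-invariance of $\ord_{\oE}$ forces $\ord_{\oE}(\bar s)=\min_{0\le i\le d-1}\bigl(i+\ord_E(s_i)\bigr)$; in particular the orders of distinct eigen-summands never cancel. Reading off which $\bar s$ satisfy $\ord_{\oE}(\bar s)\ge\la$ then identifies $\cF_{\oE}^\la\ocR_m$ with the direct sum displayed in part (1).

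For part (2) I would restrict part (1) to the central fibre. The surjection $\ocR_m\twoheadrightarrow\oR_m$ (surjectivity by cohomology and base change, exactly as for $X$ in Setup \ref{sss:setup}) is reduction modulo $\pi^{1/d}\ocR_m$, and under \eqref{e:ocR} one has $\pi^{1/d}\ocR_m=\pi^{1/d}\cR_m\oplus\cdots\oplus\pi^{(d-1)/d}\cR_m\oplus\pi\cR_m$; hence $\oR_m\cong\cR_m/\pi\cR_m=R_m$ via the $i=0$ summand, and this coincides with the isomorphism $\oR_m\to R_m$ induced by $(X_0,\Delta_0;L_0)\cong(\oX_0,\oDe_0;\oL_0)$. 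Feeding the description of $\cF_{\oE}^\la\ocR_m$ from part (1) through this quotient, only the $i=0$ summand survives, and its image in $R_m$ is the piece of $\cG$ asserted in part (2). The one genuinely delicate bit is the bookkeeping of the ramification index $d$ of $\oC\to C$ in the exponents on both sides — equivalently, nailing down the precise normalization of $\ord_{\oE}$ relative to $\ord_E$ and of $\ord_{\oE}(\pi^{1/d})$; once the $\mu_d$-equivariance is set up, everything else is formal.
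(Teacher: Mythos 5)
Your overall strategy -- decompose $\ocR_m$ into $\mu_d$-eigenspaces via \eqref{e:ocR}, use $\mu_d$-invariance of $\ord_{\oE}$ to split $\cF_{\oE}^\la\ocR_m$ eigenspace by eigenspace, and then read off each piece from a single valuation computation -- is precisely the paper's. The problem is that the valuation computation you carry out does not produce the displayed formula, and you assert without comment that it does. You compute $\ord_{\oE}(\pi^{i/d}\rho^*s_i)=i+\ord_E(s_i)$, which makes the $i$-th eigenpiece of $\cF_{\oE}^\la\ocR_m$ equal to $\pi^{i/d}\,\cF_E^{\la-i}\cR_m$; the formula stated in part (1) has $\pi^{i/d}\,\cF_E^{(\la-i)/d}\cR_m$, and for $d>1$ these are different subspaces. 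The paper's own proof instead computes $\ord_{\oE}(\rho^*s)=d\cdot\ord_E(s)$, which does reproduce the displayed formula. So there is a genuine gap in your write-up: the conclusion of your part-(1) argument does not follow from its ingredients.

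Curiously, the mismatch is evidence that the stated formula carries a spurious factor of $d$: evaluating both sides of the competing identities on the uniformizer $\pi$ gives $\ord_{\oE}(\pi)=d\cdot\ord_{\oE}(\pi^{1/d})=d=\ord_E(\pi)$, whereas $d\cdot\ord_E(\pi)=d^2$, which is impossible for $d>1$, and Theorem \ref{t:increasedelta}(2) explicitly asserts $\ord_{\oE}|_{K(X)}=\ord_E$ with no factor of $d$. (You should not cite that theorem, whose proof comes later; instead argue directly that $\ord_{\oE}|_{K(X)}$ and $\ord_E$ share the valuation ring $\cO_{X',\xi}$, $\xi$ the generic point of $X'^{\rm red}_{0}$, hence are proportional, and both send $\pi$ to $d$.) The upshot is that your calculation appears correct and the proposition's statement appears to be off by the linear rescaling $\la\mapsto\la/d$ of the filtration index; this is harmless for the two downstream uses (Proposition \ref{p:weaklyspecialtc} only needs an isomorphism of graded rings $\gr_{\ocG}\oR\cong\gr_{\cG}R$, and Proposition \ref{p:boundlct} only needs that a basis compatible with $\cF_{\oE}$ corresponds to a basis compatible with $\cF_E$, both insensitive to reparametrizing $\la$). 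But you must flag that what you derive is a corrected version of the stated formula rather than claim the two coincide.
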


\begin{proof}
The $B$-module $\ocR_m$ admits a direct sum decomposition 
\[
\ocR_m = \ocR_m^{(0)} \oplus \cdots \oplus \ocR_m^{(d-1)}
\]	
into $\mu_d$-eigenspaces, where $\zeta \in \mu_d$ acts on $\ocR_m^{(i)}$ via multiplication by $\zeta^i$.
Under this decomposition, the isomorphism \eqref{e:ocR} sends $\ocR_m^{(i)} $ to $\pi^{i/d} \cR_m$.
Since the $\mu_d$-action on $\oX'$ fixes $\oX'_0$, the valuation $\ord_{\overline{E}}$ is $\mu_d$-invariant.
Therefore the filtration $\cF_{\oE}$ of $\ocR$ is $\mu_d$-invariant, which means that the  $\mu_d$-action on $\ocR_m$ fixes each subspace $\cF_{\oE}^\la \ocR_m$.
Therefore 
\[
\cF_{\oE}^\la \ocR_m 
=
(\cF_{\oE}^\la \ocR_m   \cap \ocR_m^{(0)})
\oplus \cdots 
\oplus 
(\cF_{\oE}^\la \ocR_m   \cap \ocR_m^{(d-1)})
.\]
To prove (1),  it suffices to show that the isomorphism $\ocR_m^{(i)} \to \pi^{i/d} \cR_m$ sends $(\cF_{\oE}^\la \ocR_m   \cap \ocR_m^{(i)})$
to $\pi^{i/d}\cF_E^{(\la -i)/d}\cR_m$. 
This holds, since if $\overline{s}= \pi^{i/d} \rho^*(s) \in \ocR_m^{(i)}$, where $s\in \cR_m$, then 
\[
\ord_{\overline{E}}(\pi^{i/d}\rho^*s) 
=
i + \ord_{\overline{E}}(\rho^*s) 
=
i + d \cdot  \ord_{E}(s).\]
Therefore  (1) holds. 
Statement (2) follows immediately from (1) as the composition $\ocR_m \to \cR_m \to R_m$ sends an element $\sum_{i=0}^{d-1} \pi^{i/d} \rho^*s_i \in \ocR_m$  to $s_0\vert_{X_0} \in R_m$.
 \end{proof}
The next proposition will play a key role in the proof of  Lemma  \ref{l:subtleBJinequality}.

\begin{prop}\label{p:weaklyspecialtc}
The $\bN$-graded $\bk[x]$-algebra ${\rm gr}_\cG R $ is finitely generated and the scheme ${\rm Proj}\, {\rm gr}_\cG R$ is reduced.
\end{prop}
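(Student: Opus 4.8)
The strategy is to connect the graded ring $\gr_\cG R$ to the graded ring $\gr_{\ocG}\oR$ on the base-changed family, where the picture is geometrically cleaner, and then realize the latter as the section ring of a degeneration coming from the divisor $\oE = \oX'_0$. First I would use Proposition \ref{p:comparefilt}.2: the isomorphism $\oR \to R$ identifies $\ocG^\la \oR_m$ with $\oG^{\la/d} R_m$, so $\gr_{\ocG} \oR$ is, up to the scaling $\la \mapsto \la/d$ of the grading by $\bR$, isomorphic to $\gr_{\cG} R$. In particular $\gr_\cG R$ is finitely generated over $\bk$ if and only if $\gr_{\ocG}\oR$ is, and $\Proj \gr_\cG R$ is reduced if and only if $\Proj \gr_{\ocG}\oR$ is. (One has to be slightly careful that the grading by $r\bN$ is the one used to form $\Proj$; the rescaling only affects the auxiliary $\bR$-grading recording the filtration level, which is an extra $\bG_m$-action, not the projective coordinate.) So it suffices to prove the statement for $(\oX,\oDe)\to\oC$ and $\oE = \oX'_0$.

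Next I would identify $\gr_{\ocG}\oR$ with the relative section ring of the degeneration of $\oX_0$ to (a component of) the special fiber of a suitable test configuration / degeneration induced by $\oE$. Concretely, by Proposition \ref{p:logFano} the pair $(\oX',\oDe')\to\oC$ is a family of log Fano pairs with reduced irreducible special fiber $\oX'_0 = \oE$, and $(\oX,\oDe)\to\oC$, $(\oX',\oDe')\to\oC$ are isomorphic over $\oC\setminus 0$. Thus $\oX \dashrightarrow \oX'$ is a birational map over $\oC$ contracting $\oX_0$ to (the generic point of) $\oE$, and the filtration $\cF_{\oE}$ of $\ocR$ is exactly the filtration by order of vanishing along the new central fiber $\oX'_0$. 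Restricting to $\oX_0$ and passing to the associated graded, the standard degeneration-to-the-normal-cone picture (as in \cite{BHJ}, or the construction of the Rees algebra of a filtration) identifies $\gr_{\ocG}\oR$ with the section ring $R(\mathcal{X}_0, \mathcal{L}_0)$ of the special fiber $(\mathcal{X}_0,\mathcal{L}_0)$ of the test configuration $(\mathcal{X},\mathcal{L})$ of $(\oX_0,\oDe_0;\oL_0)$ whose total space is (an appropriate birational model of) the degeneration defined by $\cF_{\oE}$. Here finite generation of $\gr_{\ocG}\oR$ follows from finite generation of $\gr_{\oE}\ocR$, which I would deduce from Proposition \ref{p:finite generation}: $\oE$ is a monomial lc place of a special $\bQ$-complement of $(\oX,\oDe)$ (this is where Setup \ref{assu:logFano}.3 and the base-change compatibility of complements under $\rho$ are used, via \cite[Cor.\ 2.43]{Kol13}), so $\gr_{\oE}\ocR$ is finitely generated, and $\gr_{\ocG}\oR \cong \gr_{\oE}\ocR \otimes_B k(0) = \gr_{\oE}\ocR / (\pi^{1/d}\cdot \gr_{\oE}\ocR)$ is then finitely generated too (compare the argument giving \eqref{e:formula for S_m}).

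For reducedness of $\Proj \gr_{\ocG}\oR$: the key point is that $\oX'_0 = \oE$ is a \emph{reduced} and irreducible divisor on the normal variety $\oX'$ (Proposition \ref{p:logFano}), and that the degeneration $\mathcal{X}_0$ of $\oX_0$ we have produced is, up to taking $\Proj$ of the section ring, nothing but (a linear-series model of) $\oE$ with its reduced structure, together with the $\bG_m$-action coming from the filtration grading. More precisely, $\gr_{\ocG}\oR$ is the associated graded of the $\bN$-filtration $\ocG$ of the domain $\oR$; its Rees algebra $\bigoplus_m \bigoplus_{\la} \ocG^\la \oR_m \, t^{-\la}$ is a flat $\bk[t]$-algebra whose general fiber is $\oR$ (a domain, since $\oX_0$ is integral) and whose special fiber is $\gr_{\ocG}\oR$; the total space of its $\Proj$ is a flat degeneration over $\bA^1$ whose central fiber maps to $\oX'_0 = \oE$ and which is generically reduced because $\oE$ is. Since the total space is $\bk[t]$-flat and $t$ is a nonzerodivisor that is generically reduced, combined with $S_1$/$R_0$ of the total space (inherited from normality of $\oX'$ along the generic point of $\oX'_0$ — here one uses that $\oX'$ is normal and $\oX'_0$ reduced, so $\oX'$ is regular at the generic point of $\oX'_0$), one concludes the central fiber $\Proj \gr_{\ocG}\oR$ has no embedded points and is generically reduced, hence reduced. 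I expect \textbf{this last step — rigorously passing from ``$\oX'_0$ reduced'' to ``$\Proj\gr_\cG R$ reduced''} to be the main obstacle, since it requires carefully tracking the relationship between the filtration $\cF_{\oE}$, its Rees algebra, and the geometry of the contraction $\oX \dashrightarrow \oX'$ (in particular checking that no non-reducedness is introduced by the restriction to $\oX_0$ or by the choice of birational model computing $\ord_{\oE}$); the cone/degeneration-to-normal-cone formalism of \cite{BHJ} together with normality of $\oX'$ along $\oX'_0$ is what makes it go through.
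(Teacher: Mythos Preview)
Your reduction via Proposition~\ref{p:comparefilt}(2) to the base-changed filtration $\ocG$, and the Rees-algebra framework connecting $\gr_\cG R$ to a fiber of a larger degeneration, are correct and match the paper's setup. However, both of your key technical steps have genuine gaps.

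For finite generation, you invoke Proposition~\ref{p:finite generation}, which requires $\oE$ to be a monomial lc place of a \emph{special} $\bQ$-complement of a log Fano pair. But Setup~\ref{assu:logFano}(3) only provides an ordinary complement $\Delta^+$ of $(X,\Delta)$; special complements were produced in Proposition~\ref{p:special complement} under the stronger hypothesis $\delta(X,\Delta)<\min\{1,\delta(X_\eta,\Delta_\eta)\}$ of Theorem~\ref{t:divisorial minimizer}, which is not assumed here. Moreover $(\oX,\oDe)$ is only a sub-pair ($\oDe$ need not be effective, as noted after Proposition~\ref{p:logFano}), so Proposition~\ref{p:finite generation} does not apply to it as stated.

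For reducedness, your $S_1/R_0$ sketch does not go through. The scheme $\Proj\gr_{\ocG}\oR$ is the central fiber of the test configuration $\Proj{\rm Rees}(\ocG)$ of $\oX_0$; it is not a priori a subscheme of $\oX'$, so ``normality of $\oX'$ along $\oX'_0$'' does not directly constrain it. Reducedness of a flat limit does not follow from reducedness of the generic fiber and of the total space without further input, and you have not supplied the identification that would let properties of $\oX'_0$ transfer.

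The paper closes both gaps by a different mechanism. Using the complement $\Delta^+\geq X_0$ of Setup~\ref{assu:logFako}(3), it builds from $(\oX,\oDe)$ and $(\oX',\oDe')$ two families of \emph{boundary polarized CY pairs} over $\oC$ in the sense of \cite{ABB+}, and then analyzes the two-parameter Rees family $\mathfrak{X}=\Proj{\rm Rees}(\cF_{\oE})\to\Spec B[s,t]/(st-\pi^{1/d})$. By \cite[Proposition~5.8]{ABB+} this Rees algebra is finitely generated, and by \cite[proof of Theorem~5.4]{ABB+} the morphism $\mathfrak{X}\to\Spec B[s,t]/(st-\pi^{1/d})$ is a family of lc pairs, so in particular every fiber is reduced. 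The paper then checks algebraically that ${\rm Rees}(\cF_{\oE})/(s,t)\cong\gr_\cG R$, whence the fiber over $s=t=0$ is $\Proj\gr_\cG R$. Thus the ingredient you are missing is not \cite{XZ-SDC} via special complements, but the boundary-polarized-CY machinery of \cite{ABB+} applied to this two-parameter family; this is precisely what makes the condition $\Delta^+\geq X_0$ in Setup~\ref{assu:logFano}(3) (rather than just $\Delta^+\geq\Delta$) essential.
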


\begin{proof}
Consider the Rees algebras
\[
{\rm Rees}(\ocG):=  \bigoplus_{m \in r\bN} \bigoplus_{ \la \in \bZ} \ocG^\la \oR_m t^{-\la}
\quad \text{ and } \quad 
{\rm Rees}(\cF_{\oE}):= \bigoplus_{m \in r\bN} \bigoplus_{ \la \in \bZ} \cF_{\oE}^\la \ocR_m t^{-\la} 
.\]
The first  has the structure  of a  $\bk[t]$-algebra, while the second 
has the structure of a  $B[s,t]/(st-\pi^{1/d})$-algebra, where $s$ and $t$ act  on $f t^{-\la}$, with $f \in \ocR_m$  and $\la \in\bZ$, by
\[
s \cdot  (f t^{-\la} )=  \pi^{1/d} f t^{-\la-1}  
\quad \text{ and } \quad 
t \cdot (f t^{-\la}) =  f t^{-\la+1}
.\]
We claim that there is an isomorphism 
\[
{\rm Rees}(\cF_{\oE}) / (s,t) {\rm Rees}(\cF_{\oE})  \simeq {\rm gr}_{\ocG} R.
\]
of $r\bN$-graded algebras. 
Indeed, first there is an isomorphism 
\begin{equation}\label{e:Rees/s}
{\rm Rees}(\cF_{\oE})/
s {\rm Rees}(\cF_{\oE}) 
\simeq {\rm Rees}(\ocG)
\end{equation}
of $\bN$-graded $\bk[t]$-algebras,
since  we have canonical isomorphisms
\[
\frac{{\rm Rees}(\cF_{\oE})}
{s {\rm Rees}(\cF_{\oE})}
\simeq 
 \bigoplus_{m \in r\bN} \bigoplus_{ \la \in \bZ} \frac{\cF_{\oE}^\la \ocR_m t^{-\la} }{ \pi^{i/d} \cF_{\oE}^{\la-1} \ocR_mt^{-\la}}
\]
and 
\[
\frac{\cF_{\oE}^\la \ocR_m }{ \pi^{1/d} \cF_{\oE}^{\la-1} \ocR_m}
\simeq 
\frac{\cF_{\oE}^\la \ocR_m }{ \pi^{1/d} \ocR_m \cap  \cF_{\oE}^\la \ocR_m}
\simeq 
\im (\cF_{\oE}^\la \cR_m \to \ocR_m /\pi^{1/d} \ocR_m)
\simeq 
\ocG^\la \oR_m,
.\]
In addition, there are isomorphisms
\begin{equation}\label{e:Rees/t}
{\rm Rees}(\ocG)/ t {\rm Rees}(\ocG) \simeq {\rm gr}_{\ocG} \oR
\simeq {\rm gr}_\cG R
\end{equation}
where the second holds by Proposition \ref{p:comparefilt}. 
Therefore \eqref{e:Rees/s}  and \eqref{e:Rees/t} imply the claim.

We will proceed to use results on boundary polarized CY pairs in  \cite{ABB+} to analyze ${\rm Rees}(\cF_{\oE})$.
By  Assumption \ref{assu:logFano}, there exists a complement $\Delta^+$ of $(X,\Delta)$ such that $A_{X,\Delta^+}(E)=0$ and  $\Delta^+ \geq X_0$.
Define a $\bQ$-divisor $\overline{\Delta}^+$ by the formula 
\[
K_{\overline{X}} + \oDe^+ = \rho^*(K_X+\Delta^+) 
.
\]
Note that $(\oX,\overline{\Delta}^+)$ is an lc pair by \cite[Corollary 2.43]{Kol13} and, hence, $\oDe^+$ is a complement of $(\oX,\oDe)$. 
Furthermore, $A_{\oX,\oDe^+}(\overline{E})=A_{X,\Delta^+}(E) = 0$ 
and $\oDe^+ \geq \overline{X}_0$.
Now let $\oDe'^+$ denote the birational transform of $\oDe^+$ on $\oX'$. 
Since $K_{\oX}+\oDe^+\sim_{\bQ}0$ and $A_{\oX,\oDe^+}(\oE)=0$, the pairs
$(\oX,\oDe^+)$ and $ (\oX',\oDe'^++\oX'_0)$ are crepant birational.
Thus $(\oX',\oDe'^++\oX'_0)$ is lc and $K_{\oX'}+\oDe'^++ \oX'_0\sim_{\bQ}0$.
Now let
\[
\overline{B}:= \overline{\Delta} - {\rm coeff}_{\overline{X}_0}(\oDe)
\quad \text{and }  \quad 
\overline{B}': =\oDe' 
.\]
Define $\bQ$-divisors $\oD$ and $\oD'$ on $\oX$ and $\oX'$ by the formulas
\[
\oB+\oD +\oX_0= \oDe^+  \quad \text{ and } \quad 
\oB'+\oD'+ \oX_0 = \oDe'^+
.\]
In particular, $\oD$ and $\oD'$ are effective and horizontal over $\oC'$. 
Now observe that 
\begin{enumerate}
	\item[(i)] $-K_{\oX}-\oB$ and $-K_{\oX'}-\oB'$ are ample over $\oC'$;
	\item[(ii)] $K_{\oX}+\oB+\oD\sim_{\bQ}0$ and $K_{\oX'}+\oB'+\oD'\sim_{\bQ}0$;
	\item[(iii)] $(\oX,\oB+\oD+\oX_0)$ and $(\oX',\oB'+\oD'+\oX'_0)$ are lc.
\end{enumerate}
Therefore $(\oX,\oB+\oD)\to \oC$ and $(\oX',\oB+\oD')\to \oC$ are families of boundary polarized CY pairs in the terminology of \cite[Definition 2.9]{ABB+}. 
By \cite[Proposition 5.8]{ABB+}, ${\rm Rees}(\cF_{\oE})$
is a finitely generated $B[s,t]/(st-\pi^{1/d})$ algebra.
Thus ${\rm gr}_\cG R$ is a finitely generated $\bk$-algebra.
Now consider
\[\mathfrak{X} := {\rm Proj} \, {\rm Rees}(\cF_{\oE})
\to \Spec \, B[s,t]/(st-\pi^{1/d})
.\]
By \cite[Proof of Theorem 5.4]{ABB+}, $(\fX,\fB)\to S$ is a  family of lc pairs and, in particular, the fibers are reduced.
Therefore the fiber $\fX_{s=t=0}$, which is isomorphic to  ${\rm Proj}\, {\rm gr}_\cG R$, is reduced.
\end{proof}
\subsection{Threshold along the minimizer}
In this subsection, we introduced an invariant that measures singularities of basis type divisors along the minimizer $\ord_E$. 
This invariant will be used  to construct certain complements adapted to basis type divisors compatible with the minimizer.

\subsubsection{Definition and lower bound}
Throughout the following discussion, let 
\[
 E_Y\subset Y \overset{g}{\to}
 X
\]
denote the proper birational morphism  constructed in Proposition \ref{p:extractmin} with the property that $E_Y$, which   denotes the birational transform of $E$ on $Y$,  is the sole exceptional divisor of $g$  and $-E_Y$ is ample over $X$. 
Recall that $L:= -K_X-\Delta$ and $r$ is a fixed positive integer such that $rL$ is a Cartier divisor.

\begin{defn}
For each  integer $m>0$ divisible by $r$, we define $\la_m(X,\Delta;E)$ to be  
\[
\la_m(X,\Delta;E) 
:=\\
 \sup \left\{ \la \in \bQ 
\, \left\vert\, 
\begin{tabular}{c}
\mbox{$(Y,g_*^{-1}(\Delta+ \la D) +E_Y )$ is lc for all $m$-basis} \\  
\mbox{type divisors $D$  of $L$ compatible with $E$} 
\end{tabular}
\right.\right\} 
.
\]
We set 
\begin{eqnarray}\label{eq-lambda}
\la(X,\Delta;E) = \liminf_{m \to \infty} \la_{mr}(X,\Delta,E)
.
\end{eqnarray}
A priori, the value $\lambda(X,\Delta)$ could depend on the choice of the positive integer $r$ above.
\end{defn}

The following statement is the main result of this section.

\begin{prop}\label{p:lowerboundlambda}
After possibly replacing $r>0$ with a positive multiple, the following inequality holds:
\[
\la(X,\Delta;E) \geq \delta(X,\Delta).
\]
\end{prop}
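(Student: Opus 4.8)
The plan is to bound $\lambda_m(X,\Delta;E)$ from below by an infimum of the form $\inf_{w}\frac{A_{X,\Delta}(w)-w(E_Y)A_{X,\Delta}(\ord_E)}{S_m(w)-w(E_Y)S_m(\ord_E)}$ over divisorial valuations $w$, which is exactly the content of Lemma \ref{l:la_minequality} alluded to in the introduction. To see this, fix an $m$-basis type divisor $D$ compatible with $\ord_E$; we want to understand when $(Y,g_*^{-1}(\Delta+\lambda D)+E_Y)$ is lc. Since $E_Y=\mathrm{Exc}(g)$ with $-E_Y$ ample over $X$, the crepant pullback of $K_X+\Delta+\lambda D$ to $Y$ is $K_Y+g_*^{-1}(\Delta+\lambda D)+(1-\lambda\,\ord_E(D))E_Y$ up to the log discrepancy term; using that $\ord_E(D)=S_m(\ord_E)$ (as $D$ is compatible with $\ord_E$ and $m$-basis type, by Lemma \ref{l:S(v)basistype}), the coefficient of $E_Y$ is $1-\lambda S_m(\ord_E)+(A_{X,\Delta}(\ord_E)-1)=A_{X,\Delta}(\ord_E)-\lambda S_m(\ord_E)$. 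So lc-ness along $E_Y$ itself forces $\lambda\le A_{X,\Delta}(\ord_E)/S_m(\ord_E)$, and lc-ness along any other divisor $w$ over $Y$ (hence over $X$, with $w(E_Y)$ its coefficient against $E_Y$) translates, after subtracting the $E_Y$-contribution, into $\lambda\le \frac{A_{X,\Delta}(w)-w(E_Y)A_{X,\Delta}(\ord_E)}{w(D)-w(E_Y)S_m(\ord_E)}$; taking the sup over compatible $D$ replaces $w(D)$ by $S_m(w)$.

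Next I would feed into this the fundamental inequality $\delta(X,\Delta)=\frac{A_{X,\Delta}(\ord_E)}{S(\ord_E)}\le \frac{A_{X,\Delta}(w)}{S(w)}$ (valid since $\ord_E$ computes $\delta$), rewritten as $A_{X,\Delta}(w)\ge \delta(X,\Delta) S(w)$ and $A_{X,\Delta}(\ord_E)=\delta(X,\Delta)S(\ord_E)$, to get
\[
\frac{A_{X,\Delta}(w)-w(E_Y)A_{X,\Delta}(\ord_E)}{S(w)-w(E_Y)S(\ord_E)}\ge \delta(X,\Delta)
\]
whenever the denominator $S(w)-w(E_Y)S(\ord_E)$ is positive; one must also check this denominator is nonnegative, which follows because $S(w)-w(E_Y)S(\ord_E)=\lim_m\big(S_m(w)-w(E_Y)S_m(\ord_E)\big)$ and $S_m(w)-w(E_Y)S_m(\ord_E)=w(g_*^{-1}D_m)\ge 0$ for $D_m$ an $m$-basis type divisor compatible with both $w$ and $\ord_E$ (this last compatibility is where we need to invoke the $(m,c)$-basis / simultaneous diagonalization technology, but at the level of the generic fiber the two filtrations live over a field so ordinary simultaneous diagonalization suffices). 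The crux is then a uniform convergence statement: for every $\varepsilon>0$ there is $m_0$ with $S_{mr}(w)-w(E_Y)S_{mr}(\ord_E)\le (1+\varepsilon)\big(S(w)-w(E_Y)S(\ord_E)\big)$ for \emph{all} divisorial $w$ simultaneously. Granting this, $\lambda_{mr}(X,\Delta;E)\ge \inf_w\frac{A_{X,\Delta}(w)-w(E_Y)A_{X,\Delta}(\ord_E)}{S_{mr}(w)-w(E_Y)S_{mr}(\ord_E)}\ge (1+\varepsilon)^{-1}\inf_w\frac{A_{X,\Delta}(w)-w(E_Y)A_{X,\Delta}(\ord_E)}{S(w)-w(E_Y)S(\ord_E)}\ge (1+\varepsilon)^{-1}\delta(X,\Delta)$ for $m\ge m_0$, so taking $\liminf_m$ and then $\varepsilon\to 0$ gives $\lambda(X,\Delta;E)\ge\delta(X,\Delta)$.

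The uniform convergence statement is where the work lies, and it is precisely the inequality \eqref{e:introconvergence}. I would prove it by the Okounkov-body/Newton--Okounkov argument already packaged in Theorem \ref{prop-comparetwofiltration}: apply that result with $V_\bullet$ the section ring, with the prime divisor in the statement taken to be $E$ (after passing to a birational model where $E$ is a genuine divisor), and with $\mathcal{G}$ the filtration $\mathcal{G}^\la V_m=\{s:\ord_E(s)\ge \la - m\,\ord_E(E_Y)\}$ suitably normalized so that $S_m(\mathcal{G})$ and $S(\mathcal{G})$ encode $w(E_Y)S_m(\ord_E)$-type corrections; Theorem \ref{prop-comparetwofiltration} then yields $S_{mr}(\mathcal{F})-S_{mr}(\mathcal{G})\le (1+\varepsilon)(S(\mathcal{F})-S(\mathcal{G}))$ uniformly in $\mathcal{F}\supset\mathcal{G}$, with $\mathcal{F}=\mathcal{F}_w$. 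The genuine obstacle, as flagged in the introduction, is that the \emph{relative} $S$-invariant is computed on $X_0$, and the restriction of $\mathcal{F}_E$ to $X_0$ is not the filtration of a single valuation; to bridge this I would degenerate $X_0$ via $\ord_E$ to the reduced projective scheme $\mathrm{Proj}\,\mathrm{gr}_{\mathcal{G}}R$ (reduced by Proposition \ref{p:weaklyspecialtc}), decompose into irreducible components, and apply the absolute Okounkov-body estimate component by component, using the reducible-scheme machinery of Section \ref{ss:reducible} (Propositions \ref{p:reducedSinvproperties} and \ref{prop-reducedBJinequality}) to reassemble. Finally, possibly enlarging $r$ so that $rL$ is Cartier on all relevant models and so that the base ideals stabilize appropriately is harmless since $\lambda(X,\Delta;E)$ only improves (or is unchanged) under replacing $r$ by a multiple, by the same limit argument as for $S$ and $\delta$.
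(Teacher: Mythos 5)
Your outline matches the paper's argument: first establish the non-asymptotic lower bound of Lemma \ref{l:la_minequality}, then combine the minimizer inequality $A_{X,\Delta}(w)\ge \delta(X,\Delta)S(w)$ with the uniform convergence estimate of Lemma \ref{l:subtleBJinequality}, proved by degenerating $X_0$ to the reduced scheme $\mathrm{Proj}\,\mathrm{gr}_{\mathcal G}R$ (via Proposition \ref{p:weaklyspecialtc}) and applying the two-filtration Okounkov-body estimate (Proposition \ref{prop-comparetwofiltration}) componentwise using the Section \ref{ss:reducible} machinery. One small remark: for the non-negativity of the denominator you do not need (and in general cannot achieve) a relative basis type divisor compatible with both $w$ and $\ord_E$; taking $D_m$ compatible with $\ord_E$ alone already gives $S_m(w)-w(E_Y)S_m(E)\ge w(D_m)-w(E_Y)S_m(E)=w(g_*^{-1}D_m)\ge 0$, which is all that is required.
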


The proof of the inequality mirrors the proof that the stability has a valuative interpretation, but is significantly more subtle.
Before proving the inequality, we give a lower bound for the non-asymptotic version.

\begin{lem}\label{l:la_minequality}
For each  integer $m>0$ divisible by $r$,
\[
\la_m(X,\Delta;E) \geq 
\inf_{w\in \DivVal_X}
\frac{A_{X,\Delta}(w) - w(E_Y) A_{X,\Delta}(E)}{S_m(w) - w(E_Y)S_m(E)},
\] 
Furthermore,   the  numerator  and  denominator of the fraction inside the infimum are always non-negative.
\end{lem}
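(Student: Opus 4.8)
The plan is to reduce the lc condition defining $\la_m(X,\Delta;E)$ to an inequality on log discrepancies of divisorial valuations over $Y$, and then to rewrite that inequality in terms of invariants on $X$ using the crepant pullback formula for $g$ and the interpretation $S_m(w)=\max\{w(D)\}$ over $m$-basis type divisors compatible with $w$. First I would fix an $m$-basis type divisor $D$ of $L$ compatible with $E$ and a rational number $\la>0$, and observe that $(Y,g_*^{-1}(\Delta+\la D)+E_Y)$ is lc if and only if $A_{Y,g_*^{-1}(\Delta+\la D)+E_Y}(w)\geq 0$ for all divisorial valuations $w$ on $X$ (equivalently on $Y$). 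Since $E_Y$ is the unique $g$-exceptional divisor and $-E_Y$ is $g$-ample, the crepant pullback formula reads
\[
K_Y+g_*^{-1}\Delta+E_Y = g^*(K_X+\Delta)+\big(1-A_{X,\Delta}(E)\big)E_Y,
\]
so for any $w\in\DivVal_X$ one gets
\[
A_{Y,g_*^{-1}(\Delta+\la D)+E_Y}(w) = A_{X,\Delta}(w)-\la\, w(D) - w(E_Y)\big(1-A_{X,\Delta}(E)\big) + w(E_Y) = A_{X,\Delta}(w)-\la\, w(D) + w(E_Y)A_{X,\Delta}(E),
\]
where I use $w(g_*^{-1}D)=w(D)-\ord_E(D)\,w(E_Y)$ together with $\ord_E(D)\le S_m(E)$ and, since $D$ is compatible with $E$, in fact $\ord_E(D)=S_m(E)$. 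Thus $w(g_*^{-1}D)=w(D)-w(E_Y)S_m(E)$, and $(Y,g_*^{-1}(\Delta+\la D)+E_Y)$ is lc precisely when $\la\big(w(D)-w(E_Y)S_m(E)\big)\le A_{X,\Delta}(w)-w(E_Y)A_{X,\Delta}(E)$ for all $w\in\DivVal_X$.

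Next I would take the supremum over such $\la$ and the infimum over $m$-basis type divisors $D$ compatible with $E$. For a fixed $w$, the quantity $\sup_D\big(w(D)-w(E_Y)S_m(E)\big)$ taken over $D$ compatible with $E$ equals $S_m(w')-w(E_Y)S_m(E)$ where $w'$ ranges suitably; more precisely, since there exists an $m$-basis type divisor simultaneously compatible with both $E$ and $w$ when $w$ and $E$ generate a quasi-monomial valuation (and in general one bounds $w(D)\le S_m(w)$ by Lemma \ref{l:S(v)basistype}, with equality for $D$ compatible with $w$), one obtains
\[
\la_m(X,\Delta;E) = \inf_{w\in\DivVal_X} \frac{A_{X,\Delta}(w)-w(E_Y)A_{X,\Delta}(E)}{\sup_D\big(w(D)-w(E_Y)S_m(E)\big)} \geq \inf_{w\in\DivVal_X}\frac{A_{X,\Delta}(w)-w(E_Y)A_{X,\Delta}(E)}{S_m(w)-w(E_Y)S_m(E)},
\]
where the inequality uses $w(D)\le S_m(w)$. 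The main obstacle I anticipate is precisely the handling of the simultaneous compatibility of a single $m$-basis type divisor $D$ with both $E$ and $w$ over a DVR: as the introduction emphasizes, one cannot diagonalize two filtrations of a free module over a DVR, so to get the exact identity (rather than just the inequality which is all the lemma asserts) one would need the $(m,c)$-basis type divisor machinery; for the stated inequality, however, the crude bound $w(D)\le S_m(w)$ suffices and this difficulty is sidestepped.

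Finally, for the non-negativity assertions: the numerator $A_{X,\Delta}(w)-w(E_Y)A_{X,\Delta}(E)$ equals $A_{Y,g_*^{-1}\Delta+E_Y}(w)+w(E_Y)\big(A_{X,\Delta}(E)-A_{X,\Delta}(E)\big)$—more directly, it is the log discrepancy $A_{Y,g_*^{-1}\Delta+E_Y}(w)$ which is $\ge 0$ because $(Y,g_*^{-1}\Delta+E_Y)$ is lc (this pair being crepant-birational to $(X,\Delta^+)$ via the complement $\Delta^+\ge X_0$ with $A_{X,\Delta^+}(E)=0$ from Setup \ref{assu:logFano}, or simply because $(X,\Delta)$ is klt and $E_Y$ is extracted with log discrepancy $\le 1$). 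For the denominator, $S_m(w)-w(E_Y)S_m(E)=\max_D\big(w(D)-w(E_Y)S_m(E)\big)\ge \max_D w(g_*^{-1}D)\ge 0$ since $g_*^{-1}D$ is effective, where $D$ ranges over $m$-basis type divisors compatible with $w$; here one uses that any such $D$ has $\ord_E(D)\le S_m(E)$, so $w(g_*^{-1}D)=w(D)-w(E_Y)\ord_E(D)\ge w(D)-w(E_Y)S_m(E)$, hence the denominator dominates a non-negative quantity. This completes the plan.
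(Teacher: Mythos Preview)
Your overall approach is the same as the paper's: compute the log discrepancy of $w$ with respect to $(Y,g_*^{-1}(\Delta+\la D)+E_Y)$ via the crepant pullback formula, use $\ord_E(D)=S_m(E)$ for $D$ compatible with $E$, and bound $w(D)\le S_m(w)$. Two computational slips should be corrected.

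First, your crepant pullback formula is miswritten: the correct identity is
\[
K_Y+g_*^{-1}\Delta+E_Y = g^*(K_X+\Delta)+A_{X,\Delta}(E)\,E_Y,
\]
not with coefficient $1-A_{X,\Delta}(E)$. Consequently your displayed expression $A_{X,\Delta}(w)-\la\,w(D)+w(E_Y)A_{X,\Delta}(E)$ is internally inconsistent with the lc inequality you state immediately afterwards; with the correct coefficient one finds
\[
A_{Y,g_*^{-1}(\Delta+\la D)+E_Y}(w)=A_{X,\Delta}(w)-w(E_Y)A_{X,\Delta}(E)-\la\big(w(D)-w(E_Y)\ord_E(D)\big),
\]
which then gives exactly the inequality you wrote. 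So your final inequality is right, but the intermediate line should be fixed.

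Second, your argument for non-negativity of the denominator has the inequality in the wrong direction. You take $D$ compatible with $w$ and use $\ord_E(D)\le S_m(E)$, which yields $w(g_*^{-1}D)\ge w(D)-w(E_Y)S_m(E)$; this gives an \emph{upper} bound for $S_m(w)-w(E_Y)S_m(E)$ by $w(g_*^{-1}D)$, not a lower bound. The paper instead chooses $D$ compatible with $E$, so that $\ord_E(D)=S_m(E)$ exactly, and then $0\le w(g_*^{-1}D)=w(D)-w(E_Y)S_m(E)\le S_m(w)-w(E_Y)S_m(E)$, which is the needed direction. Swap the compatibility condition and the argument works.
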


In the lemma, we use the convention that the fraction inside the infimum is $+\infty$ when the denominator is zero.

\begin{proof}	
First, we verify that  $(Y, g_*^{-1}(\Delta) +E_Y)$ is lc. 
By Assumption \ref{assu:logFano}, 
$E$ is an lc place of some complement $\Delta^+$ of $(X,\Delta)$. 
Therefore $(Y, g_*^{-1}(\Delta^+)+E_Y)$ is crepant birational to $(X,\Delta^+)$ and, in particular, is lc.
As $\Delta^+\geq \Delta$, it follows that $(Y, g_*^{-1}(\Delta)+E_Y)$ is lc.
	
Next, we prove that the numerators and denominators of the fraction inside the infimum are always non-negative. 
To analyze the numerator, note that
\[
K_{Y} +g_*^{-1}(\Delta)+ E_Y = g^*(K_X+\Delta)  + A_{X,\Delta}(E) E_Y
.\]	
Thus, for $w\in \DivVal_X$,
\[
A_{X,\Delta}(w) 
=
A_{Y,g_*^{-1}(\Delta)+E_Y}(w) + A_{X,\Delta}(E)w(E_Y)
.\]
As $(Y,g_*^{-1}(\Delta)+E_Y)$ is lc by the first paragraph, it follows that 
$A_{X,\Delta}(w)-A_{X,\Delta}(E)w(E_Y)$ is non-negative.
To analyze the denominator, let $D$ be a relative $m$-basis type divisor of $(X,\Delta)$ compatible with $E$. Then 
\[
g^*D=g_*^{-1}(D)+S_m(E)E_Y\,
,\]  which implies that 
\[
0\le w(g_*^{-1}(D))= w(g^*D)-S_m(E)w(E_Y)
=
w(D) - S_m(E) w(E_Y).
\]
As $S_m(w) \geq w(D)$, we conclude that the denominator is also always non-negative.

To prove the main inequality, fix a relative $m$-basis type divisor  $D$  compatible with $\ord_E$ and $\la\in \bQ_{\geq0}$. 
Since
\[
K_Y +g_*^{-1}(\Delta+\la D)+E_Y
=
g^*(K_X+\Delta+\la D) +A_{X,\Delta+\la D}(E)E_Y
,\]
for any $w\in \DivVal_X$, we have
\begin{multline*}
A_{Y,g_*^{-1} (\Delta+\la D)+E_Y}(w)
=
A_{X,\Delta+\la D}(w) -w(E_Y)A_{X,\Delta+\la D}(E)\\
=
A_{X,\Delta}(w) 
-
w(E_Y)A_{X,\Delta}(E)
- \la (w(D)  -w(E_Y) \ord_E(D))
\\
\geq
A_{X,\Delta}(w) 
-
w(E_Y)A_{X,\Delta}(E)
- \la (S_m(w)  - w(E_Y)S_m(E)),
\end{multline*}
where the last line uses that $S_m(w)\geq w(D)$ and $S_m(E)=\ord_E(D)$ as $D$ is compatible with $E$ by assumption.
Thus the pair $(Y, g_*^{-1}(\Delta+\la D)+E_Y)$ is lc when
\[
\la \leq \frac{A_{X,\Delta}(w)-w(E_Y)A_{X,\Delta}(E)}
{S_m(w)-w(E_Y)S_m(E)}
\]
for all $w\in \DivVal_X$. 
Therefore the desired inequality holds.
\end{proof}

We now deduce Proposition \ref{p:lowerboundlambda} from Lemma \ref{l:la_minequality} and a technical result that will appear in the next section. 

\begin{proof}[Proof of Proposition \ref{p:lowerboundlambda}]
We first replace $r$ with a positive multiple satisfying Lemma \ref{l:subtleBJinequality}.  
Fix $\varepsilon>0$ and let $m_0:= m_0(\varepsilon)$ be a positive integer satisfying the conclusion of Lemma \ref{l:subtleBJinequality} stated and proven in the next subsection. 
For $m\geq m_0$ and divisible by $r$, we have that
\[
\la_{m}(X,\Delta;E)
\geq 
\inf_w \frac{A_{X,\Delta}(w) - w(E_Y) A_{X,\Delta}(E)}{S_m(w) - w(E_Y)S_m(E)}
\geq
\frac{1}{1+\varepsilon}\inf_w 
\frac{A_{X,\Delta}(w) - w(E_Y) A_{X,\Delta}(E)}{S(w) - w(E_Y)S(E)}.
\]
by Lemmas  \ref{l:la_minequality} and \ref{l:subtleBJinequality}.
Note that 
\[
\frac{A_{X,\Delta}(w)}{S(w)}
\geq \delta(X,\Delta)
=
\frac{A_{X,\Delta}(E)}{S(E)}
\]
by Proposition \ref{p:delta=inf A/S} and the assumption that $E$ computes $\delta(X,\Delta)$. 
Thus 
\[
A_{X,\Delta}(w) - w(E_Y)A_{X,\Delta}(E)
\geq 
\delta(X,\Delta) \big( S(w) - w(E_Y)S(E)\big)
\]
for any $w\in \DivVal_X$.
So
\[
\inf_w\frac{A_{X,\Delta}(w) - w(E_Y)A_{X,\Delta}(E)}{ S(w) - w(E_Y)S(E)} \ge \delta(X,\Delta)
\]
Combining this with the first inequality gives that 
\[
\la_{m}(X,\Delta;E)
\geq \frac{1}{1+\varepsilon}\delta(X,\Delta)
\]
for $m\geq m_0$ divisible by $r$. 
Since $\varepsilon >0$ was arbitrary, it follows that 
\[
\la(X,\Delta;E)
\geq \delta(X,\Delta)
\]
 as desired.
\end{proof}

\subsubsection{Convergence result}\

\begin{lem}\label{l:subtleBJinequality}
After possibly replacing $r>0$ with a positive multiple, the following holds.
For each $\varepsilon>0$, there exists $m_0:=m_0(\varepsilon)$ such that 
\[
S_m(w)- w(E_Y) S_m(E)  \leq  (1+\varepsilon)(S(w)- w(E_Y) S(E))
\]
for all $m \in r \bN$ with $m\geq m_0$   and $w\in \DivVal_X$.
\end{lem}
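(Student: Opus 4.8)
\textbf{Proof plan for Lemma \ref{l:subtleBJinequality}.}

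The plan is to reduce the relative statement to the absolute convergence result Proposition \ref{prop-comparetwofiltration}, applied not on $X_0$ directly (where the filtration $\cG = \cF_E|_{X_0}$ is not induced by a single valuation), but on a reduced projective scheme obtained by degenerating $X_0$ via $\ord_E$. Concretely, recall from Section \ref{ss:basechange} that after the finite base change $\oC \to C$ ramified to order $d = \ord_E(X_0)$, we have the filtration $\ocG = \cF_{\oE}|_{\oX_0}$ of $\oR$, and by Proposition \ref{p:comparefilt} the isomorphism $\oR \cong R$ identifies $\ocG^\la$ with $\oG^{\la/d}$, so $S_m(\ocG) = d\cdot S_m(\cG)$ and everything about $\cG$ can be read off $\ocG$. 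By Proposition \ref{p:weaklyspecialtc}, the graded ring $\gr_\cG R$ is finitely generated and $X_0' := \Proj \gr_\cG R$ is a \emph{reduced} projective scheme. The central fiber $X_0$ degenerates (via the Rees construction) to $X_0'$, which carries a polarization $L_0' := -K_{X_0'} - B_0'$ (the restriction of $\oL'$ to $\oX_0'$, identified with $E$), and $\gr_\cG R$ is the section ring of this polarization. The key structural input I would extract is: for a divisorial valuation $w \in \DivVal_X$ with $C_X(w) \subseteq X_0$, the filtration $\cF_w|_{X_0}$ degenerates compatibly to a \emph{linearly bounded filtration} $\cF_w'$ of $\gr_\cG R$, and one has $S_m(w) = S_m(\cF_w') + \ord_E(\text{-})$-type correction --- more precisely, the quantity $S_m(w) - w(E_Y) S_m(E)$ is exactly $\tfrac1d$ of the $S_m$-invariant (with respect to $L_0'$) of the filtration of $\gr_\cG R$ induced by $w$ on the ``$g_*^{-1}$-transformed'' sections, via the formula $w(g_*^{-1} D_m) = w(D_m) - \ord_E(D_m)\, w(E_Y)$ quoted in the introduction.

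The steps, in order: (1) Enlarge $r$ so that $r\oL'$ is Cartier, $\gr_\cG R$ is generated in degree dividing $r$, and $\oX_0' \to \Spec \bk$ together with its polarization is well-behaved. (2) Make precise the degeneration dictionary: to each $w \in \DivVal_X$ centered in $X_0$ associate a linearly bounded $\bN$-filtration $\widetilde{\cF}_w$ of $S := \gr_\cG R$ (the section ring of the ample $\bQ$-line bundle $L_0'$ on the reduced scheme $X_0'$), with $\widetilde{\cF}_w^0 S = S$, such that $S_{mr}(\widetilde{\cF}_w) = d\,(S_{mr}(w) - w(E_Y)S_{mr}(E))$ for all $m$, and correspondingly $S(\widetilde{\cF}_w) = d\,(S(w) - w(E_Y)S(E))$; the non-negativity of both sides was already checked in Lemma \ref{l:la_minequality}, so these are genuine linearly bounded filtrations. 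This is where one uses that $E_Y$ is the strict transform of $E$ and $-E_Y$ is $g$-ample, so that $g_*^{-1}D_m$ has the expected restricted-volume behavior; the compatibility of basis type divisors with both $\ord_{X_0'}$ and $w$ (Lemma \ref{lem-dualfiltration} and its underlying double-diagonalization) is what makes the bookkeeping work. (3) Apply Proposition \ref{prop-reducedBJinequality} to the reduced projective scheme $X_0'$ with the ample line bundle $L_0'$: for each $\varepsilon>0$ there is $m_0(\varepsilon)$ with $S_{mr}(\cF) \le (1+\varepsilon) S(\cF)$ for all linearly bounded filtrations $\cF$ of $S$ with $\cF^0 S = S$ and all $m \ge m_0$. (4) Specialize to $\cF = \widetilde{\cF}_w$ and divide by $d$ to conclude.

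The main obstacle I anticipate is Step (2): establishing, uniformly over all $w \in \DivVal_X$, that $S_m(w) - w(E_Y)S_m(E)$ equals (up to the factor $d$) the $S_m$-invariant of a single filtration of the section ring $\gr_\cG R$ of the reduced scheme $X_0'$. The subtlety is that $\cG = \cF_E|_{X_0}$ is not the filtration of $R$ coming from a valuation on $X_0$, so one cannot simply ``restrict $w$ to $X_0$''; instead one must track $w$ through the Rees degeneration and verify that the induced filtration on the special fiber of the Rees algebra is linearly bounded with the correct $S_m$-values. Here I would argue componentwise: decompose $X_0' = \bigcup X_i'$ into irreducible components (Section \ref{ss:reducible}), note that $\gr_\cG$ intertwines the component-wise graded linear series construction, and on each geometrically integral component the filtration induced by $w$ \emph{is} of the expected form, so Proposition \ref{p:irredBJinequality} applies there and Proposition \ref{prop-reducedBJinequality} packages the pieces. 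One must also check that the correction term $w(E_Y)S_m(E)$ is precisely absorbed by passing from $D_m$ to $g_*^{-1}D_m$, i.e. that $S_m(E) = \ord_E(D_m)$ for $D_m$ compatible with $E$ (which is Lemma \ref{l:S(v)basistype}) and that compatibility with $E$ survives the degeneration; this is exactly the point where replacing $r$ by a multiple, and the finite generation of $\gr_\cG R$ from Proposition \ref{p:weaklyspecialtc}, are used. Once this dictionary is in place, the convergence is an immediate application of Proposition \ref{prop-reducedBJinequality}, with the uniform $m_0(\varepsilon)$ coming directly from that proposition.
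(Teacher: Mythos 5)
Your proposal and the paper's proof agree on the opening move---pass to the degeneration $Z := \Proj \gr_\cG R$ with $\cG := \cF_E\vert_{X_0}$, whose reducedness and finite generation (Proposition~\ref{p:weaklyspecialtc}) make the section-ring approximation machinery applicable---but then diverge on the main step. The paper treats $S_m(w) - w(E_Y)S_m(E)$ as a genuine \emph{difference} of $S_m$-invariants of two filtrations $\cG \subset \cF$ of $\gr_\cG R$ and, in the Claim, proves the uniform convergence of this difference by decomposing $Z$ into irreducible components, identifying each $\cG_i$ as a divisorial/flag-type filtration via \cite{BHJ}*{Lemma 5.17}, and invoking the \emph{two-filtration} Proposition~\ref{prop-comparetwofiltration} component by component. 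You propose instead to encode the difference as the $S_m$-invariant of a \emph{single} filtration $\widetilde{\cF}_w$ of $\gr_\cG R$ and then apply the \emph{one-filtration} Proposition~\ref{prop-reducedBJinequality} directly. This is a genuine alternative, and it would in fact simplify matters if carried out: after normalizing so $w(E_Y)=1$ (forcing $\cG \subset \cF := \cF_w\vert_{X_0}$), the shifted filtration $\widetilde{\cF}^\mu(\gr_\cG^\la R_m) := \cF^{\mu+\la}(\gr_\cG^\la R_m)$ is multiplicative, linearly bounded, satisfies $\widetilde{\cF}^0 = \mathrm{everything}$ (all three use $\la\ge 0$ and $\cG\subset\cF$), and $S_m(\widetilde{\cF}) = S_m(\cF) - S_m(\cG)$ by simultaneous diagonalization; Proposition~\ref{prop-reducedBJinequality} applied to $\widetilde{\cF}_w$ then closes the argument without Proposition~\ref{prop-comparetwofiltration} or any component-wise reduction.

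However, the proposal as written leaves exactly this step undone. You flag Step~(2) as ``the main obstacle,'' never write down $\widetilde{\cF}_w$, and never verify the $S_m$-identity; the resolution you sketch---decomposing into components, applying Proposition~\ref{p:irredBJinequality} componentwise and letting Proposition~\ref{prop-reducedBJinequality} ``package the pieces''---does not serve the single-filtration route (once $\widetilde{\cF}_w$ exists you apply Proposition~\ref{prop-reducedBJinequality} exactly once; no decomposition is needed), and reads more like a garbled echo of the paper's two-filtration argument, which crucially needs Proposition~\ref{prop-comparetwofiltration}, not \ref{p:irredBJinequality}. Two smaller issues: the case $w(E_Y)=0$, where $\cG \not\subset \cF_w\vert_{X_0}$ and the shift construction does not apply, must be handled separately (the paper dispatches it first by applying Proposition~\ref{prop-reducedBJinequality} to $\cF_w\vert_{X_0}$ directly); and the factor $d$ and the appeal to Proposition~\ref{p:comparefilt} are a distraction---the paper's proof of this lemma works entirely with $\cG = \cF_E\vert_{X_0}$ over the original $C$, and the base change to $\oC$ enters only inside the proof of Proposition~\ref{p:weaklyspecialtc}, which you may treat as a black box.
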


While we would like to deduce the lemma as a simple consequence of Proposition \ref{prop-comparetwofiltration}, we cannot as the restricted filtration $\cF_E\vert_{X_0}$ is not necessarily induced by a divisor over $X_0$. 
We will instead degenerate $X_0$ to a projective reduced scheme on which we can apply Proposition \ref{prop-comparetwofiltration} to the irreducible components.

\begin{proof}
First observe that by Proposition \ref{prop-reducedBJinequality}
there exists $m_1:=m_1(\varepsilon)$ such that
\[
S_m(w)- w(E_Y) S_m(E)  \leq  (1+\varepsilon)(S(w)- w(E_Y) S(E))
\]
 holds for all $m \in r\bN$  with $m \geq m_1$ and $w\in \DivVal_X$ with $w(E_Y)=0$. 
Thus it remains to prove a similar convergence statement for $w\in \DivVal_X$ with  $w(E_Y)>0$.

Let $\cG:= \cF_E\vert_{X_0}$ denote the filtration of $R:=R(X_0,L_0)$ defined by restricting $\cF_E$ to a filtration of $R$.
Consider the graded $\bk$-algebra
\[
{\rm gr}_\cG R  
:= \bigoplus_{m \in r\bN} {\rm gr}_\cG R_m
:= \bigoplus_{m \in r\bN} \bigoplus_{\la \in \bZ} {\rm gr}_\cG^\la R_m
.\]
We define a  filtration of ${\rm gr}_\cG R$   by
\[
\cG^\mu ({\rm gr}_\cG R_m ) =  \bigoplus_{\la =\lceil  \mu \rceil }^{+\infty  }  {\rm gr}^\la_\cG R_m
\]
that we a bit abusively also denote by $\cG$.
Note that $\dim \cG^\la R_m  = \dim \cG^\la  ({\rm gr}_\cG R_m)$ and so 
$S_m(\cG, R) = S_m( \cG,{\rm gr}_\cG R)$.
We seek to verify the following key claim.

\medskip

\noindent {{\bf Claim}: After possibly replacing $r>0$ with a positive multiple, the following holds. For any $\varepsilon>0$, there exists an integer $m_2:=m_2(\varepsilon)>0$ such that
\[
S_m(\cF, {\rm gr}_\cG R ) - S_m(\cG , {\rm gr}_\cG R ) \leq (1+\varepsilon)( S(\cF, {\rm gr}_\cG R) - S(\cG, {\rm gr}_\cG R) )
\]
for all $m \in r\bN$ with $m\geq m_2$  and linearly bounded filtrations $\cF$ of ${\rm gr}_\cG^R$ with $\cG\subset \cF$.}
\medskip

\begin{proof}[Proof of Claim]	
First, we relate  ${\rm gr}_\cG R$ to the section ring of a reduced projective scheme.
By Proposition \ref{p:weaklyspecialtc}, ${\rm gr}_\cG R$ is a finitely generated $\bk$-algebra
and 
\[
Z:={\rm Proj}\, {\rm gr}_\cG R\,
\]
is reduced. 
After replacing $r$ with a positive multiple, we may assume that $\cO_Z(r)$ is a line bundle and write $H:= \cO_Z(r)^{\otimes 1/r}$, which is a $\bQ$-line bundle. 
After further replacing $r$ with a positive multiple, we may  assume that the natural map
\[
{\rm gr}_\cG R \to R(Z,H) 
\]
is an isomorphism of graded rings.
Note that the $\bZ$-grading on ${\rm gr}_\cG R$ induces a $\bG_m$-action on $Z$ and a $\bG_m$-linearization of $\cO_Z(1)$ such that  ${\rm gr}_\cG R \to R(Z,H)$ restricts to an isomorphism 
\[
 {\rm gr}_\cG^\la R_m \to R_m(Z,H)_{\la}  
,\]
 where $ R_m(Z,H)_{\la}$ is the $\la$-weight space of the $\bG_m$-action on $R_m(Z,H)$.
Since there is a natural isomorphism ${\rm gr_\cG} R\cong R(Z,H)$, 
we will refer to filtrations of ${\rm gr}_\cG R$ and $R(Z,H)$ without distinction. 
In particular, we write $\cG$ for the filtration $R(Z,H)$ such that 
\[
\cG^\mu R_m(Z,H) =  \bigoplus_{\la =\lceil  \mu \rceil }^{+\infty  } R_m(Z,H)_\la
.\]
 
Next, we consider the decomposition
 $
 Z = Z_1 \cup \cdots  \cup Z_s,
 $
 where each $Z_i$ is a distinct irreducible component of $Z$, and set
  $H_i := H\vert_{Z_i}$. 
 After replacing $r>0$ with a further multiple, we may assume that each restriction map
 \[
r_i : R(Z,H)\to R(Z_i,H_i)
\]
is surjective.
The $\bG_m$-action on $Z$ and linearization of $\cO_Z(H)$
restricts to a $\bG_m$-action on $Z_i$ and linearization of $\cO_{Z_i}(H_i)$. 
Observe that 
\[
 r_i ( R_m(Z,H)_\la) = R_m(Z_i,H_i)_\la
,\]
where $R_m(Z_i,H_i)_\la$ is  the $\la$-weight space of $R_m(Z_i,H_i)$.
We define a filtration $\cG_i$ of  of $R_m(Z_i,H_i)$ by
 \[
 \cG_i^\mu R_m (Z_i,H_i):=  \bigoplus_{\la =\lceil  \mu \rceil }^{+\infty  }  R_m(Z_i,H_i)_\la.
 \]
 By \cite[Lemma 5.17]{BHJ} applied to the product test configuration induced by the $\bG_m$-action on the normalization of $(Z_i, H_i)$, there exists  a prime divisor $E_i$ 
 over $Z_i$ and $a_i,b_i\in \bQ$ with $a_i\geq 0$ satisfying
 \[
 \cG_i^\la R_m(Z_i,H_i) 
 =
 \{ s\in H^0(Z_i,mH_i) \, \vert\, \ord_{E_i}(s) \geq  a_i\la  -b_im \} 
\]
for all $m\in \bN$ and $\la \in \bR$. 

Following Section \ref{ss:reducible},  consider the graded
 linear series $V_\bullet^i$ of $H_i$ defined by 
\[
V_m^{i} := \im \left ( W_m^i  \to H^0(Z_i, mH_i) \right)
,\]
where 
\[
W_m^i := H^0(Z,\cO_Z(mH) \otimes \cI_{Z_1\cup \cdots \cup Z_{i-1 }}).
\]
Write  $\cG_i$ for the filtration of $V_\bullet^i$ defined by 
\[
\cG_i^\la V^i_m: = \cG_i^\la R_m(Z_i,H_i) \cap V_m^i 
.\]
By Proposition \ref{prop-comparetwofiltration}, there exists $M:=M(\varepsilon)>0$ such that 
\[
S_m(\cF_i)-
S_m(\cG_i)< (1+ \varepsilon)^{1/2} (S(\cF_i)-S(\cG_i))
\]
for all $i\in \{1,\ldots, s\}$,  $m\in r\bN$ with $m\geq M$, and linearly bounded filtrations of $\cF_i$ of $V_\bullet^i$ satisfying $\cG_i \subset \cF_i$.
By Proposition \ref{p:vollimit}  and Lemma \ref{l:Vicontainsampleseries}, there exists $N:=N(\varepsilon)>0$ such that 
\[
\frac{\dim V_m^i }{\dim R_m(Z,H)} 
< (1+\varepsilon)^{1/2} \frac{\vol(V_\bullet^i)}{ \vol (H)}
\]
for all $m\in r\bN$ and $m \geq N$ and $i \in \{1,\ldots, s\}$. 

We now verify that the  claim holds with $m_2:= \max\{M ,N\}$. Fix a linearly bounded filtration of $\cF$ of $R(Z,H)$ with $\cG\subset \cF$.
Let $\cF_i$ be the  induced filtration of $V_\bullet^i$  as in Section \ref{ss:reducible}. 
If $m\in r\bN$ satisfies $m \geq m_2$, then 
\begin{multline*}
S_m(\cF) -S_m(\cG)= \sum_{i=1}^s \frac{ \dim V_m^i}{ \dim  R_m(Z,H) }
  (S_m(\cF_i)-S_m(\cG_i))\\
\leq (1+\varepsilon) \sum_{i=1}^r  \frac{\vol(V_\bullet^i)}{\vol (H)} (S(\cF_i)-S(\cG_i))
= (1+\varepsilon)(S(\cF) - S(\cG)),
\end{multline*}
where the first relation  holds by Lemma \ref{l:formulafiltsonXi}, the second  by the assumption that $m \geq m_2$, and the  third by Lemma \ref{l:formulafiltsonXi}.
Therefore the claim holds. 
\end{proof}

\medskip

We now verify that the lemma holds with $m_0 :=
\max\{m_1, m_2\}$.
Fix $w\in \DivVal_{X}$ with $w(E_Y)>0$ and set $\cF:= \mathcal{F}_w \vert_{X_0}$. 
After scaling $w$, we may assume that $w(E_Y)=1$.
Observe that $\cG\subset \cF$, since if $s\in H^0(X, mL)$ and  $c : = \ord_{E_Y}(s)$, then 
\[
w(s) = w (g_*^{-1}\{s=0\} +c E_Y) \geq c w(E_Y ) = c
.
\]
Now $\cF$ induces a filtration of ${\rm gr}_\cG(R)$ defined by 
\[
\cF^\mu  ({\rm gr}_\cG R_m) := \im \Big( \bigoplus_{\la \in \bZ} (\cF^\mu R_m \cap \cG^\la R_m) \to \bigoplus_{\la \in \bZ} {\rm gr}_\cG^\la R_m \Big)
\subset {\rm gr}_\cG R_m
.\]
We abusively also refer to this filtration as $\cF$.
By choosing a basis for $R_m$ that simultaneously diagonalizes both $\cG$ and $\cF$, we see that $\dim \cF^\la R_m = \dim \cF^\la {\rm gr}_\cG R_m$.
Therefore 
\[
S_m(w) = S_m(\cF,R) = S_m(\cF, {\rm gr}_\cG R)
\]
and 
\[
S_m(E_Y) = S_m(\cG,R) = S_m(\cG, {\rm gr}_\cG R)
.\]
Since there is an inclusion $\cG\subset \cF$ as filtrations of $R$, the same inclusion holds for the filtrations of ${\rm gr}_\cG R$ and so the claim implies that 
\[
S_m(\cF,{\rm gr}_\cG R) - S_m(\cG,{\rm gr}_\cG R)< (1+\varepsilon)(S(\cF,{\rm gr}_\cG R)-S(\cG,{\rm gr}_\cG R))
\]
for all integers $m \geq m_0$  divisible by $r$.
Combining the previous three expressions and the assumption that $w(E_Y)=1$, we deduce that 
\[
S_m(w) - w(E_Y) S_m(E_Y) < (1+\varepsilon)( S(w)-w(E_Y) S(E_Y))
\]
for all  integers $m \geq m_0$  divisible by $r$.
\end{proof}

\subsubsection{Complements to basis type divisors}
We now use Proposition \ref{p:lowerboundlambda} to prove a result on constructing complements with prescribed singularities along  $m$-basis type divisors compatible with the minimizer $E$.

\begin{prop}\label{p:complementsforbasistype+minimizer}
After possibly replacing $r>0$ with a positive multiple, the following holds:
For any $\varepsilon>0$, there exists $m_0:=m_0(\varepsilon)$  such that if  $m\geq m_0$ is divisible by $r$  and $D$ is a relative $m$-basis type divisor of $-K_X-\Delta$  compatible with $E$, then
there exists a complement $\Delta^+$ of $(X,\Delta)$ such that
\begin{enumerate}
\item  $\Delta^+\geq \Delta+ (\delta(X,\Delta)-\varepsilon) D$ and
\item $A_{X,\Delta^+}(E)=0$.
\end{enumerate} 
\end{prop}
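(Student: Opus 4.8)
The strategy is to convert the asymptotic lower bound $\la(X,\Delta;E)\ge \delta(X,\Delta)$ from Proposition~\ref{p:lowerboundlambda} into the existence of the desired complement, following the template used earlier to pass from the non-asymptotic invariant to an actual lc place (as in Lemma~\ref{l:limit of complement} and the proof of Proposition~\ref{p:special complement}). First I would fix $\varepsilon>0$, set $\delta:=\delta(X,\Delta)$, and choose, using Proposition~\ref{p:lowerboundlambda} (after replacing $r$ by a suitable multiple so that the conclusion of that proposition holds), an integer $m_0$ such that $\la_m(X,\Delta;E)\ge \delta-\varepsilon/2$ for all $m\ge m_0$ divisible by $r$. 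For such $m$ and any relative $m$-basis type divisor $D$ compatible with $E$, the pair $(Y,g_*^{-1}(\Delta)+E_Y+(\delta-\varepsilon/2)g_*^{-1}D)$ is lc by the definition of $\la_m$, and pushing forward via $g$ using $g^*(K_X+\Delta+(\delta-\varepsilon/2)D)=K_Y+g_*^{-1}(\Delta)+E_Y+(\delta-\varepsilon/2)g_*^{-1}D$ (which holds because $A_{X,\Delta}(E)=0$ cannot be assumed, so I must keep track of the coefficient $A_{X,\Delta+(\delta-\varepsilon/2)D}(E)=A_{X,\Delta}(E)-(\delta-\varepsilon/2)\ord_E(D)$), we see that $(X,\Delta+(\delta-\varepsilon/2)D)$ is lc and $E$ is extracted crepantly by $g$ up to that coefficient.

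The next step is to upgrade "$E$ is lc in a pair with the right divisor in the boundary" to "$E$ is an lc place of an actual $\bQ$-complement containing $(\delta-\varepsilon)D$". Since $-(K_X+\Delta)$ is ample over $C$ and $\delta<1$, the $\bQ$-divisor $-(K_X+\Delta+(\delta-\varepsilon/2)D)$ is ample over $C$, so by Bertini we can add an effective $\bQ$-divisor $\Theta\sim_{\bQ,C}-(K_X+\Delta+(\delta-\varepsilon/2)D)$, general enough that $(X,\Delta+(\delta-\varepsilon/2)D+\Theta)$ remains lc away from the center of $E$; this gives a $\bQ$-complement $\Delta^{++}$ of $(X,\Delta)$ with $\Delta^{++}\ge \Delta+(\delta-\varepsilon/2)D$. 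However this does not yet guarantee $A_{X,\Delta^{++}}(E)=0$. To force $E$ to be an lc place, I would instead run the argument of Lemma~\ref{l:limit of complement} / Lemma~\ref{l:complement>=given divisor}: using that $E=\ord_E$ (a divisorial, hence quasi-monomial, valuation) computes $\delta(X,\Delta)$, and that $v(D)=S_m(E)$ for $D$ compatible with $E$ with $\lim_m S_{mr}(E)=S(E)$, one extracts from the sequence of complements $\Delta^{++}_m$ attached to a sequence $D_m$ of compatible basis type divisors a limiting complement realizing $E$ as an lc place while retaining $\ge(\delta-\varepsilon)D_m$ in the boundary for $m\gg 0$. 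The key inputs are Lemma~\ref{lem:ACC} (the ACC-type perturbation lemma, to pass from $(1-\varepsilon')$-coefficients to full coefficients in a complement) and Lemma~\ref{l:Fanotype} (to ensure the birational model extracting $E$ is of Fano type over $C$ so that complements propagate).

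Concretely, I would: (i) fix a proper birational $p:\tX\to X$ extracting exactly $E$ (Proposition~\ref{p:extractmin} gives essentially this, or \cite[Corollary 1.4.3]{BCHM}), so that $\tX$ is of Fano type over $C$ by Lemma~\ref{l:Fanotype}; (ii) on $\tX$, the strict transform of $\Delta+(\delta-\varepsilon/2)D$ plus $E$ forms an lc pair whose $-(K_{\tX}+\cdot)$ is big over $C$; (iii) apply Lemma~\ref{lem:ACC} with $G$ the relevant part of $D$, possibly after shrinking $\varepsilon$, to produce a $\bQ$-complement of $(\tX, \tilde\Delta+(\delta-\varepsilon)\tilde D+E)$ over $C$; (iv) push this down to $X$ to obtain $\Delta^+$ with $\Delta^+\ge\Delta+(\delta-\varepsilon)D$ and $A_{X,\Delta^+}(E)=0$. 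The uniformity of $m_0$ in $\varepsilon$ (independent of $D$) comes for free because $m_0$ only enters through the bound $\la_m\ge\delta-\varepsilon/2$, which is about all compatible $D$ simultaneously, and because the ACC constant in Lemma~\ref{lem:ACC} depends only on $\dim X$ and the coefficient set, which is fixed once $r$ and the coefficients of $\Delta$ are fixed. The main obstacle I anticipate is step (iii)–(iv): ensuring that the coefficient perturbation in Lemma~\ref{lem:ACC} can be arranged so that the \emph{same} loss $\varepsilon$ works uniformly and so that $E$ survives as a genuine lc place rather than merely an lc valuation with positive discrepancy — this is exactly the subtlety that Lemma~\ref{l:limit of complement} was designed to handle, so I expect to invoke it essentially verbatim, with the sequence $\Gamma_m:=$ (the complement attached to $D_m$) and $v=\ord_E$, using $\lim_m v(\Gamma_m)=A_{X,\Delta}(E)$ which follows from $v(\Gamma_m)\ge (\delta-\varepsilon/2)\cdot S_m(E)\to (\delta-\varepsilon/2)S(E)$ together with $A_{X,\Delta}(E)=\delta\,S(E)$ and letting $\varepsilon\to 0$ through the construction.
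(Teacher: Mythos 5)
Your proposal correctly takes Proposition~\ref{p:lowerboundlambda} as the starting point and correctly locates the construction on the extraction model $Y$ of Proposition~\ref{p:extractmin}, where $(Y, g_*^{-1}(\Delta + cD) + E_Y)$ is lc for $c$ just below $\delta(X,\Delta)$ and $m \gg 0$. But the route you take from there has a genuine gap. The paper's proof does one thing your sketch never does: it shows that
\[
-\bigl(K_Y + g_*^{-1}(\Delta + c_m D) + E_Y\bigr) \sim_{\bQ} (1-c_m)\Bigl(-g^*(K_X+\Delta) - \tfrac{a_m}{1-c_m}\, E_Y\Bigr)
\]
is semiample over $C$, where $c_m := \min\{\delta_m(X,\Delta),\lambda_m(X,\Delta;E)\}$ and $a_m := A_{X,\Delta}(E) - c_m S_m(E) \ge 0$. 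This follows because $-K_X-\Delta$ is ample over $C$, $-E_Y$ is $g$-ample, $c_m\to\delta(X,\Delta)<1$, and $a_m/(1-c_m)\to 0$. With semiampleness in hand, a single application of Bertini \emph{on $Y$} to the lc pair $(Y, g_*^{-1}(\Delta + c_m D) + E_Y)$ produces a $\bQ$-complement; since $E_Y$ lies in the boundary with coefficient $1$, the pushforward $\Delta^+ := g_*(\cdot)$ automatically has $A_{X,\Delta^+}(E)=0$. Your version runs Bertini on $X$ instead, which as you yourself note loses control of $E$, and the subsequent detour through Lemma~\ref{l:limit of complement} and Lemma~\ref{lem:ACC} does not fix it.

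Concretely, neither auxiliary lemma does the job here. Lemma~\ref{l:limit of complement} produces a single limiting complement $\Gamma$; it cannot give, for every $m\ge m_0$ and every compatible $D$, a complement dominating $(\delta-\varepsilon)D$, because that complement must vary with $(m,D)$ and a single $\Gamma$ cannot contain $(\delta-\varepsilon)D_m$ for all $m$ at once. Lemma~\ref{lem:ACC} fails for a different reason: the constant it supplies depends on the coefficient set of $G$, and the coefficients of $m$-basis type divisors (of the form $\ell/(mN_m)$) do not lie in any fixed DCC set as $m$ varies, so the ACC constant is not uniform in $m$. Your claim that the coefficient set is "fixed once $r$ and the coefficients of $\Delta$ are fixed" is exactly where this breaks. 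The missing idea, then, is the positivity computation above; once you have it, you can dispense with the limit argument and the ACC lemma entirely and the construction becomes a one-shot Bertini on $Y$ for each $(m,D)$.
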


\begin{proof}
Fix $r>0$ such that the conclusion of Proposition \ref{p:lowerboundlambda} holds. 
Fix $\varepsilon>0$.
For each positive integer $m$ divisible by $r$, set 
\[
c_m:= \min \{\delta_m(X,\Delta), \la_m(X,\Delta,E)\}
.\]
By Proposition \ref{p:delta=inf A/S} and Proposition \ref{p:lowerboundlambda}, 
\[
\lim_{m\to \infty} c_{mr} =\delta(X,\Delta)\, .
\]
Thus there exists an integer  $m_1$ such  that $c_m \geq \delta(X,\Delta)-\varepsilon$ for all $m\geq m_1$ divisible by $r$.

We claim that there exists an integer $m_2$ such that  if $m\geq m_2$ divisible by $r$ and $D$ is a relative $m$-basis type divisor of $-K_X-\Delta$ compatible with $E$, then 
\[
-K_{Y}
-
g_*^{-1}(\Delta+c_m D) - E
\]
is semiample.
To verify the claim, observe that 
\begin{multline*}
-K_Y 
	-g_*^{-1}(\Delta+c_m D) - E
	=
	-
	g^*(K_X+ \Delta+c_m D) -A_{X,\Delta+c_m D}(E)E\\
	\sim_{\bQ} 
	-
	(1-c_m)g^*(K_X+\Delta) - a_m E
	\sim_{\bQ}
	(1-c_m) \left( -g^*(K_X+\Delta) - \tfrac{a_m}{1-c_m} E\right)
,
\end{multline*}
where $a_m := A_{X,\Delta}(E)- c_m S_m(E)\ge 0$. 
Now note  that
\[
\lim_{m \to \infty}(1-c_{mr})
=
1- \delta(X,\Delta)
<1
\quad \text{ and } \quad
\lim_{m\to \infty} \frac{a_{mr}}{1-c_{mr}}
=
\frac{
	A_{X,\Delta}(E) - \delta(X,\Delta)S(E)}{1-\delta(X,\Delta)}
=
0
.\]
Using that $-K_X-\Delta$ is ample and $-E$ is $g$-ample, it then follows that 
\[
-g^*(K_X+\Delta) - \frac{a_{m}}{1-c_{m}} E
\]
is semiample for $m>0$ sufficiently large and divisible by $r$.

Set $m_0 = \max \{m_1,m_2\}$.
Fix $m\geq m_0$ divisible by $r$ and $D$  an $m$-basis type divisor of $(X,\Delta)$ compatible with $E$.
Since $m \geq m_2$,
$-K_Y-g_*^{-1}(\Delta+c_m D) - E$ is semi-ample. 
Since $c_m \geq \la_{m}(X,\Delta,E)$, the pair
\[
(Y,  g_*^{-1}(\Delta+c_m D)+E)
\]
is log canonical.
Thus we can use Bertini's Theorem to construct a complement $\Delta^+_Y$ of the latter pair.
Thus  $\Delta^+:=g_*\Delta^+_Y$ is a complement of  $(X,\Delta+c_m D)$ that satisfies condition (2).
Since $m\geq m_1$, condition (1) is also satisfied.
\end{proof}

\subsection{Bounding the stability threshold}
In this subsection, we will complete the proof of Theorem \ref{t:increasedelta}.
The following is the key proposition towards this goal.

\begin{prop}\label{p:boundlct}
After possibly replacing $r>0$ with a positive multiple, the following holds:
For any $\varepsilon>0$, there exists $m_0:=m_0(\varepsilon)$ such that if $m \in r\bN$ satisfies $m \geq m_0$ and
\[
\oD'= \tfrac{1}{mN_m} (\{s'_1 =0\}+ \cdots + \{s'_{N_m} =0\})
\]
is a relative $m$-basis type divisor of $\oL':=-K_{\oX'}-\oDe'$ compatible with $\ord_{\oX_0}$ and  
$s'_1,\ldots, s'_{N_m} \in H^0(\oX', m \oL')$ are eigenvectors with respect to the $\mu_d$-action,
then 
\[
\lct(\oX',\Delta'+\oX'_0; \oD')\geq \delta(X,\Delta)-\varepsilon.
\]
\end{prop}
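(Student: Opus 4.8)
The plan is to transfer the singularity estimate from $\overline{X}'$ back to $X'$ and then to $X$, using the dual-filtration machinery of Lemma \ref{lem-dualfiltration} together with the complements produced in Proposition \ref{p:complementsforbasistype+minimizer}. First I would observe that since the $s'_i$ are $\mu_d$-eigenvectors, the base-change identification \eqref{e:ocR} and Proposition \ref{p:comparefilt} let us match $\overline{D}'$ with a genuine relative $m$-basis type divisor on the original family $(X',\Delta')\to C$ (after possibly scaling weights by $d$), compatible with $\ord_{X_0}$; indeed the $\mu_d$-eigenspace decomposition of $H^0(\overline{X}',m\overline{L}')$ is precisely the decomposition into $\pi^{i/d}$-twisted copies of $H^0(X',mL')$, and a basis compatible with $\ord_{\overline{X}'_0}$ descends to a basis compatible with $\ord_{\overline{X}_0}$, hence—via the isomorphism over $C\setminus 0$ and Lemma \ref{lem-dualfiltration}—to the strict transform on $X$, which is an $m$-basis type divisor of $-K_X-\Delta$ compatible with $\ord_E$.

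Next I would run the argument outlined in the introduction in the reverse direction. Let $D$ be the $m$-basis type divisor of $-K_X-\Delta$ on $X$ produced above, compatible with $\ord_E$. Apply Proposition \ref{p:complementsforbasistype+minimizer}: for $m\geq m_0(\varepsilon)$ there is a complement $\Delta^+$ of $(X,\Delta)$ with $\Delta^+\geq \Delta+(\delta(X,\Delta)-\varepsilon)D$ and $A_{X,\Delta^+}(E)=0$. Because $E$ is an lc place of $(X,\Delta^+)$, the pair $(X',\Delta'^++X'^{\mathrm{red}}_0)$ obtained by crepant birational transform is lc, where $\Delta'^+$ is the transform of $\Delta^+$; and the transform of $D$ on $X'$ is exactly the relative $m$-basis type divisor $D'$ of $-K_{X'}-\Delta'$ that our $\overline{D}'$ descends from. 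So $(X',\Delta'+X'^{\mathrm{red}}_0+(\delta(X,\Delta)-\varepsilon)D')$ is lc, i.e. $\lct(X',\Delta'+X'^{\mathrm{red}}_0;D')\geq \delta(X,\Delta)-\varepsilon$. Finally I would pass from $X'$ to $\overline{X}'$: since $\overline{X}'\to X'\times_C\overline{C}$ is the normalization of a base change along the finite map $\overline{C}\to C$ of ramification index $d$, and $\overline{D}'$ is the pullback of $D'$ (up to the weight rescaling absorbed into the eigenvector bookkeeping), the log canonical threshold is preserved by \cite[Corollary 2.43]{Kol13}: $\lct(\overline{X}',\overline{\Delta}'+\overline{X}'_0;\overline{D}')=\lct(X',\Delta'+X'^{\mathrm{red}}_0;D')\geq \delta(X,\Delta)-\varepsilon$, which is the claim.

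The main obstacle I anticipate is the precise bookkeeping of the degree-$d$ rescaling of valuations and weights when translating between $\overline{X}'$ and $X'$. The filtration $\cF_{\overline{E}}$ on $\ocR$ restricts, by Proposition \ref{p:comparefilt}(1), to a filtration whose jumps are $d$ times finer than those of $\cF_E$ on $\cR$, and the $\mu_d$-eigenvalue of $s'_i$ records which residue class mod $d$ the $\ord_{\overline{E}}$-value lies in; one must check that a basis of $\ocR_m$ compatible with $\ord_{\overline{X}'_0}$ and consisting of eigenvectors pulls apart into $d$ bases of $\cR_m$, each compatible with $\ord_E$ (after dividing values by $d$), so that the resulting divisor on $X$ is genuinely $m$-basis type and compatible with $E$ in the sense of Section \ref{sss:basistypedivisors}. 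The key coincidence making this work is that $\overline{D}'$ is compatible with $\ord_{\overline{X}_0}$ — not with $\ord_{\overline{E}}$ — so after applying Lemma \ref{lem-dualfiltration} (with the roles of $X_0$ and $X'_0$ as in its statement, now over $\overline{C}$) the strict transform on $\overline{X}$ is compatible with $\ord_{\overline{X}'_0}=\ord_{\overline{E}}$, and descending to $X$ via the finite quotient by $\mu_d$ yields compatibility with $\ord_E$. Once this identification is pinned down, the rest is a formal chain of lct comparisons as above; I expect only the $\mu_d$-equivariance argument and the verification that the normalization-base-change preserves lct to require genuine care, the latter being a routine application of \cite[Corollary 2.43]{Kol13} once one notes $\overline{D}'$ is the crepant pullback of $D'$.
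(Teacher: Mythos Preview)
Your proposed route $\overline{X}' \to X' \to X$ has a genuine gap at the very first step. The claim that ``the $\mu_d$-eigenspace decomposition of $H^0(\overline{X}',m\overline{L}')$ is precisely the decomposition into $\pi^{i/d}$-twisted copies of $H^0(X',mL')$'' is false. That decomposition (equation \eqref{e:ocR} and Proposition \ref{p:comparefilt}) holds for $\overline{X}=X\times_C\overline{C}$ precisely because $X_0$ is reduced, so $\overline{X}$ is already normal and $\mu_d$ acts \emph{trivially} on $\overline{X}_0\cong X_0$. By contrast, $X'_0=d\cdot X'^{\mathrm{red}}_0$ is non-reduced, so $\overline{X}'$ is the normalization of $X'\times_C\overline{C}$, and $\mu_d$ acts \emph{nontrivially} on $\overline{X}'_0$ (this is exactly why the paper invokes \cite{Z-equivariant} in the proof of Theorem \ref{t:increasedelta}). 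Hence there are weight-$j$ eigenvectors in $H^0(\overline{X}',m\overline{L}')$ for $j\neq 0$ that do not vanish on $\overline{X}'_0$; such sections cannot lie in $\pi^{j/d}\cdot(\text{anything})$, so the asserted eigenspace description fails and $\overline{D}'$ does not descend to a divisor on $X'$. For the same reason your final step, identifying $\overline{D}'$ as the crepant pullback of some $D'$ from $X'$, breaks down. Note also that Lemma \ref{lem-dualfiltration} cannot be applied to $X\dashrightarrow X'$ directly: its hypotheses (those of Section \ref{sss:setup}) require reduced fibers, which $X'_0$ lacks.

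The paper's proof takes the other route $\overline{X}'\to\overline{X}\to X$. One applies Lemma \ref{lem-dualfiltration} to $\overline{X}'\dashrightarrow\overline{X}$ over $\overline{C}$ (valid since both $\overline{X}_0$ and $\overline{X}'_0$ are reduced by Proposition \ref{p:logFano}), obtaining an $m$-basis type divisor $\overline{D}$ on $\overline{X}$ compatible with $\ord_{\overline{E}}$, still built from $\mu_d$-eigenvectors $s_i$. Now the key point: compatibility with $\ord_{\overline{E}}=\ord_{\overline{X}'_0}$ forces each $s_i$ to be nonvanishing on $\overline{X}_0$, and since $\mu_d$ acts trivially on $\overline{X}_0$, every such eigenvector must have weight $0$, hence $s_i=\rho^*r_i$ for some $r_i\in\cR_m$. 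This gives the descent to an $m$-basis type divisor $D$ on $X$ compatible with $\ord_E$ (via Proposition \ref{p:comparefilt}), after which Proposition \ref{p:complementsforbasistype+minimizer} applies; the complement is pulled back to $\overline{X}$ and pushed birationally to $\overline{X}'$. The asymmetry between $\overline{X}$ and $\overline{X}'$ with respect to the $\mu_d$-action on the central fiber is what makes this route work and yours fail.
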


To prove the result, we will relate $\oD'$ to a basis type divisor on $\oX$ that descends to $X$ and then apply Proposition \ref{p:complementsforbasistype+minimizer}.

\begin{proof}
Fix $r>0$  satisfying Proposition \ref{p:complementsforbasistype+minimizer}.
Next let $m_0:=m_0(\varepsilon)$ be a positive integer satisfying the conclusion of Proposition \ref{p:complementsforbasistype+minimizer}.
Fix an integer $m\geq m_0$ divisible by $r$ and an $m$-basis type divisor of $\oL'$
\[
\oD'=
 \tfrac{1}{mN_m} (\{s'_1 =0\}+ \cdots + \{s'_{N_m} =0\})
\]
such that the $s'_i$ are eigenvectors with respect to the $\mu_d$-action and
$\oD'$ is compatible with $\ord_{\oX_0}$.
Using the isomorphism $(\oX,\oDe)_{\oC\setminus 0}\simeq (\oX',\oDe')_{\oC\setminus 0}$, 
we may view each $s'_i$ as a rational section of $\cO_{\oX}(m\oL)$. 
Let $b'_i$ denote the order of vanishing of this rational section $s'_i$ along $\oX_{0}$ and set 
\[
	s_i := \pi^{-b'_i/d} s'_i \in H^0(\oX,m \oL) 
.\]
By Lemma \ref{lem-dualfiltration}, 
\[
\oD := \tfrac{1}{mN_m} (\{s_1=0 \}+ \cdots + \{s_{N_m}=0\})  
\]
is a relative $m$-basis type divisor of $\oL$ on $\oX$ that is compatible with $\ord_{\oE}$. 
Since the $s'_i$ are  eigenvectors with respect to the $\mu_d$-action, the $s_i$  are also eigenvectors with respect to the $\mu_d$-action. 
	
We now show that  $\oD$ is the pullback of an $m$-basis type divisor $D$ on $X$ compatible with $\ord_E$.
To see this, first consider the decomposition into eigenspaces
\[
\ocR_m =  \ocR_m^{(0)} \oplus \ocR_m^{(1)}\oplus \cdots \oplus \ocR_m^{(d-1)}
,\]
where $\zeta \in \mu_d$ acts on $\ocR_m^{(j)}$ as multiplication by $\zeta^j$.
As $s_i$ is a $\mu_d$ eigenvector, it lies in a single summand. 
As $\ocR_m^{(j)} = \pi^{j/d} \rho^* \ocR_m$ by \eqref{e:ocR} and $s_i$ does not vanish along $\oX_0$,  $s_i$ lies in  $\ocR_m^{(0)}$.
Thus $s_i = \rho^*(r_i) $ for some $r_i \in \cR_m$.
Now set 
\[
D:= \tfrac{1}{mN_m} \{r_1= 0\} + \cdots + \{ r_{N_m}=0\}
,\]
which satisfies 
$\oD= \rho^*D$.
Using that $\oD$ is compatible with $\ord_{\oE}$, Proposition \ref{p:comparefilt} implies that $D$ is compatible with $\ord_{E}$.

We now analyze the log canonical threshold of $\oD'$.
By Proposition \ref{p:complementsforbasistype+minimizer},
 there exists a complement $\Delta^+$ of $(X,\Delta)$ such that 
\[
\Delta^+ \geq \Delta+ (\delta(X,\Delta)-\varepsilon) D
\quad \text{ and  } A_{X,\Delta^+}(E)=0.
\] 
Define $\overline{\Delta}^+$ by the formula
$
K_{\oX}+\oDe^+ = \rho^*(K_X+\Delta^+)
$.
Note that $\oDe^+$ is a complement of $(\oX,\oDe)$  satisfying 
\[
\oDe^+ \geq \oDe + (\delta(X,\Delta)-\varepsilon) \oD \quad \text{ and } \quad 
A_{\oX,\oDe^+}(\oE)=0
.\] 
Let $\oDe'^+$ denote the birational transform of $\oDe^+$ on $\oX'$. 
By the previous properties,  $\oDe'^+$ is a complement of $(\oX',\oDe')$ satisfying
%
 \[
 \oDe'^+ \geq  \oDe'+(\delta(X,\Delta)-\varepsilon)\oD'+\oX'_0
 .\]
Therefore the pair
\[
(\oX',\oDe'+(\delta(X,\Delta)-\varepsilon)\oD'+\oX'_0)
\]
is lc and so 
\[
\lct(\oX',\oDe'+\oX'_0;\oD')\geq \delta(X,\Delta)-\varepsilon
\]
 as desired.
\end{proof}

We will now  deduce Theorem \ref{t:increasedelta} as a consequence of the above proposition. 

\begin{proof}[Proof of Theorem \ref{t:increasedelta}]
Consider the pair $(\oX',\oDe') \to \oC$ with isomorphism 
\[
(\oX',\oDe')_{\oC\setminus 0} \simeq (\oX,\oDe)_{\oC\setminus 0}
\]
constructed in Section \ref{ss:basechange}. 
By Proposition \ref{p:logFano}, $(\oX',\oDe')\to \oC$ is a family of log Fano pairs. 
It remains to verify that $\delta(\overline{X}'_0,\overline{\Delta}'_0) \ge \delta(X,\Delta)$. 
We may assume that $\delta(\overline{X}'_0,\overline{\Delta}'_0) < 1$  as otherwise the inequality holds trivially as $\delta(X,\Delta)<1$ by Setup \ref{assu:logFano}.

Fix $r>0$ satisfying the conclusion of Proposition \ref{p:boundlct}.
Next, fix any $\varepsilon>0$, and let $m_0 =m_0(\varepsilon)$ be an integer satisfying the conclusion of Proposition \ref{p:boundlct}. 
Note that $\mu_d$-action on $(\oX',\oDe')$ fixes $\oX'_{0}$ and, hence, induces a $\mu_d$-action on $(\oX'_{0},\oDe'_{0})$. 
By \cite[Theorem 1.1]{Z-equivariant}, there exists a $\mu_d$-invariant valuation $v' \in \DivVal_{\oX'_{0}}$ such that 
\[
\frac{A_{\oX'_{0}, \oDe'_{0}} (v')}{S(v')} \leq \delta(\oX'_0,\oDe'_0)+\varepsilon.
\]
Since $S(v')= \lim_{m} S_{mr}(v')$, after increasing $m_0$, we may assume that 
\[
\frac{A_{\oX'_{0}, \oDe'_{0}}(v')}{S_m(v')} \leq \delta(\oX'_0,\oDe'_0)+2\varepsilon.
\]
for all $m\geq m_0$ divisible by $r$. 

We now seek to use Proposition \ref{p:boundlct} to bound the left hand side of the previous equation from below. 
Fix $m \geq m_0$ divisible by $r$.
Note that the $\mu_d$-action on $(\oX',\oDe')$ induces a direct sum decomposition into weight spaces
\[
H^0(\oX',m\oL') = \bigoplus_{i=0}^{d-1}H^0(\oX',m\oL')^{(i)}
\quad \text{ and } \quad
H^0(\oX'_{0},m\oL'_{0}) = \bigoplus_{i=0}^{d-1}H^0(\oX'_{0},m\oL'_{0})^{(i)}
,\]
where $\zeta\in \mu_d$ acts on the $i$-th summand as multiplication by $\zeta^i$.
Since the  $\mu_d$ action fixes $\ord_{\oX_{0}}$ and $v'$,
the  filtrations $\cF_{\ord_{\overline{X}_0}}$  and $\cF_{v'}$ of $H^0(\oX'_0,m\oL'_0)$ respect the weight decomposition. 
Therefore there exists an  eigenbasis
$
(s'_{1,0}, \ldots, s'_{N_m,0})
$
 for 
$H^0(\oX'_0,\oL'_0)$ that is compatible with both $\cF_{\ord_{\overline{X}_0}}\vert_{\oX'_{0}}$ and $\cF_{v'}$.
By the definition of $\cF_{\ord_{\overline{X}_0}}\vert_{\oX'_{\oo}}$,
there exists an eigenbasis
$
\{s'_1,\ldots, s'_{N_m}\}
$
for  $H^0(\oX', \oL')$  compatible with $\cF_{\ord_{\overline{X}_{0}}}$ and satisfying $s'_{i}\vert_{\oX'_0} =s'_{i,0}$.
Now set 
\[
\oD':= \tfrac{1}{mN_m}( \{s'_1=0\} + \cdots + \{s'_{N_m}=0 \})
.\]
Observe that
\begin{multline*}
\delta(X,\Delta)-\varepsilon
\le
\lct(\oX',\oDe'+\oX'_{0}; \oD') 
=
\lct(\oX_{0}, \oDe'_{0}; \oD'\vert_{\oX'_0})
\leq
\frac{A_{\oX'_{0}, \oDe'_{0}}(v')}{S_m(v')}
\leq 
\delta(\oX'_{0},\oDe'_{0})+2\varepsilon
,\end{multline*}
where the first relation holds as $m\geq m_0$, the second by inversion of adjunction, and the third by the fact that $\oD'_{0}$ is an $m$-basis type divisor compatible with $v'$.
Therefore 
\[
\delta(X,\Delta) 
\leq \delta(\oX'_{0},\oDe'_{0})+3\varepsilon
.\]
As the argument holds for arbitrary $\varepsilon>0$, it follows that $\delta(X,\Delta) \leq \delta(\oX'_0,\oDe'_0)$.
\end{proof}

\subsection{Proof of main results}
We now prove the results stated in the introduction.

\begin{proof}[Proof of Theorem \ref{thm-main}] 
By Corollary \ref{cor-minimizer complement}, there exists $\varepsilon>0$ such that for any $t\in (0,\varepsilon) \cap \bQ$ the following holds: 
\begin{enumerate}
   \item $\delta(X_0,B_0)< \delta(X,B+(1-t)X_0) <1$;
    \item there exist a divisorial valuation $v= \ord_E$ with $C_X(v) \subsetneq X_0$ computing 
    \[\delta(X,B+(1-t)X_0);\]
    and  $v$ is an lc place of a $\bQ$-complement $B^+$ of $(X,B)$ with $B^+ \geq X_0$.
\end{enumerate}
In addition,  $X_0$ is reduced and irreducible as $(X,B) \to C$ is a family of log Fano pairs by assumption.
Thus the relative log Fano pair $(X,\Delta)\to C$ with $\Delta:= B+(1-t)X_0$
and $v=\ord_E$ 
satisfies the conditions of Setup \ref{assu:logFano}.
Therefore Theorem \ref{t:increasedelta} implies that there exists a  morphism  $C'={\rm Spec}(R')\to C$ induced by a finite extension of DVRs  and family of log Fano pairs $(X',B')\to C'$ with an isomorphism
\[
(X,B)\times_{C\setminus 0}  (C'\setminus 0)\simeq (X', B')\vert_{C'\setminus 0}
\]
such that the restriction of $\ord_{X'_0}$ to $K(X) \subset K(X')$ is a multiple of  $ \ord_E$
 and 
$
\delta(X,\Delta)\leq \delta(X'_0, B'_0) \,
$.
Therefore 
$\delta(X_0,B_0)
<
\delta(X'_0, B'_0)
$
as desired.
\end{proof}

\begin{proof}[Proof of Corollary \ref{cor-deltaconstant}]
By \cite{LX-special} (see also \cite[Corollary 2.50]{Xu-Kbook} for the result for pairs),
 there exist an extension of DVRs $R\to R^1$ such that  $(X_K,B_K)\times_K K^1$ extends to a family of log Fano pairs 
 \[
 (X^1, B^1 ) \to  C^1:= \Spec(R^1)
.\]
By \cite{BLX-openness} and Proposition \ref{p:deltafieldext} ,
 \[
 \min \{1,  \delta(X^1_{0}, B^1_{0})\}
\leq
 \min \{1 , \delta(X^1_{K^1}, B^1_{{K}^1})\}
.\]
We claim that if the inequality is strict, then there exists an extension of DVRs
$R^1\subset R^2$ with extension of fraction fields $K^1\subset K^2=: {\rm Frac}(R^2)$ such that $(X_K, B_K )\times_K K^2$  extends to a family of log Fano pairs 
 \[
 (X^2, \Delta^2) \to  C^2:=\Spec(R^2)
 \]
satisfying
\[
\delta(X^1_{0},B^1_{0})
 <
 \delta(X^2_{0}, B^2_{0}) 
.
\]
Indeed, if the residue field $\bk^1$ of $R^1$ is algebraically closed, then this is an immediate consequence of Theorem \ref{thm-main}.
If not, then consider the extensions of DVRs
\[
R^1 \hookrightarrow \widehat{R^1} \cong \bk^1[\![t]\!] \hookrightarrow \overline{\bk^1}[\![t]\!] =: R'^1.
\]
If the inequality is strict, then, using  Proposition \ref{p:deltafieldext}, we may apply  Theorem \ref{thm-main}.1 to  
\[
(X^1, B^1) \times_{C^1}C'^1 \to C'^1 := \Spec(R'^1)
\] 
to get an extension of DVRs $R'^1 \subset R^2$ and an extension $(X^2, B^2) \to C^2:=\Spec(R^2)$ satisfying the claim. 

Continuing in this way, we obtain a sequence of extension of DVRs $R^{i-1}\subset R^{i}$ and extensions of $(X_K,B_K)\times_K K^i$ to a family  of log Fano pairs 
\[
(X^i, B^i) \to \Spec( R^i) 
\]
satisfying
\[
\delta (X^{i-1}_0, B^{i-1}_0)
<
\delta(X^i_0, B^i_0)
\quad \text{ and } \quad 
\min \{1,\delta(X^i_0, B^i_0)\}\leq
\min \{1 , \delta(X^i_{K^i}, B^i_{K^i})\},
\]
This process terminates if the last inequality ever fails to be strict. 

We claim that the process  terminates in finitely many steps.
Let $n:= \dim X_K$,  $v:= (-K_{X_K}-\Delta_K)^n$, $\delta:= \delta(X^1_0,\Delta^1_0)$, and $N$   a positive integer such that $N\Delta_K$ is integral.
Let $\mathcal{T}$ be the set of log Fano pairs $(Y,G)$ 
such that $n= \dim Y$, $v= (-K_Y-G)^n$, $\delta(Y,G)\geq \delta$, and $NG$ is integral.
Since $\mathcal{T}$ is a bounded family of pairs by \cite{Jia20,XZ-minimizer-unique}  and the function $\min \{1, \delta(\, \cdot \, )\}$ is constructible in families of log Fano pairs \cite{BLX-openness}, the set
\[
\{ \min \{1, \delta(Y,G)\}\, \vert\, (Y,G) \in \mathcal{T}\} 
\]
is finite. 
Since  each $(X_0^i,B_0^i)$ is in $ \mathcal{T}$, we conclude that the process must terminate. 
 Therefore, for some positive  integer $s$, we must have 
\[
\min \{1,
\delta(X^s_0, B^s_0)\} 
=
\min \{1 ,\delta(X^s_{K^s}, B^s_{K^s})\}.
\]
as desired. 
Furthermore, if $(X_K, B_K)$ is K-semistable, then 
\[
1
=
 \min \{1 ,\delta(X_K,B_K)\} = \min \{ 1,\delta(X^s_{K^s},B^s_{K^s})\}
 =
\min \{ 1,\delta(X^s_{0},B^s_{0})\},
\]
which the second equality is Proposition \ref{p:deltafieldext}.
Therefore $\delta(X^s_{0},B^s_{0})\geq 1$ and the fibers of $(X^s,\Delta^s) \to C^s$ are  K-semistable. 
\end{proof}

\begin{proof}[Proof of Theorem \ref{t:propKmod}]
This is special case of Corollary \ref{cor-deltaconstant}.
\end{proof}

\begin{proof}[Proof of Corollary \ref{c:propKmod}]
Let $\fX^{\rm K}_{n,V,N}$ denote the K-moduli stack defined in  \cite[Definition 7.23]{Xu-Kbook}. 
By \cite[Theorem 8.14]{Xu-Kbook}), $\fX^{\rm K}_{n,V,N}$ is a finite type algebraic stack and admits a good moduli space morphism to the K-moduli space $X^{\rm K}_{n,V,n}$, which is a finite type separated algebraic space. 
By Theorem \ref{t:propKmod}, the K-moduli stack satisfies the existence part of the version of the valuative criterion for properness allowing extensions of DVRs. 
Thus the K-moduli stack is universally closed by \cite[\href{https://stacks.math.columbia.edu/tag/0CLW}{Lemma 0CLW}]{stacks-project}.
Therefore the K-moduli space is proper by \cite[Proposition 3.48]{AHLH}.
\end{proof}

\bibliography{ref}

\end{document}